\def\input@path{{C:/Users/Amitay/Dropbox/Thesis/}}
\providecommand{\tabularnewline}{\\}
\numberwithin{equation}{section}
\numberwithin{figure}{section}
\numberwithin{table}{section}
\theoremstyle{plain}
\newtheorem{thm}{\protect\theoremname}[section]
  \theoremstyle{definition}
  \newtheorem{defn}[thm]{\protect\definitionname}
  \theoremstyle{plain}
  \newtheorem{cor}[thm]{\protect\corollaryname}
  \theoremstyle{plain}
  \newtheorem{prop}[thm]{\protect\propositionname}
  \theoremstyle{definition}
  \newtheorem{example}[thm]{\protect\examplename}
  \theoremstyle{plain}
  \newtheorem{lem}[thm]{\protect\lemmaname}
  \theoremstyle{remark}
  \newtheorem{rem}[thm]{\protect\remarkname}
  \theoremstyle{definition}
  \newtheorem{problem}[thm]{\protect\problemname}
  \theoremstyle{plain}
  \newtheorem*{thm*}{\protect\theoremname}
  \providecommand{\corollaryname}{Corollary}
  \providecommand{\definitionname}{Definition}
  \providecommand{\examplename}{Example}
  \providecommand{\lemmaname}{Lemma}
  \providecommand{\problemname}{Problem}
  \providecommand{\propositionname}{Proposition}
  \providecommand{\remarkname}{Remark}
  \providecommand{\theoremname}{Theorem}
\providecommand{\theoremname}{Theorem}
\begin{document}

\title{$L_{p}$-Expander Complexes}

\author{Amitay Kamber}

\thanks{Institute of Mathematics, The Hebrew University of Jerusalem, amitay.kamber@mail.huji.ac.il}
\begin{abstract}
We discuss two combinatorical ways of generalizing the definition
of expander graphs and Ramanujan graphs, to quotients of buildings
of higher dimension. The two possible definitions are equivalent for
affine buildings, giving the notion of an $L_{p}$-expander complex.
We calculate explicit spectral gaps on many combinatorical operators,
on any $L_{p}$-expander complex. 

We associate with any complex a natural ``zeta function'', generalizing
the Ihara-Hashimoto zeta function of a finite graph. We generalize
a well known theorem of Hashimoto, showing that a complex is Ramanujan
if and only if the zeta function satisfies the Riemann hypothesis.
\end{abstract}

\maketitle

\section{Introduction}

Expander graphs in general and Ramanujan graphs in particular, have
been a topic of great interest in the last four decades. In recent
years, a theory of high dimensional expanders has emerged (see \cite{lubotzky2014ramanujan}
and the references therein). Namely, high dimensional simplicial complexes
which resemble the properties of expander graphs when specialized
to dimension one. Several different notions of high dimensional expanders
have been proposed (which in general are not equivalent), each with
its own goal and motivation.

The goal of this paper is to propose yet another such a generalization,
based on the representation theory of all dimensional Hecke Algebras
(to be defined later) associated with buildings. One of the advantages
of our approach is that it gives a generalization of expanders and
Ramanujan graphs in a unified way. In addition, we deduce that quotients
of buildings associated with semisimple groups over local fields with
property (T) are indeed high dimensional expanders, as one expects,
recalling the classical result of Margulis showing how to get expander
graphs from groups with property (T) (\cite{margulis1973explicit}). 

In \cite{kamber2016lpgraph} we presented an alternative definition
of expander graphs. The goal of this work is to generalize it to higher
dimensions.

Consider a regular, locally finite thick affine building $B$, with
parameter $q$. The building $B$ has a corresponding irreducible
affine Coxeter group $(W,S)$ of type $\tilde{A}_{n}$, where $S$
is a finite set of generators of $W$. Denote by $B_{\phi}$ the set
of chambers of $B$, i.e. the highest dimensional faces. Every panel
(i.e. a codimension one face) $\sigma$ is contained in $q+1$ chambers.
The building $B$ is a \emph{colored} pure simplicial complex, which
means that each panel $\sigma$ has a natural color (or cotype) $t(\sigma)\in S$
and each chamber contains $\left|S\right|=n+1$ panels of different
colors. The \emph{color} or\textbf{ }$\tau(\sigma)\subset S$ of a
general face $\sigma$ is the union of the colors of the panels containing
it.

The \emph{$W$-metric approach} to buildings allows us to define a
distance between every two chambers $C_{0},C_{1}\in B_{\phi}$ by
$d(C_{0},C_{1})\in W$. Each chamber $C$ has $q_{w}=q^{l(w)}$ chambers
$C^{\prime}$ with $d(C,C^{\prime})=w\in W$.

Let $\Gamma$ be a cocompact torsion free lattice in $G$. In this
case the quotient space $X=\Gamma\backslash B$ is a finite colored
simplicial complex. Identify its chambers by $X_{\phi}.$ Let $\pi:B\rightarrow X$
be the projection and define for $f\in\mathbb{C}^{X_{\phi}}$ the
pullback $\pi^{\ast}f\in\mathbb{C}^{B_{\phi}}$. Let $C_{0}$ be a
fixed chamber of $B$, and let $\rho_{C_{0}}:\mathbb{C}^{B_{\phi}}\rightarrow\mathbb{C}^{B_{\phi}}$
be the spherical average operator defined by $\rho_{C_{0}}(f)(C)=\frac{1}{q_{d(C_{0},C)}}\sum_{C^{\prime}:d(C_{0},C^{\prime})=d(C_{0},C)}f(C^{\prime})$.
Finally, define the non trivial space $L_{2}^{0}(X_{\phi})=\left\{ f\in\mathbb{C}^{X_{\phi}}:\,\sum_{C\in X_{\phi}}f(C)=0\right\} $.
We can now define:
\begin{defn}
\label{def:Expander definition}For $2\le p\le\infty$ call $X$ an
$L_{p}$\emph{-expander} if for every $f\in L_{2}^{0}(X_{\phi})$
and $C_{0}\in B_{\phi}$, $\rho_{C_{0}}(\pi^{\ast}f)\in L_{p+\epsilon}(B_{\phi})$
for every $\epsilon>0$.

Call $X$ \emph{Ramanujan} if it is an $L_{2}$-expander.
\end{defn}
The definition is equivalent to the fact that every matrix coefficient
of every subrepresentation of $L^{2}(\Gamma\backslash G)$ with Iwahori
fixed vector is in $L_{p+\epsilon}(G)$ for every $\epsilon>0$. Generalizing
the $p=2$ case, we say that a $G$-representation satisfying this
property is \emph{$p$-tempered}.

Let us say right away that the Ramanujan complexes constructed in
\cite{lubotzky2005explicit,lubotzky2005ramanujan} using Lafforgue
work, are also Ramanujan in our sense (See \textbf{\uline{\mbox{\cite{first2016ramanujan}}}}).
However, our definition is aperiori stronger than the one given in
\cite{lubotzky2005explicit} as it requires the $L_{2+\epsilon}$
condition on functions on faces of all colors, in contrast with \cite{lubotzky2005explicit}
where only functions on vertices are considered. This is reflected
by the fact that \cite{lubotzky2005ramanujan} works only with the
classical spherical Hecke algebra, while we consider the Iwahori-Hecke
algebra of the all dimensional Hecke algebra (see below). This allows
us in turn to analyze higher dimensional faces and not just vertices
(compare \cite{evra2015mixing}). The definition is equivalent to
\emph{strongly-Ramanujan} in \cite{kang2016riemann} and \emph{flag-Ramanujan}
in \cite{first2016ramanujan}.

The definition of $p$-temperedness is intimately connected to property
(T) of reductive groups. The results of Oh in \cite{oh2002uniform}
express the quantitative property $(T)$ of reductive groups in the
following way:
\begin{thm}
\label{thm:Oh's Theorem-intro}(Oh \cite{oh2002uniform}) Let $k$
be a non-Archimedean local field with $\mathrm{char}k\ne2$. Let $G$
be the group of $k$-rational points of a connected linear almost
$k$-simple algebraic group with $k$-rank $\ge2$. Then every irreducible
infinite dimensional unitary representation of $G$ is $p_{0}$-tempered
for some explicit $p_{0}$ depending only on the affine Coxeter group
$W$. Explicitly, the bounds are:

\begin{tabular}{|c|c|c|c|c|c|c|c|c|c|c|}
\hline 
$W$ &
$\tilde{A}_{n}$ &
$\tilde{B}_{n}$ &
$\tilde{C}_{n}$ &
$\tilde{D}_{n}$, $n$ even &
$\tilde{D}_{n}$, $n$ odd &
$\tilde{E}_{6}$ &
$\tilde{E}_{7}$ &
$\tilde{E}_{8}$ &
$\tilde{F}_{4}$ &
$\tilde{G}_{2}$\tabularnewline
\hline 
$p_{0}$ &
$2n$ &
$2n$ &
$2n$ &
$2(n-1)$ &
$2n$ &
16 &
18 &
29 &
11 &
6\tabularnewline
\hline 
\end{tabular}
\end{thm}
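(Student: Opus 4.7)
The plan is to follow Oh's strategy of reducing the $p$-temperedness of an arbitrary irreducible unitary representation $(\pi,\mathcal{H})$ of $G$ to explicit estimates of matrix coefficients along a maximal split torus, then to combinatorial bookkeeping on the root system. Since $p$-temperedness is equivalent to all $K$-finite matrix coefficients lying in $L^{p+\epsilon}(G)$, and since $G$ admits a Cartan decomposition $G=KA^{+}K$, the condition translates to an $L^{p+\epsilon}$ summability statement for $a\mapsto \langle\pi(a)v,w\rangle$ on the positive chamber $A^{+}$, weighted by the Cartan density whose growth is governed by $q^{2\rho(a)}$.

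First I would invoke the Howe--Moore-type vanishing-at-infinity theorem: since $G$ is almost $k$-simple of positive $k$-rank, every unitary representation without invariant vectors has matrix coefficients vanishing at infinity, so one only needs \emph{quantitative} decay. To obtain this, I would apply Howe's strongly orthogonal roots method. Inside the relative root system $\Phi$ of $G$ one selects a maximal set of pairwise strongly orthogonal roots $\{\alpha_{1},\dots,\alpha_{s}\}$; each supports a copy of $\mathrm{SL}_{2}(k)$ (or a rank-one subgroup of the appropriate unramified type), and restricting $\pi$ to each such subgroup allows one to use the classification of its irreducible unitary representations to obtain explicit exponential decay of $\langle\pi(a)v,v\rangle$ along the associated one-parameter subgroup. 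The assumption $\mathrm{char}\,k\ne 2$ enters here in order to handle the unitary dual of the rank-one groups uniformly.

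Combining these directional bounds multiplicatively produces an estimate of the form $|\langle\pi(a)v,w\rangle|\le C\,\Xi(a)^{1/s}$ where $\Xi$ is the Harish-Chandra $\Xi$-function of $G$ and $s$ counts the number of independent strongly orthogonal directions. It is standard that $\Xi\in L^{2+\epsilon}(G)$, so one concludes $\pi$ is $p_{0}$-tempered where $p_{0}=2s$ in the most symmetric cases, and in general $p_{0}$ is the smallest $p$ for which $\sum_{a\in A^{+}}q^{2\rho(a)}\Xi(a)^{p/s}<\infty$. Unwinding this sum in the positive chamber using the Gindikin--Karpelevich formula for $\Xi$ gives a convergence criterion purely in terms of the root system data, which produces the tabulated exponents.

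The hard part is obtaining the \emph{sharp} integer $s$ (and hence the sharp $p_{0}$) for each affine Coxeter type. For classical types $\tilde{A}_{n},\tilde{B}_{n},\tilde{C}_{n},\tilde{D}_{n}$ this is a combinatorial problem of maximizing strongly orthogonal sets and reading off the correct factor from the root multiplicities, which yields the linear-in-$n$ bounds and explains the dichotomy between $\tilde{D}_{n}$ with $n$ even and odd. For the exceptional types $\tilde{E}_{6},\tilde{E}_{7},\tilde{E}_{8},\tilde{F}_{4},\tilde{G}_{2}$ a uniform argument is not available, and I would expect to have to argue case by case using explicit knowledge of the unipotent subgroup structure and occasionally more delicate lower bounds on matrix coefficients (e.g.\ obtained via the Kazhdan constants of specific $\mathrm{SL}_{3}$ or $\mathrm{Sp}_{4}$ subsystems) to reach the exact values $16,18,29,11,6$ rather than slightly weaker generic estimates.
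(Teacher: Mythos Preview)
The paper does not prove this theorem at all: it is quoted from Oh \cite{oh2002uniform} as a black box. In Section~\ref{sec:Oh's-Theorem} the theorem is restated verbatim without a proof environment, and the author explicitly remarks (in the ``Structure of the Work'' subsection) that he was \emph{unable to translate the proof of this theorem into the methodology of this work} and therefore only cites it under the algebraic-group assumption. So there is no proof in the paper for your proposal to be compared against.

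That said, your sketch is a reasonable high-level outline of what Oh actually does in \cite{oh2002uniform}: the reduction via the Cartan decomposition to decay along $A^{+}$, Howe's strongly orthogonal roots producing rank-one $\mathrm{SL}_2$ subgroups, and the resulting pointwise bound of the form $|\langle\pi(a)v,w\rangle|\le C\,\Xi(a)^{1/s}$ are the correct ingredients. A few cautions if you intend to flesh this out: the relation between $s$ and the tabulated $p_0$ is not simply $p_0=2s$ in all cases (the exceptional values do not all arise this way, and Oh herself notes the bounds are not optimal outside type $\tilde{A}_n$); and the char $k\ne 2$ hypothesis is used more broadly than just in the rank-one unitary dual. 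But since the paper treats this as an imported result, none of this bears on the paper's own arguments.
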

As a corollary we get:
\begin{cor}
Every finite quotient complex $X$ of the building corresponding to
$G$ as above is a $L_{p_{0}}$-expander. 
\end{cor}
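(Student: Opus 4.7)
The plan is to combine the representation-theoretic reformulation of the $L_p$-expander property (stated just after Definition \ref{def:Expander definition}) with Theorem \ref{thm:Oh's Theorem-intro}. Recall that the chambers of $X$ correspond to $\Gamma\backslash G/I$ where $I$ is the Iwahori subgroup fixing $C_0$, so $\mathbb{C}^{X_\phi}$ is identified with the Iwahori-fixed vectors in $L^2(\Gamma\backslash G)$. Under this identification, constant functions correspond to the trivial $G$-representation, and the subspace $L_2^0(X_\phi)$ is orthogonal to them.

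First, using that $\Gamma$ is cocompact, I would invoke the standard decomposition $L^2(\Gamma\backslash G)=\mathbb{C}\oplus L_0^2(\Gamma\backslash G)$, with $L_0^2$ decomposing as a Hilbert direct sum of irreducible unitary subrepresentations (each of finite multiplicity). By the equivalence, it suffices to show that each nontrivial irreducible subrepresentation with a nonzero Iwahori-fixed vector is $p_0$-tempered.

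Next, I would verify that every nontrivial irreducible subrepresentation occurring in $L_0^2$ is automatically infinite-dimensional. This is standard: since $G$ is the group of $k$-points of a connected almost $k$-simple algebraic group of $k$-rank at least $2$, it is generated by its unipotent $k$-subgroups; any unipotent element acting in a finite-dimensional unitary representation must be the identity (its eigenvalues are all $1$ and it preserves a positive definite Hermitian form), so every finite-dimensional unitary representation of $G$ is trivial. Hence the hypotheses of Theorem \ref{thm:Oh's Theorem-intro} apply to each irreducible constituent of $L_0^2$, yielding $p_0$-temperedness with the explicit $p_0$ from the table.

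Applying the stated equivalence in the reverse direction then gives that $\rho_{C_0}(\pi^*f)\in L_{p_0+\epsilon}(B_\phi)$ for every $f\in L_2^0(X_\phi)$ and every $\epsilon>0$, which is precisely the $L_{p_0}$-expander condition. The only nontrivial input is the equivalence between the combinatorial expander condition and $p$-temperedness of Iwahori-spherical subrepresentations of $L^2(\Gamma\backslash G)$; everything else (discrete decomposition, triviality of finite-dimensional unitary representations, Oh's theorem) is off-the-shelf. I expect this equivalence to be the main technical point requiring separate development in the body of the paper.
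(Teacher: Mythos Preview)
Your proposal is correct and follows essentially the same route as the paper: the body of the paper establishes precisely the equivalence you invoke (Corollary~\ref{cor:Expander depends on G}, via Proposition~\ref{prop:Tempereddness and G} and the admissibility/unitarity of $C^\infty(G/\Gamma)$ in Proposition~\ref{prop:Quotients of Buildings and G represen}), and then the formal proof of the result (the unnamed theorem at the end of Section~\ref{sec:Representations-of-the}) reads simply ``Follows from Corollary~\ref{cor:Expander depends on G} and Theorem~\ref{thm:Oh's Theorem}.'' Your additional remark that nontrivial irreducible constituents must be infinite-dimensional is a point the paper leaves implicit but which is indeed needed to match the hypotheses of Oh's theorem.
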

Let us now relate the above to the representation theory of Hecke
algebras. For $w\in W$ we define the operator $h_{w}:\mathbb{C}^{B_{\phi}}\rightarrow\mathbb{C}^{B_{\phi}}$
by $h_{w}f(C)=\sum_{C^{\prime}:d(C,C^{\prime})=w}f(C^{\prime})$.
\begin{defn}
\emph{The Iwahori-Hecke algebra $H_{\phi}$ of $B$} is the linear
span of all the $h_{w}$, $w\in W$. 
\end{defn}
Those operators satisfy for $w\in W,\,s\in S$ the Iwahori-Hecke relations:
\[
\begin{array}{cccc}
h_{s}^{2} & = & q\cdot Id+(q-1)h_{s}\\
h_{w}h_{s} & = & h_{ws}\,\,\,\,\,\,\,\,\,\,\,\,\, & if\,\,l(ws)=l(w)+1
\end{array}
\]

In terms of $G$, the Iwahori-Hecke algebra can be written as $H_{\phi}\cong C_{c}\left[G_{\phi}\backslash G/G_{\phi}\right]$,
where $G_{\phi}$ (usually denoted $I$) is the Iwahori subgroup of
$G$ and $C_{c}\left[G_{\phi}\backslash G/G_{\phi}\right]$ has the
algebra structure given by convolution. The algebra $H_{\phi}$ can
also be defined as the set of all row and column finite (see definition
\ref{def:Row and Column finite}) linear operators acting on $\mathbb{C}^{B_{\phi}}$
and commuting with the action of $G$ on the space.

The Iwahori-Hecke algebra $H_{\phi}$ acts naturally on $\mathbb{C}^{X_{\phi}}$,
i.e. functions on chambers of the complex. In fact, the standard inner
product on $\mathbb{C}^{X_{\phi}}\cong L_{2}(X_{\phi})$ gives it
the structure of a finite dimensional \emph{unitary representation}
of $H_{\phi}$ (as a $\ast$-algebra), and $L_{2}^{0}(X_{\phi})$
is a proper subrepresentation.
\begin{defn}
\label{def:Weakly-Contained}We say that a finite dimensional representation
$V$ of $H_{\phi}$ is $p$-tempered if for every $v\in V$, $u\in V^{\ast}$
and $\epsilon>0$ we have $\sum_{w\in W}q^{l(w)(1-p-\epsilon)}\left|\left\langle u,h_{w}v\right\rangle \right|^{p+\epsilon}<\infty$.
\end{defn}
An easy calculation shows that definition \ref{def:Expander definition}
is actually equivalent to the $p$-temperedness of the $H_{\phi}$-representation
$L_{2}^{0}(X_{\phi})$.

There is an alternative possible definition of an $L_{p}$-expander.
It is based on the following:
\begin{defn}
Assume $V$ is a finite dimensional representations of $H_{\phi}$.
We say that $V$ is \emph{weakly contained} in $L_{p}(B_{\phi})$
if for each operator $h\in H_{\phi}$ and eigenvalue $\lambda$ of
$h$ on $V$, $\lambda$ belongs to the approximate point spectrum
of $h$ on $L_{p}(B_{\phi})$.
\end{defn}
The following is the main theorem of this work. It says that the two
possible definitions are equivalent in the case we consider (Compare
\cite{cowling1988almost}, for $p=2$ only).
\begin{thm}
\label{thm:Main thm}A finite dimensional representation $V$ of $H_{\phi}$
is $p$-tempered if and only if it is weakly contained in $L_{p}(B_{\phi})$.

Therefore $X$ is an $L_{p}$-expander if and only if the $H_{\phi}$-representation
$L_{2}^{0}(X_{\phi})$ is weakly contained in $L_{p}(B_{\phi})$.
\end{thm}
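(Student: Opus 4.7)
The plan is to pass through the structure theory of the Iwahori--Hecke algebra $H_{\phi}$, reducing the problem to one about central characters and their relation to the $L^{p}$--spectrum of the convolution algebra. As a preliminary, I would set up the bridge between the two sides: the correspondence $\mathbb{C}^{B_{\phi}}\cong\mathbb{C}[G/G_{\phi}]$ realizes $L^{p}(B_{\phi})$ as a convolution $H_{\phi}$--module, any finite--dimensional $H_{\phi}$--module $V$ is realized as Iwahori--fixed vectors in some smooth $G$--representation $\pi_{V}$, and a direct computation of $\mathrm{vol}(G_{\phi}g_{w}G_{\phi})=q^{l(w)}$ (with $\mathrm{vol}(G_{\phi})=1$) yields the identity
\[
\|\phi_{u,v}\|_{L^{p+\epsilon}(B_{\phi})}^{p+\epsilon}=\sum_{w\in W}q^{l(w)(1-p-\epsilon)}|\langle u,h_{w}v\rangle|^{p+\epsilon},
\]
so that the $p$--temperedness of Definition \ref{def:Weakly-Contained} is exactly the statement that every Iwahori matrix coefficient of $V$ lies in $\bigcap_{\epsilon>0}L^{p+\epsilon}(B_{\phi})$. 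Both conditions in the theorem are hereditary under subquotients, so I would then reduce to $V$ irreducible.

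For the direction $p$--tempered $\Rightarrow$ weakly contained, fix $h\in H_{\phi}$ and an eigenvalue $\lambda$ of $h$ on $V$ with eigenvector $v$, and choose $u\in V^{\ast}$ non--zero on $v$. The matrix coefficient $f=\phi_{u,v}$ is a genuine eigenfunction of $h$ acting on $\mathbb{C}^{B_{\phi}}$ with eigenvalue $\lambda$, because Hecke convolution intertwines the action on $V$. By $p$--temperedness $f\in L^{p+\epsilon}(B_{\phi})$ for every $\epsilon>0$, though $f$ need not lie in $L^{p}$. The truncations $f_{N}=f\cdot\mathbf{1}_{d(C_{0},\cdot)\le N}$ are finitely supported, and $(h-\lambda)f_{N}$ is supported in a bounded shell near the sphere of radius $N$. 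A pigeonhole argument on the annular $L^p$--norms $\int_{d(C_{0},\cdot)=n}|f|^{p}$, combined with H\"older inequality to compare $L^p$ and $L^{p+\epsilon}$, produces a subsequence of radii along which $\|(h-\lambda)f_{N}\|_{p}/\|f_{N}\|_{p}\to 0$, placing $\lambda$ in the approximate point spectrum of $h$ on $L^{p}(B_{\phi})$.

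The reverse direction is the main obstacle and is where the structure theory has to be invoked more seriously. Using the Bernstein--Lusztig presentation, I would identify the center $Z(H_{\phi})$ with $W_{\mathrm{fin}}$--invariant Laurent polynomials on the coweight lattice, so that an irreducible $V$ carries a central character recorded by a $W_{\mathrm{fin}}$--orbit of Satake parameters $s=(s_{1},\dots,s_{n})$. Two ingredients then need to be established: (i) the $L^{p}$--spectrum of $Z(H_{\phi})$ acting on $L^{p}(B_{\phi})$ equals the image under the Satake isomorphism of the set of $s$ with $|s_{i}|\le q^{1/2-1/p}$, which is exactly the range of parameters for which the spherical function attached to $s$ lies in $\bigcap_{\epsilon>0}L^{p+\epsilon}(B_{\phi})$; and (ii) for $s$ in this range, Macdonald's formula expresses the Iwahori matrix coefficients of the (generalized) principal series as a sum of products of $c$--functions and characters, from which $p$--temperedness follows by a termwise $L^{p+\epsilon}$--bound. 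The hypothesis that $V$ is weakly contained in $L^{p}(B_{\phi})$, specialized at $h\in Z(H_{\phi})$, places the central eigenvalues of $V$ in this $L^{p}$--spectrum; (i) bounds the Satake parameters, and (ii) then upgrades the bound to $p$--temperedness of \emph{every} matrix coefficient of $V$, not merely the spherical one. The second statement of the theorem follows by substituting $V=L_{2}^{0}(X_{\phi})$ and unravelling Definition \ref{def:Expander definition}.
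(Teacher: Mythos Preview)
Your approach diverges from the paper's in both directions, and while the first direction is essentially sound, the second contains a real gap.

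For the implication $p$-tempered $\Rightarrow$ weakly contained, the paper does not truncate. Instead it introduces the exponential damping $f_{\delta}(\sigma)=(1-\delta)^{l(\sigma)}f(\sigma)$, which lies in $L_{p}(B_{\phi})$ by definition of $T_{p}$, and proves directly that $\|h(f_{\delta})-(hf)_{\delta}\|_{p}/\|f_{\delta}\|_{p}\to 0$ as $\delta\to 0$ (Section~\ref{sec:The-Point-Spectrum}). Your truncation-plus-pigeonhole argument can be made to work, but the damping is cleaner and avoids the subsequence extraction. Note also that the paper's argument works for \emph{any} locally finite regular building, affine or not; nothing about the group $G$ is used.

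The reverse implication is where your proposal has a genuine gap. You assert as ingredient (i) that the $L^{p}$-spectrum of $Z(H_{\phi})$ on $L^{p}(B_{\phi})$ is exactly the image of the tube $|s_{i}|\le q^{1/2-1/p}$, but this is precisely the hard content of the theorem, and you give no mechanism to establish it. The paper devotes Sections~\ref{sec:Retraction-into-Apartments}--\ref{sec:How-to-Bound-operators} to proving the equivalent statement: an explicit bound $\|h_{\beta}\|_{L_{p}}\le D(q,l(\beta))\,q_{\beta}^{(p-1)/p}$ for $\beta\in P^{+}$ (Theorem~\ref{thm:Bounds_Theorem}), obtained by combining the Bernstein presentation with the geometry of sectorial retraction on the building. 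Gelfand's formula then gives $\lambda_{L_{p}}(h_{\beta})\le q_{\beta}^{(p-1)/p}$, and weak containment transfers this eigenvalue bound to $V$. Finally, the combinatorial structure theorem $\hat{W}=W_{0}\cdot P^{+}\cdot\hat{A}_{0}$ with additive lengths (Theorem~\ref{thm:Structure Theorem}) shows that these eigenvalue bounds on the $h_{\beta_{i}}$ alone already force $p$-temperedness (Proposition~\ref{Temperedness_Affine_Case_eigenvalues}). No Macdonald-type asymptotic formula for matrix coefficients is needed, and in particular your step (ii) is bypassed entirely.

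Two further remarks. First, your setup routes everything through a group $G$ and smooth representations $\pi_{V}$; the paper deliberately avoids this, since it wants the theorem for arbitrary affine buildings (including those in dimension $\le 2$ not arising from algebraic groups). Second, restricting to $h\in Z(H_{\phi})$ is unnecessary: weak containment gives you the spectrum of every $h_{\beta}$, not only the $W_{0}$-symmetric combinations, and the paper exploits this directly.
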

In theorem \ref{thm:Main thm}, the Iwahori-Hecke algebra $H_{\phi}$
can be extended to a larger algebra , which we call the \emph{all
dimensional Hecke (ADH) algebra} $H$. The simplest way to define
it is:
\begin{defn}
\label{def:Hecke-algbera-def-by-commute}Identify $B$ with the set
of all its faces. The ADH algebra $H$ of $B$ is the algebra of all
row and columns finite linear operators acting on $\mathbb{C}^{B_{f}}$
and commuting with the action of $G$. 
\end{defn}
The algebra $H$ contains many interesting operators, such as boundary
and coboundary operators, Laplacians and adjacency operators. It is
described explicitly in section \ref{sec:The-All-Dimensional} (see
also \cite{abramenko2015distance}). Shortly, distances between general
faces are parameterized by $d\in W_{I_{1}}\backslash W/W_{I_{2}}$,
for $I_{1},I_{2}\subsetneq S$, where $W_{I_{1}},W_{I_{2}}\subset W$
are the corresponding parabolic subgroups. The algebra $H$ is spanned
by operators $h_{d}$, for $d\in W_{I_{1}}\backslash W/W_{I_{2}}$. 

Explicit bounds on the operators of $H$ can be given by the following
theorem. Notice that we have a length function $l:W\rightarrow\mathbb{N}$.
Each operator $h_{w}\in H_{\phi}$ sums $q^{l(w)}$ different chambers
of $B_{\phi}$ and therefore $q^{l(w)}$ is its trivial eigenvalue.
Now:
\begin{thm}
\label{thm:Bounds_Theorem}The norm of $h_{w}\in H_{\phi}$ is bounded
on $L_{p}(B_{\phi})$ by $D(w,l(w))q^{l(w)(p-1)/p}$, where $D(q,l)=\left|W_{0}\right|2^{l(\tilde{w}_{0})}q^{4\cdot l(\tilde{w}_{0})}\left(l+1+l(\tilde{w}_{0})\right)^{l(\tilde{w}_{0})}$,
$W_{0}$ is the spherical Coxeter group corresponding to $W$ and
$\tilde{w}_{0}$ is the longest element of $W_{0}$.

Therefore the same bound applies to the action of $h_{w}\in H_{\phi}$
on $L_{2}^{0}(X_{\phi})$.
\end{thm}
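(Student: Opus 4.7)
The plan is to reduce the claim to an endpoint estimate at $p=2$ and interpolate. At $p=\infty$ the trivial bound is immediate: $h_w f(C)$ is a sum of $q^{l(w)}$ values of $f$, so $\|h_w\|_{L_\infty(B_\phi)} \le q^{l(w)}$, which matches the claim with constant one; the same bound at $p=1$ follows from the adjointness $h_w^\ast = h_{w^{-1}}$ together with $l(w^{-1})=l(w)$. Consequently, once one has an estimate
\[
\|h_w\|_{L_2(B_\phi)} \;\le\; D_0(l(w))\, q^{l(w)/2}
\]
with $D_0$ an explicit polynomial factor of the shape stipulated by $D$, Riesz--Thorin complex interpolation between $L_2$ and $L_\infty$ produces $\|h_w\|_{L_p(B_\phi)} \le D_0^{2/p}\, q^{l(w)(p-1)/p}$ for every $2 \le p \le \infty$, and absorbing $D_0^{2/p} \le D_0$ yields the claim on the building.

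The $L_2$ estimate is the real content. I would use the identification $L_2(B_\phi) \cong L_2(G/G_\phi)$, under which $H_\phi \cong C_c[G_\phi\backslash G/G_\phi]$ acts by convolution. The Plancherel decomposition of this action is supported on the tempered Iwahori-spherical dual of $G$, parameterized by the compact unramified torus $T^{un}$ modulo $W_0$; the corresponding representation $\pi_\chi$ is the unramified principal series, whose Iwahori-fixed subspace has dimension $|W_0|$. Hence
\[
\|h_w\|_{L_2(B_\phi)} \;=\; \sup_{\chi \in T^{un}}\, \|\pi_\chi(h_w)\|,
\]
and Matsumoto's formula expresses $\pi_\chi(h_w)$ as an explicit matrix on a $W_0$-indexed basis of $\pi_\chi^{G_\phi}$, with entries given by the Macdonald $c$-function expansion. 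The classical Macdonald--Matsumoto cancellation between the $W_0$-symmetrized terms then yields a uniform bound of the form
\[
\|\pi_\chi(h_w)\| \;\le\; |W_0|\, 2^{l(\tilde{w}_0)}\, q^{4\, l(\tilde{w}_0)}\,\bigl(l(w)+1+l(\tilde{w}_0)\bigr)^{l(\tilde{w}_0)}\cdot q^{l(w)/2},
\]
where the successive factors count the $W_0$-orbit, convert between spherical and Iwahori-spherical normalizations, absorb the Poincar\'e-polynomial constants coming from the $c$-function denominators on $T^{un}$, and account for the cancellation polynomial of degree $l(\tilde{w}_0)$.

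The transfer from the building to $L_2^0(X_\phi)$ is then an immediate consequence of Theorem~\ref{thm:Main thm}: under the $L_p$-expander hypothesis, $L_2^0(X_\phi)$ is $p$-tempered, hence weakly contained in $L_p(B_\phi)$, so every eigenvalue of $h_w$ on $L_2^0(X_\phi)$ lies in the approximate point spectrum of $h_w$ on $L_p(B_\phi)$ and is bounded by the norm just established. The principal obstacle is the explicit Macdonald cancellation with tight control on constants. The naive term-by-term bound on the $c$-function expansion is rational in $\chi$ with poles along the walls of $T^{un}$; obtaining a polynomial bound in $l(w)$ uniform over $\chi$ requires organizing the $W_0$-sum so that these poles cancel and leave precisely a polynomial of degree $l(\tilde{w}_0)$, while simultaneously tracking the prefactor's dependence on $l(\tilde{w}_0)$ and $q$. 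This is the combinatorial heart of the argument.
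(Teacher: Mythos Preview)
Your route is genuinely different from the paper's, and it is worth spelling out both what it buys and what it costs.

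The paper does \emph{not} go through Plancherel or the Macdonald--Matsumoto formula. Instead it works directly on the building for every $p\ge 2$ simultaneously: it first bounds $\|h_\beta\|_{L_p(\hat B_\phi)}$ for dominant coweights $\beta\in P^+$ by decomposing $L_p(\hat B_\phi)$ according to sectorial retraction type, writing $h_\beta$ in the Bernstein presentation, and reducing to elementary $L_p$ estimates for adjacency operators of biregular bipartite graphs (Theorem~\ref{thm:Bound on H_beta}). The passage to arbitrary $w\in\hat W$ is then a short factorisation $w=w_0\beta w_0'$ with $w_0,w_0'\in W_0$ (Corollary~\ref{cor:Bound on H_w}). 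The paper explicitly remarks afterward that your interpolation strategy between $p=2$ and $p=\infty$ would also recover the $p>2$ case once the $p=2$ bound is in hand, so that part of your plan is sound. What your approach gains is conceptual transparency at $p=2$; what it loses is generality: the Plancherel identification $L_2(B_\phi)\cong L_2(G/G_\phi)$ and the parametrisation of the tempered Iwahori-spherical dual by $T^{un}/W_0$ presuppose the algebraic-group setting, whereas the paper's argument is purely combinatorial and applies to \emph{any} locally finite regular affine building (including the two-dimensional exotic ones with no underlying group).

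There are also two soft spots in your sketch. First, the step you flag as the ``principal obstacle'' is indeed the entire content: the uniform bound $\sup_{\chi\in T^{un}}\|\pi_\chi(h_w)\|\le D_0(l(w))q^{l(w)/2}$ with the stated polynomial $D_0$ is not something one reads off Matsumoto's formula; it requires exactly the kind of careful organisation of the $c$-function cancellation that the paper replaces by the Bernstein-presentation combinatorics of Propositions~\ref{prop:Explicit_bernstein_relations} and~\ref{prop:Explicit Bernstein Relations2}. Second, your transfer to $L_2^0(X_\phi)$ only bounds the \emph{eigenvalues} of $h_w$, but $h_w$ is not normal, so this does not immediately bound $\|h_w\|_{L_2^0(X_\phi)}$. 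The paper closes this gap by applying weak containment to the self-adjoint operator $h_w h_w^\ast$ and then using $\|h_w\|_V=\sqrt{\|h_w h_w^\ast\|_V}$ for unitary $V$.
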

In conjugation with Oh's theorem \ref{thm:Oh's Theorem-intro} this
theorem gives an explicit spectral gap of the operators $h_{w}\in H_{\phi}$,
in any quotient of the building.

There exists a direct application of the last theorem . We measure
distance between chambers in a quotient complex $X$ by \emph{gallery
distance, }i.e the length of the shortest gallery connecting the two
chambers. 
\begin{thm}
\label{thm:Distance-and-Diameter}Let $X$ be an $L_{p}$-expander
of with $N$ chambers and $C_{0}\in X_{\phi}$. Let $n$ be the dimension
of $X$ and $\tilde{w}_{0}$ is the longest element of the spherical
Coxeter group $W_{0}$. Then all but $o(N)$ chambers $C\in X_{\phi}$
are of gallery distance $l(C_{0},C)$ which satisfies 
\[
l(C_{0},C)\le\frac{p}{2}\log_{q}N+\left(l(\tilde{w}_{0})+1\right)\log_{q}\log_{q}N+1
\]

and 
\[
l(C_{0},C)\ge\log_{q}N-\left(n+1\right)\log_{q}\log N_{q}-1
\]

In addition, the diameter of $X$ is at most $p\log_{q}N+2\left(l(\tilde{w}_{0})+1\right)\log_{q}\log_{q}N+1$.
\end{thm}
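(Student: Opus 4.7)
My plan is to deduce all three assertions from an expander-mixing type inequality for the ball operator, combined with Theorem~\ref{thm:Bounds_Theorem}. Fix a lift $\tilde{C}_{0}\in B_{\phi}$ of $C_{0}$ and set $A_{l}:=\sum_{w\in W,\,l(w)\le l}h_{w}\in H_{\phi}$. Unwinding the definitions, $A_{l}\delta_{C_{0}}(C)$ equals the number of $\Gamma$-translates of $\tilde{C}_{0}$ at gallery distance at most $l$ from any fixed lift of $C$; in particular its support is exactly the ball $S_{l}:=\{C:\,l(C_{0},C)\le l\}$, and $\sum_{C}A_{l}\delta_{C_{0}}(C)=Q_{l}:=\sum_{l(w)\le l}q^{l(w)}$. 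Writing $\delta_{C_{0}}=N^{-1}\mathbf{1}+f_{0}$ with $f_{0}\in L_{2}^{0}(X_{\phi})$ and $\|f_{0}\|_{2}\le 1$, and using $A_{l}\mathbf{1}=Q_{l}\mathbf{1}$, one has $A_{l}\delta_{C_{0}}=\tfrac{Q_{l}}{N}\mathbf{1}+A_{l}f_{0}$. Pairing with $\mathbf{1}_{U_{l}}$, where $U_{l}:=X_{\phi}\setminus S_{l}$, the left-hand side vanishes on $U_{l}$, so Cauchy--Schwarz yields the master estimate
\[
|U_{l}|\;\le\;\bigl(N/Q_{l}\bigr)^{2}\,\|A_{l}\|_{L_{2}^{0}\to L_{2}^{0}}^{2}.
\]

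For the numerator I would apply Theorem~\ref{thm:Bounds_Theorem}, which gives $\|h_{w}\|_{L_{2}^{0}\to L_{2}^{0}}\le D(q,l(w))\,q^{l(w)(p-1)/p}$, together with the triangle inequality $\|A_{l}\|\le\sum_{l(w)\le l}\|h_{w}\|$. Because $q^{(p-1)/p}>1$ this sum is a geometric series dominated by its top-length term, and combining the polynomial growth of $\#\{w\in W:\,l(w)=k\}$ (degree $n-1$ in $k$) with the polynomial $D(q,k)$ of degree $l(\tilde{w}_{0})$ yields $\|A_{l}\|_{L_{2}^{0}\to L_{2}^{0}}\lesssim P(l)\,q^{l(p-1)/p}$ and $Q_{l}\asymp R(l)\,q^{l}$ for explicit polynomials $P,R$. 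Substituting gives $|U_{l}|\lesssim N^{2}\,(P(l)/R(l))^{2}\,q^{-2l/p}$; the choice $l=\tfrac{p}{2}\log_{q}N+(l(\tilde{w}_{0})+1)\log_{q}\log_{q}N+1$ then forces $|U_{l}|=o(N)$, proving the upper bound for all but $o(N)$ chambers, while the stronger choice $l=p\log_{q}N+2(l(\tilde{w}_{0})+1)\log_{q}\log_{q}N+1$ forces $|U_{l}|<1$, giving the diameter bound.

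The lower bound does not use expansion at all: the closed ball of radius $l$ around $\tilde{C}_{0}$ in $B$ has $Q_{l}$ chambers and surjects onto $S_{l}$, so $|S_{l}|\le Q_{l}\lesssim l^{n-1}q^{l}$; for $l=\log_{q}N-(n+1)\log_{q}\log_{q}N-1$ this yields $|S_{l}|/N=O((\log_{q}N)^{-2})=o(1)$, so $N-o(N)$ chambers lie at gallery distance strictly greater than $l$ from $C_{0}$, as required. The main technical obstacle is pinning the polynomial exponents $(l(\tilde{w}_{0})+1)$ and $(n+1)$ down exactly: the triangle inequality used for $\|A_{l}\|$ may be slightly too crude for the cleanest log-log constants in the upper and diameter bounds, and if so I would apply the master estimate to a single $h_{w}$ of length $l$ rather than the full ball (this eliminates the $\#\{w:\,l(w)=k\}$ factor, and its zero set still contains $U_{l}$, so the argument still goes through).
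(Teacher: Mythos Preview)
Your argument is sound and close to the paper's, with two structural differences.

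First, the paper works throughout with a single operator $h_{w}$ of length $l$ rather than the ball operator $A_{l}$: writing $1_{C_{0}}=\pi+(1_{C_{0}}-\pi)$ with $\pi\equiv N^{-1}$, it bounds $\|q_{w}^{-1}h_{w}1_{C_{0}}-\pi\|_{2}\le D(q,l)\,q^{-l/p}$ directly from Corollary~\ref{cor:Bound on H_w}, passes to $L_{1}$ via $\|\cdot\|_{1}\le N^{1/2}\|\cdot\|_{2}$, and reads off that $h_{w}1_{C_{0}}$ vanishes on at most $N^{3/2}D(q,l)q^{-l/p}$ chambers. This is exactly your announced fallback, and it sidesteps the extra polynomial factor that the triangle inequality over $A_{l}$ introduces; your $L_{2}$ master estimate applied to a single $h_{w}$ gives the same $o(N)$ threshold.

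Second --- and this is the more substantive difference --- the paper does \emph{not} obtain the diameter by pushing to $|U_{l}|<1$ at the doubled radius. Instead it reuses the upper bound via pigeonhole: once fewer than $N/2$ chambers are missed from the $l$-ball about $C_{0}$, and likewise about $C_{1}$, the two balls must intersect and $l(C_{0},C_{1})\le 2l$. Your direct route is also valid (the quadratic master estimate at radius $\approx p\log_{q}N$ does tend to $0$), and in fact yields a slightly sharper $\log\log$ coefficient than the pigeonhole route; the paper's argument buys simplicity by recycling the already-proved upper bound without a second computation.

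The lower bound is identical in both treatments: a pure counting estimate on the size of a ball in $B$.
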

Compare the graph case in \cite{lubetzky2015cutoff} corollary 2,
or \cite{sardari2015diameter}.

As a final result, recall that for a graph, the expander property
is connected to the eigenvalues of both the vertex adjacency operator
and of Hashimoto's non backtracking operator. For the high dimensional
case, we can give a generalization of the non-backtracking operator.
As a preliminary, one can extend the Iwahori-Hecke algebra $H_{\phi}$
to an extended Iwahori-Hecke algebra $\hat{H}_{\phi}$, given by operators
$h_{w},w\in\hat{W}$, the extended Iwahori-Hecke algebra. The operators
of $\hat{H}_{\phi}$ act naturally on function on ``colored'' chambers
of $B$- $\mathbb{C}^{\hat{B}_{\phi}}=\mathbb{C}{}^{B_{\phi}\times\Omega}$.
Within $\hat{H}_{\phi}$ we have $n$ Bernstein-Luzstig operators
$h_{\beta_{1}},...,h_{\beta_{n}}$, corresponding to the simple coweights
$\beta_{1},...,\beta_{n}$ of the root system of $\hat{W}$. The one
dimensional case agrees with Hashimoto's non-backtracking operator.
Now:
\begin{thm}
\label{thm:Zeta_Function_Thm_intro}Let $V$ be a finite dimensional
representation of $\hat{H}_{\phi}$. Then $V$ is $p$-tempered if
and only if for every $i=1,..,n$ every eigenvalue $\lambda$ of $h_{\beta_{i}}$
on $V$ satisfies $\left|\lambda\right|\le q^{l(\beta_{i})(p-1)/p}$.
\end{thm}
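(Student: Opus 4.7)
The plan is to prove both directions of the equivalence: the forward direction follows quickly from Theorem~\ref{thm:Main thm} and the operator bounds of Theorem~\ref{thm:Bounds_Theorem} via a spectral-radius argument, while the reverse direction rests on the Bernstein--Lusztig presentation of the extended Iwahori--Hecke algebra $\hat{H}_{\phi}$.

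For the forward direction, suppose $V$ is $p$-tempered. By Theorem~\ref{thm:Main thm} (applied to $\hat{H}_{\phi}$ acting on $\hat{B}_{\phi}$), $V$ is weakly contained in $L_{p}(\hat{B}_{\phi})$, so every eigenvalue $\lambda$ of $h_{\beta_i}$ on $V$ lies in the approximate point spectrum of $h_{\beta_i}$ on $L_{p}(\hat{B}_{\phi})$ and therefore satisfies $|\lambda|^{k}\le\|h_{\beta_i}^{k}\|_{L_{p}}$ for every $k\ge 1$. Because $\beta_i$ is a dominant fundamental coweight, the Bernstein--Lusztig relations give $h_{\beta_i}^{k}=h_{k\beta_i}$ with $l(k\beta_i)=k\,l(\beta_i)$, and Theorem~\ref{thm:Bounds_Theorem} then yields $\|h_{k\beta_i}\|_{L_{p}}\le D(k\beta_i,k l(\beta_i))\,q^{k l(\beta_i)(p-1)/p}$. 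Since $D$ depends only polynomially on its length argument, $D^{1/k}\to 1$ as $k\to\infty$, and taking $k$-th roots gives the desired bound $|\lambda|\le q^{l(\beta_i)(p-1)/p}$.

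For the reverse direction, assume the eigenvalue bounds hold. Invoke the Bernstein--Lusztig presentation, which identifies $\hat{H}_{\phi}$ as a vector space with $\mathbb{C}[X]\otimes H_{0,\phi}$, where $X$ is the coweight lattice (with the $h_{\beta_i}$ and their inverses generating the commutative subalgebra $\mathbb{C}[X]$) and $H_{0,\phi}$ is the finite Hecke algebra of $W_{0}$. Decompose $V=\bigoplus_{\chi}V_{\chi}$ into generalized eigenspaces for $\mathbb{C}[X]$; on $V_{\chi}$ each $h_{\beta_i}$ acts as $\chi(h_{\beta_i})I+N_i$ with $N_i$ nilpotent, and by hypothesis $|\chi(h_{\beta_i})|\le q^{l(\beta_i)(p-1)/p}$. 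Multiplicativity of $\chi$ on $\mathbb{C}[X]$ extends this to $|\chi(h_\beta)|\le q^{l(\beta)(p-1)/p}$ for every dominant coweight $\beta=\sum m_i\beta_i$, using $l(\beta)=\sum m_i l(\beta_i)$. Writing each $w\in\hat{W}$ as $t_\beta\sigma$ with $\beta$ dominant and $\sigma$ in a finite set so that $l(w)=l(\beta)+l(\sigma)$, expanding $h_w$ in the Bernstein--Lusztig basis, and controlling the contributions of the nilpotent parts produces an estimate of the form $|\langle u,h_w v\rangle|\le P(l(w))\,q^{l(w)(p-1)/p}$ for some polynomial $P$. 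Since the number of elements of $\hat{W}$ of length $N$ grows polynomially in $N$, a direct computation then gives
\[
\sum_{w\in\hat{W}}q^{l(w)(1-p-\epsilon)}|\langle u,h_w v\rangle|^{p+\epsilon}\ \lesssim\ \sum_{N\ge 0}\mathrm{poly}(N)\cdot q^{-N\epsilon/p}<\infty
\]
for every $\epsilon>0$, proving that $V$ is $p$-tempered.

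The main obstacle is the reverse direction, specifically the book-keeping required in the Bernstein--Lusztig multiplication: crossing an $h_{\beta}$ past an element of $H_{0,\phi}$ via the Bernstein relations produces lower-order correction terms, and the nilpotent parts $N_i$ contribute extra binomial terms of order at most $l(\beta)^{\dim V}$ when $h_{\beta_i}$ is raised to high powers. The crucial point is that all such corrections grow only polynomially in $l(\beta)$ rather than exponentially, so they are absorbed by the exponential decay $q^{-N\epsilon/p}$ in the final sum. Establishing this uniform polynomial control, which ultimately relies on the explicit form of the Bernstein relations and the finite-dimensionality of $H_{0,\phi}$, is the technical heart of the argument.
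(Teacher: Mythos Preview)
Your proposal is essentially correct in spirit, but both directions take a different route from the paper, and one of your intermediate claims needs repair.

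\textbf{Forward direction.} Your argument via weak containment (Corollary~\ref{cor:Main Theorem proof}) and the operator bounds (Theorem~\ref{thm:Bounds_Theorem}) is valid, but heavier than necessary. The paper never invokes these results here: it works directly from the limsup characterization of $p$-temperedness (Proposition~\ref{prop:Equivalent temperdness conditions}). Since $h_{\alpha}^{m}=h_{m\alpha}$ and $l(m\alpha)=m\,l(\alpha)$ for $\alpha\in P$, the limsup condition applied along the sequence $w=\alpha,\alpha^{2},\ldots$ immediately gives $\limsup_m|\langle v^{*},h_{\alpha}^{m}v\rangle|^{1/m}\le q_{\alpha}^{(p-1)/p}$, and choosing $v$ an eigenvector yields the eigenvalue bound. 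Your route is logically sound (you only use the ``only if'' half of Theorem~\ref{thm:Main thm}, which is independent of the bounds), but it imports the entire $L_{p}$-norm machinery for a statement that follows from the definition in two lines.

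\textbf{Reverse direction.} Your sketch has the right shape but the decomposition you state is not quite correct: it is \emph{not} true that every $w\in\hat{W}$ can be written as $t_{\beta}\sigma$ with $\beta$ dominant, $\sigma$ in a finite set, and $l(w)=l(\beta)+l(\sigma)$. The paper's Structure Theorem (Theorem~\ref{thm:Structure Theorem}) gives the correct statement: $w=w_{0}\beta a$ uniquely, with $w_{0}\in W_{0}$, $\beta=\prod\beta_{i}^{m_{i}}\in P^{+}$, $a\in\hat{A}_{0}$, and lengths additive. The crucial consequence is that $h_{w}=h_{w_{0}}\bigl(\prod h_{\beta_{i}}^{m_{i}}\bigr)h_{a}$ holds \emph{directly in the Iwahori--Hecke basis}, so no Bernstein--Lusztig expansion (and none of the cross-term bookkeeping you describe) is needed. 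One then writes $\langle v^{*},h_{w}v\rangle=\langle h_{w_{0}^{-1}}v^{*},\prod h_{\beta_{i}}^{m_{i}}(h_{a}v)\rangle$; since $W_{0}$ and $\hat{A}_{0}$ are finite, the supremum over $w_{0},a$ is harmless, and Gelfand's formula applied to each $h_{\beta_{i}}$ (equivalently, your nilpotent-part analysis) gives $\limsup_{m_{i}}\bigl(q_{\beta}^{1-p}|\langle u^{*},\prod h_{\beta_{i}}^{m_{i}}u\rangle|^{p}\bigr)^{1/\sum m_{i}}\le 1$. This verifies the limsup criterion for temperedness directly. Your generalized-eigenspace argument would also close the gap, but the Structure Theorem bypasses the Bernstein relations entirely and makes the polynomial control immediate.
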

The theorem encourages the following definition:
\begin{defn}
Consider the $\hat{H}_{\phi}$-representation $L_{2}\left(\hat{B}_{\phi}\right)$.
Let
\[
\zeta_{\hat{B}_{\phi}}(u)=\frac{1}{\det(1-h_{\beta_{1}}u^{l(\beta_{1})})\cdot...\cdot\det(1-h_{\beta_{n}}u^{l(\beta_{n})})}
\]
\end{defn}
\begin{cor}
The complex $X$ is an $L_{p}$-expander if and only if every pole
$\lambda$ of $\zeta_{\hat{B}_{\phi}}(u)$ satisfies $\left|\lambda\right|\le q^{(p-1)/p}$
or $\left|\lambda\right|=q$.
\end{cor}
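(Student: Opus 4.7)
The plan is to reduce the statement to Theorem~\ref{thm:Zeta_Function_Thm_intro} by translating the pole condition on $\zeta_{\hat{B}_{\phi}}$ into an eigenvalue condition on the Bernstein--Lusztig operators. By the standard diagonalization identity,
\[
\det(1 - h_{\beta_i} u^{l(\beta_i)}) = \prod_j \bigl(1 - \mu_{i,j}\, u^{l(\beta_i)}\bigr),
\]
where $\mu_{i,j}$ ranges over the eigenvalues of $h_{\beta_i}$ on the finite-dimensional $\hat{H}_{\phi}$-representation underlying $\zeta_{\hat{B}_{\phi}}$. The factor vanishes precisely when $u^{l(\beta_i)} = \mu_{i,j}^{-1}$, so writing $\lambda^{l(\beta_i)} = \mu_{i,j}$ for the relevant pole, the bound $|\mu| \le q^{l(\beta_i)(p-1)/p}$ translates to $|\lambda| \le q^{(p-1)/p}$, while the trivial eigenvalue $\mu = q^{l(\beta_i)}$ corresponds to a pole with $|\lambda| = q$.

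Next I would decompose the relevant $\hat{H}_{\phi}$-representation into its spherical summand and its complement. On the spherical part each $h_{\beta_i}$ acts as multiplication by its trivial eigenvalue $q^{l(\beta_i)}$ (since $h_{\beta_i}$ sums $q^{l(\beta_i)}$ chambers), contributing exactly the poles with $|\lambda| = q$. The complement, viewed as an $\hat{H}_{\phi}$-representation, is naturally the colored-chamber extension of the subrepresentation $L_2^0(X_\phi)$ from Definition~\ref{def:Expander definition}. Combining the equivalence between the $L_p$-expander property and $p$-temperedness of $L_2^0(X_\phi)$ with Theorem~\ref{thm:Zeta_Function_Thm_intro} applied to this complement, $X$ is an $L_p$-expander if and only if every eigenvalue $\mu$ of every $h_{\beta_i}$ on the complement satisfies $|\mu| \le q^{l(\beta_i)(p-1)/p}$; equivalently, every non-spherical pole $\lambda$ of $\zeta_{\hat{B}_{\phi}}$ satisfies $|\lambda| \le q^{(p-1)/p}$.

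The main obstacle is bookkeeping the extension from $L_2^0(X_\phi)$ to the colored-chamber representation on $\hat{X}_{\phi} = X_\phi \times \Omega$: one must verify that $p$-temperedness is preserved in both directions. Since $\Omega$ is a finite abelian group acting compatibly with $\hat{H}_{\phi}$, the colored representation should split into $\Omega$-isotypic components, each equivalent to a character twist of a subrepresentation of $L_2^0(X_\phi)$ along which the Bernstein--Lusztig spectrum transfers correctly. A secondary concern is confirming that the spherical summand of the $\hat{H}_{\phi}$-representation coincides with the eigenspace on which every $h_{\beta_i}$ acts by its trivial eigenvalue, ensuring that the two alternatives $|\lambda| \le q^{(p-1)/p}$ and $|\lambda| = q$ really do exhaust all poles without overlap or omission.
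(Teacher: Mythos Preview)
Your approach is correct and is exactly what the paper intends; the corollary is stated without proof as an immediate consequence of Theorem~\ref{thm:Zeta_Function_Thm_intro} and the definition of $\zeta$, so there is no separate argument in the paper to compare against beyond the one you outline.

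The first obstacle you raise---compatibility of $p$-temperedness under the passage from the $H_\phi$-representation $L_2^0(X_\phi)$ to the $\hat{H}_\phi$-representation on $L_2(\hat{X}_\phi)=L_2(X_\phi\times\Omega)$---is precisely the content of Proposition~\ref{prop:Color-Rotating-Prop}, parts (5) and (8), so no further work is needed there. Your second concern, that the ``$|\lambda|=q$'' alternative might inadvertently absorb a nontrivial eigenvalue, is a genuine subtlety that the introductory statement glosses over; the cleaner formulation appears after Definition~\ref{def:Generalized Zeta}, where the pole condition is stated for an arbitrary representation $V$ without an exceptional value, and the corollary here then follows by taking $V$ to be the nontrivial part. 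One small notational point: your identification $\lambda^{l(\beta_i)}=\mu_{i,j}$ amounts to reading $\lambda$ as $u^{-1}$ for the actual pole $u$ of $\zeta(u)$; the later treatment in Section~\ref{sec:Temperedness-in-the} phrases the equivalent condition directly as $|u|\ge q^{(1-p)/p}$.
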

The theorem is a generalization of the well known connection between
the expander property and the graph Zeta function, i.e. a $q+1$ regular
graph is an $L_{p}$-expander (in the notions of \cite{kamber2016lpgraph}),
if and only if every non trivial eigenvalue $\lambda$ of Hashimoto's
non backtracking operator, satisfies $\left|\lambda\right|\le q^{(p-1)/p}$.
See \cite{kamber2016lpgraph}, theorem 10.1.

\subsection*{Related Works}

Expander graphs are classical and we will not discuss their history
here. There are various works on how to extend the theory to high
dimensions, and in particular on how to extend the definition of a
Ramanujan graph to the definition of a Ramanujan complex, a quotient
of an affine building of type $\tilde{A}_{n}$. All the different
works are motivated (implicitly or explicitly) by the notion of a
tempered representation of a reductive group. 

The extension to ``cubical complexes'', i.e. quotient of buildings
of type $W=\tilde{A}_{1}\times...\times\tilde{A}_{1}$was considered
in \cite{jordan1999ramanujan}. This case requires considering the
adjacency operator for each summand separately. 

Based on previous works on the geometry of $\tilde{A}_{n}$buildings
(\cite{cartwright1999harmonic}), in \cite{cartwright2003ramanujan},
it was suggested to study the representation theory of the spherical
Hecke algebra acting on functions on the vertices of the complex.
The definition was slightly changed in \cite{lubotzky2005ramanujan},
definition 1.1, so it was equivalent to the fact that every spherical
non trivial subrepresentation of $L_{2}(\Gamma\backslash G)$ is tempered.
An Alon-Boppana type theorem was proved in \cite{li2004ramanujan}. 

Following Laffourge's work Ramanujan complexes were constructed in
\cite{li2004ramanujan,lubotzky2005explicit,lubotzky2005ramanujan,sarveniazi2007explicit},
satisfying the above definition.

The action of the Iwahori-Hecke algebra on functions on chambers of
the building and its quotients is classical, and was considered in
the seminal work of Borel (\cite{borel1976admissible}), from an algebraic
group point of view. A combinatorical theory, applied to any locally
finite regular building, appeared in \cite{parkinson2006buildings}.
Recently, the construction of the all dimensional Hecke algebra $H$
appeared in \cite{abramenko2015distance}. 

An approach to high-dimensional expanders is given in \cite{first2016ramanujan},
and is similar in spirit to definition \ref{def:Weakly-Contained}.
The approach there is slightly more general, dealing with arbitrary
simplicial complexes, but focuses on the Ramanujan case only (i.e.
$p=2$), and does not contain the explicit results for affine buildings.

\subsection*{Philosophy and Context of the Work}

Most of this work deals with general locally finite regular buildings
(see section \ref{sec:Definition-of-a-building}), generally without
the assumption of the existence of an automorphism group $G$. We
allow buildings with arbitrary parameter system $\vec{q}=(q_{s})_{s\in S}$,
not just a single parameter $q$ (the introduction is stated with
a single $q$ for simplicity). In particular, the only if case of
theorem \ref{thm:Main thm} holds for any locally finite regular building,
affine or not (see corollary \ref{cor:Main Theorem proof}), although
some change is required in definition \ref{def:Expander definition}
to deal with the thin case (see definition \ref{def:tempered-definition}
and lemma \ref{claim:Equivalence of temperedness definitions}). The
if case of theorem \ref{thm:Main thm} holds for any affine building,
and is actually based on theorem \ref{thm:Bounds_Theorem}.

In the affine case, a well known theorem of Tits shows that in thick
irreducible affine locally finite regular buildings of dimensions
greater than $2$ always comes from algebraic Lie group over a non-Archimedean
local field. However, this is not the case in dimensions 1 and 2.
Since our proofs are combinatorical, theorem \ref{thm:Bounds_Theorem}
applies to any affine building, while theorem \ref{thm:Zeta_Function_Thm_intro}
actually applies even more generally to arbitrary affine Iwahori-Hecke
algebras, with parameter systems $q_{s}>1$.

Note, however, that some of the theorems we cite, most notably Oh's
theorem \ref{thm:Oh's Theorem-intro}, are only known for algebraic
Lie groups over a non-Archimedean local field. We call this case shortly
the algebraic-group case.

\subsection*{Structure of the Work}

We divided this work into 4 parts. Very generally, in part I we present
the all-dimensional Hecke algebra $H$ and some of its basic properties.
Part II is devoted to the basic representation theory of $H$, and
in particular to $p$-tempered representations and $L_{p}$-expanders.
Part III is devoted to the spectrum of operators of $H$. While the
first three parts do not assume in general that $W$ is affine, part
IV contains the specific results to the affine case, which are the
main results of this work.

\subsubsection*{Part I- The Hecke Algebra}

In section \ref{sec:Definition-of-a-building} we present the $W$-metric
approach to buildings. It is standard and used mainly to set notations
for the rest of the work. In section \ref{sec:Distances-in-Buildings}
we discuss distances in buildings between two faces. This topic also
appears in \cite{abramenko2008buildings}. In section \ref{sec:The-All-Dimensional}
we define the algebra $H$ and describe it explicitly. The main result
here is proposition \ref{prop:GH algebra is an algebra} showing that
$H$ is indeed an algebra. Similar description of the algebra was
given in \cite{abramenko2015distance}, although we follow a slightly
different approach We also deduce that $H$ commutes with spherical
average operators, such as $\rho_{C_{0}}$ in the introduction.

In section \ref{sec:Building-Automorphisms-I} and section \ref{sec:Building-Automorphisms-I}
we relate $H$ to a sufficiently transitive (i.e Weyl transitive complete)
automorphism group $G$ of the building. We also show that $H$ can
be defined, as in the introduction, as the algebra of row and column
finite operators acting on $\mathbb{C}^{B}$ and commuting with the
natural $G$-action on this space (see proposition \ref{defined_by_automorphisms},
\cite{kamber2016lpgraph} proposition 2.3, and compare the approach
in \cite{first2016ramanujan}). We prove that the Iwahori-Hecke algebra
$H_{\phi}$ is isomorphic to the Hecke algebra $H_{G_{\phi}}(G)$
of $G$ with respect to the Iwahori subgroup $G_{\phi}$ (see \ref{prop:Isomorphism_to_G_Hecke}.
In the algebraic-group case this claim appeared back in \cite{borel1976admissible}).
The extension to $H$ is straightforward and is given in proposition
\ref{prop:Isomorphic_to_G_Hecke_Full}. 

\subsubsection*{Part III- Basic Representation Theory}

In section \ref{sec:Unitary-Representations} we discuss unitary representations
of the algebras $H$ and $H_{\phi}$, which is rather standard $\ast$-algebras
subject. In section \ref{sec:Induction-and-restriction} we prove
that there is a strong bijection between isomorphism classes of irreducible
representations of the two algebras (proposition \ref{prop:Induction-and-restriction-full-proposition},
similar result also appears in \cite{first2016ramanujan}, proposition
4.30). 

In section \ref{sec:Matrix-Coefficients} we show that matrix coefficients
allow us to consider every $H$-representation as a subrepresentation
of the action of $H$ on $\mathbb{C}^{B_{f}}$. This is analog to
the standard matrix coefficients argument which enables us to see
each group representation as a subrepresentation of the action of
$G$ on $\mathbb{C}^{G}$. Matrix coefficients lead us to study tempered
representations in section \ref{sec:-Tempered-Representations}. Our
definition of $p$-tempered or ``almost $L_{p}$'', definition \ref{def:tempered-definition},
is a little different then the standard $L_{p+\epsilon}$ , but is
equivalent in the algebraic-group case. This is done to handle the
amenable case, which happens if the building is a single apartment.
See also the corresponding definition for a (compactly generated)
group in section \ref{sec:Representations-of-the-automorphism-group}.
The definition of temperedness allows us to give the definition of
an expander complex (definition \ref{def:Expander_definition}).

In section \ref{sec:Representations-of-the-automorphism-group} we
discuss the connection between the representations of $G$ and of
$H$. A basic bijection between the right equivalent classes in simple
and well known (proposition \ref{Induction from HK to G- basic}).
However, it seems to be unknown in general if a finite dimensional
(respectively unitary) representation of $H_{\phi}$ induces to an
admissible (resp. unitary) representation of $G$. In theorem \ref{thm:Connection between Hphi and G rep}
we cite two strong results of Borel (\cite{borel1976admissible})
and Barbasch and Moy (\cite{barbasch1996unitary}) showing the answer
is yes in the algebraic-group case. Oh's theorem \ref{thm:Oh's Theorem-intro}
is discussed in section \ref{sec:Oh's-Theorem}. As said above, we
were unable to translate the proof of this theorem into the methodology
of this work. Therefore the theorem is only cited under the algebraic-group
assumption. 

\subsubsection*{Part III- Spectrum of Operators}

We present and discuss our definition of weak containment in section
\ref{sec:Spectrum-and-Weak}. Notice that our definition also covers
non-unitary representations, which is not standard. A more complete
treatment of weak containment, on the unitary case only, is given
in \cite{first2016ramanujan}. In section \ref{sec:The-Point-Spectrum}
we prove the ``only if'' part of theorem \ref{thm:Main thm}, which
is rather abstract, works in a general settings and is analog to \cite{cowling1988almost},
theorem 1. 

We then move to prove two generalizations of the ``Alon-Boppana theorem''.
In graphs there are two similar results connecting the spectrum of
the adjacency operator $A$ on a $q+1$ regular graph and on the $q+1$
regular tree. The first (sometimes called Serre's theorem) shows that
as the injectivity radius of the graph grows the spectrum of $A$
dense in $[-2\sqrt{q},2\sqrt{q}]$, and actually converges in distribution
to the spectral distribution of $A$ on the $q+1$ regular tree (see
\cite{mckay1981expected}). The density part of this theorem was generalized
in \cite{li2004ramanujan}, see also \cite{first2016ramanujan}, theorem
5.1. We present another version of this theorem in theorem \ref{Generelized-Serre}.
The classical Alon-Boppana theorem itself assume only that the graph
is connected and is large enough, and concerns only the largest eigenvalue
(sometimes is absolute value) of $A$. We prove a generalization of
this theorem in theorem \ref{Alon Boppana}.

\subsubsection*{Part IV- The affine case}

Before discussing the affine case we show how to extend the theory
to color rotating automorphisms in section \ref{sec:Color-Rotations}.
Since it adds some confusion we did not start with it, but it is essential
to the affine case since it allows working with the extended Iwahori-Hecke
algebra. This algebra acts naturally on ``recolored'' chambers of
the building, a subject not treated usually in works about buildings
and Hecke algebras.

In section \ref{sec:Affine-Buildings} we discuss root systems and
their connection to affine Coxeter groups. Most of the results are
standard and presented without proof. Theorem \ref{thm:Structure Theorem}
is a structure theorem of affine Coxeter group, which is of its own
interest. Similar result appears in \cite{gashi2012looping}.

In section \ref{sec:Temperedness-in-the} we discuss temperedness
in the affine case. Using the polynomial growth of $W$ in this case
we give a couple of different equivalent conditions for temperedness
in proposition \ref{prop:Equivalent temperdness conditions}. Then
we use theorem \ref{thm:Structure Theorem} to prove theorem \ref{thm:Zeta_Function_Thm_intro}.
We also explain the connection of the results to the generalized Poincare
series of $\hat{W}$, a notion from \textbf{\uline{\mbox{\cite{gyoja1983generalized}}}}
and \cite{hoffman2003remarks}. 

Sections \ref{sec:Bounds-on-Hecke-operators}-\ref{sec:How-to-Bound-operators}
are devoted to theorem \ref{thm:Bounds_Theorem}. Section \ref{sec:Bounds-on-Hecke-operators}
contains some consequences of this theorem. First, it derives the
if part of theorem \ref{thm:Main thm}. Secondly, we discuss some
versions of the Kunze-Stein theorem. In section \ref{sec:Application:-Average-Distance}
we prove theorem \ref{thm:Distance-and-Diameter} using theorem \ref{thm:Bounds_Theorem}.

We then turn to the proof of the theorem itself, which is based on
\cite{cowling1988almost}, theorem 2. Section \ref{sec:Retraction-into-Apartments}
is devoted to the connection between the well known Bernstein presentation
of the Iwahori-Hecke algebra and the building construction known as
sectorial retraction. Both ideas are versions of the Iwasawa decomposition
used in \cite{cowling1988almost}. The Bernstein presentation allows
us to write every operator as a sum of ``sectorial operators''.
We then show how to bound sectorial operators in section \ref{sec:How-to-Bound-operators},
thus proving theorem \ref{sec:Bounds-on-Hecke-operators} using some
bounds provided by the Bernstein presentation. 

\section*{Acknowledgments }

The author would like to thank his adviser Prof. Alex Lubotzky for
his guidance, support, and his unrelenting insistence on completing
this paper. Uriya First has read an early version of this article
and suggested many improvements, for which we are grateful.

\part{The Hecke Algebra}

\section{\label{sec:Definition-of-a-building}Buildings}

This section discusses the definition and basic properties of buildings.
We will follow the $W$-metric approach to buildings (as in Ronan's
book \cite{ronan2009lectures}).

\subsection*{Simplicial Complexes}

A \emph{simplicial complex} is $\left(B,V\right)$, $V$ some set,
$B\subset P(V)$, such that if $\phi\ne\sigma_{1}\subset\sigma_{2}\in B$
then $\sigma_{1}\in B$. The elements of $B$ are called \emph{faces}.
If a face is a subset of another face we say that the first face is
contained in the second. The \emph{dimension}\textbf{ }of a face is
the number of elements it has minus 1. We always assume dimensions
are finite.

A face is called \emph{maximal} if it is not a proper subset of another
face. We say that a simplicial complex is \emph{pure of dimension
$n$} if all its maximal faces have the same dimension $n$. Maximal
faces in a pure simplicial complex are called \emph{chambers}. Faces
of dimension $n-1$ are called \emph{panels}. Faces of dimension $0$
are called \emph{vertices}.

A face that is contained in a finite number of chambers is called
\emph{spherical}. The complex $B$ is called \emph{vertex spherical
}if every vertex (and hence every face) is contained in a finite number
of faces (such a simplicial complex is sometimes called locally finite,
but we reserve this term for a locally finite building. See below).
Two chambers $C_{1},C_{2}$ are \emph{adjacent} if they contain a
common panel. A pure simplicial complex is \emph{connected} if the
equivalence relation on chambers generated by adjacency has a single
equivalence class.

We say that a pure simplicial complex of dimension $n$ is \emph{colored}
if each panel is colored by a singleton $\{i\}\subset[n]=\{0,...,n\}$,
such that if two panels belong to the same chamber they have different
colors. The \emph{color} (sometimes called \emph{cotype}\textbf{ }in
building theory) $t(\sigma)$ of a face $\sigma$ is the union of
the colors of the panels containing it. It is a subset of $[n]=\{0,...,n\}$
and the color of a chamber is the empty set. We usually denote a color
by $I\subset[n]$. We denote the faces of color $I\subset[n]$ by
$B_{I}$. For example, the set of chambers is $B_{\phi}$. 

Two adjacent chambers $C_{1},C_{2}$ in a simplicial complex are called
\emph{$j$-adjacent} for $j\in[n]$ if they share a panel of color
$\{j\}$.

A \emph{color preserving isomorphism} (or simply an \emph{isomorphism})
between two colored complexes $f:\Sigma_{1}\rightarrow\Sigma_{2}$
is a bijection from the faces of $\Sigma_{1}$ to the faces of $\Sigma_{2}$
that preserves colors and containment of faces. In particular a \emph{(color
preserving) automorphism} is an isomorphism from a colored complex
to itself.

A \emph{color rotating isomorphism} $f:\Sigma_{1}\rightarrow\Sigma_{2}$
is a bijection that preserve containment and such that there exists
a bijection $\tau:[n]\rightarrow[n]$ satisfying that the color of
$f(\sigma)$ is $\tau(t(\sigma))$.

\subsection*{Coxeter Groups}

A \emph{Coxeter group} $\left(W,S\right)$ is given by a group $W$
and a finite set of generators $S=\{s_{0},...,s_{n}\}$, such that
$W$ is the group defined by the relations 
\[
W=\left\langle s_{i},i\in[n]\left|s_{i}^{2}=1,\left(s_{i}s_{j}\right)^{m_{i,j}}=1\right.\right\rangle 
\]

Whenever we write $W$ in this work we will implicitly assume we also
have a fixed set of generators $S$. We will always assume $S$ has
$n+1$ elements $s_{0},...,s_{n}$, so to each element $s_{i}\in S$
corresponds a color $i\in[n]$. We generally identify $S$ with $[n]$.
Therefore by abuse of notation we may relate to $I\subset[n]$ as
$I\subset S$.

The parameters $m_{i,j},\,i,j\in[n]$, $m_{i,i}=1$ are called the
\emph{Coxeter numbers} of the group. For every $I\subset[n]$ we define
a subgroup $W_{I}=\left\langle s_{i}:i\in I\right\rangle $. Such
a subgroup is called \emph{a parabolic subgroup} and it is also a
Coxeter group. 

A Coxeter group, or a parabolic subgroup $W_{I}$, is called \emph{spherical}
if it is finite, and in the parabolic subgroup case we also say that
the color $I$ is spherical. If every $I\subsetneq[n]$ is spherical,
we say that the Coxeter group $W$ is \emph{vertex spherical}. 

There exists a \emph{length function} $l:W\rightarrow\mathbb{N}$.
The length $l(w)$ of $w\in W$ is the length of the shortest word
in the generators $s_{i}$ expressing $w$.

A Coxeter group is called \emph{irreducible affine} if the following
conditions hold: 1.$W$ is an infinite subgroup of the isomorphism
group of a euclidean vector space $V$, generated by affine reflections.
2. $V$ has no nontrivial $W$ invariant subspace. 3. $W$ is discrete,
i.e. the number of $w\in W$ fixing some point $p\in V$ is finite.
If $W$ is irreducible affine then $W$ is vertex spherical.

A Coxeter group is called \emph{affine} if it is a finite direct sum
of irreducible affine Coxeter groups. For classifications of spherical
and affine Coxeter groups, also called Weyl groups, see \cite{ronan2009lectures}. 
\begin{prop}
A Coxeter group $(W,S)$ with $\left|S\right|=n+1$ has a geometric
realization as a connected colored simplicial complex of dimension
$n$- \emph{the} \emph{Coxeter complex} $\mathbb{W}$. The chambers
of $\mathbb{W}$ correspond to elements $w\in W$. Two chambers $w,w^{\prime}$
are adjacent if $w=w^{\prime}s_{i}$ and in this case the color of
the common panel is $i$. Faces of color $I\subsetneq S$ correspond
to cosets $wW_{I}$.
\end{prop}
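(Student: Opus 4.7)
The plan is to take the faces of $\mathbb{W}$ to be the left cosets $wW_I$ with $I \subsetneq S$, where chambers are the singletons $\{w\} = wW_\emptyset$, panels are the two-element cosets $\{w, ws_i\} = wW_{\{s_i\}}$, and in general a face of color $I$ is the coset $wW_I$. I would declare $wW_I$ to be a simplicial subface of $w'W_J$ exactly when $I \supseteq J$ and $w'W_J \subseteq wW_I$ as subsets of $W$. The color (cotype) of $wW_I$ is set to be $I$, which is well-defined because the coset $wW_I$ uniquely determines the parabolic subgroup $W_I$ fixing it on the right.

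First I would realize this data as an honest simplicial complex. Its vertices are the cosets of the maximal proper parabolic subgroups $W_{S \setminus \{j\}}$, and a direct check shows that the vertices sitting above a face $wW_I$ are exactly $\{wW_{S \setminus \{j\}} : j \notin I\}$, a set of $n+1-|I|$ elements, so $wW_I$ has dimension $n-|I|$. The key identity $\bigcap_{j \notin I} W_{S \setminus \{j\}} = W_I$ then shows that two cosets define the same face precisely when they have the same underlying vertex set, so the complex is faithful, pure of dimension $n$, and the chambers really are the singletons $\{w\}$.

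For the coloring I would verify the axioms directly: the $n+1$ panels inside a chamber $\{w\}$ are the cosets $wW_{\{s_0\}}, \dots, wW_{\{s_n\}}$, which carry the distinct colors $\{0\}, \dots, \{n\}$, and a short coset calculation shows that the cotype of an arbitrary face equals the union of the singleton colors of the panels above it, matching the convention from the earlier subsection. For adjacency, if $\{w\}$ and $\{w'\}$ share a panel $vW_{\{s_i\}}$ then $\{w, w'\} \subseteq \{v, vs_i\}$, forcing $w' = ws_i$; conversely, whenever $w' = ws_i$ the panel $wW_{\{s_i\}} = w'W_{\{s_i\}}$ is a common face of color $i$.

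Finally, connectivity is immediate from $S$ generating $W$: writing $w = s_{i_1} \cdots s_{i_k}$ yields a gallery $e, s_{i_1}, s_{i_1}s_{i_2}, \dots, w$ in which consecutive chambers are adjacent. The main (minor) obstacle and the easiest place to slip is the reversal between the cotype order and the simplicial inclusion order: faces of larger color have smaller dimension, so face inclusion in $\mathbb{W}$ corresponds to reverse inclusion of cotypes combined with forward inclusion of cosets. Once this convention is pinned down, every remaining claim reduces to elementary manipulation of left cosets of the parabolic subgroups $W_I$.
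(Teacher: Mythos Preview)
Your proposal is correct and gives precisely the standard construction of the Coxeter complex. The paper's own ``proof'' is just a citation to Ronan's book (\cite{ronan2009lectures}, p.~10), so you have in fact supplied the details that the paper defers to the reference; the construction you outline---faces as cosets $wW_I$ with reverse inclusion, vertices as cosets of maximal parabolics, and the key identity $\bigcap_{j\notin I} W_{S\setminus\{j\}} = W_I$---is exactly the one found there.
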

\begin{proof}
See \cite{ronan2009lectures}, p.10.
\end{proof}
Notice that a face of type $I$ of $\mathbb{W}$ is spherical if and
only if $I$ is a spherical color. In particular, $W$ is vertex spherical
(as a Coxeter group) if and only if the Coxeter complex $\mathbb{W}$
is vertex spherical (as a simplicial complex).

\subsection*{Buildings}

Fix a Coxeter group $\left(W,S\right)$. We may identify the free
monoid on $n+1$ elements with $\left\{ id\right\} \cup_{m\ge1}[n]^{m}$.
If $\left|S\right|=n+1$ there exists a unique projection $p:\left\{ id\right\} \cup_{m\ge1}[n]^{m}\rightarrow W$
sending $i\rightarrow s_{i}.$
\begin{defn}
Let $B$ be a colored simplicial complex of dimension $n$. A \emph{gallery}
$\mathbb{G}$ is a finite sequence of chambers $\mathbb{G}=(C_{0},C_{1},...,C_{m})$,
such that $C_{i},C_{i+1}$are adjacent and $C_{i}\ne C_{i+1}$

For every gallery $\mathbb{G}$ we define the \emph{color} $t(\mathbb{G})=\left(\alpha_{0},...,\alpha_{m-1}\right)\in[n]^{m}$
when $C_{i},C_{i+1}$ are $\alpha_{i}$-adjacent. The \emph{Coxeter
color }of the gallery $\mathbb{G}$ is $t_{W}(\mathbb{G})=p(t(\mathbb{G}))\in W$.
\end{defn}

\begin{defn}
(See \cite{ronan2009lectures} chapter 3) A \emph{building} $(B,W,d)$
is given by:

1. A connected colored simplicial complex $B$ in which each panels
belongs to at least 2 chambers.

2. A Coxeter group $W$ with $n+1$ generators $S$.

3. A\textbf{ }\emph{distance function} $d:B_{\phi}\times B_{\phi}\rightarrow W$
.

Such that for every gallery of minimal length $\mathbb{G}$ between
$C,C^{\prime}$, the distance $d(C,C^{\prime})\in W$ equals $t_{W}(\mathbb{G})\in W$
.
\end{defn}
\begin{example}
The Coxeter complex $\mathbb{W}$ is a building, with the distance
function $d(w,w^{\prime})=w^{-1}w^{\prime}$.
\end{example}
If every panel in $B$ belongs to exactly 2 chambers we say that $B$
is \emph{thin}. If every panel in $B$ belongs to at least 3 chambers
we say that $B$ is \emph{thick}.

If every panel in $B$ belongs to a finite number of chambers we say
that $B$ is \emph{locally finite}. If $B$ is locally finite and
$W$ is vertex spherical (as a Coxeter group) then $B$ is also vertex
spherical (as a Coxeter group). 

A building is called\textbf{ }\emph{locally finite regular} if every
chamber $C$ has a constant number $q_{i}<\infty$ of adjacent chambers
of type $i$ where $q_{i}$ does not depend on $C$. The numbers $\overrightarrow{q}=\left(q_{i}\right)_{i\in[n]}$
are called the \emph{parameter system} of the building. We also write
$q_{i}=q_{s_{i}}$ if $s_{i}$ is the $i$-th element of $S$ (using
the identification of $S$ and $[n]$).
\begin{example}
Let $T$ be a tree (i.e. a graph without cycles) with no leaves. Color
its vertices with $0$ and $1$ such that each edge contains a vertex
of each color. The tree $T$ is an affine building with Coxeter group
$\tilde{A}_{1}=D_{\infty}=\left\langle s_{0},s_{1}:s_{0}^{2}=s_{1}^{2}=1\right\rangle $.
If each vertex is contained in a finite number of edges it is locally
finite and vertex spherical. If each vertex of type $i$ is contained
in $q_{i}$ edges (i.e $T$ is a biregular graph) then it is a locally
finite regular building. 
\end{example}
From now on we assume the building is locally finite regular with
parameter system $\overrightarrow{q}$ .
\begin{example}
The Coxeter complex $\mathbb{W}$ is a thin building and every thin
building is isomorphic to $\mathbb{W}$. The Coxeter complex is always
a locally finite building, even if it is not a locally finite simplicial
complex.
\end{example}
An \emph{apartment} $A$ in a building $B$ is a colored subcomplex
that is isomorphic to the Coxeter complex.
\begin{lem}
Every two chambers belong to an apartment of the building.
\end{lem}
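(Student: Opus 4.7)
The plan is to build the desired apartment as the image of an injective, color-preserving map $\phi:\mathbb{W}\to B$ from the Coxeter complex, with $\phi(\mathrm{id})=C_0$ and $\phi(w)=C_1$, where $w=d(C_0,C_1)\in W$. Because $\mathbb{W}$ is thin, the image of such a $\phi$ is automatically a thin subcomplex of $B$ isomorphic to $\mathbb{W}$, hence an apartment by the definition given above.

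To produce $\phi$, first fix a reduced expression $w=s_{i_1}\cdots s_{i_k}$ with $k=l(w)$. By the $W$-metric axiom, the color of any minimal gallery between $C_0$ and $C_1$ is a reduced word for $w$, so there exists a minimal gallery $C_0=D_0,D_1,\ldots,D_k=C_1$ of color $(i_1,\ldots,i_k)$. Next, I would extend $\phi$ to all of $W$ by induction on $l(w')$: for each $w'=w''s_j$ with $l(w')=l(w'')+1$, define $\phi(w')$ to be the endpoint of a chosen $j$-adjacent step from $\phi(w'')$ in $B$, arranged so that along the reduced subexpressions $s_{i_1}\cdots s_{i_j}$ of $w$ the chambers $D_j$ are reproduced, forcing $\phi(w)=C_1$.

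The main obstacle is showing that $\phi$ is well-defined independently of the reduced expression used to reach each $w'$. By Matsumoto's theorem, any two reduced expressions for a fixed element are connected by a sequence of braid moves, each internal to a spherical rank-two parabolic subgroup $\langle s_i,s_j\rangle$ of $W$. Thus well-definedness reduces to a statement inside each $\{i,j\}$-residue of $B$, which is itself a spherical building of rank two: the two reduced galleries corresponding to the two sides of a braid relation, starting from a common chamber, must terminate at the same chamber. This follows from the $W$-metric axioms applied in the spherical rank-two case, where chambers at a given $W$-distance are determined by the color of any reduced gallery reaching them. Once consistency is established, $\phi$ preserves $W$-distances (which in $\mathbb{W}$ are given by $d(w_1,w_2)=w_1^{-1}w_2$), so it is injective; the image is therefore an apartment containing both $C_0$ and $C_1$.
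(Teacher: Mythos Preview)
The paper does not give its own proof of this lemma; it simply cites Ronan, \emph{Lectures on Buildings}, p.~32. Your overall plan---build a color-preserving $W$-isometry $\phi:\mathbb W\to B$ with $\phi(\mathrm{id})=C_0$ and $\phi(w)=C_1$---is exactly the standard strategy used in that reference, so in spirit you are following the cited source.

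There is, however, a genuine gap in your well-definedness argument. You reduce to braid moves via Matsumoto and then assert that in a rank-two residue ``the two reduced galleries corresponding to the two sides of a braid relation, starting from a common chamber, must terminate at the same chamber,'' justified by ``chambers at a given $W$-distance are determined by the color of any reduced gallery reaching them.'' That last sentence inverts what the $W$-metric axiom actually says. The axiom tells you that $d(C,C')$ equals the Coxeter image of the type of any minimal gallery from $C$ to $C'$; it does \emph{not} say that $C'$ is determined by $C$ together with a gallery type. In a thick rank-two residue (a generalized $m_{ij}$-gon) a chamber has many opposites, and two galleries of types $(i,j,i,\dots)$ and $(j,i,j,\dots)$ of length $m_{ij}$, each built by making an arbitrary ``chosen $j$-adjacent step'' at every stage as you propose, will typically end at \emph{different} opposite chambers. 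With unconstrained choices, $\phi$ is therefore not well defined.

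The fix---and this is the actual content of the argument in Ronan---is to constrain the choice at each extension step so that $\phi$ remains a $W$-isometry on the set where it is already defined. When extending from $w''$ to $w''s$, the building axioms (via the gate/projection property of the $s$-panel) guarantee that among the $s$-neighbors of $\phi(w'')$ there is one whose $W$-distance to every previously placed $\phi(w''')$ is the required $s\cdot d(\phi(w''),\phi(w'''))$; checking that these constraints are simultaneously satisfiable is precisely where the axioms do real work. Once the extension is forced in this way the braid-compatibility you want follows, but as a consequence of the construction, not as an independent fact about arbitrary galleries.
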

\begin{proof}
\cite{ronan2009lectures}, p. 32
\end{proof}
We are mainly interested in buildings that are vertex spherical. However,
some interesting examples are not vertex spherical:
\begin{example}
If $W$ is an irreducible affine Coxeter group then every color $I\subsetneq[n]$
is spherical, the Coxeter group $W$ is vertex spherical and the Coxeter
complex has a structure of a locally finite simplicial complex. However,
since a general affine Coxeter group is a finite direct sum of irreducible
affine Coxeter groups, the Coxeter complex$\mathbb{W}$ is not a locally
finite simplicial complex. This complex $\mathbb{W}$ can also be
considered as a a \emph{locally finite polysimplicial complex} in
which each vertex is contained in a finite number of chambers. The
two points of views are equivalent. 

Consider for example $W=\tilde{A}_{1}\times\tilde{A}_{1}$ . It is
the Coxeter group with 4 generators- $s_{0},s_{1},s_{0}^{\prime},s_{1}^{\prime}$
and relations 
\[
s_{0}^{2}=s_{1}^{2}=s_{0}^{\prime2}=s_{1}^{\prime2}=\left(s_{0}s_{0}^{\prime}\right)^{2}=\left(s_{0}s_{1}^{\prime}\right)^{2}=\left(s_{1}s_{0}^{\prime}\right)^{2}=\left(s_{1}s_{1}^{\prime}\right)^{2}=1
\]

The corresponding Coxeter complex $\mathbb{W}$ can be considered
as a ``cube complex'' which is a product of two trees, with squares
(of color $\phi$), edges (of color $s_{0},s_{1},s_{0}^{\prime},s_{1}^{\prime}$)
and vertices (of colors $\{s_{0},s_{0}^{\prime}\}$,$\{s_{0},s_{1}^{\prime}\}$,$\{s_{1},s_{0}^{\prime}\}$,$\{s_{1},s_{1}^{\prime}\}$)
as faces. It can also be considered as a simplicial complex with chambers
of dimension 3, in which only the colors $\phi$,$s_{0}$,$s_{1}$,$s_{0}^{\prime}$,$s_{1}^{\prime}$,$\{s_{0},s_{0}^{\prime}\}$,$\{s_{0},s_{1}^{\prime}\}$,$\{s_{1},s_{0}^{\prime}\}$,$\{s_{1},s_{1}^{\prime}\}$
are spherical. As said, both views are equivalent and we use the simplicial
one in this work. See \cite{jordan1999ramanujan} for an expander
theory for cube complexes.

In terms of semisimple algebraic groups, an almost simple group over
a non-Archimedean local field (e.g $SL_{n}(Q_{p})$) has an irreducible
affine Weyl group and acts as as automorphism group on a vertex spherical
building. A product of two almost simple groups (e.g. $SL_{n}(Q_{p})\times SL_{m}(Q_{p^{\prime}})$)
acts on a ``polysimplicial'' building- a non vertex spherical building
 which is the product of the two buildings.
\end{example}

\section{\label{sec:Distances-in-Buildings}Distances in Buildings}
\begin{defn}
Let $\sigma_{1}\in B_{I_{1}},\sigma_{2}\in B_{I_{2}}$ be two faces
in $B$. Choose $\sigma_{1}\subset C_{1},\sigma_{2}\subset C_{2}$
with minimal distance between them and define $\tilde{d}(\sigma_{1},\sigma_{2})=d(C_{1},C_{2})\in W$.
The \emph{distance} $d(\sigma_{1},\sigma_{2})\in W_{I_{1}}\backslash W/W_{I_{2}}$
is the projection of $\tilde{d}(\sigma_{1},\sigma_{2})$ from $W$
to $W_{I_{1}}\backslash W/W_{I_{2}}$.
\end{defn}
One should prove it is well defined. The picture is explained in details
in \cite{abramenko2008buildings}, section 5.3.2 and we base our discussion
on it. Let us start with Coxeter groups. The following lemma is well
known:
\begin{lem}
\label{Coxeter decomposition lemma}1. Let $I\subset S$ be fixed.
Each coset $d=wW_{I}\in W/W_{I}$ has a unique shortest element $\tilde{d}$
and similarly in $W_{I}\backslash W$. 

Write $W^{I}\subset W$ for the set of shortest elements in the cosets
$W/W_{I}$. Similarly, write $^{I}W\subset W$ for the set of shortest
elements in the cosets $W_{I}\backslash W$.

2. Every $w\in W$ can be written uniquely as $w=w^{I}w_{I}$, $w^{I}\in W^{I},\,w_{I}\in W_{I}$
and in this case $l(w)=l(w^{I})+l(w_{I})$.
\end{lem}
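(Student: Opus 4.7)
This is a classical fact about Coxeter groups and the plan is to derive it from the exchange (equivalently, deletion) condition. Define
\[
W^I = \{w\in W : l(ws)>l(w) \text{ for all } s\in I\},
\]
i.e.\ the elements having no right descent in $I$. The strategy is to show that $W^I$ is exactly the set of minimal-length representatives of the cosets $W/W_I$, and that this representative is unique; part (2) then follows by applying this to $w$ and setting $w_I=(w^I)^{-1}w\in W_I$.

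The heart of the argument is the following key assertion:
\emph{If $w\in W^I$ and $v\in W_I$, then $l(wv)=l(w)+l(v)$.}
I would prove this by induction on $l(v)$. The case $l(v)=0$ is trivial. For the inductive step, write $v=v's$ with $s\in I$ and $l(v')=l(v)-1$; since $W_I$ admits reduced expressions using only generators from $I$, one has $v'\in W_I$, and by induction $l(wv')=l(w)+l(v')$. It suffices to rule out $l(wv's)=l(wv')-1$. Suppose it held; fixing reduced expressions $w=s_1\cdots s_p$ and $v'=t_1\cdots t_q$ (with $t_j\in I$), the concatenation is a reduced expression for $wv'$, and the exchange condition lets us delete one letter to obtain a reduced expression for $wv's$. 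If the deleted letter is some $t_j$, then $v's=t_1\cdots\widehat{t_j}\cdots t_q$; but $v's=v$, forcing $l(v)\le q-1=l(v)-2$, a contradiction. If instead some $s_i$ is deleted, then $wv's=w''v'$ with $l(w'')=p-1$, so $w\cdot r=w''$ where $r=v'sv'^{-1}\in W_I$ is a reflection of $W_I$, and hence $l(wr)<l(w)$.

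The final step is to see that $l(wr)<l(w)$ for a reflection $r\in W_I$ is incompatible with $w\in W^I$; this is the only nontrivial part and is the main obstacle. It can be established either combinatorially, by applying the strong exchange condition together with the fact that a reflection of $W_I$ has a reduced expression in the generators $I$, or more conceptually via the geometric/root-system picture: $w\in W^I$ is equivalent to $w\alpha_s$ being a positive root for every $s\in I$, which extends by linearity and positivity to $w\gamma>0$ for every positive root $\gamma$ of the subsystem attached to $W_I$, while $l(wr)<l(w)$ translates to $w\gamma_r<0$ for the positive root $\gamma_r$ associated with $r$. Either route yields the desired contradiction.

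Granting the key assertion, both halves of (1) are immediate: for $w\in W^I$ and any $v\in W_I\setminus\{1\}$ we have $l(wv)>l(w)$, so $w$ is the unique shortest element of $wW_I$; conversely, any shortest element $w^\ast$ of a coset must satisfy $l(w^\ast s)>l(w^\ast)$ for every $s\in I$ (as $w^\ast s$ lies in the same coset and lengths differ by exactly one), so $w^\ast\in W^I$. The analogous statement for $W_I\backslash W$ follows by symmetry (replace $w$ by $w^{-1}$). Finally, for (2), given $w\in W$ let $w^I$ be the unique element of $W^I$ in $wW_I$ and set $w_I=(w^I)^{-1}w\in W_I$; the key assertion gives $l(w)=l(w^I)+l(w_I)$, and uniqueness of the factorization is inherited from the uniqueness of the coset representative.
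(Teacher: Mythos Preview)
Your argument is correct and is essentially the standard textbook proof via the exchange condition and the root-system characterization of descents. The paper does not give its own argument here at all: its entire proof is the citation ``See \cite{humphreys1992reflection} 5.12.'' So there is nothing to compare on the level of strategy; you have supplied exactly the kind of proof that the cited reference contains, whereas the paper simply defers to that reference.
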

\begin{proof}
See \cite{humphreys1992reflection} 5.12.
\end{proof}
There exists a similar statement for double cosets of Coxeter groups
which is a refinement of the lemma above. It is less standard and
we are mainly interested in the first statement of the following lemma.
Notice that if $I_{1}\subset I_{2}$ then $W_{I_{2}}$ is a parabolic
subgroup of $W_{I_{1}}$. Therefore, $^{I_{2}}(W_{I_{1}})$ and $(W_{I_{1}})^{I_{2}}$
are well defined.
\begin{lem}
\label{lem:1. Distance lemma}Let $I_{1},I_{2}\subsetneq S$ be fixed.
Then each double coset $d=W_{I_{1}}wW_{I_{2}}\in W_{I_{1}}\backslash W/W_{I_{2}}$
has a unique shortest element $\tilde{d}\in W$ with $d=W_{I_{1}}\tilde{d}W_{I_{2}}$.
Write $^{I_{1}}W^{I_{2}}$ for the set of such shortest elements.

Let $I_{3}=I_{1}\cap\tilde{d}I_{2}\tilde{d}^{-1}\subset I_{1}$, $I_{4}=I_{2}\cap\tilde{d}^{-1}I_{1}\tilde{d}\subset I_{2}$
(multiplication takes place in $W$, as $S\subset W$). We have a
bijection $\sim_{d}:W_{I_{3}}\leftrightarrow W_{I_{4}}$, given by
$w_{3}\sim_{d}w_{4}$, $w_{3}\in W_{I_{3}}$, $w_{4}\in W_{I_{4}}$
if $w_{3}\tilde{d}=\tilde{d}w_{4}$. Every element $w\in W$ with
$W_{I_{1}}wW_{I_{2}}=W_{I_{1}}\tilde{d}W_{I_{2}}$ can be decomposed
in $\left|W_{I_{3}}\right|=\left|W_{I_{4}}\right|$ ways as $w=w_{1}w_{3}\tilde{d}w_{4}w_{2}$,
with: $w_{1}\in\left(W_{I_{1}}\right)^{I_{3}}$, $w_{3}\in W_{I_{3}}$,
$\tilde{d}\in{}^{I_{1}}W^{I_{2}}$, $w_{4}\in W_{I_{4}}$, $w_{2}\in{}^{I_{4}}\ensuremath{(W_{I_{2}})}$
and in this case $l(w)=l(w_{1})+l(w_{3})+l(\tilde{d})+l(w_{4})+l(w_{2})$.
All the different decompositions are given by $w_{3}\rightarrow w_{3}\hat{w_{3}},\,w_{4}\rightarrow\hat{w}_{4}w_{4}$
for $\hat{w}_{3}\sim_{d}\hat{w}_{4}$.
\end{lem}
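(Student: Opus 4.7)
The plan is to prove the lemma in four stages: locate the shortest element $\tilde{d}$ and place it in ${}^{I_1}W^{I_2}$; verify the Kilmoyer-type identity $\tilde{d}W_{I_4}\tilde{d}^{-1}=W_{I_3}$; decompose an arbitrary element of the double coset by invoking the basic Coxeter decomposition Lemma \ref{Coxeter decomposition lemma}; and enumerate the decompositions, deducing uniqueness of $\tilde{d}$ as a byproduct.

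First I would take $\tilde{d}$ of minimal length in $W_{I_1}wW_{I_2}$. If some $s\in I_2$ satisfied $l(\tilde{d}s)<l(\tilde{d})$, then $\tilde{d}s$ would be a shorter element of the same double coset; hence $\tilde{d}\in W^{I_2}$, and symmetrically $\tilde{d}\in{}^{I_1}W$. The Kilmoyer identity is then essentially tautological from the definitions: for $s\in I_4$ one has $\tilde{d}s\tilde{d}^{-1}\in I_1\cap\tilde{d}I_2\tilde{d}^{-1}=I_3$, and the reverse inclusion is symmetric. Since conjugation by $\tilde{d}$ is a group automorphism of $W$, this bijection of generators extends to a group isomorphism $\phi\colon W_{I_4}\to W_{I_3}$, $w_4\mapsto\tilde{d}w_4\tilde{d}^{-1}$. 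The relation $\sim_d$ is (up to the inversion convention used in the lemma) the graph of $\phi$, so in particular $|W_{I_3}|=|W_{I_4}|$.

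For the decomposition, write $w=u\tilde{d}v$ with $u\in W_{I_1}$ and $v\in W_{I_2}$, and apply Lemma \ref{Coxeter decomposition lemma} to the pairs $W_{I_1}\supset W_{I_3}$ and $W_{I_2}\supset W_{I_4}$ to get $u=w_1w_3$, $v=w_4w_2$ with the stated membership and with $l(u)=l(w_1)+l(w_3)$, $l(v)=l(w_4)+l(w_2)$. The heart of the proof, and the main obstacle, is the five-fold length identity
\[
l(w)=l(w_1)+l(w_3)+l(\tilde{d})+l(w_4)+l(w_2).
\]
I would first establish the special case $l(w_1\tilde{d}w_2)=l(w_1)+l(\tilde{d})+l(w_2)$ when $w_1\in(W_{I_1})^{I_3}$ and $w_2\in{}^{I_4}(W_{I_2})$, by concatenating reduced expressions and invoking the strong exchange condition: cancellations between $w_1$ and $\tilde{d}$ are blocked by $\tilde{d}\in{}^{I_1}W$; cancellations between $\tilde{d}$ and $w_2$ by $\tilde{d}\in W^{I_2}$; and any cancellation bridging $\tilde{d}$ can, via $w_3\tilde{d}=\tilde{d}\phi(w_3)$, be transported into a cancellation inside $w_1$ ending in a letter from $I_3$, contradicting $w_1\in(W_{I_1})^{I_3}$. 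The general case then reduces to the special case by rewriting $w_1w_3\tilde{d}w_4w_2=w_1\tilde{d}\bigl(\phi(w_3)w_4\bigr)w_2$ and combining with length additivity inside $W_{I_2}$.

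Finally, any two decompositions of the same $w$ must differ by a substitution $(w_3,w_4)\mapsto(w_3\hat{w}_3,\hat{w}_4w_4)$ with $\hat{w}_3\tilde{d}\hat{w}_4=\tilde{d}$, parameterized by the bijection $\sim_d$, yielding exactly $|W_{I_3}|$ decompositions. Uniqueness of $\tilde{d}$ then follows: any other minimal element $\tilde{d}'$ in the double coset can be decomposed as $\tilde{d}'=w_1w_3\tilde{d}w_4w_2$, and the five-fold length identity together with $l(\tilde{d}')=l(\tilde{d})$ forces $w_1=w_3=w_4=w_2=1$, so $\tilde{d}'=\tilde{d}$. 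The length-additivity step is the only substantive difficulty; the rest is bookkeeping.
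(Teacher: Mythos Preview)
The paper does not actually prove this; it refers you to Abramenko--Brown, Proposition~2.23. Your plan is essentially a reconstruction of that standard argument, so there is nothing to compare in terms of approach.

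Two genuine gaps to flag. First, your enumeration step silently needs the full Kilmoyer theorem $W_{I_1}\cap\tilde d\,W_{I_2}\,\tilde d^{-1}=W_{I_3}$, not just the tautological bijection $I_3\leftrightarrow I_4$ on generators: from two decompositions $u\tilde d v=u'\tilde d v'$ you only get $u'^{-1}u\in W_{I_1}\cap\tilde d\,W_{I_2}\,\tilde d^{-1}$, and passing to $W_{I_3}$ is where the content lies. Second, your reduction of the general five-fold length identity to the special case does not work as written: after the rewriting you still need $l\bigl(\phi^{-1}(w_3)w_4\bigr)=l(w_3)+l(w_4)$, which is false in general. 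In fact the five-fold length identity cannot hold for all $|W_{I_3}|$ decompositions simultaneously (take $W=S_3$, $I_1=\{s_1\}$, $I_2=\{s_2\}$, $\tilde d=s_2s_1$; then $\tilde d=s_1\cdot\tilde d\cdot s_2$ is one of the two admissible decompositions, yet $1+2+1\neq 2$), so the lemma's phrasing is already loose at that point. What one actually proves---and what your special-case argument is exactly the right ingredient for---is the \emph{unique} three-fold form $w=a\tilde d b$ with $a\in W_{I_1}$, $b\in{}^{I_4}(W_{I_2})$, $l(w)=l(a)+l(\tilde d)+l(b)$; from this both the five-fold form and Kilmoyer follow.
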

\begin{proof}
See \cite{abramenko2008buildings}, section 2.3.2, proposition 2.23.
\end{proof}
Let us return to buildings. The following proposition is a generalization
of lemma \ref{lem:1. Distance lemma} to buildings.
\begin{prop}
\label{claim:Distance claims}1. Let \textup{$p:W\rightarrow W_{I_{1}}\backslash W/W_{I_{2}}$
be the projection. Then }$d(\sigma_{1},\sigma_{2})=p(d(C_{1},C_{2}))$
does not depend on the chambers \textup{$\sigma_{1}\subset C_{1}\in B_{\phi},\sigma_{2}\subset C_{2}\in B_{\phi}$. }

2. The unique shortest representative $\tilde{d}$ of \textup{$d=d(\sigma_{1},\sigma_{2})$}
is the shortest distance between two chambers containing the faces.

3. Let $I_{3}=I_{1}\cap\tilde{d}I_{2}\tilde{d}^{-1},\,I_{4}=I_{2}\cap\tilde{d}^{-1}I_{1}\tilde{d}$.
There exists a face $\sigma_{3}$ of color $I_{3}$ containing $\sigma_{1}$
and a face of color $\sigma_{4}$ containing $\sigma_{2}$ such that:

3.a. Every two chambers $\sigma_{1}\subset C_{1},\sigma_{2}\subset C_{2}$
with $d(C_{1},C_{2})=\tilde{d}$ contain $\sigma_{3},\sigma_{4}$
respectively.

3.b. There exists a bijection $F:C_{\sigma_{3}}\rightarrow C_{\sigma_{4}}$
between the faces containing $\sigma_{3}$ and the faces containing
$\sigma_{4}$ such that $d(C,F(C))=\tilde{d}$.
\end{prop}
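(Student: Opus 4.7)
The plan is to reduce everything to Coxeter-group combinatorics via a common apartment. Any two chambers $C_1, C_2$ lie in a common apartment $A \cong \mathbb{W}$, and inside $A$ the $W$-distance coincides with $w_1^{-1} w_2$ once chambers are labelled by $W$; moreover the residue of a face $\sigma$ of color $I$ through a chamber $C \in A$ corresponds to a coset of $W_I$. The building axiom I will use repeatedly is: if $C_1, C_1'$ are $s$-adjacent and $d(C_1', C_2) = w$, then $d(C_1, C_2)$ equals $sw$ or $w$, with $d(C_1, C_2) = sw$ iff $l(sw) = l(w) + 1$.

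For Part 1, if $C_1, C_1'$ both contain $\sigma_1$ they are joined by a gallery inside the residue of $\sigma_1$, i.e.\ by a sequence of $s_i$-adjacencies with $i \in I_1$. Induction on the length of this gallery together with the axiom above shows $d(C_1', C_2) \in W_{I_1} d(C_1, C_2)$. Running the symmetric argument on the $\sigma_2$ side gives $d(C_1', C_2') \in W_{I_1} d(C_1, C_2) W_{I_2}$, which proves that the image in $W_{I_1} \backslash W / W_{I_2}$ is well defined.

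For Part 2, start from arbitrary $C_1 \supset \sigma_1$, $C_2 \supset \sigma_2$, and decompose $w := d(C_1, C_2) = w_1 \tilde{d} w_2$ with $w_1 \in W_{I_1}$, $w_2 \in W_{I_2}$ and $l(w) = l(w_1) + l(\tilde{d}) + l(w_2)$ using Lemma~\ref{lem:1. Distance lemma}. Place $C_1, C_2$ in a common apartment $A$ and realize a minimal gallery from $C_1$ to $C_2$ of type a reduced word for $w$ factored according to $w_1 \tilde{d} w_2$. The intermediate chambers $C_1^\ast$ (after the first $l(w_1)$ steps) and $C_2^\ast$ (after the next $l(\tilde{d})$ steps) lie in the residues of $\sigma_1, \sigma_2$ respectively, and satisfy $d(C_1^\ast, C_2^\ast) = \tilde{d}$. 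Conversely any pair of chambers through $\sigma_1, \sigma_2$ has distance in $W_{I_1} \tilde{d} W_{I_2}$, and $\tilde{d}$ is of minimal length there, so $\tilde{d}$ is actually the shortest distance.

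For Part 3, fix a pair $(C_1, C_2)$ as in Part 2 and let $\sigma_3 \supset \sigma_1$ be the face of color $I_3$ inside $C_1$, and $\sigma_4 \supset \sigma_2$ the face of color $I_4$ inside $C_2$. To prove 3.a, suppose $C_1' \supset \sigma_1$ and $C_2' \supset \sigma_2$ also satisfy $d(C_1', C_2') = \tilde{d}$. Setting $u_1 = d(C_1, C_1') \in W_{I_1}$ and $u_2 = d(C_2', C_2) \in W_{I_2}$, the argument of Part 1 together with the axioms gives $d(C_1, C_2') \in \{u_1 \tilde{d}\} \cdot W_{I_2}$ and combining with $d(C_1', C_2') = \tilde{d}$ forces an identity $u_1 \tilde{d} u_2 = \tilde{d}$ with $l(u_1) + l(\tilde{d}) + l(u_2) = l(\tilde{d})$; by the uniqueness part of Lemma~\ref{lem:1. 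Distance lemma} this implies $u_1 \in W_{I_3}$ and $u_2 \in W_{I_4}$, so $C_1$ and $C_1'$ share the face $\sigma_3$, and likewise for $\sigma_4$. For 3.b, define $F$ as follows: given $C \supset \sigma_3$, the chamber $C$ lies in the apartment containing $C_1, C_2$ at position $w_3 \tilde{d}$ from $C_2$-reference and can be matched with the unique $C' \supset \sigma_4$ at position $\tilde{d} w_4$, where $w_3 \sim_d w_4$; the equality $d(C, C') = \tilde{d}$ then follows from $w_3 \tilde{d} = \tilde{d} w_4$ with the length-adding property.

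The main obstacle is bookkeeping the length additivity in Part 3: one must show that the identity $u_1 \tilde{d} u_2 = \tilde{d}$ obtained from $d(C_1', C_2') = \tilde{d}$ really comes with $l(u_1) + l(\tilde{d}) + l(u_2) = l(\tilde{d})$, so that the uniqueness clause of Lemma~\ref{lem:1. Distance lemma} applies and forces $u_1, u_2$ into the smaller parabolic subgroups $W_{I_3}, W_{I_4}$. This step is where passing to a common apartment (so that the three chambers $C_1, C_1', C_2'$ can be seen simultaneously as Coxeter elements) is essential, as the axiom-level arguments otherwise only yield the weaker statements $u_1 \in W_{I_1}$, $u_2 \in W_{I_2}$.
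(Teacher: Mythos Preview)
The paper does not prove this proposition itself; it simply refers to \cite{abramenko2008buildings}, Section~5.3.2. Your arguments for Parts~1 and~2 are correct and essentially the standard ones found there.

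Part~3.a, however, contains a genuine error, and you have correctly located it in your final paragraph. The identity you obtain is $u_1 \tilde d\, u_2 = \tilde d$ (equivalently $u_1 \tilde d = \tilde d\, u_2^{-1}$), but the accompanying length claim $l(u_1) + l(\tilde d) + l(u_2) = l(\tilde d)$ is false: it would force $u_1 = u_2 = e$, which is certainly not what happens. What the gate property actually gives (using $\tilde d \in {}^{I_1}W \cap W^{I_2}$) is $d(C_1,C_2') = u_1 \tilde d$ with $l(u_1 \tilde d) = l(u_1) + l(\tilde d)$, and symmetrically $d(C_1,C_2') = \tilde d\, u_2^{-1}$ with $l(\tilde d\, u_2^{-1}) = l(\tilde d) + l(u_2)$; equating yields only $l(u_1) = l(u_2)$. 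The conclusion $u_1 \in W_{I_3}$ does not come from any length-additivity trick but from Kilmoyer's theorem
\[
W_{I_1} \cap \tilde d\, W_{I_2}\, \tilde d^{-1} = W_{I_3},
\]
which holds precisely because $\tilde d$ is the minimal double-coset representative. This is the real content behind the bijection $\sim_d$ in Lemma~\ref{lem:1. Distance lemma}, and it is what Abramenko--Brown invoke at this step; the ``uniqueness'' clause of that lemma does not do the job you want.

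Your sketch of 3.b is also not right as written: in a thick building the chambers through $\sigma_3$ do not all lie in a single apartment, so you cannot argue inside ``the apartment containing $C_1, C_2$''. The map $F$ is again built from the gate property rather than from a common apartment: for $C \supset \sigma_3$ with $d(C_1,C)=w_3\in W_{I_3}$ one has $d(C, C_2) = w_3^{-1} \tilde d = \tilde d\, w_4^{-1}$ where $w_3 \sim_d w_4$, and since $\tilde d \in W^{I_4}$ the projection of $C$ onto the $\sigma_4$-residue is the unique chamber $F(C)$ with $d(C, F(C)) = \tilde d$. The inverse is constructed symmetrically.
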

\begin{proof}
See \cite{abramenko2008buildings}, section 5.3.2.
\end{proof}
Notice that the distance we defined is not symmetric. However, for
$C_{1},C_{2}\in B_{\phi}$ if $d(C_{1},C_{2})=w$ then $d(C_{2},C_{1})=w^{-1}$. 
\begin{defn}
for $d=W_{I_{1}}wW_{I_{2}}\in W_{I_{1}}\backslash W/W_{I_{2}}$ we
define $d^{*}=W_{I_{2}}w^{-1}W_{I_{1}}\in W_{I_{2}}\backslash W/W_{I_{1}}$.
\end{defn}
\begin{prop}
If $d(\sigma_{1},\sigma_{2})=W_{I_{1}}wW_{I_{2}}=d$ then $d(\sigma_{2},\sigma_{1})=W_{I_{2}}w^{-1}W_{I_{1}}=d^{*}$.
\end{prop}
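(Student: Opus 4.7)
The plan is to reduce this to the already established symmetry for chambers, namely that $d(C_1,C_2)=w$ implies $d(C_2,C_1)=w^{-1}$, and then to translate this through the projection that defines the face-distance.

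First, I would fix chambers $C_1\supset \sigma_1$ and $C_2\supset\sigma_2$. By part 1 of Proposition \ref{claim:Distance claims}, the face-distance $d(\sigma_1,\sigma_2)\in W_{I_1}\backslash W/W_{I_2}$ is the image of $d(C_1,C_2)\in W$ under the projection $p_{12}\colon W\to W_{I_1}\backslash W/W_{I_2}$, and this image does not depend on the choice of containing chambers. Write $w'=d(C_1,C_2)$, so $w'\in W_{I_1}wW_{I_2}$ by hypothesis.

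Next, I would invoke the chamber symmetry $d(C_2,C_1)=(w')^{-1}$ recorded in the paragraph preceding the definition of $d^*$. Then, again by part 1 of Proposition \ref{claim:Distance claims}, applied now with the roles of the faces reversed, $d(\sigma_2,\sigma_1)$ is the image of $(w')^{-1}$ under the projection $p_{21}\colon W\to W_{I_2}\backslash W/W_{I_1}$. The elementary observation is that the inversion map $w\mapsto w^{-1}$ on $W$ descends to a well-defined bijection $W_{I_1}\backslash W/W_{I_2}\to W_{I_2}\backslash W/W_{I_1}$ (since $(a w b)^{-1}=b^{-1}w^{-1}a^{-1}$ for $a\in W_{I_1},b\in W_{I_2}$), and by definition this bijection sends $d$ to $d^*$.

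Combining: $(w')^{-1}\in W_{I_2}w^{-1}W_{I_1}$, so $p_{21}((w')^{-1})=W_{I_2}w^{-1}W_{I_1}=d^*$, giving $d(\sigma_2,\sigma_1)=d^*$ as required. There is no real obstacle here; the only thing to keep in mind is that well-definedness of the face-distance (independence of the chosen containing chambers) is crucial and is supplied by Proposition \ref{claim:Distance claims}(1), so the argument genuinely reduces to the chamber case plus a short bookkeeping check that inversion intertwines the two double-coset projections.
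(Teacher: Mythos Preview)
Your proof is correct and follows essentially the same approach as the paper: reduce to the chamber case $d(C_2,C_1)=d(C_1,C_2)^{-1}$ and then project to double cosets. The paper phrases this via minimal galleries (the reverse of a minimal gallery from $C_1$ to $C_2$ is a minimal gallery from $C_2$ to $C_1$) and appeals directly to the original definition of face distance using chambers at minimal distance, whereas you pick arbitrary containing chambers and invoke Proposition~\ref{claim:Distance claims}(1) for independence of that choice; these are the same argument with slightly different bookkeeping.
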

\begin{proof}
A minimal gallery from $C_{2}$ to $C_{1}$ is the reverse of a minimal
gallery from $C_{1}$ to $C_{2}.$ Now use the definition of distance
using minimal galleries.
\end{proof}
Recall that we we assume the building is locally finite regular with
parameter system $\overrightarrow{q}=\left(q_{i}\right)_{i\in[n]}$
. 
\begin{defn}
Assume that $I_{1}$ is spherical. The number of faces of distance
$d\in W_{I_{1}}\backslash W/W_{I_{2}}$ from a face $\sigma\in B_{I_{1}}$
is denoted by $q_{d}$.
\end{defn}
Our next goal is to prove that the definition of $q_{d}$ does not
depend on $\sigma$ and to calculate $q_{d}$ explicitly. It will
be done in proposition \ref{claim:Distance calculation}. 

Note that if $I_{1}$ is not spherical it is contained in an infinite
number of chambers, so $q_{d}$ is usually $\infty$.
\begin{defn}
For a finite subset $A\subset W$, denote $q_{A}=\sum_{w\in A}q_{w}$.
In particular $q_{W_{I}}=\sum_{w\in W_{I}}q_{w}$
\end{defn}
\begin{prop}
The number of chambers $C^{\prime}$ of distance $w\in W$ from a
chamber $C$ depends only on $w$. We denote it by $q_{w}$. If a
minimal decomposition is $w=s_{\alpha_{1}}\cdot...\cdot s_{\alpha_{l}}$
then $q_{w}=q_{\alpha_{1}}\cdot...\cdot q_{\alpha_{l}}$.
\end{prop}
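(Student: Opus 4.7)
The plan is to proceed by induction on $l(w)$. The base case $l(w)=0$ gives $w=\mathrm{id}$, and the only chamber at distance $\mathrm{id}$ from $C$ is $C$ itself (since any gallery has Coxeter color equal to a reduced product of generators, hence nontrivial), so $q_{\mathrm{id}}=1$, matching the empty product.

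For the inductive step, fix a reduced decomposition $w=w's_{\alpha}$ with $l(w')=l(w)-1$. The key building-theoretic input I would use is the standard $W$-metric consequence: if $d(C,C'')=w'$ and $C'$ is $\alpha$-adjacent to $C''$ with $C'\neq C''$, then because $l(w's_{\alpha})=l(w')+1$ one is forced to have $d(C,C')=w's_{\alpha}=w$. Conversely, every $C'$ with $d(C,C')=w$ admits a minimal gallery from $C$ whose last step is $\alpha$-adjacency, so $C'$ is $\alpha$-adjacent to a chamber $C''$ with $d(C,C'')=w'$, and such $C''$ is unique (if there were two distinct choices, both $\alpha$-adjacent to $C'$ and at distance $w'$ from $C$, then going through the other one would give $d(C,C')=w'$, contradicting $d(C,C')=w$). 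Thus the map $C'\mapsto C''$ is a $q_{\alpha}$-to-$1$ surjection onto $\{C'':d(C,C'')=w'\}$, the fibers being the $q_{\alpha}$ chambers $\alpha$-adjacent to $C''$ other than $C''$ itself, by the regularity hypothesis. By the induction hypothesis there are $q_{w'}=q_{\alpha_{1}}\cdots q_{\alpha_{l-1}}$ chambers at distance $w'$ from $C$, so the total count is $q_{\alpha_{1}}\cdots q_{\alpha_{l-1}}q_{\alpha}$, as claimed.

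A minor but necessary side remark: since the left-hand side $q_{w}$ (defined as the count) is manifestly independent of the chosen reduced expression, the equality $q_{w}=q_{\alpha_{1}}\cdots q_{\alpha_{l}}$ forces the product on the right to be independent of the reduced decomposition as well. This is consistent with Matsumoto's theorem (any two reduced expressions are related by braid moves), since braid moves either permute an equal number of $s_{i}$'s and $s_{j}$'s when $m_{ij}$ is even, or exchange $s_{i}\leftrightarrow s_{j}$ when $m_{ij}$ is odd, in which case $s_{i}$ and $s_{j}$ are conjugate in $W$ and a regular building forces $q_{i}=q_{j}$.

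The only real obstacle is the $W$-metric axiom invoked in the inductive step, namely the precise rule determining $d(C,C')$ from $d(C,C'')$ when $C',C''$ are $s$-adjacent. This is one of the defining axioms in Ronan's presentation \cite{ronan2009lectures} of buildings (or follows from the minimal gallery characterization of $d$ stated in our definition), so I would simply cite it; no deeper geometry is needed.
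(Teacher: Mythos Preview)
Your proof is correct and follows essentially the same inductive strategy as the paper: both arguments write $w=w's$ with $l(w)=l(w')+1$, establish a bijection between chambers at distance $w$ and pairs $(C'',C')$ with $d(C,C'')=w'$ and $d(C'',C')=s$, and conclude $q_w=q_{w'}q_s$. Your additional remarks on the base case and on independence of the reduced expression via Matsumoto's theorem are welcome elaborations that the paper leaves implicit.
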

\begin{proof}
If $w=w^{\prime}s$ with $l(w)=l(w^{\prime})+l(s)$ and $d(C,C^{\prime})=w=w^{\prime}s$
then there exists a single chamber $C^{\prime\prime}$ such that $d(C,C^{\prime\prime})=w^{\prime},d(C^{\prime\prime},C^{\prime})=s$
(it is standard- follows from \ref{claim:Distance claims} for example). 

On the other hand, if $d(C,C^{\prime\prime})=w^{\prime},d(C^{\prime\prime},C^{\prime})=s$
then $d(C,C^{\prime})=w=w^{\prime}s$. Therefore the number of such
$C^{\prime}$ is the number of pairs $C^{\prime\prime},C^{\prime}$
such that $d(C,C^{\prime\prime})=w^{\prime}$, $d(C^{\prime\prime},C^{\prime})=s$.
So inductively we have $q_{w}=q_{w^{\prime}}q_{s}$.
\end{proof}
\begin{prop}
Let $d=\tilde{d}W_{I_{2}}\in W/W_{I_{2}}$ (recall that $\tilde{d}$
is the shortest element in the coset). Then $q_{d}=q_{\tilde{d}}$.
\end{prop}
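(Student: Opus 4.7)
The plan is to set up an explicit bijection between the faces of color $I_2$ at distance $d$ from $\sigma$ and the chambers at distance $\tilde{d}$ from $\sigma$. Here $\sigma \in B_\phi$ is a fixed chamber (since $I_1 = \phi$ is spherical, the count is well-defined once we prove independence from the choice of $\sigma$, which will be automatic from the bijection). The key tool is Proposition~\ref{claim:Distance claims}, specialized to $I_1 = \phi$.

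First I would observe that with $I_1 = \phi$, the set $I_3 = I_1 \cap \tilde{d} I_2 \tilde{d}^{-1}$ of Proposition~\ref{claim:Distance claims}(3) is empty, so the face $\sigma_3$ there is just $\sigma$ itself. Given any face $\sigma_2 \in B_{I_2}$ with $d(\sigma,\sigma_2) = d$, part (2) of the proposition says that the shortest word $\tilde{d}$ representing $d$ is realized as $d(C,C')$ for some chambers $C \supset \sigma, C' \supset \sigma_2$; since $C$ must equal $\sigma$, there exists a chamber $C' \supset \sigma_2$ with $d(\sigma, C') = \tilde{d}$. Part (3b) applied with $\sigma_3 = \sigma$ shows that this $C'$ is unique: the bijection $F : C_{\sigma_3} \to C_{\sigma_4}$ has $C_{\sigma_3} = \{\sigma\}$, so only one chamber containing $\sigma_2$ lies at distance exactly $\tilde{d}$ from $\sigma$.

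This defines a map $\Phi$ from $\{\sigma_2 \in B_{I_2} : d(\sigma,\sigma_2) = d\}$ to $\{C' \in B_\phi : d(\sigma,C') = \tilde{d}\}$. In the other direction, every chamber $C'$ contains a unique face of color $I_2$, yielding a candidate inverse $\Psi$. To check $\Psi \circ \Phi = \mathrm{id}$ I just note that $\sigma_2$ is the unique face of $C'$ of color $I_2$ by construction. For $\Phi \circ \Psi = \mathrm{id}$, given $C'$ with $d(\sigma,C') = \tilde{d}$, let $\sigma_2 \subset C'$ be its face of color $I_2$; then $d(\sigma,\sigma_2) \in W/W_{I_2}$ is the projection of some $\tilde{d}(\sigma,\sigma_2)$, which is the shortest distance from $\sigma$ to a chamber containing $\sigma_2$. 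Since $\tilde{d}$ is already the minimal-length element in the coset $d$, and $C' \supset \sigma_2$ realizes distance $\tilde{d}$, we get $\tilde{d}(\sigma,\sigma_2) = \tilde{d}$ and hence $d(\sigma,\sigma_2) = d$. Uniqueness of $C'$ at distance $\tilde{d}$ containing $\sigma_2$ (from the previous paragraph) then gives $\Phi(\sigma_2) = C'$.

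The bijection shows $q_d = q_{\tilde{d}}$, and in particular the count is independent of $\sigma$. The only subtle point, which I would be most careful with, is confirming that $\tilde{d}(\sigma,\sigma_2) = \tilde{d}$ (not some other element of the coset $d$): this reduces to the general fact that the chamber realizing the minimum distance to a face lies at the shortest-length representative of the corresponding coset, which is guaranteed by Lemma~\ref{Coxeter decomposition lemma}(1) together with the characterization in Proposition~\ref{claim:Distance claims}(2). Everything else is a direct unpacking of the definitions.
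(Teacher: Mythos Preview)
Your proof is correct and follows essentially the same bijection as the paper's own proof: faces at distance $d$ correspond to chambers at distance $\tilde{d}$ via ``take the unique chamber at distance $\tilde{d}$ containing it'' in one direction and ``take the unique face of color $I_2$'' in the other. The paper dispatches this in two sentences, while you spell out the verification of $\Phi\circ\Psi=\mathrm{id}$ more carefully; note that this step is actually immediate from part (1) of Proposition~\ref{claim:Distance claims}, since $d(\sigma,\sigma_2)$ is by definition the projection of $d(\sigma,C')$ for \emph{any} chamber $C'\supset\sigma_2$, so there is no need to invoke minimality of $\tilde d$ separately.
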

\begin{proof}
By \ref{claim:Distance claims}, every face of distance $d$ from
$C$ has a single chamber $C_{2}$ of distance $\tilde{d}$. On the
other hand every chamber of distance $\tilde{d}$ has a single face
of color $I_{2}$. The claim follows.
\end{proof}
\begin{prop}
\label{claim:Distance calculation}Assume $I_{1}$ is spherical. Let
$d=W_{I_{1}}\tilde{d}W_{I_{2}}$. Let $I_{3}=I_{1}\cap\tilde{d}I_{2}\tilde{d}^{-1},\,I_{4}=I_{2}\cap\tilde{d}^{-1}I_{1}\tilde{d}$.
Then $q_{d}=q_{W_{I_{1}}}/q_{W_{I_{3}}}\cdot q_{\tilde{d}}=q_{\left(W_{I_{1}}\right)^{I_{3}}}q_{\tilde{d}}$.
\end{prop}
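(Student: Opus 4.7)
The strategy is to reduce $q_d$ to a sum of the form $\sum_{w \in W_{I_1} \tilde{d} W_{I_2}} q_w$ via a double count of chambers, and then to evaluate this sum using the decomposition provided by Lemma \ref{lem:1. Distance lemma}.

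First I fix a chamber $C_1 \supset \sigma_1$ and double-count pairs $(\sigma_2, C_2)$ with $\sigma_2 \in B_{I_2}$, $\sigma_2 \subset C_2$, and $d(\sigma_1, \sigma_2) = d$. Counting by $\sigma_2$ first uses the standard fact that a face of spherical color $I_2$ lies in exactly $q_{W_{I_2}}$ chambers (its residue is a sub-building of type $W_{I_2}$, so the previous proposition applied inside the residue gives the count), producing $q_d \cdot q_{W_{I_2}}$ pairs. Counting by $C_2$ first uses Proposition \ref{claim:Distance claims}(1): $d(\sigma_1, \sigma_2)$ is the projection of $d(C_1, C_2)$ to $W_{I_1}\backslash W / W_{I_2}$, and since each chamber has a unique face of color $I_2$, the condition becomes $d(C_1, C_2) \in W_{I_1} \tilde d W_{I_2}$, producing $\sum_{w \in W_{I_1} \tilde d W_{I_2}} q_w$ pairs. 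Equating shows both that $q_d$ is independent of $\sigma_1$ (hence well-defined) and that
\[
q_d = \frac{1}{q_{W_{I_2}}} \sum_{w \in W_{I_1} \tilde d W_{I_2}} q_w.
\]

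Next I parametrize the double coset via Lemma \ref{lem:1. Distance lemma}. The lemma supplies $|W_{I_3}|$ distinct decompositions $w = w_1 w_3 \tilde d w_4 w_2$ per $w$, with the different decompositions obtained by modifying only the pair $(w_3, w_4)$. Fixing $w_3 = 1$ therefore selects one canonical decomposition per $w$, yielding a bijection
\[
(W_{I_1})^{I_3} \times W_{I_4} \times {}^{I_4}(W_{I_2}) \longleftrightarrow W_{I_1} \tilde d W_{I_2}, \qquad (w_1, w_4, w_2) \mapsto w_1 \tilde d w_4 w_2,
\]
under which $l(w_1 \tilde d w_4 w_2) = l(w_1) + l(\tilde d) + l(w_4) + l(w_2)$. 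Hence
\[
\sum_{w \in W_{I_1} \tilde d W_{I_2}} q_w = q_{(W_{I_1})^{I_3}} \cdot q_{\tilde d} \cdot q_{W_{I_4}} \cdot q_{{}^{I_4}(W_{I_2})}.
\]

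Finally, Lemma \ref{Coxeter decomposition lemma} applied to $W_{I_4} \subset W_{I_2}$ yields the decomposition $W_{I_2} = {}^{I_4}(W_{I_2}) \cdot W_{I_4}$ with additive lengths, so $q_{W_{I_2}} = q_{{}^{I_4}(W_{I_2})} \cdot q_{W_{I_4}}$. Substituting into the formula for $q_d$ and cancelling gives $q_d = q_{(W_{I_1})^{I_3}} \cdot q_{\tilde d}$. The second form $q_{(W_{I_1})^{I_3}} = q_{W_{I_1}}/q_{W_{I_3}}$ is the same lemma applied now to $W_{I_3} \subset W_{I_1}$. The only real subtlety is the $|W_{I_3}|$-fold overcount built into Lemma \ref{lem:1. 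Distance lemma}; the observation that fixing $w_3 = 1$ singles out a canonical representative keeps the bookkeeping clean.
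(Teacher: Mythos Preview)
Your argument is correct but takes a different route from the paper, and it silently adds the hypothesis that $I_2$ is spherical.

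The paper's proof double-counts pairs $(C_1,\sigma_2)$ with $C_1\supset\sigma_1$ and $d(C_1,\sigma_2)=\tilde d\,W_{I_2}$. Since $I_1$ is spherical there are $q_{W_{I_1}}$ choices of $C_1$, and the previous proposition gives $q_{\tilde d}$ choices of $\sigma_2$ for each, so the count is $q_{W_{I_1}}q_{\tilde d}$. On the other side, Proposition~\ref{claim:Distance claims}(3) says that for a fixed $\sigma_2$ at distance $d$ the admissible $C_1$ are exactly the chambers containing the face $\sigma_3$ of color $I_3$, hence there are $q_{W_{I_3}}$ of them; this yields $q_d=q_{W_{I_1}}q_{\tilde d}/q_{W_{I_3}}$ directly, with the equality $q_{W_{I_1}}=q_{(W_{I_1})^{I_3}}q_{W_{I_3}}$ giving the second form.

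Your approach instead fixes $C_1$ and counts pairs $(\sigma_2,C_2)$ with $C_2\supset\sigma_2$, reducing to $q_d\,q_{W_{I_2}}=\sum_{w\in W_{I_1}\tilde d W_{I_2}}q_w$, then evaluates the right side purely Coxeter-theoretically via Lemma~\ref{lem:1. Distance lemma}. This is clean and avoids the geometric input from Proposition~\ref{claim:Distance claims}(3), but both sides of your equation are infinite when $W_{I_2}$ is infinite, so you only recover the proposition under the extra assumption that $I_2$ is spherical. The paper's count stays on the $I_1$ side throughout and therefore needs only the stated hypothesis. For the purposes of the all-dimensional Hecke algebra (where all colors are spherical) your version suffices, but as a proof of the proposition as stated it falls short of full generality.
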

\begin{proof}
Let $\sigma_{1}\in B_{I_{1}}$ be a face of color $I_{1}$. Look at
the pairs $\left(C_{1},\sigma_{2}\right)$ where $C_{1}$ is a chamber,
$\sigma_{1}\in C_{1}$ and $\sigma_{2}$ is of distance $\tilde{d}W_{I_{2}}$
from $C_{1}$. By the last claim the number of such pairs is $\left(\sum_{w\in W_{I_{1}}}q_{w}\right)q_{\tilde{d}}=q_{W_{I_{1}}}q_{\tilde{d}}$.
From \ref{claim:Distance claims} every face $\sigma_{3}$ is counted
$q_{W_{I_{3}}}$ times and the first equality follows. 

Finally, the decomposition $W_{I_{1}}=\left(W_{I_{1}}\right)^{I_{3}}W_{I_{3}}$,
its uniqueness, and the fact that it agrees with lengths of elements
give 
\[
q_{W_{I_{1}}}=\left(\sum_{w\in W_{I_{1}}}q_{w}\right)=\left(\sum_{w\in\left(W_{I_{1}}\right)^{I_{3}}}q_{w}\right)\left(\sum_{w\in W_{I_{3}}}q_{w}\right)=q_{\left(W_{I_{1}}\right)^{I_{3}}}q_{W_{I_{3}}}
\]
\end{proof}
\begin{defn}
\label{def:n_d definition}Using the notations of \ref{claim:Distance calculation},
denote $n_{d}=q_{W_{I_{3}}}$.
\end{defn}

\section{\label{sec:The-All-Dimensional}The All Dimensional Hecke Algebra}

We now define our algebra. We assume that all colors used below are
spherical. It is useful to let our algebra work on all spherical faces
simultaneously. 
\begin{defn}
From now on identify $B_{f}=\cup_{I:I\,spherical}B_{I}$.
\end{defn}

\begin{defn}
For $d\in W_{I_{1}}\backslash W/W_{I_{2}}$ we define the operator
$h_{d}:\mathbb{C}^{B_{f}}\rightarrow\mathbb{C}^{B_{f}}$ by 
\[
h_{d}(f)(\sigma_{1})=\begin{cases}
\sum_{\sigma_{2}:d(\sigma_{1},\sigma_{2})=d}f(\sigma_{2}) & \sigma_{1}\,\text{of\,color}\,I_{1}\\
0 & \sigma_{1}\,\text{not\,of\,color}\,I_{1}
\end{cases}
\]
\end{defn}
\begin{rem}
Notice that we assume here that the number of faces at distance $d$
is finite. This is a result of the regularity of the building.
\end{rem}
\begin{defn}
\emph{The all dimensional Hecke (ADH) algebra $H$} is 
\[
H=span\left\{ h_{d}:d\in W_{I_{1}}\backslash W/W_{I_{2}},I_{1},I_{2}\,\text{spherical}\right\} 
\]

The linear span of all the $h_{d},\,d\in W_{I_{1}}\backslash W/W_{I_{2}}$
is denoted $H_{I_{1},I_{2}}$. We also write $H_{I}=H_{I,I}$. 
\end{defn}
We identify $H_{I}$ and $H_{I_{1},I_{2}}$ with their natural embedding
in $H$.

We should prove that our Hecke algebra is indeed an algebra. Composition
of operators can be used to define multiplication $H_{I_{1},I_{2}}\times H_{I_{2},I_{3}}\rightarrow Hom_{\mathbb{C}}(\mathbb{C}^{B_{I_{3}}},\mathbb{C}^{B_{I_{1}}})$.
However, it is not so obvious why the result is in $H_{I_{1},I_{3}}$
and what it is. Let us start with a simple claim:
\begin{prop}
\label{prop:Iwahori Hecke }The algebra $H_{\phi}$ it is isomorphic
to the abstract \emph{Iwahori-Hecke algebra of $W$} - the algebra
generated by $h_{s},s\in S$ with the Iwahori-Hecke relations:

\[
\begin{array}{cccc}
h_{w}h_{s} & = & h_{ws}\,\,\,\,\,\,\,\,\,\,\,\,\,\,\,\, & if\,\,l(ws)=l(w)+1\\
h_{s}^{2} & = & q_{s}\cdot Id+(q_{s}-1)h_{s}
\end{array}
\]
\end{prop}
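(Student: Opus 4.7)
The plan is to define an algebra homomorphism $\Phi$ from the abstract Iwahori-Hecke algebra $\tilde{H}$ (given by generators $\tilde{h}_s$, $s\in S$, modulo the two listed relations) to $H_\phi$ by sending $\tilde{h}_s\mapsto h_s$, and then to check both surjectivity and injectivity.

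First I verify the relations on the concrete operators. For the quadratic relation, I unwind $h_s^2 f(C)$ as a double sum over chambers $C',C''$ with $d(C,C')=s$ and $d(C',C'')=s$ and group by $C''$. The case $C''=C$ contributes $q_s\cdot f(C)$, because any of the $q_s$ chambers $C'\ne C$ sharing the $s$-panel of $C$ gives a valid intermediate. The case $C''\ne C$ with $d(C,C'')=s$ contributes $(q_s-1)f(C'')$, since the intermediate $C'$ is any chamber in the $s$-star of $C$ other than $C$ and $C''$. For the braid-type relation $h_w h_s=h_{ws}$ when $l(ws)=l(w)+1$, I appeal to the standard extension property for minimal galleries (built into the $W$-metric axioms and reiterated in Proposition \ref{claim:Distance claims}): under the length hypothesis, for each $C'$ with $d(C,C')=w$, every $s$-neighbor $C''\ne C'$ of $C'$ satisfies $d(C,C'')=ws$, and conversely every $C''$ at distance $ws$ from $C$ has a unique such intermediate $C'$. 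Summing chamber-by-chamber yields $h_w h_s=h_{ws}$.

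These verifications show that $\Phi$ extends to a well-defined algebra map $\Phi:\tilde{H}\to H_\phi$. Surjectivity follows inductively from the braid relation: for any reduced expression $w=s_{i_1}\cdots s_{i_l}$ we obtain $h_w=h_{s_{i_1}}\cdots h_{s_{i_l}}=\Phi(\tilde{h}_{s_{i_1}}\cdots\tilde{h}_{s_{i_l}})$, and these $h_w$ span $H_\phi$ by definition. For injectivity I exploit that the concrete operators $\{h_w\}_{w\in W}$ are linearly independent: if $\sum_w c_w h_w=0$ as an operator on $\mathbb{C}^{B_\phi}$, fix a chamber $C_0$, apply to $\delta_{C_0}$, and evaluate at a chamber $C_1$ with $d(C_1,C_0)=w_0$; such a $C_1$ exists for every $w_0\in W$ because any two chambers lie in a common apartment. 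Only the $w=w_0$ term survives, forcing $c_{w_0}=0$. Combined with the standard fact (Matsumoto/Bourbaki) that the $\tilde{h}_w$ (defined via any reduced expression) span the abstract algebra $\tilde{H}$, the linear independence of their images forces them to be a basis and $\Phi$ to be an isomorphism.

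The main obstacle is the injectivity step: verifying the two defining relations is a short gallery calculation, and surjectivity is automatic, but deducing that $\Phi$ has no extra kernel requires the classical fact that the $\tilde{h}_w$ span $\tilde{H}$. One may either quote Matsumoto's theorem, or argue directly from the quadratic and braid relations that every word in the $\tilde{h}_s$ can be rewritten as a $\mathbb{C}$-linear combination of $\tilde{h}_w$'s by a length-decreasing rewriting procedure, using $H_\phi$ itself as the faithful representation that certifies no further collapse occurs.
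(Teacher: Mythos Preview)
Your proof is correct and follows essentially the same route as the paper: verify the two relations hold for the concrete operators, obtain the homomorphism from the abstract algebra, get surjectivity from the spanning set $\{h_w\}$, and get injectivity by combining the linear independence of the concrete $h_w$ with the (cited) fact that the $\tilde h_w$ span the abstract algebra. Your version simply spells out more explicitly the gallery count for $h_s^2$ and the delta-function evaluation for linear independence, which the paper leaves as ``immediate''.
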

\begin{proof}
First, $H_{\phi}$ satisfies the Iwahori-Hecke relations. The relation
$h_{s}^{2}=q_{s}\cdot Id+(q_{s}-1)h_{s}$ is immediate. The relation
$h_{w}h_{s}=h_{ws}$ for $l(ws)=l(w)+1$ follows from the fact that
if $d(C,C_{1})=w,\,d(C_{1},C_{2})=s$ then $d(C,C_{2})=ws$.

Let $H_{\phi}^{\prime}$ be the Iwahori-Hecke algebra. It is well
known that $H_{\phi}^{\prime}$ is an algebra with basis $h_{w},\,w\in W$
(see\textbf{ \cite{humphreys1992reflection},} 7.1). Since $H_{\phi}$
satisfies the relations we have a homomorphism of algebra $H_{\phi}^{\prime}\rightarrow H_{\phi}$.
Since $H_{\phi}$ is spanned by the $h_{w}$, $w\in W$ this homomorphism
is onto and it remains to prove it has a trivial kernel. It is therefore
enough to prove that the $h_{w}$ are linearly independent in $H_{\phi}$.
This is immediate since every operator $h=\sum_{W}\alpha_{w}h_{w}\in H_{\phi}$
with $\alpha_{w}\ne0$ for some $w\in W$ acts non trivially on $\mathbb{C}^{B_{\phi}}$.
\end{proof}
Assume we have colors $I_{2}\subset I_{1}$. Notice that the larger
$I$ is, the face is smaller. That means that each face of color $I_{2}$
has exactly one subface of color $I_{1}$, and each face of color
$I_{1}$ is contained in a constant number of faces of color $I_{2}$.
We define:
\begin{defn}
\label{def:Induction-and-Restriction defnition}\emph{The (unsigned
colored) coboundary operator} $\delta_{I_{2},I_{1}}\in H_{I_{2},I_{1}}$,
$\delta_{I_{2},I_{1}}:\mathbb{C}^{B_{I_{1}}}\rightarrow\mathbb{C}^{B_{I_{2}}}$
is the element $h_{d}\in H_{I_{2},I_{1}}$ for $d=W_{I_{2}}\backslash id/W_{I_{1}}\in W_{I_{2}}\backslash W/W_{I_{1}}$.

\emph{The (unsigned colored) boundary operator} $\partial_{I_{1},I_{2}}\in H_{I_{1},I_{2}}$,
$\partial_{I_{1},I_{2}}:\mathbb{C}^{B_{I_{2}}}\rightarrow\mathbb{C}^{B_{I_{1}}}$
is the element $h_{d}\in H_{I_{1},I_{2}}$ for $d=W_{I_{1}}\backslash id/W_{I_{2}}\in W_{I_{1}}\backslash W/W_{I_{2}}$.

We denote $\delta_{I}=\delta_{\phi,I},\,\partial_{I}=\partial_{I,\phi},\,e_{I}=\delta_{I}\partial_{I}$.
\end{defn}
The coboundary operator $\delta_{I_{2},I_{1}}$ assigns to each face
of color $I_{2}$ the value of its subface of color $I_{1}$. The
boundary operator $\partial_{I_{1},I_{2}}$ assigns to each face of
color $I_{1}$ the sum of values of the faces of color $I_{2}$ containing
it.
\begin{rem}
It is worth noting that (when all faces of dimension $m$ are spherical),
the usual signed boundary and coboundary operators of $B$ between
dimensions $m,m+1$, belong to our algebra. Since the complex $B$
is colored, we have a natural ordering of the vertices of each simplex,
and therefore every simplex has a natural orientation, given (for
example) by ascending sequence of colors. The usual boundary and coboundary
operators are therefore sums with $\pm1$ coefficients of $\delta_{I_{2},I_{1}}$,
$\partial_{I_{1},I_{2}}$, $\left|I_{2}\right|=m$, $\left|I_{1}\right|=m+1$,
$I_{2}\subset I_{1}$. We will not use them at all in this work.
\end{rem}
\begin{lem}
\label{lem:Embedding_Claim}Let $I_{1},I_{2},I\subset[n]$ be spherical.

1. Let $d=W_{I_{1}}\backslash\tilde{d}/W_{I_{2}}\in W_{I_{1}}\backslash W/W_{I_{2}}$.
Then: $h_{d}=\left(1/n_{d}\right)\partial_{I_{1}}h_{\tilde{d}}\delta_{I_{2}}$.
(recall- $\tilde{d}\in W$ the shortest element in the double coset.
$n_{d}$ is defined in \ref{def:n_d definition}).

2. We have $\partial_{I}\delta_{I}=q_{W_{I}}1_{I}$, $e_{I}=\delta_{I}\partial_{I}=\sum_{w\in W_{I}}h_{w}\in H_{\phi}$
. Also $e_{I}^{2}=\left(\sum_{w\in W_{I}}q_{w}\right)e_{I}=q_{W_{I}}e_{I}$.

3. The algebra $H_{I}$ can be embedded in $H_{\phi}$ by $h_{d}\rightarrow q_{W_{I}}^{-1}n_{d}^{-1}e_{I}h_{\tilde{d}}e_{I}$.
\end{lem}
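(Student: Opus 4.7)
The plan is to prove the three parts in the order (2), (1), (3), since part (2) is the simplest and sets up identities reused in the other parts.

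For part (2), I would directly unpack the definitions. On a face $\sigma$ of color $I$, the composition $\partial_I\delta_I$ applied to $f$ evaluates to $\sum_{C\supset\sigma}f(\sigma)$, because $\delta_I f$ on a chamber $C$ reads off $f$ at the unique color-$I$ subface of $C$, which for chambers containing $\sigma$ is $\sigma$ itself. The number of chambers containing $\sigma$ equals $q_{W_I}$ (apply proposition \ref{claim:Distance calculation} with $I_1=I$, $d=$ identity double coset, noting $n_d=1$, $q_{\tilde d}=1$), so $\partial_I\delta_I=q_{W_I}\cdot 1_I$. Dually, $\delta_I\partial_I f$ on $C$ sums $f(C')$ over $C'$ sharing the color-$I$ subface with $C$, i.e.\ over $C'$ with $d(C,C')\in W_I$, giving $\sum_{w\in W_I}h_w=e_I$. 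The identity $e_I^2=q_{W_I}e_I$ follows by combining the two formulas.

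For part (1), I would expand $\partial_{I_1}h_{\tilde d}\delta_{I_2}$ applied to $f$ at a face $\sigma_1$ of color $I_1$. Unwinding the definitions gives
\[
\partial_{I_1}h_{\tilde d}\delta_{I_2}(f)(\sigma_1)=\sum_{C_1\supset\sigma_1}\ \sum_{C_2:\,d(C_1,C_2)=\tilde d}f(\sigma_2(C_2)),
\]
where $\sigma_2(C_2)$ is the unique face of color $I_2$ in $C_2$. The task is to count, for each face $\sigma_2$ of color $I_2$, the number of chamber pairs $(C_1,C_2)$ with $C_1\supset\sigma_1$, $C_2\supset\sigma_2$, and $d(C_1,C_2)=\tilde d$. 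Proposition \ref{claim:Distance claims} part 3 says such pairs exist only if $d(\sigma_1,\sigma_2)=d$, in which case there are unique faces $\sigma_3\supset\sigma_1$ of color $I_3$ and $\sigma_4\supset\sigma_2$ of color $I_4$ through which every such pair factors, together with a bijection $C\supset\sigma_3\leftrightarrow F(C)\supset\sigma_4$ realizing $d(C,F(C))=\tilde d$. Thus the count equals the number of chambers containing $\sigma_3$, which by part (2) is $q_{W_{I_3}}=n_d$. Dividing by $n_d$ yields the formula $h_d=(1/n_d)\partial_{I_1}h_{\tilde d}\delta_{I_2}$.

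For part (3), the cleanest route is to repackage the embedding as conjugation by the ``half-units'' $\iota=\delta_I$ and $\pi=q_{W_I}^{-1}\partial_I$, which by part (2) satisfy $\pi\iota=1_I$ and $\iota\pi=q_{W_I}^{-1}e_I$ (an idempotent in $H_\phi$). The map $\Phi\colon H_I\to H_\phi$ defined by $\Phi(h)=\iota h\pi$ is a homomorphism because
\[
\Phi(h)\Phi(h')=\iota h(\pi\iota)h'\pi=\iota hh'\pi=\Phi(hh'),
\]
and it is injective since $\pi\Phi(h)\iota=h$. Specializing to $h=h_d$ and substituting part (1), $\Phi(h_d)=q_{W_I}^{-1}\delta_I h_d\partial_I=q_{W_I}^{-1}n_d^{-1}\delta_I\partial_I h_{\tilde d}\delta_I\partial_I=q_{W_I}^{-1}n_d^{-1}e_I h_{\tilde d}e_I$, which is the stated formula. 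The only thing still to verify is that the image lies in $H_\phi$ (rather than merely in $H$), which is immediate since $\delta_I h\partial_I$ maps $\mathbb{C}^{B_\phi}\to\mathbb{C}^{B_\phi}$ and is manifestly a linear combination of the $h_w$, $w\in W$.

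The main obstacle will be the counting in part (1): one must use proposition \ref{claim:Distance claims} carefully to justify that each admissible $\sigma_2$ contributes exactly $n_d$ chamber pairs, with no over- or under-counting. Once this identity is in hand, parts (2) and (3) are essentially formal.
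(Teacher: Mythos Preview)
Your proposal is correct and follows essentially the same approach as the paper: part (2) by direct unwinding of $\delta_I\partial_I$ and $\partial_I\delta_I$, part (1) by the pair-counting argument drawn from Proposition~\ref{claim:Distance claims}, and part (3) by combining the two. Your treatment of (3) via the half-units $\iota=\delta_I$, $\pi=q_{W_I}^{-1}\partial_I$ with $\pi\iota=1_I$ is a slightly more explicit packaging than the paper's one-line ``follows from (1) and (2)'', but it is exactly the natural way to flesh out that claim and supplies the homomorphism and injectivity checks that the paper leaves implicit.
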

\begin{proof}
(1) follows from \ref{claim:Distance claims} and \ref{claim:Distance calculation}. 

For (2), Let $f\in\mathbb{C}^{B_{\phi}}$. Then $e_{I}f(C)=\delta_{I}\partial_{I}f(C)$
is equal to the sum of $f$ over all chambers sharing with $C$ its
face of color $I$. Every such chamber is of distance $w\in W_{I}$
from $C$, therefore $e_{I}=\sum_{w\in W_{I}}h_{w}$. Applying $e_{I}$
twice counts each element $\sum_{w\in W_{I}}q_{w}=q_{W_{I}}$ times.

(3) follows from (1) and (2).
\end{proof}
\begin{thm}
\label{prop:GH algebra is an algebra}The ADH algebra $H$ is indeed
an algebra. It is spanned by $h_{d}$, $d\in W_{I_{1}}\backslash W/W_{I_{2}}$,
for $I_{1},I_{2}$ spherical. The relations defining it are the Iwahori-Hecke
relations and the relations:

\textup{
\[
\begin{array}{cccc}
h_{d} & = & \left(1/n_{d}\right)\partial_{I_{1}}h_{\tilde{d}}\delta_{I_{2}}\\
\partial_{I}\delta_{I} & = & q_{W_{I}}1_{I}\\
\delta_{I}\partial_{I} & = & \sum_{w\in W_{I}}h_{w}\\
\delta_{I_{1}}\partial_{I_{2}} & = & 0 & \text{for }I_{1}\ne I_{2}
\end{array}
\]
}

The algebra $H$ is generated by the coboundary and boundary operators
$\delta_{I}=\delta_{\phi,I}$, $\partial_{I}=\partial_{I,\phi}$ ,
$I$ spherical, as well as the identity operator $1_{\phi}$ of $H_{\phi}$. 
\end{thm}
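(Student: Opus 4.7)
The plan is to leverage Lemma \ref{lem:Embedding_Claim}, which already realizes each $h_d\in H_{I_1,I_2}$ as $(1/n_d)\,\partial_{I_1}h_{\tilde d}\delta_{I_2}$, to reduce everything to the Iwahori-Hecke algebra $H_\phi$ (whose algebra structure is settled by Proposition \ref{prop:Iwahori Hecke }). Concretely, I will show the two-sided equality
\[
H_{I_1,I_3}\;=\;\partial_{I_1}\,H_\phi\,\delta_{I_3},
\]
and then derive closure under composition, the span, the four relations, and the generation statement from this identification.

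First I would verify closure under composition. If $h\in H_{I_1,I_2}$ and $h'\in H_{I_2',I_3}$, then the composition is zero unless $I_2=I_2'$, because $h'$ takes values in functions supported on $B_{I_2'}$ while $h$ acts as zero off $B_{I_2}$; this also gives the last relation $\delta_{I_1}\partial_{I_2}=0$ for $I_1\neq I_2$. When $I_2=I_2'$, factor both operators using Lemma \ref{lem:Embedding_Claim}(1). The middle of the resulting expression contains $\delta_{I_2}\partial_{I_2}=e_{I_2}\in H_\phi$ by part (2) of the same lemma, so the whole composition lies in $\partial_{I_1}\,H_\phi\,\delta_{I_3}$. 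It therefore suffices to show the inclusion $\partial_{I_1}\,H_\phi\,\delta_{I_3}\subset H_{I_1,I_3}$, for which one expands an arbitrary $h=\sum_w\alpha_w h_w\in H_\phi$ and computes $\partial_{I_1} h_w\delta_{I_3}$ directly. Using Proposition \ref{claim:Distance claims} and the double-coset decomposition of Lemma \ref{lem:1. Distance lemma}, this operator depends only on the double coset $d=W_{I_1}wW_{I_3}$ up to a multiplicity counted by $|W_{I_3}|$ (or the weighted analog $q_{W_{I_3}}$), and equals an explicit scalar multiple of $h_d$. Regrouping the sum by double cosets expresses $\partial_{I_1} h\delta_{I_3}$ as a $\mathbb C$-linear combination of $h_d$'s, simultaneously proving the span statement and closing the composition inside $H$.

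The four displayed relations are then immediate: the first is Lemma \ref{lem:Embedding_Claim}(1); the second and third are Lemma \ref{lem:Embedding_Claim}(2); the fourth is the support observation above. That these relations together with the Iwahori-Hecke relations suffice is equivalent to the $h_d$ being a basis of $H$: they span by the previous paragraph, and they are linearly independent because distinct $h_d$ take the indicator of a single face of the appropriate color to manifestly independent elements of $\mathbb C^{B_f}$ (a finite sum $\sum\alpha_d h_d=0$ applied to $1_\sigma$ for $\sigma$ of color $I_2$ reads off each coefficient $\alpha_d$ on a disjoint union of faces, by Proposition \ref{claim:Distance claims}). Finally, $H$ is generated by the $\delta_I,\partial_I,1_\phi$ because $H_\phi$ is generated by the $h_s$, and the Iwahori-Hecke relation $h_s^2=q_s\cdot 1_\phi+(q_s-1)h_s$ combined with $\delta_{\{s\}}\partial_{\{s\}}=1_\phi+h_s$ from Lemma \ref{lem:Embedding_Claim}(2) recovers each $h_s$; then every $h_d$ is obtained from the first displayed relation.

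The main obstacle is the bookkeeping in the identification $\partial_{I_1}h_{\tilde d}\delta_{I_3}=n_d\cdot h_d$ with the correct multiplicity, which is where the combinatorics of Lemma \ref{lem:1. Distance lemma} (specifically the groups $I_3=I_1\cap \tilde dI_3\tilde d^{-1}$ type intersections and the bijection $\sim_d$) enters; once that scalar is pinned down, both closure and the exact form of the relations fall out together, and the rest of the proof is bookkeeping.
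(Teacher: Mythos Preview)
Your proposal is correct and follows essentially the same route as the paper: factor every $h_d$ through $\partial_{I_1}H_\phi\delta_{I_2}$ via Lemma \ref{lem:Embedding_Claim}, collapse the middle $\delta_{I_2}\partial_{I_2}=e_{I_2}\in H_\phi$ to get closure, and recover the generators from $h_s=\delta_{\{s\}}\partial_{\{s\}}-1_\phi$. The paper's proof is terser and simply asserts $H_{I_1,I_2}=\partial_{I_1}H_\phi\delta_{I_2}$ directly from Lemma \ref{lem:Embedding_Claim}, whereas you spell out the reverse inclusion and the linear independence of the $h_d$; these additions are sound but not a genuinely different argument.
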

\begin{proof}
By lemma \ref{lem:Embedding_Claim} we have $H_{I_{1},I_{2}}=\mbox{span}\left\{ \partial_{I_{1}}\tilde{h}\delta_{I_{2}}:\tilde{h}\in H_{\phi}\right\} $.
Therefore for $h_{1}=\partial_{I_{1}}\tilde{h}_{1}\delta_{I_{2}}\in H_{I_{1},I_{2}},\,h_{2}=\partial_{I_{2}}\tilde{h}_{2}\delta_{I_{3}}\in H_{I_{2},I_{3}}$
we have $h_{1}h_{2}=\partial_{I_{1}}\tilde{h}_{1}\delta_{I_{2}}\partial_{I_{2}}\tilde{h}_{2}\delta_{I_{3}}$.But
$\tilde{h}_{1}\delta_{I_{2}}\partial_{I_{2}}\tilde{h}_{2}=\tilde{h}_{1}e_{I_{2}}\tilde{h}_{2}\in H_{\phi}$.
Therefore $h_{1}h_{2}\in H_{I_{1},I_{3}}$ and $H$ is an algebra.

The algebra $H_{\phi}$ is generated by $h_{s},s\in S$. Since $h_{s}=\delta_{\{s\}}\partial_{\{s\}}-1_{\phi}$
we get the result for it. Then lemma \ref{lem:Embedding_Claim} gives
the result for the entire algebra.
\end{proof}
The theorem actually shows that $H$ can be defined abstractly, for
any parameter system $\overrightarrow{q}$. The conditions on the
parameter system here are that $q_{s}=q_{s^{\prime}}$ when $m_{s,s^{\prime}}$
is odd (otherwise the Iwahori-Hecke algebra is not well defined, see\textbf{
\cite{humphreys1992reflection},} 7.1).
\begin{defn}
Denote by $1_{I}=h_{d}$, $d=W_{I}\backslash1/W_{I}$ the identity
operator of $H_{I}$.
\end{defn}
The ADH algebra has an identity element $1=\sum_{I\,spherical}1_{I}$.
It also has an adjunction, making it a $\ast$-algebra- the involution
$d=W_{I_{2}}wW_{I_{1}}\rightarrow d^{*}=W_{I_{1}}w^{-1}W_{I_{2}}$
extends to $\left(\alpha h_{d}\right)^{*}=\bar{\alpha}h_{d^{*}}$. 
\begin{prop}
\label{pro:Adjuction is involution}We have $\left(h_{1}h_{2}\right)^{*}=h_{2}^{*}h_{1}^{*}$
for every $h_{1},h_{2}\in H$.
\end{prop}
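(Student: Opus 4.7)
The cleanest route is to realize every $h \in H$ as a (row- and column-finite) matrix on $B_f$ and show that the involution $h_d \mapsto h_{d^*}$ corresponds to matrix transpose. Antimultiplicativity then follows from the standard matrix identity $(AB)^{T} = B^{T} A^{T}$ applied entry by entry.

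More precisely, I plan the following steps. First, for $d \in W_{I_1}\backslash W/W_{I_2}$, write the matrix entries of $h_d$ as
\[
M(h_d)_{\sigma_1,\sigma_2} = \begin{cases} 1 & \sigma_1 \in B_{I_1},\ \sigma_2 \in B_{I_2},\ d(\sigma_1,\sigma_2) = d,\\ 0 & \text{otherwise}.\end{cases}
\]
By the definition of $h_d(f)(\sigma_1)$, this matrix representation is consistent with the action on $\mathbb{C}^{B_f}$, and composition of operators is given by ordinary matrix multiplication (well defined since the matrices are row and column finite, which is exactly what ensures the sums involved are finite). Second, I would use the proposition that $d(\sigma_1,\sigma_2) = d$ if and only if $d(\sigma_2,\sigma_1) = d^*$ to compute
\[
M(h_d)_{\sigma_2,\sigma_1} = \mathbb{1}[\sigma_2 \in B_{I_1},\ \sigma_1 \in B_{I_2},\ d(\sigma_2,\sigma_1)=d] = M(h_{d^*})_{\sigma_1,\sigma_2},
\]
so that $M(h_d^*) = M(h_d)^{T}$ (the entries being real, conjugation does nothing).

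Third, extend by (conjugate) linearity: since $H$ is spanned over $\mathbb{C}$ by the $h_d$ and the involution is defined by $(\alpha h_d)^* = \bar\alpha h_{d^*}$, the map $h \mapsto M(h)^{T}$ (with complex conjugation of scalars) agrees with $h \mapsto h^*$ on a spanning set and both are conjugate-linear, hence they coincide on all of $H$. Finally, for $h_1, h_2 \in H$, the entrywise computation
\[
M(h_1 h_2)^{T}_{\sigma_1,\sigma_2} = \sum_{\sigma} M(h_1)_{\sigma_2,\sigma}\,M(h_2)_{\sigma,\sigma_1} = \sum_{\sigma} M(h_2)^{T}_{\sigma_1,\sigma}\,M(h_1)^{T}_{\sigma,\sigma_2}
\]
gives $M((h_1 h_2)^*) = M(h_2^*)\,M(h_1^*) = M(h_2^* h_1^*)$, and since the matrix realization $h \mapsto M(h)$ is injective (as already observed in the proof of Proposition~\ref{prop:Iwahori Hecke }, the underlying operators act faithfully on $\mathbb{C}^{B_f}$), we conclude $(h_1 h_2)^* = h_2^* h_1^*$.

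The only genuine point of care is the infinite-matrix aspect: I need the sums in the definition of matrix multiplication and of the transpose identity to be finite. This is exactly guaranteed by row- and column-finiteness of each $h_d$ (which itself follows from regularity of the building and sphericality of the colors involved). Once that is in hand, everything reduces to a formal entry-by-entry calculation and linearity, so there is no real obstacle beyond bookkeeping of colors in the double coset spaces.
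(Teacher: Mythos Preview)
Your proof is correct and is essentially the same argument as the paper's: both show that the involution $h_d\mapsto h_{d^*}$ coincides with taking the Hilbert-space adjoint (equivalently, conjugate transpose) in the faithful action on functions on $B_f$, using the symmetry $d(\sigma_1,\sigma_2)=d\iff d(\sigma_2,\sigma_1)=d^*$, and then invoke the antimultiplicativity of the adjoint together with faithfulness. The only cosmetic difference is that the paper phrases this via the inner product on $L_2(B_f)$ while you phrase it via entrywise matrix transpose on $\mathbb{C}^{B_f}$.
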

\begin{proof}
Consider the action of $H$ on $L_{2}(B_{f})$, i.e. the $L_{2}$
norm on $B_{f}$ defined by the inner product $\left\langle f,g\right\rangle =\sum_{\sigma\in B_{f}}\left\langle \bar{f}(\sigma),g(\sigma)\right\rangle $.
Then for $d\in W_{I_{1}}\backslash W/W_{I_{2}}$ , 
\[
\left\langle h_{d}f,g\right\rangle =\sum_{\sigma\in B_{f}}\sum_{\sigma':d(\sigma,\sigma')=d}\left\langle \bar{f}(\sigma'),g(\sigma)\right\rangle =\sum_{\sigma'\in B_{f}}\sum_{\sigma:d(\sigma',\sigma)=d^{*}}\left\langle \bar{f}(\sigma'),g(\sigma)\right\rangle =\left\langle f,h_{d^{*}}g\right\rangle 
\]

Therefore the $\ast$-operator on $H$ agrees with the $\ast$-operator
coming from the inner product on $L_{2}(B_{f})$. Since the homomorphism
$H\rightarrow\hom\left(L_{2}(B_{f}),L_{2}(B_{f})\right)$ is an embedding
of $H$, the result follows.
\end{proof}
The fact that the ADH algebra is well defined and its algebra relations
can help to understand the geometry of the building. In particular,
one can show (Compare \cite{abramenko2015distance} theorem 3.1):
\begin{lem}
\label{prop:strongly regular}Let $B$ be a locally finite regular
building. Let $\sigma_{0}\in B_{I_{0}}$, $\sigma_{1}\in B_{I_{1}}$
be spherical faces of $B$ of distance $d_{1}\in W_{I_{0}}\backslash W/W_{I_{1}}$.
Let $d_{2},\tilde{d}$ be distances $d_{2}\in W_{I_{0}}\backslash W/W_{I_{2}}$,
$\tilde{d}\in W_{I_{1}}\backslash W/W_{I_{2}}$. Let $M$ be the number
of faces $\sigma_{2}\in B_{I_{2}}$ with $d(\sigma_{0},\sigma_{2})=d_{2}$,
$d(\sigma_{1},\sigma_{2})=\tilde{d}$. Then $M$ is a polynomial function
on the parameter system $\left(q_{i}\right),\,i\in S$ which depends
only on $d_{0},d_{1}$ and \textup{$\tilde{d}$}. More precisely:
Let $\alpha$ be the coefficient of $h_{d_{1}}$ in the decomposition
of $h_{d_{2}}h_{\tilde{d}^{*}}$into a sum of basis elements. Then
$M=\alpha$.
\end{lem}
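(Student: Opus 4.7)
The plan is to evaluate both sides against indicator functions of faces and use the basis structure of $H_{I_0, I_1}$ given by Theorem \ref{prop:GH algebra is an algebra}. Since $\tilde{d} \in W_{I_1} \backslash W / W_{I_2}$, we have $\tilde{d}^* \in W_{I_2} \backslash W / W_{I_1}$, so $h_{\tilde{d}^*} \in H_{I_2, I_1}$ and $h_{d_2} h_{\tilde{d}^*} \in H_{I_0, I_1}$; this composition lies in the span of $\{h_{d'} : d' \in W_{I_0} \backslash W / W_{I_1}\}$ by the preceding algebra result.

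The computation is short. First I would note that for the indicator function $\mathbf{1}_{\sigma_1} \in \mathbb{C}^{B_{I_1}}$ one has
\[
h_{\tilde{d}^*}(\mathbf{1}_{\sigma_1})(\sigma_2) = \mathbf{1}[d(\sigma_2, \sigma_1) = \tilde{d}^*] = \mathbf{1}[d(\sigma_1, \sigma_2) = \tilde{d}],
\]
so applying $h_{d_2}$ and evaluating at $\sigma_0$ gives exactly
\[
\bigl(h_{d_2} h_{\tilde{d}^*}(\mathbf{1}_{\sigma_1})\bigr)(\sigma_0) = \sum_{\sigma_2 : d(\sigma_0, \sigma_2) = d_2} \mathbf{1}[d(\sigma_1, \sigma_2) = \tilde{d}] = M.
\]
On the other hand, writing $h_{d_2} h_{\tilde{d}^*} = \sum_{d'} \alpha_{d'} h_{d'}$ in the basis of $H_{I_0, I_1}$, the relation $h_{d'}(\mathbf{1}_{\sigma_1})(\sigma_0) = \mathbf{1}[d(\sigma_0, \sigma_1) = d']$ picks out only the term $d' = d_1$. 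Equating the two expressions yields $M = \alpha_{d_1}$, which already proves that $M$ depends only on $d_1$, $d_2$, $\tilde{d}$ and not on the particular choice of $\sigma_0, \sigma_1$ realizing $d_1$.

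For the polynomial claim, I would invoke the explicit presentation of $H$ from Theorem \ref{prop:GH algebra is an algebra}. By Lemma \ref{lem:Embedding_Claim}, each $h_d$ is expressible in terms of the Iwahori-Hecke generators and the boundary/coboundary operators, with coefficients that are rational functions of the parameters $(q_s)$ of the form $1/n_d$ or $q_{W_I}$. Multiplication in the Iwahori-Hecke algebra has structure constants which are polynomials in the $q_s$ (this follows by iterating the quadratic relation $h_s^2 = q_s \cdot \mathrm{Id} + (q_s - 1) h_s$). Combining these, the coefficient $\alpha_{d_1}$ of $h_{d_2} h_{\tilde{d}^*}$ in the basis $\{h_{d'}\}$ is a polynomial in the $q_s$; the potential denominators $n_d$ and $q_{W_I}$ cancel because $M$ is a nonnegative integer for every specialization, so all rational expressions appearing must in fact be polynomial.

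There is essentially no deep obstacle: the whole content is the interplay between the action of $H$ on indicator functions and its decomposition in the natural basis. The only bookkeeping care required is in tracking which $H_{I, I'}$ each operator belongs to (especially that $\tilde{d}^*$ lives in $W_{I_2} \backslash W / W_{I_1}$, not in $W_{I_1} \backslash W / W_{I_2}$), and in checking that the polynomiality statement survives the division by $n_d$ factors that appear in the presentation of $H$; both are routine given the framework already set up.
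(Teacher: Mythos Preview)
Your proof is correct and is essentially identical to the paper's: both evaluate $h_{d_2}h_{\tilde d^*}$ on the indicator function of $\sigma_1$ at the point $\sigma_0$ and compare with the basis expansion to read off $M=\alpha$. You supply more detail on the polynomiality claim than the paper does (the paper's proof is two lines and does not address polynomiality separately), but the core argument is the same.
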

\begin{proof}
To show that $M=\alpha$ choose a function $f_{\sigma_{1}}$ with
$f_{\sigma_{1}}(\sigma_{1})=1$, $f_{\sigma_{1}}(\sigma)=0$ for $\sigma\ne\sigma_{1}$.
Then $M=h_{d_{2}}h_{\tilde{d}^{*}}f_{\sigma_{1}}(\sigma_{0})=\alpha h_{d_{1}}f_{\sigma_{1}}(\sigma_{0})=\alpha$.
\end{proof}
\begin{lem}
\label{Strongly Regular Lemma}Let $\sigma_{0}$ be fixed, $\sigma_{1}$
be at distance $d_{1}=d(\sigma_{0},\sigma_{1})$ and $\sigma_{2}$
be at distance $d_{2}=d(\sigma_{0},\sigma_{2})$. Let $\tilde{d}$
be some distance. Let $M_{1}$ be the number of $\sigma_{1}^{\prime}$
with $d(\sigma_{0},\sigma_{1}^{\prime})=d_{1}$, $d(\sigma_{1}^{\prime},\sigma_{2})=\tilde{d}$.
Let $M_{2}$ be the number of $\sigma_{2}^{\prime}$ with $d(\sigma_{0},\sigma_{2}^{\prime})=d_{2}$,
$d(\sigma_{1},\sigma_{2}^{\prime})=\tilde{d}$. Then $M_{1}q_{d_{2}}=M_{2}q_{d_{1}}$.
\end{lem}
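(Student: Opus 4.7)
The plan is to prove the identity via a double count of the set
\[
S = \{ (\sigma_1', \sigma_2') : d(\sigma_0, \sigma_1') = d_1,\ d(\sigma_0, \sigma_2') = d_2,\ d(\sigma_1', \sigma_2') = \tilde{d} \}
\]
in two different orders, where the ``independence of base face'' in each step is supplied by the preceding lemma \ref{prop:strongly regular}.

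First I would count $|S|$ by choosing $\sigma_1'$ first. By definition of $q_{d_1}$ there are exactly $q_{d_1}$ candidates for $\sigma_1'$, and for any such fixed $\sigma_1'$, lemma \ref{prop:strongly regular} (applied with the two fixed faces being $\sigma_0$ and $\sigma_1'$, and the distance between them being $d_1$) tells me that the number of $\sigma_2'$ with $d(\sigma_0, \sigma_2') = d_2$ and $d(\sigma_1', \sigma_2') = \tilde{d}$ is a quantity depending only on the triple $(d_1, d_2, \tilde{d})$. In particular, specializing to $\sigma_1' = \sigma_1$ shows this number equals $M_2$. Hence $|S| = q_{d_1} M_2$.

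Next I would count $|S|$ by choosing $\sigma_2'$ first. There are $q_{d_2}$ choices. For each fixed $\sigma_2'$, the count of $\sigma_1'$ with $d(\sigma_0, \sigma_1') = d_1$ and $d(\sigma_1', \sigma_2') = \tilde{d}$ is again independent of the particular $\sigma_2'$ chosen, by lemma \ref{prop:strongly regular} applied to the two fixed faces $\sigma_0$ and $\sigma_2'$ (one must invoke that $d(\sigma_1', \sigma_2') = \tilde{d}$ is equivalent to $d(\sigma_2', \sigma_1') = \tilde{d}^{\ast}$, so the count genuinely depends only on the distances $d_2, d_1, \tilde{d}^{\ast}$). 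Taking $\sigma_2' = \sigma_2$ identifies this common value with $M_1$, giving $|S| = q_{d_2} M_1$.

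Equating the two expressions for $|S|$ yields $q_{d_1} M_2 = q_{d_2} M_1$, i.e.\ the desired identity $M_1 q_{d_2} = M_2 q_{d_1}$. The only real content is the invocation of \ref{prop:strongly regular} to replace ``the count at a specific face'' by ``the count at any face at the correct distance'', and there is no obstacle beyond being careful about the asymmetry of $d$ and the role of $\tilde{d}$ versus $\tilde{d}^{\ast}$; the argument is a clean two-way count.
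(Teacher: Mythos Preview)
Your proof is correct and follows essentially the same approach as the paper: both double-count the set of pairs $(\sigma_1',\sigma_2')$ satisfying the three distance constraints, using lemma~\ref{prop:strongly regular} to see that the inner count is independent of the particular outer choice. Your version is slightly more detailed (you make explicit the use of $\tilde d^{\ast}$ in the second count), but the argument is the same.
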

\begin{proof}
Consider the number $M$ of pairs $(\sigma_{1}^{\prime},\sigma_{2}^{\prime})$
with $d(\sigma_{0},\sigma_{1}^{\prime})=d_{1}$, $d(\sigma_{0},\sigma_{2}^{\prime})=d_{2}$,
$d(\sigma_{1}^{\prime},\sigma_{2}^{\prime})=\tilde{d}$. By proposition
\ref{prop:strongly regular} the number $M_{2}$ of $\sigma_{2}^{\prime}$
corresponding to a single $\sigma_{1}$ does not depend on $\sigma_{1}$.
Therefore $M=M_{2}q_{d_{1}}$ and by symmetry also $M=M_{1}q_{d_{2}}$.
\end{proof}
We can now show that the operators of $H$ commute with spherical
operators.
\begin{defn}
\label{def: spherical operators}For any spherical face $\sigma_{0}$
we define a \emph{spherical average operator} $\rho_{\sigma_{0}}:\mathbb{C}^{B_{f}}\rightarrow\mathbb{C}^{B_{f}}$
by 

\[
\rho_{\sigma_{0}}f(\sigma)=\frac{1}{q_{d(\sigma_{0},\sigma)}}\sum_{\sigma':d(\sigma_{0},\sigma)=d(\sigma_{0},\sigma')}f(\sigma')=\frac{1}{q_{d(\sigma_{0},\sigma)}}h_{d(\sigma_{0},\sigma)}f(\sigma_{0})
\]
\end{defn}
\begin{prop}
\label{Commutes_With_Spherical_Average}The ADH algebra commutes with
spherical average operators. That is- for any $h\in H$ and any face
$\sigma_{0}$ we have $\rho_{\sigma_{0}}h=h\rho_{\sigma_{0}}$.
\end{prop}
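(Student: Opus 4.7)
The plan is to reduce to basis elements and then show that both $\rho_{\sigma_0}h_d f$ and $h_d\rho_{\sigma_0}f$ can be rewritten as the same double sum, the key tool being Lemma~\ref{Strongly Regular Lemma}.

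Since $H=\operatorname{span}\{h_d\}$ with $d\in W_{I_1}\backslash W/W_{I_2}$ ranging over double cosets with $I_1,I_2$ spherical, it suffices to prove $\rho_{\sigma_0}h_d=h_d\rho_{\sigma_0}$. Fix such $d$, fix a face $\sigma_1\in B_{I_1}$, and set $d_1=d(\sigma_0,\sigma_1)$. Unwinding the definitions of $\rho_{\sigma_0}$ and $h_d$ gives
\[
\rho_{\sigma_0}h_d f(\sigma_1)=\frac{1}{q_{d_1}}\sum_{\sigma_2\in B_{I_2}}N(\sigma_2)\,f(\sigma_2),
\]
where $N(\sigma_2)=\bigl|\{\sigma_1':d(\sigma_0,\sigma_1')=d_1,\ d(\sigma_1',\sigma_2)=d\}\bigr|$. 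Similarly, grouping the sum in $h_d\rho_{\sigma_0}f(\sigma_1)$ by the value $d_2=d(\sigma_0,\sigma_2)$ yields
\[
h_d\rho_{\sigma_0}f(\sigma_1)=\sum_{d_2}\frac{M(d_2)}{q_{d_2}}\sum_{\sigma_2':d(\sigma_0,\sigma_2')=d_2}f(\sigma_2'),
\]
where $M(d_2)=\bigl|\{\sigma_2':d(\sigma_0,\sigma_2')=d_2,\ d(\sigma_1,\sigma_2')=d\}\bigr|$, a quantity depending only on the distance class $d_2$ because $\sigma_0$ and $\sigma_1$ are already fixed.

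Now apply Lemma~\ref{Strongly Regular Lemma} with the roles $\tilde d=d$: for any $\sigma_2$ with $d(\sigma_0,\sigma_2)=d_2$, the counts $N(\sigma_2)$ and $M(d_2)$ are precisely the $M_1$ and $M_2$ of the lemma, so $N(\sigma_2)\,q_{d_2}=M(d_2)\,q_{d_1}$, i.e.\ $N(\sigma_2)/q_{d_1}=M(d_2)/q_{d_2}$. Substituting this into the first expression and regrouping by $d_2$ gives exactly the second expression, so $\rho_{\sigma_0}h_d f(\sigma_1)=h_d\rho_{\sigma_0}f(\sigma_1)$. Since $\sigma_1$ and $d$ were arbitrary, this establishes the proposition.

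The main obstacle is the asymmetric appearance of $\sigma_0$ (center of the averaging) versus $\sigma_1$ (point at which we evaluate): the combinatorial weights on the two sides of the identity live at different faces, and it is not visually obvious that they match. The counting symmetry of Lemma~\ref{Strongly Regular Lemma}, which in turn rests on Lemma~\ref{prop:strongly regular}, is exactly what converts one into the other, so once the sums are organized by the distance $d_2=d(\sigma_0,\sigma_2)$ the identity becomes a direct application.
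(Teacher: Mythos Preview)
Your proof is correct and follows essentially the same approach as the paper: both reduce to basis elements $h_d$, both identify the two relevant counts (your $N(\sigma_2)$ and $M(d_2)$ are exactly the $M_1$ and $M_2$ of Lemma~\ref{Strongly Regular Lemma}), and both conclude by applying that lemma. The only cosmetic difference is that the paper tests against delta functions $f_{\sigma_2}$ while you expand both sides for a general $f$ and compare coefficients; these are equivalent formulations of the same computation.
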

\begin{proof}
Let $\tilde{d}\in W_{I}\backslash W/W_{I_{2}}$ be a distance. We
need to prove that $(h_{\tilde{d}}\rho_{\sigma_{0}}f)(\sigma_{1})=(\rho_{\sigma_{0}}h_{\tilde{d}}f)(\sigma_{1})$
for every $\sigma_{1}\in B_{f}$. It if enough to prove it for the
functions $f_{\sigma_{2}}$, $\sigma_{2}\in B_{f}$ fixed, defined
by $f_{\sigma_{2}}(\sigma_{2})=1$, $f(\sigma)=0$ for $\sigma\ne\sigma_{2}$.

Fix $\sigma_{0},$$\sigma_{1}$, $\sigma_{2}$, $\tilde{d}$. By definition,
$(h_{\tilde{d}}\rho_{\sigma_{0}}f_{\sigma_{2}})(\sigma_{1})$ equals
the number of $\sigma_{2}^{\prime}$ with $d(\sigma_{0},\sigma_{2}^{\prime})=d(\sigma_{0},\sigma_{2})=d_{2}$,
$d(\sigma_{1},\sigma_{2}^{\prime})=\tilde{d}$, divided by $q_{d_{2}}$.
Similarly, $(\rho_{\sigma_{0}}h_{\tilde{d}}f_{\sigma_{2}})(\sigma_{1})$
equals the number of $\sigma_{1}^{\prime}$ with $d(\sigma_{0},\sigma_{1}^{\prime})=d(\sigma_{0},\sigma_{1})=d_{1}$,
$d(\sigma_{1}^{\prime},\sigma_{2})=\tilde{d}$, divided by $q_{d_{1}}$.
By lemma \ref{Strongly Regular Lemma}, both numbers are equal.
\end{proof}

\section{\label{sec:Building-Automorphisms-I}Building Automorphisms I}

Recall that an automorphism $\gamma$ of the building is an automorphism
of the simplicial complex respecting colors of the faces.
\begin{defn}
Let $G$ be a subgroup of the automorphism group of the building.
The group $G$ is called:
\begin{itemize}
\item \emph{Chamber transitive} if for every 2 chambers $C_{1},C_{,2}$
we have an automorphism $\gamma\in G$ such that $\gamma(C_{1})=C_{2}$.
\item \emph{Weyl transitive} if for every 4 chambers $C_{1},C_{,2},C_{3},C_{4}$
such that $d(C_{1},C_{2})=d(C_{3},C_{4})$ we have an automorphism
$\gamma\in G$ such that $\gamma(C_{1})=C_{2},\gamma(C_{3})=C_{4}$.
\item \emph{Strongly transitive} , or has a \emph{BN- pair} if for every
2 chambers $C_{1},C_{,2}$ and two apartments containing them $C_{1}\in A_{1},C_{2}\in A_{2}$
we have an automorphism $\gamma\in G$ such that $\gamma(C_{1})=C_{2},\gamma(A_{1})=A_{2}$.
\end{itemize}
\end{defn}
Notice that the strongly transitive notion actually depends on the
choice of apartments of the building. 
\begin{lem}
An automorphism group that is strongly transitive is Weyl transitive.
An automorphism group that is Weyl transitive is chamber transitive.
A building that has a chamber transitive automorphism group is regular.
\end{lem}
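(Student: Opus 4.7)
The plan is to prove the three implications in order, relying on basic properties of apartments and the $W$-distance established earlier.

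For the first implication, that strongly transitive implies Weyl transitive, the key observation is that within a single apartment $A$ (which is isomorphic to the Coxeter complex $\mathbb{W}$, where $d(w,w') = w^{-1}w'$), the $W$-distance from a fixed chamber uniquely determines the other chamber. Given chambers $C_1, C_2, C_3, C_4$ with $d(C_1,C_2) = d(C_3,C_4)$, I would first invoke the earlier lemma that every two chambers lie in a common apartment to select $A_1 \supset \{C_1,C_2\}$ and $A_2 \supset \{C_3,C_4\}$. Strong transitivity applied to the pairs $(C_1,A_1)$ and $(C_3,A_2)$ then supplies an automorphism $\gamma$ with $\gamma(C_1) = C_3$ and $\gamma(A_1) = A_2$. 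Since automorphisms preserve the $W$-distance, $\gamma(C_2)$ is a chamber of $A_2$ at distance $d(C_1,C_2) = d(C_3,C_4)$ from $C_3$, and the uniqueness observation forces $\gamma(C_2) = C_4$.

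For the second implication, that Weyl transitive implies chamber transitive, I would simply apply Weyl transitivity to the degenerate 4-tuple $(C_1, C_1, C_2, C_2)$: both pairs have $W$-distance equal to the identity, so the definition produces an automorphism $\gamma$ with $\gamma(C_1) = C_2$.

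For the third implication, that chamber transitivity implies regularity, I would fix a color $i \in S$ and two chambers $C, C'$. Chamber transitivity gives some $\gamma$ with $\gamma(C) = C'$, and since $\gamma$ is a color-preserving simplicial automorphism, it restricts to a bijection between the $i$-adjacent neighbors of $C$ and those of $C'$. Hence the number $q_i$ of $i$-neighbors is independent of the chosen chamber.

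The only mildly geometric step is the uniqueness-in-apartment fact used in the first implication; everything else is direct unpacking of definitions, so I expect the whole argument to occupy only a handful of lines.
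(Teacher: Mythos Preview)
Your proposal is correct and is essentially the fully unpacked version of the paper's proof, which reads in its entirety ``Follows from the definitions.'' You have simply written out the two or three lines of reasoning that the paper leaves implicit (apartment containing two chambers, uniqueness of a chamber at a given $W$-distance inside an apartment, the degenerate $4$-tuple for chamber transitivity, and the bijection of $i$-neighbors for regularity), so there is nothing to compare.
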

\begin{proof}
Follows from the definitions.
\end{proof}
\begin{lem}
The distance between the faces $\sigma_{1},\sigma_{2}$ is preserved
by every (color preserving) automorphism $\gamma:B\rightarrow B$.
\end{lem}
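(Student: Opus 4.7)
The plan is to reduce the claim first to the case of chambers, and then lift it to arbitrary spherical faces via the definition of distance through minimal galleries.

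First, I would observe that a color preserving automorphism $\gamma$ induces a bijection on the set $B_\phi$ of chambers which preserves $i$-adjacency for every $i \in [n]$. Indeed, two chambers $C, C'$ are $i$-adjacent exactly when they share a panel of color $\{i\}$; since $\gamma$ sends faces to faces of the same color and preserves containment, $\gamma(C)$ and $\gamma(C')$ share the panel $\gamma(C \cap C')$ of color $\{i\}$. Consequently, for every gallery $\mathbb{G} = (C_0, \ldots, C_m)$, the image $\gamma(\mathbb{G}) = (\gamma(C_0), \ldots, \gamma(C_m))$ is again a gallery with exactly the same color sequence, so $t(\gamma(\mathbb{G})) = t(\mathbb{G})$ and hence $t_W(\gamma(\mathbb{G})) = t_W(\mathbb{G}) \in W$.

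Next, since $\gamma$ is a bijection on chambers preserving adjacency, it also preserves the length of galleries between two chambers, so a minimal gallery from $C_1$ to $C_2$ is carried to a minimal gallery from $\gamma(C_1)$ to $\gamma(C_2)$, and vice versa (using $\gamma^{-1}$). Combined with the previous paragraph and the defining axiom of a building (the distance equals the Coxeter color of any minimal gallery), this gives
\[
d(\gamma(C_1), \gamma(C_2)) = t_W(\gamma(\mathbb{G})) = t_W(\mathbb{G}) = d(C_1, C_2)
\]
for any chambers $C_1, C_2 \in B_\phi$.

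Now I lift this to faces. Let $\sigma_1 \in B_{I_1}$, $\sigma_2 \in B_{I_2}$, and choose $C_1 \supset \sigma_1$, $C_2 \supset \sigma_2$ realizing the minimum that defines $\tilde{d}(\sigma_1, \sigma_2)$. Since $\gamma$ preserves containment, $\gamma(C_j) \supset \gamma(\sigma_j)$. Moreover, $\gamma$ induces bijections between the sets of chambers containing $\sigma_j$ and the sets of chambers containing $\gamma(\sigma_j)$ (with inverse given by $\gamma^{-1}$), so the minima in the definition of $\tilde{d}$ match on both sides, and by the chamber case we get $\tilde{d}(\gamma(\sigma_1), \gamma(\sigma_2)) = \tilde{d}(\sigma_1, \sigma_2)$. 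Finally, because $\gamma$ preserves colors, $\gamma(\sigma_j)$ again has color $I_j$, so the projection $W \to W_{I_1}\backslash W / W_{I_2}$ used to define $d$ is the same on both sides, giving $d(\gamma(\sigma_1), \gamma(\sigma_2)) = d(\sigma_1, \sigma_2)$.

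I do not expect a real obstacle here; the only subtle point is ensuring that the minimum in the definition of $\tilde{d}$ is genuinely preserved, which is taken care of by applying the argument symmetrically to $\gamma$ and $\gamma^{-1}$. Everything else is a direct unwinding of the $W$-metric definitions using that $\gamma$ is a color preserving simplicial bijection.
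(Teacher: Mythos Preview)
Your proof is correct and follows essentially the same approach as the paper's. The paper's proof is a terse two-sentence version of your argument: it asserts that color-preserving automorphisms preserve chamber distances (which you spell out via galleries and $i$-adjacency), and then observes that chambers realizing the minimal distance between $\sigma_1,\sigma_2$ are carried to chambers realizing the minimal distance between $\gamma(\sigma_1),\gamma(\sigma_2)$; you simply make these steps explicit and add the observation about the projection to $W_{I_1}\backslash W/W_{I_2}$.
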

\begin{proof}
Such automorphisms preserve distances between chambers. Therefore
the chambers $\sigma_{1}\subset C_{1},\sigma_{2}\subset C_{2}$ with
minimal distance between them go to two chambers $\gamma(\sigma_{1})\subset\gamma(C_{1}),\gamma(\sigma_{2})\subset\gamma(C_{2})$
with minimal distance between them.
\end{proof}
\begin{rem}
A color rotating automorphism $\gamma$ defines a permutation $\omega:S\rightarrow S$.
Then the distance is changed according to the extension $\omega:W\rightarrow W$,$\omega:W_{I}\rightarrow W_{\omega(I)}$
given by $\omega(s_{i})=s_{\omega(i)}$.
\end{rem}
\begin{lem}
An automorphism group $G$ is Weyl transitive if and only if for every
4 faces $\sigma_{1},\sigma_{,2},\sigma_{3},\sigma_{4}$ such that
$d(\sigma_{1},\sigma_{2})=d(\sigma_{3},\sigma_{4})$ we have an automorphism
$\gamma\in G$ such that $\gamma(\sigma_{1})=\sigma_{2},\gamma(\sigma_{3})=\sigma_{4}$.
\end{lem}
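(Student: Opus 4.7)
The proof splits naturally into two directions, and I would treat them separately. The ``if'' direction is essentially tautological: chambers are precisely the spherical faces of color $\phi$, so specializing the hypothesized face-level transitivity to quadruples $\sigma_i = C_i$ returns the chamber-level definition of Weyl transitivity verbatim.

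For the ``only if'' direction, suppose $G$ is Weyl transitive on chambers. Take four faces with $\sigma_1,\sigma_3 \in B_{I_1}$ and $\sigma_2,\sigma_4 \in B_{I_2}$ satisfying $d(\sigma_1,\sigma_2)=d(\sigma_3,\sigma_4)=d \in W_{I_1}\backslash W/W_{I_2}$, and let $\tilde d$ denote the shortest representative of $d$. By Proposition \ref{claim:Distance claims}(2), there exist chambers $C_1 \supset \sigma_1$, $C_2 \supset \sigma_2$ with $d(C_1,C_2)=\tilde d$, and similarly $C_3 \supset \sigma_3$, $C_4 \supset \sigma_4$ with $d(C_3,C_4)=\tilde d$. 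Applying the chamber-level Weyl-transitivity hypothesis to the two pairs $(C_1,C_2)$ and $(C_3,C_4)$ produces the required $\gamma \in G$ on chambers.

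It then remains to push $\gamma$ down to the faces. Because $\gamma$ is color-preserving and because every chamber in a colored simplicial complex of dimension $n$ contains exactly one face of each prescribed color $I \subset S$, the image of the unique $I_1$-colored face of $C_1$ under $\gamma$ is the unique $I_1$-colored face of $\gamma(C_1)=C_3$; these are $\sigma_1$ and $\sigma_3$ respectively, so $\gamma(\sigma_1)=\sigma_3$. The same argument applied with $I_2$, $C_2$, $C_4$ gives $\gamma(\sigma_2)=\sigma_4$.

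I do not anticipate any serious obstacle: the entire substantive content is packaged in Proposition \ref{claim:Distance claims}, which provides chambers realizing the minimal-length representative $\tilde d$. The only minor point to check is the implicit compatibility of colors, namely that an equality $d(\sigma_1,\sigma_2)=d(\sigma_3,\sigma_4)$ only makes sense once both pairs share color types $(I_1,I_2)$, so that the reduction to chambers takes place in a single, well-defined double coset $W_{I_1}\backslash W/W_{I_2}$.
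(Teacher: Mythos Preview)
Your proof is correct and follows the same approach as the paper, which simply records that the claim ``follows from the definition of a distance between faces''; you have unpacked exactly that, invoking Proposition \ref{claim:Distance claims}(2) to lift each face pair to a chamber pair at distance $\tilde d$ and then pushing the resulting chamber automorphism back down via color-preservation.

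One remark: as written, the lemma (and the paper's definition of Weyl transitivity) contains the conclusion $\gamma(\sigma_1)=\sigma_2$, $\gamma(\sigma_3)=\sigma_4$, whereas you prove $\gamma(\sigma_1)=\sigma_3$, $\gamma(\sigma_2)=\sigma_4$. Your version is the intended one --- the paper itself uses it in exactly this form in the proof of Proposition \ref{defined_by_automorphisms} --- so this is a typo in the statement rather than an error on your part.
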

\begin{proof}
Follows from the definition of a distance between faces.
\end{proof}
\begin{defn}
\label{def:Row and Column finite}Let $S$ be a discrete set. We say
that a linear operator $h:\mathbb{C}^{S}\rightarrow\mathbb{C}^{S}$
is \emph{row and column finite} if it can be written as $hf(x)=\sum_{y\in S}\alpha_{x,y}f(y)$,
for some $\alpha:S\times S\rightarrow\mathbb{C}$, with $\left|\left\{ y:\alpha_{x.y}\ne0\right\} \right|<\infty$,
$\left|\left\{ y:\alpha_{y,x}\ne0\right\} \right|<\infty$ for every
$x\in S$. 
\end{defn}
If $\gamma$ is an automorphism of the building we let $\gamma$ act
on $\mathbb{C}^{B_{f}}$, $\gamma:\mathbb{C}^{B_{f}}\rightarrow\mathbb{C}^{B_{f}}$
by $\gamma f(\sigma)=f(\gamma^{-1}(\sigma))$.
\begin{prop}
\label{defined_by_automorphisms}1. The ADH Algebra commutes with
every color preserving automorphism of the building.

2. Assume that $G$ is Weyl transitive. If a row and column finite
linear transform $h:\mathbb{C}^{B_{f}}\rightarrow\mathbb{C}^{B_{f}}$
commutes with every automorphism $\gamma\in G$ then it belongs to
the ADH algebra $H$.
\end{prop}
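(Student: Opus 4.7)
For part (1), the plan is to check the commutation $h_d \gamma = \gamma h_d$ on each generator $h_d$ of $H$ and then extend by linearity. Unwinding the definitions, one has
\[
(h_d (\gamma f))(\sigma_1) \;=\; \sum_{\sigma_2:\, d(\sigma_1,\sigma_2)=d} f(\gamma^{-1}\sigma_2).
\]
Substituting $\sigma_2 = \gamma \sigma_2'$ and using the lemma that a color-preserving automorphism preserves distances between faces, the right-hand side reindexes as $\sum_{\sigma_2':\, d(\gamma^{-1}\sigma_1,\sigma_2')=d} f(\sigma_2')$, which is exactly $(\gamma (h_d f))(\sigma_1)$. This is a one-line verification once distance invariance is in hand.

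For part (2), the strategy is to decode the commutation relation as a $G$-invariance condition on the kernel of $h$, and then exploit Weyl transitivity to descend that kernel to a function of distances. Write $hf(\sigma) = \sum_{\sigma' \in B_f} \alpha_{\sigma,\sigma'} f(\sigma')$. A direct calculation shows that $h\gamma = \gamma h$ for every $\gamma \in G$ is equivalent to $\alpha_{\gamma\sigma, \gamma\sigma'} = \alpha_{\sigma,\sigma'}$. Weyl transitivity, in its face version (the lemma preceding Definition \ref{def:Row and Column finite}), says exactly that two pairs of faces of the same respective colors lie in the same $G$-orbit if and only if they have equal distance in $W_{I_1}\backslash W/W_{I_2}$. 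Therefore $\alpha_{\sigma,\sigma'}$ depends only on $d := d(\sigma,\sigma') \in W_{I_1}\backslash W/W_{I_2}$, and setting $c_d := \alpha_{\sigma,\sigma'}$ for any representative pair produces a formal identity $h = \sum_{I_1,I_2,d} c_d\, h_d$.

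The remaining step is to promote this formal identity into membership in the finite span $H$, so the main obstacle is the finiteness bookkeeping. Here the row-finiteness hypothesis enters: for a fixed $\sigma_1 \in B_{I_1}$, each distance class $d$ with $c_d \neq 0$ contributes at least $q_d \geq 1$ non-zero entries $\alpha_{\sigma_1,\sigma'}$ among the faces of color $I_2$ at distance $d$ from $\sigma_1$, so row-finiteness forces only finitely many pairs $(I_2,d)$ to support nonzero $c_d$. Because the coefficient $c_d$ is independent of the choice of $\sigma_1 \in B_{I_1}$ (by $G$-invariance and the transitivity of $G$ on $B_{I_1}$ implied by Weyl transitivity), this finite set is the same for every $\sigma_1$ of color $I_1$; combined with the fact that the set of spherical colors $I_1 \subsetneq S$ is finite, the formal sum reduces to a finite linear combination, and hence lies in $H$. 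I expect the core ideas (distance preservation and orbit classification by distance) to be essentially immediate from earlier results; the only care needed is this final finiteness argument and being explicit that Weyl transitivity on chambers promotes cleanly to the same statement for faces.
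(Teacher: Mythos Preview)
Your proof is correct and follows essentially the same approach as the paper: distance preservation for part (1), and for part (2) writing $h$ via its kernel, using Weyl transitivity on faces to show $\alpha_{\sigma,\sigma'}$ depends only on $d(\sigma,\sigma')$, and reassembling as a sum of $h_d$'s. Your treatment is in fact slightly more careful than the paper's, which leaves the finiteness of the resulting sum implicit, whereas you spell out why row-finiteness forces only finitely many $c_d$ to be nonzero.
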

\begin{proof}
Claim (1) follows from the fact that automorphisms respect distances
between faces. 

As for (2), write $h:\mathbb{C}^{B_{f}}\rightarrow\mathbb{C}^{B_{f}}$
as $hf(\sigma)=\sum_{y\in V_{T}}\alpha_{\sigma,\sigma'}f(\sigma')$,
as in the definition of a row and column finite operator. Assume $h$
commutes with every $\gamma\in\mbox{Aut}(T)$. Let $\sigma_{1},\sigma_{,2},\sigma_{3},\sigma_{4}$
be faces such that $d(\sigma_{1},\sigma_{2})=d(\sigma_{3},\sigma_{4})$.
By the last lemma there exists $\gamma\in\mbox{Aut}(T)$ such that
$\gamma(\sigma_{1})=\sigma_{3},\,\gamma(\sigma_{2})=\sigma_{4}$.
Let $f_{\sigma_{2}}$ be the characteristic function of $\{\sigma_{2}\}$
and write $h\gamma f_{\sigma_{2}}=\gamma hf_{\sigma_{2}}$. Then 
\[
\alpha_{\sigma_{1},\sigma_{2}}=(h\gamma f_{\sigma_{2}})(\sigma_{3})=(\gamma hf_{\sigma_{2}})(\sigma_{3})=\alpha_{\sigma_{3},\sigma_{4}}
\]

Therefore $\alpha_{\sigma,\sigma'}$ depends only on $d(\sigma,\sigma')$
and we can write $\alpha_{\sigma,\sigma'}=\alpha_{d(\sigma,\sigma')}$.
Therefore $hf(\sigma)=\sum_{y\in V_{T}}\alpha_{d(\sigma,\sigma')}f(\sigma')=\sum_{d}\alpha_{d}h_{d}f(\sigma')$
and $h\in H$.
\end{proof}
\begin{cor}
If $G$ is Weyl transitive then the ADH algebra $H$ is the algebra
of row and column finite operators $h:\mathbb{C}^{B_{f}}\rightarrow\mathbb{C}^{B_{f}}$
commuting with every automorphism $\gamma\in G$.
\end{cor}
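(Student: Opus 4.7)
The corollary is essentially a packaging of the two halves of Proposition \ref{defined_by_automorphisms}, so the plan is just to verify that both inclusions are in hand and to state cleanly what still needs checking.

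The plan is to prove the two inclusions separately. For the inclusion $H \subset \{\text{row-and-column-finite operators commuting with }G\}$, I would first observe that every generator $h_d$ is row and column finite: by the regularity of the building, for any fixed face $\sigma_1$ of color $I_1$ the set $\{\sigma_2 : d(\sigma_1,\sigma_2)=d\}$ is finite (its size is $q_d$ from Proposition \ref{claim:Distance calculation}), giving row finiteness, and applying the same argument to $h_{d^{*}}$, or equivalently noting that $h_d$ and $h_{d^{*}}$ are adjoints with respect to the standard inner product, gives column finiteness. Since row-and-column-finite operators form an algebra under composition, every element of $H$ is row and column finite. That these elements commute with every color-preserving automorphism is exactly part (1) of Proposition \ref{defined_by_automorphisms}, and in particular they commute with the Weyl transitive group $G$.

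For the reverse inclusion, I would simply invoke part (2) of Proposition \ref{defined_by_automorphisms}: any row-and-column-finite $h$ commuting with all of $G$ lies in $H$. No extra work is needed here because Weyl transitivity was the assumption under which that statement was proved.

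I expect no real obstacle; the only subtle point worth spelling out is the column-finiteness of $h_d$, which is not immediately visible from the definition $h_d f(\sigma_1)=\sum_{d(\sigma_1,\sigma_2)=d} f(\sigma_2)$ and has to be read off either from the symmetry $d \leftrightarrow d^{*}$ or from the fact that the $h_d$ appear as matrix coefficients of a $*$-operation (Proposition \ref{pro:Adjuction is involution}). Once that is noted, the corollary is a one-line consequence of the proposition.
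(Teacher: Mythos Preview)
Your proposal is correct and matches the paper's approach: the paper states this corollary without proof, treating it as an immediate consequence of the two parts of Proposition \ref{defined_by_automorphisms}. Your additional remark that the generators $h_d$ are row and column finite (via $q_d$ and the $d \leftrightarrow d^{*}$ symmetry) is a helpful detail that the paper leaves implicit.
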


\section{\label{sec:Building-Automorphisms-II}Building Automorphisms II-
Hecke Algebras of Groups}

Every semisimple group $G$ over a non-Archimedean local field acts
on a certain affine building as a color rotating automorphism group
which is Weyl transitive. There is usually some normal subgroup of
$G$ (containing the center) that acts trivially. By moving to a quotient
of $G$ by this normal subgroup we will generally identify $G$ with
its image in the automorphism group. We will further take the finite
index subgroup of $G$ which acts by color preserving automorphism.
On the other hand if the dimension of an affine building is $\ge3$
a well known theorem of Tits says that its full automorphism group
$G$ is an algebraic group (see \cite{ronan2009lectures} ).

It will be useful to define our Hecke algebra in terms of the automorphism
group directly. We start with some basic claims about general locally
profinite groups (or totally disconnected locally compact groups).
We follow\textbf{ }\textbf{\uline{\mbox{\cite{casselman1974introduction}}}}. 

Let $G'$ be a general locally profinite group, i.e $G'$ is a Hausdorff
topological group that has a basis of the identity composed of compact
open subgroups. Let $K$ be a compact open subgroup of $G'$. Fix
a Haar measure $\mu$ of $G'$. We will also assume $G'$ is unimodular. 
\begin{defn}
The \emph{Hecke algebra with respect to $K$}, $H_{K}(G')\subset H(G')$,
is the set of compactly supported functions $f:G'\rightarrow\mathbb{C}$
with $f(k_{1}gk_{2})=f(g)$ for every $k_{1},k_{2}\in K$, $g\in G'$.
It is spanned by the characteristic functions $1_{KgK}$, $KgK\in K\backslash G'/K$.

The \emph{Hecke Algebra} $H(G')$ of $G'$ is $H(G')=\cup_{K}H_{K}(G')$
where $K$ goes over the set of compact open subgroups.

We denote by $C(K\backslash G')$ the set of functions $f:G'\rightarrow\mathbb{C}$
such that $f(kg)=f(g)$ for every $k\in K,\,g\in G'$

We denote $C_{l}^{\infty}(G')=\cup_{K}C(K\backslash G')$. Notice
that $H(G')\subset C_{l}^{\infty}(G')$.
\end{defn}
\begin{prop}
We can define a convolution of $h\in H(G')$ on $f\in\mathbb{C}_{l}^{\infty}(G')$
by the integral $h\ast f(y)=\int_{G}h(x)f(x^{-1}y)dx$. When restricted
to $H_{K}(G')$ the convolution defines an algebra structure on it,
as $H_{K}(G')\subset H(G')\subset C_{l}^{\infty}(G')$ and $H_{K}(G')\ast H_{K}(G')=H_{K}(G')$.
It also defines an action of $H_{K}(G')$ on $C(K\backslash G')$.
The identity of $H_{K}(G')$ is $\mu^{-1}(K)\cdot1_{K}$. This convolution
defines an idempotented algebra structure on $H(G')$.

One can define adjunction in $H_{K}(G')$ . If $G'$ is unimodular,
the adjunction is given by $*:1_{KgK}\rightarrow1_{Kg^{-1}K}$ on
$H_{K}(G')$. 
\end{prop}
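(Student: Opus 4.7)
The statement is a collection of standard facts about Hecke algebras, so the plan is to verify each piece directly from the definitions, leveraging the bi-invariance properties and the left/right invariance of the Haar measure (plus unimodularity for adjunction).

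First, I would establish well-definedness of $h*f$. For $h\in H_K(G')$ and $f\in C(K'\backslash G')$ for some compact open $K'$, the function $x\mapsto h(x)f(x^{-1}y)$ is compactly supported (since $h$ has compact support) and locally constant on $G'$ (since after intersecting $K$ and $K'$ one obtains a compact open subgroup on which the integrand is invariant). Hence the integral reduces to a finite sum of values of $h(x)f(x^{-1}y)$ times Haar measures of cosets, and in particular converges. By shrinking $K'$ if necessary, $C_l^\infty(G')$ is filtered by the $C(K'\backslash G')$, so the convolution is defined on all of $C_l^\infty(G')$.

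Second, to see $H_K(G')*H_K(G')=H_K(G')$, I would verify $K$-bi-invariance of $h_1*h_2$ by changes of variable: left $K$-invariance in $y$ follows from left $K$-invariance of $h_1$, and right $K$-invariance follows from right $K$-invariance of $h_2$ together with right invariance of Haar measure. Compact support of the convolution follows from compactness of the product of the supports. It will also be useful here to compute directly
\[
1_{Kg_1K}*1_{Kg_2K}(y)=\mu(K)\sum_{x\in K\backslash Kg_1K}1_{Kg_2K}(x^{-1}y),
\]
which shows that the convolution is a finite nonnegative integer combination of the basis elements $1_{KgK}$ and, in particular, that $1_{KeK}*1_{KgK}=\mu(K)\cdot 1_{KgK}$, verifying that $\mu^{-1}(K)\cdot 1_K$ is a two-sided identity. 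Associativity is an application of Fubini, justified by the compact support observed above. The same computations, performed with an arbitrary $f\in C(K\backslash G')$ in place of $1_{Kg_2K}$, yield the module action of $H_K(G')$ on $C(K\backslash G')$. Passing to the union over all compact open $K$ produces the idempotented algebra $H(G')$, with the idempotents $e_K=\mu^{-1}(K)1_K$, satisfying $e_K*h=h*e_K=h$ for every $h\in H_K(G')$.

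Finally, for the adjunction I would define $h^*(g):=\overline{h(g^{-1})}$. Restricted to $H_K(G')$ this sends $1_{KgK}\mapsto 1_{(KgK)^{-1}}=1_{Kg^{-1}K}$, using that $K=K^{-1}$. To check $(h_1*h_2)^*=h_2^**h_1^*$, I would compute
\[
(h_1*h_2)^*(y)=\overline{\int h_1(x)h_2(x^{-1}y^{-1})\,dx}=\int \overline{h_2(x^{-1}y^{-1})}\,\overline{h_1(x)}\,dx,
\]
and then substitute $x\mapsto y^{-1}x^{-1}$, using \emph{unimodularity} to see $d(y^{-1}x^{-1})=dx$; the resulting integral is $\int h_2^*(xy)h_1^*(x^{-1})\,dx=(h_2^**h_1^*)(y)$. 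Antilinearity and the involution property $h^{**}=h$ are immediate.

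The only step that requires any care is the adjunction computation, since it is the one place where unimodularity is genuinely used; everything else is a direct exercise in the definitions and the translation invariance of the Haar measure. I would therefore present the algebra and module-action claims briefly and devote slightly more attention to the change-of-variables argument for $(h_1*h_2)^*=h_2^**h_1^*$.
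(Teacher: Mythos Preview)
Your proposal is correct and in fact more detailed than the paper's own treatment: the paper simply cites Casselman's notes for everything except the adjunction, and leaves the adjunction ``to the reader.'' Your direct verifications are exactly what that reader would supply. One small remark on the adjunction step: the substitution $x\mapsto y^{-1}x^{-1}$ works, but you can avoid the inversion entirely by writing $\overline{h_2(x^{-1}y^{-1})}=h_2^*(yx)$, $\overline{h_1(x)}=h_1^*(x^{-1})$, and then substituting $u=yx$ using only left invariance of Haar measure; with that route unimodularity is not used for the anti-homomorphism identity itself (it enters rather when one wants $*$ to be the $L_2$-adjoint). Since unimodularity is assumed anyway this is only a cosmetic simplification.
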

\begin{proof}
See \textbf{\uline{\mbox{\cite{casselman1974introduction}}}},
section 2.1 for everything except adjunction. Adjunction can be verified
directly and is left to the reader.
\end{proof}
\begin{rem}
As $K$ is open and $h$ is compactly supported the integral is actually
a finite sum.
\end{rem}
We now connect the general considerations above to an automorphism
group $G$ of the building and the Hecke algebras we defined in the
previous sections. Let $G$ be a subgroup of the color preserving
automorphism group of the building and assume it is Weyl transitive.
Let $C_{0}$ be a fixed chamber and $G_{\phi}\subset G$ its stabilizer
(i.e an Iwahori subgroup, usually denoted $I$). Let $G_{I}$ be the
stabilizer of the face $\sigma_{0}\subset C_{0}$ is of color $I$.
Give $B$ the discrete topology. We can topologize $G$ by the compact
open topology, i.e. a basis of open sets are sets containing, for
$A\subset B_{\phi}$ finite, $U\subset B_{\phi}$ arbitrary, all automorphisms
$\gamma$ with $\gamma(A)\subset U$. As $X$ and $Y$ are discrete,
this topology is equivalent to the topology defined by pointwise convergence
of a sequence of functions.
\begin{defn}
Call $G$ \emph{complete} if it is closed in the compact open topology
defined on the entire automorphism group of $B$. 
\end{defn}
Equivalently, $G$ is complete if and only for every sequence of automorphisms
$\{\gamma_{n}\}\subset G$ that converges pointwise to $\gamma:B\rightarrow B$
we have $\gamma\in G$. A complete Weyl transitive automorphism group
is also strongly transitive.
\begin{lem}
If $G$ is complete and $I$ is spherical then $G_{I}$ is an open
compact subgroup. 
\end{lem}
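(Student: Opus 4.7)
The plan is to handle openness and compactness separately, exploiting sphericity of $I$ in both steps and then invoking completeness of $G$ plus Tychonoff's theorem.

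First I would handle openness. Since $I$ is spherical, $\sigma_0$ is contained in only finitely many chambers $C_1, \ldots, C_m$. A color-preserving automorphism fixes $\sigma_0$ if and only if it permutes $\{C_1, \ldots, C_m\}$ among themselves. This condition is a basic open set in the compact-open topology: take $A = U = \{C_1, \ldots, C_m\}$ in the definition of the base. Hence $G_I$ is open in $G$.

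For compactness I would first show that the stabilizer $H_{\sigma_0}$ of $\sigma_0$ in the full automorphism group $H = \mathrm{Aut}(B)$ is compact, and then use the hypothesis that $G$ is closed in $H$ to conclude $G_I = G \cap H_{\sigma_0}$ is compact as well. To see $H_{\sigma_0}$ is compact I would use that every $\gamma \in H_{\sigma_0}$ preserves $W$-distances from $\sigma_0$ (by the lemma in the preceding section that automorphisms preserve distances), so $\gamma(C)$ lies in the shell $\mathrm{Sh}_d = \{C' \in B_{\phi} : d(\sigma_0, C') = d\}$ whenever $d = d(\sigma_0, C)$. By sphericity of $I$ together with local finite regularity, $|\mathrm{Sh}_d| = q_d < \infty$ for every $d \in W_I \backslash W$. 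Since an automorphism is determined by its action on chambers, the map $\gamma \mapsto (\gamma(C))_{C \in B_{\phi}}$ embeds $H_{\sigma_0}$ into
\[
P = \prod_{C \in B_{\phi}} \mathrm{Sh}_{d(\sigma_0, C)},
\]
which is compact by Tychonoff as a product of finite discrete spaces. The compact-open topology on $H_{\sigma_0}$ agrees with pointwise convergence on $B_{\phi}$ (as the paper already notes for discrete targets), which is precisely the subspace topology inherited from $P$. It remains to check that the image is closed in $P$; but the defining conditions of an automorphism (bijectivity, preservation of incidence, preservation of color) are all pointwise constraints, so each cuts out a closed subset of $P$. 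Thus $H_{\sigma_0}$ is compact, $G_I$ is closed in it by completeness of $G$, and $G_I$ is compact.

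The only delicate point is the finiteness of the distance shells $\mathrm{Sh}_d$, and this is exactly where sphericity of $I$ is indispensable: without it $\sigma_0$ would sit inside infinitely many chambers, the shells would be infinite, and the Tychonoff embedding would collapse. Everything else is topological bookkeeping — identifying the compact-open topology with the product topology on the image and verifying that the ``being an automorphism'' conditions are closed.
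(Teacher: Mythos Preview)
Your argument is correct and is essentially the same as the paper's, just packaged differently. The paper proves the result for $G_\phi$ and declares the case of $G_I$ ``similar''; for compactness it runs a sequential diagonal argument (finitely many chambers within each gallery-length ball, so extract a subsequence converging pointwise, then invoke completeness), which is precisely your Tychonoff embedding unwound into sequences.
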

\begin{proof}
We will prove for $G_{\phi}$. The proof for $G_{I}$ is similar.
The fact that $G_{\phi}$ is open is by definition of the compact
open topology. 

The locally finite assumption ($q_{s}<\infty,s\in S$) guarantee that
the number of chambers at length at most $l$ from $C_{0}$ is finite.
Therefore every sequence $\{\gamma_{n}\}\subset G_{\phi}$ has a subsequence
agreeing on all chambers of distance at most $l$ from $C_{0}.$ Therefore
$\{\gamma_{n}\}$ has a subsequence converging pointwise to an automorphism
$\gamma$ and by completeness $\gamma\in G$ . Since $\gamma(C_{0})=C_{0}$,
$\gamma\in G_{\phi}$. 
\end{proof}
We will assume from now on that $G$ is complete. 

A basis for the identify of $G$ is composed of the compact open subgroups
$K_{m}$, $m\ge0$, where $K_{m}\subset G_{\phi}$ is the subgroup
containing all the automorphisms fixing all chambers $C$ with $l(d(C_{0},C))\le m$.
Since the $K_{m}$, $m\in\mathbb{N}$ are compact the topology gives
$G$ the structure of a locally profinite group. We now prove that
$G$ is unimodular.
\begin{prop}
Let $G$ be a complete Weyl transitive automorphism group of a locally
finite regular building. Then $G$ is unimodular.
\end{prop}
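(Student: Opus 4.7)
The plan is to use the standard criterion that a locally profinite group $G$ is unimodular if and only if for every compact open subgroup $K$ and every $g\in G$, the number of left $K$-cosets in $KgK$ equals the number of right $K$-cosets in $KgK$. Equivalently, writing $\Delta$ for the modular function and using $\mu(gKg^{-1}) = \Delta(g^{-1})\mu(K)$ together with
\[
[K:K\cap gKg^{-1}]\cdot\mu(K\cap gKg^{-1})=\mu(K),\quad [gKg^{-1}:K\cap gKg^{-1}]\cdot\mu(K\cap gKg^{-1})=\mu(gKg^{-1}),
\]
one obtains
\[
\Delta(g^{-1})=\frac{[gKg^{-1}:K\cap gKg^{-1}]}{[K:K\cap gKg^{-1}]}=\frac{[K:K\cap g^{-1}Kg]}{[K:K\cap gKg^{-1}]}=\frac{|K\backslash KgK|}{|KgK/K|}.
\]
Thus it suffices, for some compact open $K$ and all $g\in G$, to show the number of left cosets of $K$ in $KgK$ equals the number of right cosets.

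\textbf{Choice of $K$ and coset count on the left.} I would take $K=G_\phi$, the (open, compact) stabilizer of the fixed chamber $C_0$. Set $w=d(C_0,gC_0)\in W$. The left cosets of $K$ in $KgK$ correspond to the chambers $kgC_0$, $k\in K$, which all lie at distance $w$ from $C_0$ since $k$ fixes $C_0$ and automorphisms preserve distance. Conversely, given any chamber $C'$ with $d(C_0,C')=w$, Weyl transitivity applied to the pairs $(C_0,gC_0)$ and $(C_0,C')$ produces $k\in K$ with $kgC_0=C'$. Hence $KgK/K$ is in bijection with the set of chambers at distance $w$ from $C_0$, whose cardinality is $q_w$.

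\textbf{Coset count on the right.} The inversion map $x\mapsto x^{-1}$ is a bijection $KgK\to Kg^{-1}K$ that sends right cosets $Kh$ to left cosets $h^{-1}K$. Therefore $|K\backslash KgK|=|Kg^{-1}K/K|$. Applying the previous paragraph with $g$ replaced by $g^{-1}$, this count is $q_{w'}$ where $w'=d(C_0,g^{-1}C_0)$. Since automorphisms preserve distance, applying $g$ yields $d(gC_0,C_0)=w'$, and by definition of the $W$-metric $d(gC_0,C_0)=d(C_0,gC_0)^{-1}=w^{-1}$. Hence $|K\backslash KgK|=q_{w^{-1}}$.

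\textbf{Conclusion and main point.} The key remaining combinatorial fact is that $q_w=q_{w^{-1}}$. This is immediate from the product formula $q_w=q_{s_{i_1}}\cdots q_{s_{i_l}}$ for any reduced expression $w=s_{i_1}\cdots s_{i_l}$, applied to the reversed reduced expression for $w^{-1}=s_{i_l}\cdots s_{i_1}$. Putting the three steps together, $\Delta(g^{-1})=q_{w^{-1}}/q_w=1$ for every $g\in G$, so $\Delta\equiv 1$ and $G$ is unimodular. The main conceptual step is the translation of the index equality $[K:K\cap gKg^{-1}]=[K:K\cap g^{-1}Kg]$ into a statement about chambers at distance $w$ versus $w^{-1}$ via Weyl transitivity; once that is in place, the proof reduces to the symmetric behaviour of the length function under inversion, which is trivial.
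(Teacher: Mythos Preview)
Your proof is correct and follows essentially the same route as the paper: compute the modular function via the index formula $\Delta(g)=[KgK:K]/[Kg^{-1}K:K]$ with $K=G_\phi$, identify $[KgK:K]$ with $q_w$ for $w=d(C_0,gC_0)$ using Weyl transitivity, and conclude from $q_w=q_{w^{-1}}$. The only cosmetic difference is that you spell out the bijection between cosets and chambers and the identity $q_w=q_{w^{-1}}$ more explicitly than the paper does.
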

\begin{proof}
For $K'$ a compact open subgroup and $A$ a finite a sum of left
$K'$ cosets, let $[A:K']$ be the number of such cosets. Assume $\mu$
is left invariant. Let $K$ be a compact open subgroup and assume
$\mu(K)=1$. The modular character does not depend on this normalization.
We have the following formula for the modular character (see proof
below):

\[
\delta(g)=[KgK:K][Kg^{-1}K:K]^{-1}
\]

Choosing $K=G_{\phi}$ we have $[KgK:K]=q_{w}$, where $w=d(C_{0},g(C_{0}))$.
Since $d(C_{0},g^{-1}(C_{0}))=w^{-1}$ we have $[Kg^{-1}K:K]=q_{w^{-1}}=q_{w}$
and $\delta(g)=1.$ 

Proof of the formula: 
\[
\delta(g)=\mu(Kg)=\mu(g^{-1}Kg)=[K:K\cap g^{-1}Kg]^{-1}[g^{-1}Kg:K\cap g^{-1}Kg]
\]

Let $H_{1},H_{2}$ be subgroups of some big group. We have a set bijection
$H_{1}/(H_{1}\cap H_{2})\cong H_{1}H_{2}/H_{2}$ which is a weak version
of the second isomorphism theorem. Therefore:
\[
\begin{array}{ccc}
[g^{-1}Kg:K\cap g^{-1}Kg] & = & g^{-1}KgK:K]=[KgK:K]\\{}
[K:K\cap g^{-1}Kg] & = & gKg^{-1}:gKg^{-1}\cap K]=[Kg^{-1}K:K]
\end{array}
\]
\end{proof}
The fact that $G$ is chamber transitive allow us to identify the
set of chambers of the building \textbf{$B_{\phi}$} with cosets of
$G_{\phi}$ in $G$, i.e $B_{\phi}\cong G_{\phi}\backslash G$.. The
faces of the building of color $I$ correspond similarly to $B_{I}\cong G_{I}\backslash G$.
We can therefore identify $\mathbb{C}^{B_{I}}\cong C(G_{I}\backslash G)$.

The fact that $G$ is Weyl transitive allow us to identify distances
between chambers with double cosets $G_{\phi}\backslash G/G_{\phi}$,
and distances between faces of color $I$ by $G_{I}\backslash G/G_{I}$.
We can therefore identify $H_{I}\cong H_{G_{I}}(G)$ as a vector space.
\begin{prop}
\label{prop:Isomorphism_to_G_Hecke}Let $G$ be a complete Weyl transitive
automorphism group of the building $B$. Let $C_{0}$ be a fixed chamber
and let $G_{I}$ be the stabilizer of the face $\sigma_{I}^{0}$ of
color $I$ of $C_{0}$. 

Then $H_{I}\cong H_{G_{I}}(G)$ as an algebra and its action on $\mathbb{C}^{B_{I}}$
is the same as the action of $H_{G_{I}}(G)$ on $C(G_{I}\backslash G)$.
\end{prop}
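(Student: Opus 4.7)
The plan is to construct the isomorphism $\Phi\colon H_{G_I}(G)\to H_I$ directly from the bijection of double cosets induced by Weyl transitivity, then verify that the two algebras act identically on the common module $\mathbb{C}^{B_I}\cong C(G_I\backslash G)$, and finally invoke faithfulness of the action to promote the vector-space matching into an algebra homomorphism.

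First I would fix the Haar measure so that $\mu(G_I)=1$, so that $H_{G_I}(G)$ has the distinguished basis $\{1_{G_I g G_I}\}$ indexed by $G_I\backslash G/G_I$. Weyl transitivity (extended from chambers to faces of color $I$ as noted in section~\ref{sec:Building-Automorphisms-I}, and justified by Proposition~\ref{claim:Distance claims}) says precisely that two pairs of color-$I$ faces lie in the same $G$-orbit if and only if they have the same $W_I\backslash W/W_I$-distance. Fixing $C_0$ and the face $\sigma_I^0\subset C_0$, this yields a canonical bijection $G_I\backslash G/G_I\longleftrightarrow W_I\backslash W/W_I$ sending $G_IgG_I$ to the distance between $\sigma_I^0$ and its $G$-translate. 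Composing with the basis $\{h_d\}$ of $H_I$ (Theorem~\ref{prop:GH algebra is an algebra}) defines a linear bijection $\Phi$ by $1_{G_I g G_I}\mapsto h_d$.

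Next I would verify that $\Phi$ intertwines the two actions on $\mathbb{C}^{B_I}\cong C(G_I\backslash G)$. Using the decomposition $G_IgG_I=\bigsqcup_i G_Ig_i$ into finitely many left $G_I$-cosets, convolution expands to the finite sum $(1_{G_IgG_I}\ast\tilde f)(y)=\sum_i \tilde f(g_i^{-1}y)$. The combinatorial content one must check is that, under the identification $B_I\leftrightarrow G_I\backslash G$, the faces produced by $\{g_i^{-1}\sigma_I^0\}$ as the $g_i$ range over left coset representatives inside $G_IgG_I$ enumerate without repetition exactly the set $\{\sigma'\colon d(\sigma,\sigma')=d\}$ (where $\sigma$ corresponds to $y$). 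This is immediate from Weyl transitivity and the bijection of the previous step, and shows that $(1_{G_I g G_I}\ast \tilde f)(y)$ matches $(h_d f)(\sigma)$ under the identification.

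Finally, both actions are faithful: on the $H_I$-side, distinct basis elements $h_d$ act on the indicator function of a single face in disjoint ways; on the $H_{G_I}(G)$-side, convolution of a nonzero $h$ with $1_{G_I}$ recovers $h$ (up to the normalization). Since $\Phi$ is a vector-space bijection that intertwines two faithful actions of the two algebras on the same module, the identity $\Phi(h_1\ast h_2)\tilde f=\Phi(h_1)\Phi(h_2)\tilde f$ for all $\tilde f$ forces $\Phi(h_1\ast h_2)=\Phi(h_1)\Phi(h_2)$, and $\Phi$ is the desired algebra isomorphism whose induced action on $\mathbb{C}^{B_I}$ coincides with the convolution action on $C(G_I\backslash G)$. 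The main obstacle is purely bookkeeping: aligning the left/right action conventions (in particular the identification $G_Ig\leftrightarrow g^{-1}\sigma_I^0$) and the normalization $\mu(G_I)=1$ so that the ``$g$'' versus ``$g^{-1}$'' in the convolution formula match the distance definition; once conventions are pinned down, every step is a direct computation.
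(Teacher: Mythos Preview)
Your proposal is correct and follows essentially the same route as the paper: both arguments set up the bijection $G_I\backslash G/G_I\leftrightarrow W_I\backslash W/W_I$ from Weyl transitivity, then verify that under the identification $\mathbb{C}^{B_I}\cong C(G_I\backslash G)$ (via $f(\gamma)=f_B(\gamma^{-1}\sigma_I^0)$) the convolution action of $1_{G_IgG_I}$ coincides with the action of $h_d$, by expanding the convolution integral as a finite sum over coset representatives. Your explicit invocation of faithfulness to upgrade the module identification to an algebra isomorphism is a point the paper leaves implicit, and your closing remark about left/right coset bookkeeping is well placed---indeed, to make the formula $(1_{G_IgG_I}\ast\tilde f)(y)=\sum_i\tilde f(g_i^{-1}y)$ come out correctly for $\tilde f$ that are \emph{left} $G_I$-invariant, you need the decomposition into cosets of the form $g_iG_I$ rather than $G_Ig_i$.
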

\begin{proof}
Let $d\in W_{I}\backslash W/W_{I}$ and then consider the operator
$h_{d}\in H_{I}$. Let $G_{d}\subset G$ be the subset consisting
of all automorphisms $\gamma\in G$ such that $d(\sigma_{I}^{0},\gamma(\sigma_{I}^{0}))=d$.
We claim that the element $\tilde{h}_{d}=\frac{1}{\left|G_{I}\right|}1_{G_{d}}\in H_{G_{I}}(G)$
defines a homomorphism $H_{I}\rightarrow H_{G_{I}}(G)$, $h_{d}\rightarrow\tilde{h}_{d}$.
The following are immediate:

a. The set $G_{d}$ is a double coset- $G_{d}=G_{I}\gamma_{d}G_{I},$
where $\gamma_{d}\in G$ is some element of $G_{d}$, i.e sending
$\sigma_{I}^{0}$ to a face of distance $d$. 

b. The set $G_{d}$ is a sum of $q_{d}$ right cosets in $G/G_{I}$,
each right coset $\gamma G_{I}$ containing all automorphisms sending
$\sigma_{I}^{0}$ to a specific face of distance $d$. Therefore $\left|G_{d}\right|=q_{d}\left|G_{I}\right|$

c. For $\gamma_{0}\in G$, the set $\gamma_{0}G_{d}\subset G$ is
the set of all automorphisms sending $\gamma_{0}(\sigma_{I}^{0})$
to a face of distance $d$ of it.

Let $f\in C(G_{I}\backslash G)$. Since $G_{I}\backslash G\cong B_{I}$,
there corresponds a function $f_{B}\in\mathbb{C}^{B_{I}}$. Explicitly,
$f(\gamma)=f_{B}(\gamma^{-1}\cdot\sigma_{I}^{0})$. We have:

\[
\tilde{h}_{d}\ast f(\gamma_{0})=\int_{G}h(\gamma)f(\gamma^{-1}\gamma_{0})d\gamma=\frac{1}{\left|G_{I}\right|}\int_{G_{d}}f(\gamma^{-1}\gamma_{0})d\gamma=\frac{1}{\left|G_{I}\right|}\int_{\gamma_{0}^{-1}G_{d}}f(\gamma^{-1})d\gamma
\]

By b. and c. above and the correspondence between $f$ and $f_{B}$,
the last value is the sum of $f_{B}$ over the $q_{d}$ faces of distance
$d$ from $\gamma_{0}^{-1}(\sigma_{I}^{0})$. Therefore the action
of $\tilde{h}_{d}\in H_{G_{I}}(G)$ on $C(G_{I}\backslash G)$ and
the action of $h_{d}\in H_{I}$ on $f_{B}\in\mathbb{C}^{B_{I}}$ agree,
i.e. $\left(\tilde{h}_{d}\ast f\right)_{B}=h_{d}\cdot f_{B}$ and
we are done.
\end{proof}
We now extend the above to show similar results for $H$. In the general
context as in the beginning of this section define:
\begin{defn}
For $K_{1},K_{2}\subset G'$ compact open let $H_{K_{1},K_{2}}(G')$
be the set of compactly supported functions $f\in\mathbb{C}_{c}(G')$
with $f(k_{1}gk_{2})=f(g)$ for every $k_{1}\in K$, $k_{2}\in K$.

Define convolution on $*:H_{K_{1},K_{2}}(G')\times H_{K_{3},K_{4}}(G')\rightarrow H_{K_{1},K_{4}}$
by: 
\[
f_{1}*f_{2}=\begin{cases}
0 & K_{2}\ne K_{3}\\
\mbox{as in \ensuremath{H(G')}} & K_{2}=K_{3}
\end{cases}
\]

Define convolution on $*:H_{K_{1},K_{2}}(G')\times C(K_{3}\backslash G')\rightarrow C(K_{1}\backslash G')$
by:
\[
f_{1}*f_{2}=\begin{cases}
0 & K_{2}\ne K_{3}\\
\mbox{as in \ensuremath{H(G')}} & K_{2}=K_{3}
\end{cases}
\]
\end{defn}
If $K_{1},K_{2}$ are two different compact open subgroups the above
can give an algebra structure on $H_{K_{1},K_{1}}\oplus H_{K_{1},K_{2}}\oplus H_{K_{2},K_{1}}\oplus H_{K_{2},K_{2}}$.
We can therefore state:
\begin{prop}
\label{prop:Isomorphic_to_G_Hecke_Full}Let $G$ be a complete Weyl
transitive automorphism group of the building $B$. Let $C_{0}$ be
a fixed chamber and $G_{I}$ the stabilizer of the face of color $I$
contained in $C_{0}$. 

Then the ADH algebra $H$ is isomorphic as algebra to $\oplus_{I,J\,finite\,types}H_{G_{I},G_{J}}(G)$.
Its action on the building is given by the action of the algebra $\oplus_{I,J\,\text{spherical}}H_{G_{I},G_{J}}(G)$
on $\mathbb{C}^{B_{f}}\cong\oplus_{I\,\text{spherical}}\mathbb{C}^{B_{I}}\cong\oplus_{I\,\text{spherical}}C(G_{I}\backslash G)$.
\end{prop}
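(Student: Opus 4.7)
The plan is to generalize the proof of Proposition \ref{prop:Isomorphism_to_G_Hecke} piece by piece. For each ordered pair $(I_1,I_2)$ of spherical colors and each $d\in W_{I_1}\backslash W/W_{I_2}$, I would define $G_d\subset G$ to be the set of $\gamma\in G$ with $d(\sigma^0_{I_1},\gamma(\sigma^0_{I_2}))=d$, and map $h_d\in H_{I_1,I_2}$ to $\tilde h_d=|G_{I_2}|^{-1}\mathbf{1}_{G_d}\in H_{G_{I_1},G_{I_2}}(G)$ (with Haar measure normalized so each $G_I$ has finite positive measure). Extending linearly gives a vector-space map $H_{I_1,I_2}\to H_{G_{I_1},G_{I_2}}(G)$, and summing over all pairs gives the candidate isomorphism $H\to\bigoplus_{I,J}H_{G_I,G_J}(G)$.

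The three substantive points to verify are exactly those from the proof of \ref{prop:Isomorphism_to_G_Hecke}, adapted to the mixed-type situation. First, Weyl transitivity (together with its reformulation for faces of arbitrary color) implies $G_d=G_{I_1}\gamma_d G_{I_2}$ is a single double coset; the locally finite regular assumption gives $|G_d|=q_d|G_{I_2}|$ since $G_d$ decomposes into $q_d$ right $G_{I_2}$-cosets, one for each face of color $I_2$ at distance $d$ from $\sigma^0_{I_1}$. Second, the convolution calculation of \ref{prop:Isomorphism_to_G_Hecke} goes through verbatim: for $f\in C(G_{I_2}\backslash G)$ corresponding to $f_B\in\mathbb{C}^{B_{I_2}}$, the integral $\tilde h_d*f(\gamma_0)$ reduces to a sum over the $q_d$ faces at distance $d$ from $\gamma_0^{-1}\sigma^0_{I_1}$, which is precisely $h_d f_B$ at the corresponding face. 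So the action on $\bigoplus_{I\text{ spherical}}C(G_I\backslash G)\cong\mathbb{C}^{B_f}$ is intertwined.

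Third, I need to match the algebra structures. Since the action of $H$ on $\mathbb{C}^{B_f}$ is faithful (as in the proof of \ref{prop:Iwahori Hecke }), the intertwining from the previous step forces multiplicativity: if $h_1\in H_{I_1,I_2},h_2\in H_{I_2,I_3}$ then both $\widetilde{h_1 h_2}$ and $\tilde h_1*\tilde h_2$ lie in $H_{G_{I_1},G_{I_3}}(G)$ and act identically on $C(G_{I_3}\backslash G)$, hence coincide; and if $I_2\neq I_3$ then $h_1h_2=0$ (as $h_2f$ is supported on color $I_2$-faces and $h_1$ vanishes off color $I_1$-source arguments, which don't match unless $I_2=I_3$) while $\tilde h_1*\tilde h_2=0$ by the defining convention of the direct sum algebra, so both sides vanish simultaneously. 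Alternatively, one can invoke Theorem \ref{prop:GH algebra is an algebra} to reduce checking compatibility to the generators $\delta_I,\partial_I$ and $H_\phi$, where $H_\phi$ is handled by \ref{prop:Isomorphism_to_G_Hecke}, and $\delta_I,\partial_I$ correspond under the map to the characteristic functions of $G_{\{I_2\backslash 1/I_1\}}$-type double cosets which are immediate to identify.

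The main obstacle is bookkeeping the normalization: the constants $|G_I|^{-1}$ in the definition of $\tilde h_d$ must be chosen so that both the action identity $(\tilde h_d*f)_B=h_d\cdot f_B$ and the convolution identity $\tilde h_d*\tilde h_{d'}=\widetilde{h_d h_{d'}}$ hold simultaneously across all combinations of colors. Once one adopts the uniform normalization above and checks it against the single sanity case of $H_\phi$ in \ref{prop:Isomorphism_to_G_Hecke}, faithfulness of the action on $\mathbb{C}^{B_f}$ converts the problem into the already-verified intertwining statement, so no further calculation is required.
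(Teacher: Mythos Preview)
The paper states this proposition without proof, treating it as the evident extension of Proposition~\ref{prop:Isomorphism_to_G_Hecke} to mixed colors (and says as much in the structure overview). Your sketch is exactly that extension and is correct, including the normalization $\tilde h_d=|G_{I_2}|^{-1}\mathbf{1}_{G_d}$ and the faithfulness argument that upgrades the intertwining of actions to an algebra isomorphism.
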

\begin{rem}
If $B$ has a complete Weyl transitive automorphism group $G$ we
can prove proposition \ref{Commutes_With_Spherical_Average} using
proposition \ref{defined_by_automorphisms} as follows: we have a
left action of $H$ on $\mathbb{C}^{B_{f}}\cong\oplus_{I\,\text{spherical}}C(G_{I}\backslash G)$.
The same space has a commuting right action. Let $G_{\sigma_{0}}$
be the stabilizer of $\sigma_{0}$. Then the element $\frac{1}{\left|G_{\sigma_{0}}\right|}1_{G_{\sigma_{0}}}\in H(G)$
defines a projection $\oplus_{I\,\text{spherical}}C(G_{I}\backslash G)$
into $\oplus_{I\,\text{spherical}}C(G_{I}\backslash G/G_{\sigma_{0}})$,
commuting with the left $H$ action. This projection is exactly $\rho_{\sigma_{0}}$.
\end{rem}
We continue the discussion of this section and address the representation
theory consequences in section \ref{sec:Representations-of-the-automorphism-group}.

\section{\label{sec:Finite-Quotients}Finite Quotients}

Assume we have a discrete subgroup $\Gamma\subset Aut(B)$ (recall
that unless otherwise stated automorphisms are color preserving).
We may then construct the quotient complex $X\cong B/\Gamma$. Since
$\Gamma$ is color preserving $X$ is a colored complex.

We will assume two further properties:

1. We assume that $X$ is \emph{finite} (i.e- the number of faces
of $X$ is finite). Such a subgroup $\Gamma$ is called a \emph{cocompact
lattice}. Since $X$ is finite $\mathbb{C}^{X_{f}}\cong\mathbb{C}[X_{f}]$.

2. We assume that $\Gamma$ is \emph{torsion free}.

The discreteness of $\Gamma$ means that its intersection with every
compact group is finite and since we assume $\Gamma$ is torsion free,
the intersection is actually trivial. Therefore for every spherical
face $\sigma$, $G_{\sigma}\cap\Gamma=\{id\}$ where $G_{\sigma}$
is the stabilizer of $\sigma$.  Therefore $X$ looks locally like
the building- each face of spherical color $I$ is contained in $q_{W_{I}}$
chambers. We remark that $X$ is not necessarily a simplicial complex,
as there might be double simplices (e.g two edges between the same
vertices). This will not interfere with the analysis but one should
remember this point.

Assume also we have a function $f\in\mathbb{C}^{X_{f}}\cong\mathbb{C}[X_{f}]$,
i.e a function that assigns a value to every face of a spherical color
on the quotient. Using the projection operator $\pi:B_{f}\rightarrow X_{f}$
we can define a function $\hat{f}\in\mathbb{C}^{B_{f}}$ by $\hat{f}(\sigma)=f(\pi(\sigma))$.
Now for some element $h\in H$ we can act on $\hat{f}$ and get a
new function $h\hat{f}\in\mathbb{C}^{B_{f}}$. Since the ADH algebra
commutes with automorphisms, $\hat{f}$ is $\Gamma$ invariant and
therefore so is $h\hat{f}$. So we can project and receive a function
$hf\in\mathbb{C}^{B_{f}}$ defined by $h\hat{f}(\sigma)=hf(\pi(\sigma))$.
Therefore $H$ acts on $\mathbb{C}^{X}$.

The algebra $H$ can also be defined directly- the fact that $X$
is colored means that we can define the coboundary and boundary operators
$\delta_{I}:\mathbb{C}^{X_{I}}\rightarrow\mathbb{C}^{X_{\phi}}$,
$\partial_{I}:\mathbb{C}^{X_{\phi}}\rightarrow\mathbb{C}^{X_{I}}$
for every spherical color $I$. Since $\Gamma$ is torsion free there
exist $q_{W_{I}}$ chambers containing every face of color $I$. Therefore
the two definitions are the same. Now we can define the rest of $H$
using the generating elements.

Since $X$ is finite the action of $H$ on it is a finite dimensional
representation. Moreover:
\begin{prop}
\label{prop:L^2X is unitary}Consider the inner product $\left\langle f,g\right\rangle =\sum_{\sigma\in X_{f}}\left\langle \bar{f}(\sigma),g(\sigma)\right\rangle $
on $\mathbb{C}^{X_{f}}$. The action of $H$ on $L_{2}(X_{f})$ is
a unitary representation, i.e. $\left\langle hf,g\right\rangle =\left\langle f,h^{\ast}g\right\rangle $
\end{prop}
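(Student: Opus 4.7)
It suffices to verify $\langle h_d f, g\rangle_X = \langle f, h_{d^*} g\rangle_X$ for each basis element $h_d$, $d \in W_{I_1}\backslash W/W_{I_2}$, since $H$ is spanned by the $h_d$ and the involution on $H$ is antilinear with $h_d^{*} = h_{d^{*}}$. My plan is to expand both sides as weighted sums over $X_{I_1}\times X_{I_2}$ and reduce the identity to a symmetric orbit count in the building.

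To set this up, for $\sigma \in X_{I_1}$ and $\tau \in X_{I_2}$ define
\[
N_d(\sigma,\tau) = \bigl|\{\tilde\tau \in \pi^{-1}(\tau) : d(\tilde\sigma, \tilde\tau) = d\}\bigr|
\]
for some fixed lift $\tilde\sigma \in \pi^{-1}(\sigma)$; this count is finite (by local finiteness, $N_d \le q_d$) and independent of the lift, since replacing $\tilde\sigma$ by $\gamma\tilde\sigma$ induces the bijection $\tilde\tau \mapsto \gamma\tilde\tau$ on the fiber. Unwinding the definition of the $H$-action on $\mathbb{C}^{X_f}$ via lifts gives $(h_d f)(\sigma) = \sum_{\tau} N_d(\sigma,\tau) f(\tau)$, so
\[
\langle h_d f, g\rangle_X = \sum_{\sigma,\tau} N_d(\sigma,\tau) \overline{f(\tau)}\, g(\sigma), \qquad \langle f, h_{d^{*}} g\rangle_X = \sum_{\sigma,\tau} N_{d^{*}}(\tau,\sigma) \overline{f(\tau)}\, g(\sigma).
\]
The unitary property is thus equivalent to the combinatorial identity $N_d(\sigma,\tau) = N_{d^{*}}(\tau,\sigma)$.

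For this, fix lifts $\tilde\sigma_0 \in \pi^{-1}(\sigma)$ and $\tilde\tau_0 \in \pi^{-1}(\tau)$. Because $\Gamma$ is torsion free and the stabilizers of spherical faces in $\mathrm{Aut}(B)$ are compact, $\Gamma$ acts freely on $B_f$, so $\pi^{-1}(\tau) = \{\gamma\tilde\tau_0 : \gamma \in \Gamma\}$ with $\gamma$ uniquely determined, and similarly for $\pi^{-1}(\sigma)$. Hence
\[
N_d(\sigma,\tau) = |\{\gamma \in \Gamma : d(\tilde\sigma_0, \gamma\tilde\tau_0) = d\}|, \qquad N_{d^{*}}(\tau,\sigma) = |\{\gamma \in \Gamma : d(\gamma\tilde\sigma_0, \tilde\tau_0) = d\}|.
\]
The equality $d(\gamma\tilde\sigma_0, \tilde\tau_0) = d(\tilde\sigma_0, \gamma^{-1}\tilde\tau_0)$ (apply the color-preserving automorphism $\gamma^{-1}$ to both entries) combined with the substitution $\gamma \leftrightarrow \gamma^{-1}$ puts the two sets in bijection, closing the argument.

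The main (essentially only) obstacle is justifying the freeness of the $\Gamma$-action on spherical faces. This is precisely where the torsion-free hypothesis is used, and it is what makes the counting symmetric; without it one would need to account for stabilizer sizes and the clean orbit correspondence would break down. Once freeness is in place, everything else is a routine Fubini-style exchange.
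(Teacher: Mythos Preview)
Your proof is correct. The argument that $N_d(\sigma,\tau)=N_{d^*}(\tau,\sigma)$ via the free $\Gamma$-action and the bijection $\gamma\mapsto\gamma^{-1}$ is sound, and your reduction to this counting identity is exactly what is needed.

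The paper takes a shorter route: rather than verifying the adjoint identity for every basis element $h_d$, it exploits the fact (established earlier) that $H$ is generated as an algebra by the coboundary and boundary operators $\delta_I,\partial_I$ together with $1_\phi$. Since the $*$-operation is already known to be an anti-involution on $H$, it is enough to check $\langle \partial_I f,g\rangle=\langle f,\delta_I g\rangle$, which is an immediate Fubini-type exchange over incidences $\sigma\subset C$ in $X$. Your approach trades this algebraic reduction for a direct orbit count; it is more self-contained (no appeal to the generators-and-relations description of $H$) and makes the role of the torsion-free hypothesis very transparent, at the cost of being longer. In spirit, your computation is the quotient-space analogue of the paper's earlier proof that $*$ agrees with the Hilbert-space adjoint on $L_2(B_f)$.
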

\begin{proof}
2. It is enough to prove that $\left\langle \partial_{I}f,g\right\rangle =\left\langle f,\delta_{I}g\right\rangle $
for every $f,g\in L_{2}(X_{f})$ and $I$ spherical. It is immediate,
since both sides calculate $\sum_{\sigma\subset C,\,\sigma\in X_{I},\,C\in X_{\phi}}\bar{f}(\sigma)g(C)$
where $\sigma$ is of color $I$.
\end{proof}
\begin{rem}
It is sometimes useful to use the inner product $\left\langle f,g\right\rangle _{w}=\sum_{\sigma\in X_{f}}w(\sigma)\left\langle \bar{f}(\sigma),g(\sigma)\right\rangle $,
where $w(\sigma)=\#\{C\in X_{\phi}:\sigma\subset C\}$. In this definition
$\left\Vert \delta_{I_{2},I_{1}}f\right\Vert _{w}=\left\Vert f\right\Vert _{w}$
if $f\in\mathbb{C}^{X_{I_{1}}}$. However, the representation of $H$
on the $L_{2}$-norm is not unitary as we defined it. To fix it, one
should change the involution to $\delta_{I_{1}}^{*}=\partial_{I_{1}}/q_{W_{I_{1}}}$
(and extend to the other elements of $H$). We will not use this inner
product in our work, but see \cite{evra2015systolic} for example.
\end{rem}
\begin{rem}
The arguments of this section apply more generally then stated; as
a matter of fact we have an action of $H$ on $\mathbb{C}^{X_{f}}$
for every quotient $X\cong B/H$ for any subgroup $H\subset Aut(B)$.
\end{rem}

\part{Representation Theory}

In this part we will present some basic results about the representation
theory of $H$ and its connection to the building.

\section{\label{sec:Two-Simple-Representations}The Trivial Representation
and the Steinberg Representation}
\begin{defn}
Let $V$ be a vector space over $\mathbb{C}$. A \emph{representation}
$\left(\pi,V\right)$ of an algebra with unit $H$ is an homomorphism
of algebras $\pi:H\rightarrow Hom_{\mathbb{C}}(V,V)$ such that $\pi(1)=Id_{V}$.
\end{defn}
We will usually omit $\pi$ and write ``let $V$ be a representation
of $H$'' implicitly assuming $\pi$ is given as well. We will also
let elements of $h\in H$ act directly on the vector space $V$. We
start this part by presenting two simple and important representations:
\begin{prop}
\label{claim:Trivial-rep_definition}The sets of functions that depend
only on the color of each face form a representation space of $H$.
Its dimension is the number of finite colors of faces (i.e. $2^{n+1}-1$
in the vertex spherical case). This is called the \emph{trivial representation}.
\end{prop}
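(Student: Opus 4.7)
The plan is to exhibit an explicit basis for the claimed subspace and then verify directly that every generator $h_d$ of $H$ preserves it. Set
\[
V = \mathrm{span}\{\mathbf{1}_I : I \subset [n] \text{ spherical}\} \subset \mathbb{C}^{B_f},
\]
where $\mathbf{1}_I$ is the characteristic function of $B_I$. A function in $\mathbb{C}^{B_f}$ depends only on color precisely when it is a linear combination of the $\mathbf{1}_I$, so $V$ is the subspace described in the statement. Linear independence of the $\mathbf{1}_I$ is immediate since they have pairwise disjoint supports, so $\dim V$ equals the number of spherical colors.

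The core step is to check $H$-invariance on the spanning set. Fix $d \in W_{I_1}\backslash W/W_{I_2}$ with $I_1, I_2$ spherical, and any spherical color $J$. If $J \neq I_2$, then $h_d \mathbf{1}_J$ vanishes because $h_d$ reads only values on $B_{I_2}$ (equivalently, $h_d \in H_{I_1,I_2}$ annihilates functions supported off $B_{I_2}$ via the $\delta_{I_1} \partial_{I_2} = 0$ for $I_1 \neq I_2$ relation, used here via Theorem \ref{prop:GH algebra is an algebra}). If $J = I_2$, then for any $\sigma_1 \in B_{I_1}$,
\[
h_d \mathbf{1}_{I_2}(\sigma_1) = \sum_{\sigma_2: d(\sigma_1,\sigma_2)=d} 1 = q_d,
\]
and $h_d \mathbf{1}_{I_2}$ vanishes on $B_{I'}$ for $I' \neq I_1$ by definition. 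The key point is that $q_d$ is well-defined, i.e.\ independent of $\sigma_1 \in B_{I_1}$, which holds exactly because $I_1$ is spherical (see the discussion preceding Proposition \ref{claim:Distance calculation}). Hence $h_d \mathbf{1}_{I_2} = q_d \cdot \mathbf{1}_{I_1} \in V$, proving invariance.

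Finally, for the dimension count in the vertex spherical case: by definition $W$ is vertex spherical when every $I \subsetneq [n]$ is a spherical color, and $I = [n]$ itself is never spherical since the corresponding face would be empty. Thus the spherical colors are precisely the proper subsets of the $(n+1)$-element set $[n]$, yielding $\dim V = 2^{n+1} - 1$.

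There is no genuine obstacle here; the only subtlety worth emphasizing is the appeal to sphericity of $I_1$ to make $q_d$ a well-defined integer (rather than possibly infinite), which is exactly what allowed $h_d$ to be defined in the first place and forces the invariant subspace to be indexed by spherical colors only.
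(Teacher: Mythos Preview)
Your proof is correct and follows essentially the same approach as the paper: both compute directly that $h_d$ sends a color-constant function to $q_d$ times another color-constant function, yielding $h_d\mathbf{1}_{I_2}=q_d\mathbf{1}_{I_1}$. You are simply more thorough---explicitly exhibiting the basis, justifying the well-definedness of $q_d$ via sphericity of $I_1$, and spelling out the dimension count---whereas the paper compresses this into a three-line computation.
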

\begin{proof}
For $f\in\mathbb{C}^{B_{f}}$ depending only on the color of the face,
define $f(I)$ to be the value of $f$ on a face of color $I$. If
$d\in W_{I_{1}}\backslash W/W_{I_{2}}$ then 
\[
h_{d}f(\sigma)=\begin{cases}
0 & \sigma\text{ not of type }I_{2}\\
q_{d}f(I_{2}) & \sigma\text{ of type }I_{2}
\end{cases}
\]

It is therefore a representation, as required.
\end{proof}
\begin{lem}
\label{lem:1_phi is non zero}For every non zero representation $V$
of $H$ there exists $0\ne v\in V$ such that $1_{\phi}v=v$.
\end{lem}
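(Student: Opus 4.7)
The plan is to exploit the decomposition of the identity $1 = \sum_{I\text{ spherical}} 1_I$ and use the coboundary operators $\delta_I$ to transport a nonzero vector from the $1_I$-isotypic piece into the $1_\phi$-isotypic piece.

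First I would observe that the elements $\{1_I\}_{I\text{ spherical}}$ form a family of orthogonal idempotents in $H$: each is idempotent since $1_I$ is the identity of the subalgebra $H_I$, and $1_{I_1} 1_{I_2} = 0$ for $I_1 \ne I_2$ because this product lies in $H_{I_1,I_2} \cap H_{I_1}\cdot H_{I_2}$ and, at the level of the action on $\mathbb{C}^{B_f}$, vanishes (the $1_I$ project onto functions supported on color $I$). Since any representation $V$ satisfies $\pi(1) = \mathrm{Id}_V$, we get a direct sum decomposition $V = \bigoplus_{I\text{ spherical}} 1_I V$.

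Now, assuming $V \ne 0$, there exists some spherical $I$ such that $1_I V \ne 0$; pick $0 \ne v \in 1_I V$. Consider $v' := \delta_I v$. Since $\delta_I \in H_{\phi, I}$, we have $v' \in 1_\phi V$, hence automatically $1_\phi v' = v'$. It remains to check $v' \ne 0$, and this is where the defining relation $\partial_I \delta_I = q_{W_I} 1_I$ from Theorem \ref{prop:GH algebra is an algebra} does the work: we compute
\[
\partial_I v' = \partial_I \delta_I v = q_{W_I}\, 1_I v = q_{W_I}\, v \ne 0,
\]
so $v' \ne 0$. The spherical case $I = \phi$ is trivial (take $v' = v$, since $\delta_\phi = 1_\phi$).

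There is no real obstacle here once one has the structure of $H$ as an algebra with the orthogonal idempotents $1_I$ and the relation $\partial_I \delta_I = q_{W_I} 1_I$; the only point worth verifying carefully is the orthogonality $1_{I_1} 1_{I_2} = 0$ for distinct colors, which follows from the product decomposition of $H$ in Theorem \ref{prop:GH algebra is an algebra} (the products $h_{d_1} h_{d_2}$ for $d_1 \in W_{I_1}\backslash W/W_{I_1}$, $d_2 \in W_{I_2}\backslash W/W_{I_2}$ pass through the vanishing relation $\delta_{I_1}\partial_{I_2} = 0$ for $I_1 \ne I_2$).
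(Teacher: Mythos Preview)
Your proof is correct and follows essentially the same approach as the paper: both arguments use the decomposition $1 = \sum_I 1_I$ to find a color $I$ with $1_I v \ne 0$, then apply $\delta_I$ and invoke the relation $\partial_I \delta_I = q_{W_I} 1_I$ to verify that $\delta_I v$ is a nonzero vector fixed by $1_\phi$. Your version is slightly more verbose in establishing the orthogonality of the idempotents $1_I$, which the paper's proof does not actually need (it only uses that \emph{some} $1_I v \ne 0$, not that the decomposition is direct), but this extra detail is harmless.
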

\begin{proof}
We know that $1_{H}=\sum_{I}1_{I}$. Let $0\ne v\in V$. Since $1_{H}v=v$
there exists a finite color $I$ with $1_{I}v\ne0$. Since $1_{I}=q_{W_{I}}^{-1}\partial_{I}\delta_{I}$
also $\delta_{I}v\ne0$. Therefore $1_{\phi}\delta_{I}v=\delta_{I}v\ne0$.
\end{proof}
\begin{prop}
\label{claim:Steinberg Definition}There is only one irreducible representation
$V$ of $H$ on which $\partial_{\{s\}}$ acts by $0$ for every $s\in S$.
It is one dimensional. This representation is called the \emph{Steinberg
representation} or the \emph{special representation}.
\end{prop}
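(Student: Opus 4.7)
The plan is to identify any such representation with the sign character of the Iwahori--Hecke subalgebra $H_\phi$, extended by zero to the rest of $H$.

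For existence, I would construct the candidate representation $V_{\mathrm{St}} = \mathbb{C}$ directly. The assignment $h_s \mapsto -1$ respects the Iwahori--Hecke relations of Proposition \ref{prop:Iwahori Hecke }, since $(-1)^{2} = 1 = q_s + (q_s-1)(-1)$ and the braid relations give $(-1)^{m_{s,t}}$ on either side; so it extends to a character of $H_\phi$ sending each $h_w$ to $(-1)^{l(w)}$. Now declare every $h_d \in H_{I_1,I_2}$ with $(I_1,I_2) \neq (\phi,\phi)$ to act as zero. Using the presentation in Theorem \ref{prop:GH algebra is an algebra}, the only relations that need checking involve the boundary/coboundary operators; all of them reduce to the basic identity $\sum_{w \in W_I}(-1)^{l(w)} = 0$ valid for every non-trivial (spherical) parabolic $W_I$, which forces $\delta_I \partial_I = e_I$ to act as zero on the chamber component, matching the zero action of the left-hand side.

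For uniqueness, let $V$ be an irreducible representation with $\partial_{\{s\}}V = 0$ for every $s \in S$. First, Lemma \ref{lem:1_phi is non zero} gives $U := 1_\phi V \neq 0$. Next, from $\delta_{\{s\}}\partial_{\{s\}} = e_{\{s\}} = 1_\phi + h_s$ (Lemma \ref{lem:Embedding_Claim}) and $\partial_{\{s\}} = 0$, I would deduce $h_s|_U = -\mathrm{id}$, hence $h_w|_U = (-1)^{l(w)}$ for every $w \in W$. Consequently, for any non-empty spherical $I$, $e_I|_U = \sum_{w \in W_I}(-1)^{l(w)}\cdot \mathrm{id} = 0$, i.e., $\delta_I \partial_I v = 0$ for $v \in U$. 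Since $\partial_I \delta_I = q_{W_I}\cdot 1_I$, the operator $\delta_I$ is injective on $1_I V$, and therefore $\partial_I v = 0$ for every $v \in U$ and every non-empty spherical $I$.

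The remaining step is to show $U$ is $H$-stable. Using the factorization $h_d = (1/n_d)\partial_{I_1} h_{\tilde d}\delta_{I_2}$ from Theorem \ref{prop:GH algebra is an algebra}, any element of $H_{I_1, \phi}$ with $I_1 \neq \phi$ acts on $v \in U$ as $\partial_{I_1}(\text{something in } U)$, which vanishes by the previous step; thus $H\cdot U \subseteq U$, and irreducibility forces $V = U$. The action of $H$ on $V$ then factors through the character $H_\phi \to \mathbb{C},\ h_s \mapsto -1$, so irreducibility forces $\dim V = 1$, proving both one-dimensionality and uniqueness. The main obstacle is really the bookkeeping in this last step---making sure every component of $H$ is accounted for---while all genuine content is concentrated in the classical identity $\sum_{w\in W_I}(-1)^{l(w)} = 0$.
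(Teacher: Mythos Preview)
Your proof is correct and follows essentially the same line as the paper's, which also picks a vector $v$ with $1_\phi v=v$, observes $h_s v=(\delta_{\{s\}}\partial_{\{s\}}-1_\phi)v=-v$ and hence $h_w v=(-1)^{l(w)}v$, and then argues the remaining $h_d$ annihilate $v$ so that $\mathbb{C}v$ is $H$-stable. Your version is more explicit about why every $\partial_I$ (not just $\partial_{\{s\}}$) kills such $v$; a slightly quicker route here is the combinatorial factorization $\partial_I=\partial_{I,\{s\}}\partial_{\{s\}}$ for any $s\in I$, which gives $\partial_I|_V=0$ directly from the hypothesis without invoking $\sum_{w\in W_I}(-1)^{l(w)}=0$.
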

\begin{proof}
Assume that $V$ a representation on which $\partial_{\{s\}}$ acts
by $0$ for every $s\in S$. Let $0\ne v\in V$ be an element such
that $1_{\phi}v=v$. For $s\in S$ we have $h_{s}v=(\delta_{\{s\}}\partial_{\{s\}}-1_{\phi})v=(0-1_{\phi})v=-v$.
Inductively, $h_{w},w\in W$ acts on $v$ by $(-1)^{l(w)}$. For every
other $d\in W_{I_{1}}\backslash W/W_{I_{2}}$, $I_{1}\ne\phi$ or
$I_{2}\ne\phi$, $h_{d}$ acts by zero since every every restriction
operator is $0$. Therefore $V$ is one dimensional and uniquely determined
if it exists. To determine existence, notice that the relations above
define a representation, as required.
\end{proof}
\begin{rem}
Both the trivial and the Steinberg representations are one dimensional
representations of the Iwahori-Hecke algebra $H_{\phi}$. By the Iwahori-Hecke
relations in each such representation $h_{s}\rightarrow q_{s}$ or
$h_{s}\rightarrow-1$. In addition, if $m_{s,s'}$ is odd (and therefore
$q_{s}=q_{s'})$ we must have $h_{s},h_{s'}\rightarrow q_{s}$ or
$h_{s},h_{s'}\rightarrow-1$. On the other hand each such correspondence
defines a one dimensional representation. Therefore the number of
such representations is $2^{M}$, where $M$ is the number of equivalence
classes of $S$ generated by the relation $m_{s,s'}\,\mbox{is\,\ odd}\Longrightarrow s,\,s'\,\mbox{equivalent}$. 

For example in the bi-regular tree there are 4 such representations
and those are exactly the one dimensional $H_{\phi}$ representations
given in in \cite{kamber2016lpgraph}, proposition 11.6.
\end{rem}

\section{\label{sec:Unitary-Representations}Unitary Representations}

Recall we have an involution $*:H\rightarrow H$.
\begin{defn}
Let $V_{1},V_{2}$ be complex vector spaces. A map $\phi:V_{1}\times V_{2}\rightarrow\mathbb{C}$
is called \emph{sesquilinear} if it is additive and $\phi(av_{1},bv_{2})=\bar{a}b\phi(v_{1},v_{2})$.
A sesquilinear map $\phi:V\times V\rightarrow\mathbb{C}$ is called
\emph{positive definite Hermitian} (or \emph{an inner product}) if
we have $\phi(v_{1},v_{2})=\overline{\phi(v_{2},v_{1})}$ and $\phi(x,x)\in\mathbb{R}_{>0}$
for every $x\ne0$. The space $V$ is called a \emph{Hilbert space}
if it has an inner product and it complete with respect to the topology
it defines. A representation $V$ of $H$ is called \emph{unitary}
if $V$ is an Hilbert space and the representation satisfies $\left\langle hv_{1},v_{2}\right\rangle =\left\langle v_{1},h^{*}v_{2}\right\rangle $
for every $v_{1},v_{2}\in V$ and $h\in H$. 

A representation $V$ of $H$ is called \emph{normed} if $V$ is a
Banach space and every element $h\in H$ acts as a bounded linear
operator. 

If $V$ is finite dimensional or normed, \emph{the complex dual space}
$V^{*}$ is the vector space of continuous linear functionals on $V$.
We have an obvious map $\phi:V^{*}\times V\rightarrow\mathbb{C}$.
We define a $\mathbb{C}$ action of $V^{*}$ by $\left\langle \alpha v^{*},v\right\rangle =\left\langle v^{*},\bar{\alpha}v\right\rangle $
so the form $\phi:V^{*}\times V\rightarrow\mathbb{C}$ is sesquilinear.
\end{defn}
In practice, we deal in this work with representations that are either
finite dimensional (unitary or not), or the action of $H$ on $L_{p}(B_{f})$.
\begin{prop}
\label{claim:finite dimensional unitary decomposes}Every finite dimensional
unitary representation of $H$ decomposes into a direct sum of irreducible
representations.
\end{prop}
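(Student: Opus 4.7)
The plan is to adapt the standard argument for finite-dimensional unitary representations of a $*$-algebra: show that orthogonal complements of invariant subspaces are invariant, and then induct on dimension.

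First, I would verify the orthogonal complement lemma: if $V$ is a unitary representation of $H$ and $W \subset V$ is an $H$-invariant subspace, then $W^{\perp} = \{v \in V : \langle v, w \rangle = 0 \text{ for all } w \in W\}$ is also $H$-invariant. The argument is a one-liner using the defining unitarity relation $\langle hv_{1}, v_{2} \rangle = \langle v_{1}, h^{*}v_{2} \rangle$: for $v \in W^{\perp}$, $h \in H$, and $w \in W$, we have $\langle hv, w \rangle = \langle v, h^{*}w \rangle$, and since $H$ is closed under $*$ and $W$ is $H$-invariant, $h^{*}w \in W$, so $\langle v, h^{*}w \rangle = 0$. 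The only structural fact about $H$ used here is that the involution $*$ stabilizes $H$, which was established in proposition \ref{pro:Adjuction is involution}.

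Next, I would induct on $\dim V$. The base case ($\dim V = 0$) is vacuous. For the inductive step, if $V$ admits no proper nonzero $H$-invariant subspace, then by definition $V$ is already irreducible and there is nothing more to prove. Otherwise, pick any proper nonzero $H$-invariant subspace $W \subset V$; since $V$ is finite-dimensional and equipped with a (positive definite Hermitian) inner product, we have the orthogonal decomposition $V = W \oplus W^{\perp}$ as inner product spaces, and by the first step this is also a decomposition of $H$-representations. Both $W$ and $W^{\perp}$ are unitary representations of $H$ (inheriting the restricted inner product) of strictly smaller dimension, so by the inductive hypothesis each decomposes into a direct sum of irreducible unitary subrepresentations. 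Concatenating the decompositions yields the desired decomposition of $V$.

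There is essentially no obstacle here; the argument is entirely formal and relies only on (i) finite-dimensionality (to guarantee $V = W \oplus W^{\perp}$), (ii) the existence of a positive definite Hermitian inner product, and (iii) the fact that $H$ is a $*$-algebra acting unitarily. In particular, nothing specific about buildings or the explicit structure of $H$ is needed beyond what is already packaged into the definition of a unitary representation.
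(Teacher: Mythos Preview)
Your proof is correct and is essentially identical to the paper's own argument: both show that the orthogonal complement of an invariant subspace is invariant via the unitarity relation $\langle hv,w\rangle=\langle v,h^{*}w\rangle$, and then conclude by induction on dimension. The paper even remarks that the result is standard in representation theory, so nothing more is expected here.
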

\begin{proof}
This result is standard in representation theory- assume $\{0\}\ne V'\subset V$
is a proper subrepresentation. Let $U=\{u\in V:\left\langle v,u\right\rangle =0,\,\forall v\in V'\}$.
Since it is an inner product we have $V=V'\oplus U$. Moreover if
$u\in U$, $h\in H$ then for every $v\in v'$, $\left\langle v,hu\right\rangle =\left\langle h^{*}v,u\right\rangle =0$.
Therefore $hu\in U$ and $U$ is also a subrepresentation. The claim
follows by a simple inductive argument.
\end{proof}
\begin{rem}
In general, a finite dimensional representation $V$ of $H$ does
not decompose into a sum irreducible representations, but into a sum
of indecomposable representations (i.e representations that cannot
be written as a sum of two proper subrepresentations). 
\end{rem}
\begin{prop}
1. Given a representation $(\pi,V)$ of $H$ we can define a representation
$(\pi^{*},V^{*})$, acting on the vector space $V^{*}$ by $(\pi^{*}(h)v^{*})(v)=v^{*}(\pi(h^{*})v)$. 

2. A unitary representation $(\pi,V)$ is isomorphic to $(\pi^{*},V^{*})$.
\end{prop}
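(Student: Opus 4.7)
\medskip

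The plan is to prove both parts by direct verification, using only the $\ast$-algebra identity $(h_{1}h_{2})^{\ast} = h_{2}^{\ast}h_{1}^{\ast}$ from proposition \ref{pro:Adjuction is involution} together with the basic functional-analytic properties of Hilbert spaces.

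For part (1), the task is to check that $\pi^{\ast}$, as defined, is an actual representation of $H$, i.e.\ a $\mathbb{C}$-algebra homomorphism into $\mathrm{Hom}_{\mathbb{C}}(V^{\ast},V^{\ast})$ sending $1$ to $\mathrm{Id}_{V^{\ast}}$. I would first verify $\mathbb{C}$-linearity of $h \mapsto \pi^{\ast}(h)$, which is where the slightly non-standard $\mathbb{C}$-action on $V^{\ast}$ enters: unwinding $(\pi^{\ast}(\alpha h)v^{\ast})(v) = v^{\ast}(\pi(\bar\alpha h^{\ast})v) = \bar\alpha v^{\ast}(\pi(h^{\ast})v)$ and comparing with the action $(\alpha w^{\ast})(v) = w^{\ast}(\bar\alpha v)$ gives exactly $\pi^{\ast}(\alpha h) = \alpha\,\pi^{\ast}(h)$. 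Multiplicativity is a one-line computation: $(\pi^{\ast}(h_{1}h_{2})v^{\ast})(v) = v^{\ast}(\pi(h_{2}^{\ast}h_{1}^{\ast})v) = v^{\ast}(\pi(h_{2}^{\ast})\pi(h_{1}^{\ast})v) = (\pi^{\ast}(h_{1})\pi^{\ast}(h_{2})v^{\ast})(v)$, invoking proposition \ref{pro:Adjuction is involution}. Finally, $1^{\ast} = 1$ (since each $1_{I} = h_{d}$ for the self-conjugate double coset $d = W_{I}\backslash\mathrm{id}/W_{I}$), so $\pi^{\ast}(1) = \mathrm{Id}$.

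For part (2), the plan is to define the natural map $\Phi:V \to V^{\ast}$ by $\Phi(v)(u) = \langle v,u\rangle$ and to show it is an $H$-equivariant isomorphism. Sesquilinearity of the inner product gives $\Phi(\alpha v)(u) = \overline{\alpha}\langle v,u\rangle$, which under the $\mathbb{C}$-action on $V^{\ast}$ is precisely $(\alpha\Phi(v))(u)$, so $\Phi$ is $\mathbb{C}$-linear. Intertwining with the $H$-action is the unitarity axiom:
\[
\Phi(\pi(h)v)(u) = \langle \pi(h)v,u\rangle = \langle v,\pi(h^{\ast})u\rangle = \Phi(v)(\pi(h^{\ast})u) = (\pi^{\ast}(h)\Phi(v))(u).
\]
Injectivity of $\Phi$ is immediate from non-degeneracy of the inner product. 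Surjectivity follows in finite dimensions by equality of dimensions, and in the infinite-dimensional Hilbert case by the Riesz representation theorem applied to continuous linear functionals on $V$ (which is why the complex dual was defined as the continuous dual).

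The computations here are essentially routine bookkeeping; the only genuinely delicate point is keeping track of the conjugate-linear versus linear conventions on $V^{\ast}$ so that $\Phi$ comes out as an honest $H$-linear map rather than a conjugate-linear one, and so that $\pi^{\ast}$ is $\mathbb{C}$-linear (not antilinear) as a map $H \to \mathrm{Hom}(V^{\ast},V^{\ast})$. Once the $\mathbb{C}$-action on $V^{\ast}$ is taken in the form $(\alpha v^{\ast})(v) = v^{\ast}(\bar\alpha v)$ as specified in the excerpt, everything lines up and both parts fall out cleanly.
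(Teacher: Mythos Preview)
Your proposal is correct and is exactly the approach the paper has in mind: the paper's own proof simply says ``Claim (1) is standard and left to the reader. Claim (2) follows from the fact that the inner product gives a vector space isomorphism between $V$ and $V^{*}$. It is easy to see that this isomorphism is also an isomorphism of $H$ representations.'' You have faithfully supplied the omitted details, including the careful bookkeeping of the conjugate $\mathbb{C}$-action on $V^{*}$, which is indeed the only point requiring any care.
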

\begin{proof}
Claim (1) is standard and left to the reader. Claim (2) follows from
the fact that the inner product gives a vector space isomorphism between
$V$ and $V^{*}$. It is easy to see that this isomorphism is also
an isomorphism of $H$ representations.
\end{proof}
\begin{defn}
The representation $(\pi^{*},V^{*})$ of the last proposition is called
\emph{the dual representation.}
\end{defn}
\begin{lem}
If $V$ is finite dimensional it is irreducible if and only if $V^{*}$
is irreducible.
\end{lem}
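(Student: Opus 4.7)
The plan is to exhibit an inclusion-reversing bijection between $H$-subrepresentations of $V$ and $H$-subrepresentations of $V^{*}$, given by taking annihilators. This immediately forces irreducibility to be equivalent on the two sides.

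Concretely, for a subspace $W\subset V$, set $W^{\perp}=\{v^{*}\in V^{*}:v^{*}(w)=0\text{ for all }w\in W\}$. Since $V$ is finite dimensional, standard linear algebra gives $(W^{\perp})^{\perp}=W$ (under the canonical identification $V\cong V^{**}$) and $\dim W+\dim W^{\perp}=\dim V$. So $W\mapsto W^{\perp}$ is an inclusion-reversing bijection between subspaces of $V$ and subspaces of $V^{*}$, which sends $\{0\}$ to $V^{*}$ and $V$ to $\{0\}$.

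The key step is to check that this bijection restricts to $H$-invariant subspaces. Suppose $W\subset V$ is $H$-invariant, i.e.\ $\pi(h)W\subset W$ for all $h\in H$. For $v^{*}\in W^{\perp}$, $w\in W$ and $h\in H$, the definition of $\pi^{*}$ gives
\[
\left(\pi^{*}(h)v^{*}\right)(w)=v^{*}\bigl(\pi(h^{*})w\bigr)=0,
\]
since $\pi(h^{*})w\in W$. Hence $\pi^{*}(h)v^{*}\in W^{\perp}$, so $W^{\perp}$ is an $H$-subrepresentation of $V^{*}$. Conversely, if $U\subset V^{*}$ is $H$-invariant, the same computation applied to $(\pi^{*},V^{*})$ in place of $(\pi,V)$, together with the identification $(\pi^{**},V^{**})\cong(\pi,V)$ (valid in the finite dimensional case), shows that $U^{\perp}\subset V$ is $H$-invariant. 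Taking $U=W^{\perp}$ and using $(W^{\perp})^{\perp}=W$ confirms the bijection.

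Once this is in place, the equivalence is immediate: $V$ has a proper nonzero $H$-subrepresentation $W$ iff $W^{\perp}$ is a proper nonzero $H$-subrepresentation of $V^{*}$. There is essentially no obstacle; the only subtlety is that one must be careful to use the involution $h\mapsto h^{*}$ (rather than $h$) when transferring invariance, and one must invoke finite dimensionality to conclude $(W^{\perp})^{\perp}=W$ — without this, only one direction of the argument is automatic, which is why the statement is restricted to the finite dimensional case.
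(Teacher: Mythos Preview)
Your proof is correct and uses essentially the same idea as the paper: both arguments rest on the fact that annihilators with respect to the dual pairing send $H$-invariant subspaces to $H$-invariant subspaces (via the involution $h\mapsto h^{*}$), together with the finite-dimensional identification $V^{**}\cong V$. The paper phrases this more ad hoc---it takes a proper cyclic subspace $Hv_{0}^{*}\subsetneq V^{*}$, picks $v_{0}$ in its annihilator, and observes $Hv_{0}\subset\ker v_{0}^{*}$ is proper---while you set up the full inclusion-reversing bijection of invariant subspaces; but the content is the same.
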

\begin{proof}
Since $\left(V^{*}\right)^{*}\cong V$ it is enough to prove that
if $V$ is irreducible so is $V^{*}$. Assume $V^{*}$ is not irreducible.
Then for some $0\ne v_{0}^{*}\in V^{*}$ the space $Hv_{0}^{*}$ is
a proper subspace of $V^{*}$- a linear subspace of smaller dimension.
Therefore there is some $0\ne v_{0}\in V$ such that $\left\langle hv_{0}^{*},v_{0}\right\rangle =0$
for every $h\in H$. But in this case $\left\langle v_{0}^{*},h^{*}v_{0}\right\rangle =0$
for every $h\in H$. Since $v_{0}^{*}\ne0$, $Hv_{0}$ is a proper
subrepresentation and $V$ is not irreducible.
\end{proof}

\section{\label{sec:Induction-and-restriction}Induction and Restriction of
Algebra Representations}

The ADH algebra $H$ is an algebra with a unit $1_{H}$. Let $H'\subset H$
be a subalgebra with unit $1_{H'}$. We do not assume in general that
$1_{H'}$ is equal to $1_{H}$.
\begin{defn}
Let $V$ be a representation of $H$. The subspace $V'=\{hv:v\in V,h\in H'\}=\{1_{H'}v:v\in V\}$
is a representation space of $H'$. We call this representation the
\emph{restriction }of $V$ from $H$ representation to $H'$ representation
and denote it $\mbox{res}_{H'}^{H}V$.

Let $V'$ be a representation of $H'$. The action of $H$ on the
space $H\otimes_{H'}V$ is called the \emph{induction} of $V$ from
$H'$ representation to $H$ representation. It is denoted $\mbox{ind}_{H'}^{H}V$.
\end{defn}
We will be mainly concerned with the case in which $H$ is the ADH
algebra of the building and $H'=H_{\phi}$. 
\begin{lem}
\label{prop:res-ind- is isomorphic}Assume that $1_{H'}H1_{H'}=H'$.
Let $V$ be a representation of $H'$. Then $\mbox{res}_{H'}^{H}\mbox{ind}_{H'}^{H}V$
is isomorphic to $V$. 
\end{lem}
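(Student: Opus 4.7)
The plan is to exhibit mutually inverse $H'$-linear maps between $V$ and $\mathrm{res}_{H'}^{H}\mathrm{ind}_{H'}^{H}V = 1_{H'}(H\otimes_{H'}V)$. In one direction I define
\[
\psi : V \longrightarrow 1_{H'}(H\otimes_{H'}V), \qquad \psi(v) = 1_{H'}\otimes v.
\]
Since $1_{H'}v=v$ in the $H'$-representation $V$, the element $1_{H'}\otimes v = 1_{H'}\cdot(1_H\otimes v)$ really does lie in the subspace cut out by $1_{H'}$, and the map is visibly linear and $H'$-equivariant. This step is immediate.

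In the other direction, I want to send a general element $1_{H'}(h\otimes v)$ back to an element of $V$. First I rewrite
\[
1_{H'}(h\otimes v) = 1_{H'}h\otimes v = 1_{H'}h\otimes 1_{H'}v = 1_{H'}h\,1_{H'}\otimes v,
\]
using that tensors are over $H'$ and that $1_{H'}$ acts as the identity on $V$. Now the hypothesis $1_{H'}H1_{H'}=H'$ kicks in: the element $1_{H'}h\,1_{H'}$ actually lies in $H'$, so it acts on $V$. I therefore define
\[
\phi : 1_{H'}(H\otimes_{H'}V) \longrightarrow V, \qquad \phi\bigl(1_{H'}(h\otimes v)\bigr) = (1_{H'}h\,1_{H'})\cdot v.
\]
The main thing to check is that $\phi$ is well defined as a map out of a tensor product modulo the balancing relations $hh'\otimes v - h\otimes h'v$ for $h'\in H'$; this follows because $1_{H'}hh'\,1_{H'} = (1_{H'}h\,1_{H'})h'$ in $H'$, so both sides compute to $(1_{H'}h\,1_{H'})h'\cdot v$. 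Linearity and $H'$-equivariance (using that $H'$ acts on the image via its inclusion into $H$) are then routine.

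Finally I verify the two compositions. For $\phi\circ\psi$, a direct computation gives $\phi(1_{H'}\otimes v)=(1_{H'}\cdot 1_{H'})\cdot v=v$. For $\psi\circ\phi$, using the rewriting above,
\[
\psi\bigl(\phi(1_{H'}(h\otimes v))\bigr) = 1_{H'}\otimes (1_{H'}h\,1_{H'})v = 1_{H'}\cdot(1_{H'}h\,1_{H'})\otimes v = 1_{H'}h\,1_{H'}\otimes v,
\]
which equals $1_{H'}(h\otimes v)$ by the rewriting. So $\phi$ and $\psi$ are mutually inverse $H'$-module maps, establishing the isomorphism.

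The only genuine obstacle is the well-definedness of $\phi$, and more subtly the very existence of an action of $1_{H'}h\,1_{H'}$ on $V$; both are handled by invoking the hypothesis $1_{H'}H1_{H'}=H'$. Without this assumption, $1_{H'}h\,1_{H'}$ is only an element of $H$, not of $H'$, and the natural formula for $\phi$ would not land in $V$. Everything else is bookkeeping with tensor relations and the idempotent $1_{H'}$.
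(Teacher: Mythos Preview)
Your proof is correct and follows essentially the same approach as the paper: both define the map $\psi(v)=1_{H'}\otimes v$ and its inverse $\phi(1_{H'}(h\otimes v))=(1_{H'}h1_{H'})v$, relying on the hypothesis $1_{H'}H1_{H'}=H'$ to make $\phi$ meaningful. The only cosmetic difference is that you verify both compositions directly, while the paper checks $\phi\circ\psi=\mathrm{id}_V$ and then argues that $\psi$ is surjective via the rewriting $1_{H'}(h\otimes v)=1_{H'}\otimes(1_{H'}h1_{H'})v$.
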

\begin{proof}
Define a homomorphism $T:V\rightarrow\mbox{res}_{H'}^{H}\mbox{ind}_{H'}^{H}V$,
$v\rightarrow1_{H'}\otimes v$. $T$ is surely a $H'$-homomorphism.
Define the map $S:\mbox{res}_{H'}^{H}\mbox{ind}_{H'}^{H}V\rightarrow V$,
$1_{H'}h\otimes_{H'}v\rightarrow1_{H'}h1_{H'}v$ (using the fact that
$1_{H'}h1_{H'}\in H'$). We have $ST=id_{V}$. It remains to prove
that its image is $\mbox{res}_{H'}^{H}\mbox{ind}_{H'}^{H}V$. For
each $v\in V$ we have $1_{H'}v=v$. Therefore in $\mbox{ind}_{H'}^{H}V$,
for every $h\in H$, $1_{H'}(h\otimes v)=1_{H'}h1_{H'}\otimes v=1\otimes(1_{H'}h1_{H'}v)$.
Therefore every element of $\mbox{res}_{H'}^{H}\mbox{ind}_{H'}^{H}V$
can be written as $1_{H'}\otimes v$ for $v\in V$, as required.
\end{proof}
\begin{lem}
\label{lem:Ind H_phi Lemma}Let $V$ be a representation of $H_{\phi}$.
Each element of $\mbox{ind}_{H_{\phi}}^{H}V$ can be written as $\sum_{I}\partial_{I}\otimes_{H_{\phi}}v_{I},\,v_{I}\in V$,
$I\subset S$ is a (spherical) color and $\partial_{I}$ is the boundary
operator of definition \ref{def:Induction-and-Restriction defnition}.
\end{lem}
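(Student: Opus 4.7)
The plan is to use two structural facts already available: the block decomposition $H=\bigoplus_{I_1,I_2}H_{I_1,I_2}$ of the ADH algebra, and the fact (from Lemma \ref{lem:Embedding_Claim}(1)) that every element of $H_{I_1,I_2}$ has the form $\partial_{I_1}\tilde h\,\delta_{I_2}$ for some $\tilde h\in H_\phi$. The tensor product is over $H_\phi$, so only elements of $H_\phi$ can be moved across the tensor sign, but the key observation is that the unit $1_\phi\in H_\phi$ can be inserted on the right of $v$ and then pushed through, which will annihilate almost everything.

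First I would take an arbitrary element of $\mathrm{ind}_{H_\phi}^H V$ and write it, using the block decomposition, as a finite sum $\sum h_{I_1,I_2}\otimes v$ with $h_{I_1,I_2}\in H_{I_1,I_2}$ and $v\in V$. Since $V$ is an $H_\phi$-representation, $1_\phi v=v$, so $h_{I_1,I_2}\otimes v=h_{I_1,I_2}\cdot 1_\phi\otimes v$. Now $H_\phi=H_{\phi,\phi}$ consists of operators outputting into $\mathbb{C}^{B_\phi}$, while $h_{I_1,I_2}$ is zero on $\mathbb{C}^{B_{I'}}$ for $I'\neq I_2$; hence $h_{I_1,I_2}\cdot 1_\phi=0$ whenever $I_2\neq\phi$, and consequently such terms vanish in the tensor product. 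This reduces the problem to sums of elements $h\otimes v$ with $h\in H_{I_1,\phi}$.

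Next, applying Lemma \ref{lem:Embedding_Claim}(1) with $I_2=\phi$, and recalling that $\delta_\phi=1_\phi$ is the identity, I obtain that every basis element $h_d\in H_{I_1,\phi}$ can be written as a scalar multiple of $\partial_{I_1}h_{\tilde d}$ with $h_{\tilde d}\in H_\phi$. Hence any $h\in H_{I_1,\phi}$ is of the form $\partial_{I_1}\tilde h$ for some $\tilde h\in H_\phi$. Using the tensor relation over $H_\phi$:
\[
h\otimes v=\partial_{I_1}\tilde h\otimes v=\partial_{I_1}\otimes \tilde h\,v=\partial_{I_1}\otimes v_{I_1},
\]
where $v_{I_1}:=\tilde h\,v\in V$. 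Collecting these over the spherical colors gives the desired presentation.

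There is no genuine obstacle here; the only subtlety is to notice that inserting $1_\phi$ on the right of $v$ and sliding it across the tensor sign acts as a projector killing every block $H_{I_1,I_2}$ with $I_2\neq\phi$. Once this is observed, the remaining content is just the explicit factorization from Lemma \ref{lem:Embedding_Claim}, which handles the case $I_2=\phi$ cleanly because $\delta_\phi$ is the identity.
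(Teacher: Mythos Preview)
Your proof is correct and follows essentially the same route as the paper's: insert $1_\phi$ via $v=1_\phi v$, slide it across the tensor to kill all blocks with $I_2\neq\phi$, and for $I_2=\phi$ use the factorization $h_d=\partial_{I_1}h_{\tilde d}$ (with $n_d=1$ since $I_3=\phi$) to push the $H_\phi$-part across. The paper phrases the argument in terms of basis elements $h_d$ rather than block elements $h_{I_1,I_2}$, but this is only a cosmetic difference.
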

\begin{proof}
Let $h\otimes v\in\mbox{ind}_{H'}^{H}V$, $h\in H,\,v\in V$. Since
$h$ is a sum of basis elements, it is enough to assume $h=h_{d}$,$d\in W_{I_{1}}\backslash W/W_{I_{2}}$.
If $I_{2}\ne\phi$, $h_{d}\otimes_{H_{\phi}}v=h_{d}\otimes_{H_{\phi}}1_{\phi}v=h_{d}1_{\phi}\otimes_{H_{\phi}}v=0$.
Otherwise $I_{2}=\phi$ and $h_{d}=\partial_{I_{1}}h_{w}$ for some
$w\in W$. Then $h_{d}\otimes_{H_{\phi}}v=\partial_{I_{1}}h_{w}\otimes_{H_{\phi}}v=\partial_{I_{1}}\otimes_{H_{\phi}}h_{w}v$
and the claim proven.
\end{proof}
\begin{lem}
Let $V$ be a representation of $H$. Then $\mbox{ind}_{H_{\phi}}^{H}\mbox{res}_{H_{\phi}}^{H}V$
is isomorphic to $V$.
\end{lem}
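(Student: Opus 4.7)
The plan is to exhibit an explicit inverse to the natural $H$-equivariant map
\[
T:\operatorname{ind}_{H_{\phi}}^{H}\operatorname{res}_{H_{\phi}}^{H}V\longrightarrow V,\qquad h\otimes v\longmapsto hv.
\]
Write $V'=\operatorname{res}_{H_{\phi}}^{H}V=1_{\phi}V$. The map $T$ is well defined and $H$-linear by construction. The key structural input is the decomposition of the unit
\[
1_{H}=\sum_{I}1_{I}=\sum_{I}q_{W_{I}}^{-1}\partial_{I}\delta_{I},
\]
which follows from $\partial_{I}\delta_{I}=q_{W_{I}}1_{I}$ in Lemma~\ref{lem:Embedding_Claim}, together with the orthogonality relation $\delta_{I_{1}}\partial_{I_{2}}=0$ for $I_{1}\ne I_{2}$ listed in Theorem~\ref{prop:GH algebra is an algebra}.

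Next I would define
\[
S:V\longrightarrow\operatorname{ind}_{H_{\phi}}^{H}V',\qquad S(v)=\sum_{I\,\mathrm{spherical}}q_{W_{I}}^{-1}\,\partial_{I}\otimes_{H_{\phi}}\delta_{I}v.
\]
This is legitimate because $\delta_{I}v\in V'$: indeed $\delta_{I}\in H_{\phi,I}$, so $1_{\phi}\delta_{I}=\delta_{I}$ in $H$, hence $1_{\phi}(\delta_{I}v)=\delta_{I}v$. Verifying $T\circ S=\operatorname{id}_{V}$ is then immediate from the unit decomposition: $T(S(v))=\sum_{I}q_{W_{I}}^{-1}\partial_{I}\delta_{I}v=\sum_{I}1_{I}v=v$.

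The substantive half is $S\circ T=\operatorname{id}$. By Lemma~\ref{lem:Ind H_phi Lemma}, every element of $\operatorname{ind}_{H_{\phi}}^{H}V'$ can be written as $\sum_{I}\partial_{I}\otimes_{H_{\phi}}v_{I}$ with $v_{I}\in V'$, so it suffices to check the identity on a single summand $\partial_{I}\otimes v$ with $v\in V'$. Computing,
\[
S(T(\partial_{I}\otimes v))=S(\partial_{I}v)=\sum_{J}q_{W_{J}}^{-1}\partial_{J}\otimes\delta_{J}\partial_{I}v,
\]
and the cross terms $J\ne I$ vanish by $\delta_{J}\partial_{I}=0$, leaving $q_{W_{I}}^{-1}\partial_{I}\otimes e_{I}v$ where $e_{I}=\delta_{I}\partial_{I}$. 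Since $e_{I}\in H_{\phi}$, this equals $q_{W_{I}}^{-1}(\partial_{I}e_{I})\otimes v$, and from $\partial_{I}e_{I}=\partial_{I}\delta_{I}\partial_{I}=q_{W_{I}}1_{I}\partial_{I}=q_{W_{I}}\partial_{I}$ we conclude $S(T(\partial_{I}\otimes v))=\partial_{I}\otimes v$, as desired.

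The only mild obstacle is bookkeeping in the last step: one must recognise that the various boundary/coboundary operators live in the right bi-graded components of $H$ so that the relations of Theorem~\ref{prop:GH algebra is an algebra} apply cleanly, and that scalars from $H_{\phi}$ can be moved across $\otimes_{H_{\phi}}$. Once the algebraic identities $\partial_{I}\delta_{I}=q_{W_{I}}1_{I}$, $\delta_{J}\partial_{I}=0$ ($I\ne J$) and $1_{I}\partial_{I}=\partial_{I}$ are lined up, the verification is mechanical and the isomorphism $\operatorname{ind}_{H_{\phi}}^{H}\operatorname{res}_{H_{\phi}}^{H}V\cong V$ follows.
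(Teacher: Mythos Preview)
Your proof is correct and uses exactly the same pair of maps and the same algebraic identities ($\partial_{I}\delta_{I}=q_{W_{I}}1_{I}$, $\delta_{J}\partial_{I}=0$ for $J\ne I$, and the unit decomposition $1_{H}=\sum_{I}q_{W_{I}}^{-1}\partial_{I}\delta_{I}$) as the paper; your $T$ and $S$ are the paper's $S$ and $T$ with the names swapped. The only organisational difference is that the paper argues its map $V\to\operatorname{ind}$ is surjective (via Lemma~\ref{lem:Ind H_phi Lemma}) and a one-sided inverse, whereas you verify both compositions directly on the generators $\partial_{I}\otimes v$; your version is slightly more explicit but the content is the same.
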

\begin{proof}
Consider the linear transformation $T:V\rightarrow\mbox{ind}_{H_{\phi}}^{H}\mbox{res}_{H_{\phi}}^{H}V$,
$Tv=\sum_{I}(1/q_{W_{I}})\cdot\partial_{I}\otimes_{H_{\phi}}1_{\phi}\delta_{I}v$,
and the linear transformation $S:\mbox{ind}_{H_{\phi}}^{H}\mbox{res}_{H_{\phi}}^{H}V\rightarrow V$,
$S\left(\sum_{I}h\otimes_{H_{\phi}}1_{\phi}v\right)\rightarrow\sum h1_{\phi}v$.
By lemma \ref{lem:Ind H_phi Lemma} $T$ is onto. We have $1_{H}=\sum_{I}1_{I}=\sum(1/q_{W_{I}})\cdot\partial_{I}\delta_{I}$.
Therefore $ST=id_{V}$ and $T$ is an isomorphism of linear spaces.
Finally, for every $h\in H$ we have $ShTv=\sum_{I}(1/q_{W_{I}})\cdot h\partial_{I}1_{\phi}\delta_{I}v=hv$.
Therefore $hT=Th$ and $T$ is an $H$-isomorphism.
\end{proof}
\begin{lem}
Let $V$ be a representation of $H_{\phi}$. If $V$ is finite dimensional,
so is $\mbox{ind}_{H_{\phi}}^{H}V$. If $V$ is unitary, so is $\mbox{ind}_{H_{\phi}}^{H}V$.
\end{lem}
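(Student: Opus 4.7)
The plan relies on the explicit description of the induced module from Lemma \ref{lem:Ind H_phi Lemma} together with the $\ast$-algebra structure of $H$ and the decomposition $H_{I,\phi}=\partial_I H_\phi$ from Lemma \ref{lem:Embedding_Claim}.

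The finite-dimensionality statement is a direct consequence of Lemma \ref{lem:Ind H_phi Lemma}: every element of $\mbox{ind}_{H_\phi}^H V$ has the form $\sum_I \partial_I \otimes v_I$ with $I$ ranging over the (finitely many) spherical colors, so $\dim \mbox{ind}_{H_\phi}^H V \le 2^{|S|}\dim V < \infty$.

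For unitarity I would define an inner product on $\mbox{ind}_{H_\phi}^H V$ by the formula
\[
\langle h_1 \otimes v_1,\, h_2 \otimes v_2 \rangle := \langle v_1,\, h_1^* h_2\, v_2 \rangle_V,
\]
where it suffices to consider $h_i \in H_{I_i, \phi}$, in which case $h_1^* h_2 \in H_{\phi, I_1} \cdot H_{I_2, \phi}$ is automatically zero unless $I_1 = I_2$, and otherwise lies in $H_\phi$ and so acts on $V$. Well-definedness under the tensor relations, the Hermitian property, and the unitarity identity $\langle h w_1, w_2\rangle = \langle w_1, h^* w_2\rangle$ all fall out of the $\ast$-algebra identities $(h_1 h)^* = h^* h_1^*$ and $(h h_1)^* h_2 = h_1^*(h^* h_2)$, together with the definition of the unitary $H_\phi$-action on $V$.

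The substantive step, and what I expect to be the main obstacle, is positive definiteness. Writing $h = \partial_I \tilde h$ with $\tilde h \in H_\phi$ (via $H_{I,\phi} = \partial_I H_\phi$), and letting $p_I := e_I / q_{W_I}$, the relations $e_I^2 = q_{W_I} e_I$ and $e_I^* = e_I$ make $p_I$ a self-adjoint idempotent in $H_\phi$, and a direct computation gives
\[
h^* h \;=\; \tilde h^*\, \delta_I \partial_I\, \tilde h \;=\; \tilde h^* e_I \tilde h \;=\; q_{W_I}\,(p_I \tilde h)^*\,(p_I \tilde h).
\]
Since $V$ is a unitary $H_\phi$-representation, $(p_I \tilde h)^*(p_I \tilde h)$ acts on $V$ as a positive operator, so $\langle h\otimes v, h\otimes v\rangle = q_{W_I}\|\pi(p_I \tilde h)v\|_V^2 \ge 0$. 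Vanishing on a general $w = \sum_I \partial_I \otimes v_I$ forces $p_I v_I = 0$ for each $I$, and the identity $\partial_I p_I = \partial_I$ (from $\partial_I \delta_I = q_{W_I}1_I$ and $1_I\partial_I = \partial_I$) gives $\partial_I \otimes v_I = \partial_I \otimes p_I v_I = 0$, so $w = 0$. In the infinite-dimensional unitary case, each $\partial_I \otimes V$ is isometric to the closed subspace $p_I V$ of the Hilbert space $V$, and $\mbox{ind}_{H_\phi}^H V$ is a finite orthogonal direct sum of such pieces, hence itself complete. The key trick is the factorization $h^* h = q_{W_I}(p_I\tilde h)^*(p_I\tilde h)$, which converts an abstract $\ast$-algebra expression into a manifestly positive operator on $V$.
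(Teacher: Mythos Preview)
Your proof is correct and in spirit the same as the paper's, which simply writes down the norm $\|\partial_I\otimes v\|=q_{W_I}^{-1}\|e_I v\|_V$ and leaves all verifications to the reader. Your abstract formula $\langle h_1\otimes v_1,h_2\otimes v_2\rangle=\langle v_1,h_1^*h_2\,v_2\rangle_V$ is a cleaner packaging: it makes unitarity of the $H$-action tautological and reduces positive-definiteness to the single factorization $\partial_I^*\partial_I=e_I=q_{W_I}p_I^*p_I$, whereas the paper's bare norm formula requires the reader to reconstruct the inner product and check $\langle h\,\cdot,\cdot\rangle=\langle\cdot,h^*\cdot\rangle$ by hand. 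Note that your norm comes out as $\|\partial_I\otimes v\|^2=\langle v,e_Iv\rangle_V=q_{W_I}^{-1}\|e_Iv\|_V^2$, so it differs from the paper's stated norm by a factor $q_{W_I}^{1/2}$ on each $I$-block; your normalization is the one for which the $\ast$-identity holds on the nose.
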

\begin{proof}
The finite dimensional case follows from lemma \ref{lem:Ind H_phi Lemma}.
As a matter of fact we have $\dim\mbox{ind}_{H_{\phi}}^{H}V\le\dim V\cdot\#\{I:\,\mbox{I spherical}\}$.
For the unitary case, define norm on $\mbox{ind}_{H_{\phi}}^{H}V$
by $\left\Vert \partial_{I}\otimes_{H_{\phi}}v\right\Vert =q_{W_{I}}^{-1}\left\Vert e_{I}v\right\Vert _{V}=q_{W_{I}}^{-1}\left\Vert \delta_{I}\partial_{I}v\right\Vert _{V}$.
\end{proof}
\begin{cor}
\label{prop:Induction-and-restriction-full-proposition}Induction
and restriction induce an equivalence of categories between $H_{\phi}$-representations
and $H$-representations. This equivalence preserves irreducible representations,
unitary representations and finite dimensional representations.
\end{cor}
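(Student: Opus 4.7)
The plan is to assemble the equivalence of categories from the three preceding lemmas, and then to check the two remaining preservation properties (irreducibility and the fact that \emph{restriction} preserves finite dimension and unitarity). Most of the work has already been done; what remains is mostly bookkeeping plus one small verification.

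The first step is to confirm that the hypothesis of Lemma \ref{prop:res-ind- is isomorphic} is satisfied for $H' = H_{\phi}$, i.e.\ that $1_{\phi} H 1_{\phi} = H_{\phi}$. Using the basis $\{h_d : d \in W_{I_1}\backslash W / W_{I_2}\}$ of $H$ from Theorem \ref{prop:GH algebra is an algebra}, we note that $1_{\phi} h_d 1_{\phi}$ is nonzero only when the domain and codomain are both $\mathbb{C}^{B_{\phi}}$, forcing $I_1 = I_2 = \phi$; hence $1_{\phi} H 1_{\phi} \subset H_{\phi}$, and the reverse inclusion is trivial since $1_{\phi}$ is the identity of $H_{\phi}$. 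Consequently both lemmas apply and we have $\operatorname{res}_{H_\phi}^H \operatorname{ind}_{H_\phi}^H V \cong V$ and $\operatorname{ind}_{H_\phi}^H \operatorname{res}_{H_\phi}^H V \cong V$ for every $V$ in the appropriate category.

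For the equivalence of categories, I would next verify naturality of the two isomorphisms in $V$. The map $v \mapsto 1_{\phi} \otimes v$ constructed in the proof of Lemma \ref{prop:res-ind- is isomorphic} is clearly natural in $V$, and similarly the map $T: V \to \operatorname{ind}\operatorname{res} V$, $v \mapsto \sum_I q_{W_I}^{-1} \partial_I \otimes_{H_\phi} 1_\phi \delta_I v$, from the proof of the second lemma is given by a fixed element $\sum_I q_{W_I}^{-1} \partial_I \otimes 1_\phi \delta_I$ of the appropriate bimodule, hence commutes with any $H$-homomorphism. This is routine and gives the equivalence. It remains to check that restriction preserves the three subcategories. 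Preservation of finite dimensionality is immediate since $\operatorname{res}_{H_\phi}^H V = 1_\phi V$ is a subspace of $V$. Preservation of unitarity follows from the facts that $1_\phi = 1_\phi^{*}$ (so $1_\phi V$ inherits an inner product) and that $H_\phi$ acts through the adjoint operation of $H$ restricted to $H_\phi$; hence $\langle h v_1, v_2\rangle = \langle v_1, h^* v_2\rangle$ remains valid after restriction.

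The only slightly subtle step is preservation of irreducibility, and this is where I would focus attention (though it is not really hard). Since induction and restriction are mutually inverse equivalences, they preserve subobjects: given $V$ an irreducible $H_\phi$-representation, any $H$-subrepresentation $U \subset \operatorname{ind}_{H_\phi}^H V$ yields an $H_\phi$-subrepresentation $\operatorname{res}_{H_\phi}^H U \subset \operatorname{res}_{H_\phi}^H \operatorname{ind}_{H_\phi}^H V \cong V$, which must be $0$ or $V$; then applying $\operatorname{ind}$ and using the second lemma recovers $U = 0$ or $U = \operatorname{ind}_{H_\phi}^H V$. The same argument in the reverse direction handles restriction, using that for an $H$-irreducible $V$ and any nonzero $H_\phi$-subrepresentation $W \subset 1_\phi V$, the $H$-span $HW$ is a nonzero $H$-subrepresentation, hence equals $V$, and then $1_\phi H W = 1_\phi H 1_\phi W = H_\phi W = W$ forces $W = 1_\phi V$. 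This last calculation, which uses precisely the identity $1_\phi H 1_\phi = H_\phi$ established in step one, closes the argument.
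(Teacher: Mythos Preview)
Your proposal is correct and is exactly the intended argument: in the paper this statement is a corollary with no proof block, meant to follow immediately from the preceding Lemmas \ref{prop:res-ind- is isomorphic}--\ref{lem:Ind H_phi Lemma} and the lemma on finite-dimensionality/unitarity of induction, together with the (unstated but easy) verification $1_{\phi}H1_{\phi}=H_{\phi}$ that you supply. The additional details you spell out on naturality, on restriction preserving finite dimension and unitarity, and on irreducibility are the routine checks the paper leaves implicit.
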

\begin{rem}
This proposition is not true for other $H_{I}$, $I\ne\phi$. In particular,
the restriction of the Steinberg representation to $H_{I}$ representation
is 0 for every $I\ne\phi$.
\end{rem}
\begin{problem}
Is it true in general that induction sends finite dimensional $H_{I}$
representations to finite dimensional $H$ (or $H_{\phi}$) representations?

See also Problem \ref{prob:unitarity and admissibility of induction},
where a similar question is asked about admissibility of the induction
of a finite dimensional $H_{\phi}$ representation to $H(G)$ representation.
\end{problem}

\section{\label{sec:Matrix-Coefficients}Matrix Coefficients}

The following construction is well known in the representation theory
of groups: let $G$ be a group. The space $\mathbb{C}^{G}$ has a
$G\times G$ action given by $(g_{1},g_{2})f(x)=f(g_{1}^{-1}xg_{2})$.
Given any representation $V$ of $G$, $V^{*}$ the dual representation,
$0\ne v\in V,\,0\ne v^{*}\in V^{*}$, $c_{v^{*},v}(g)=\left\langle v^{*},gv\right\rangle $
is called a \emph{matrix coefficient} of the representation. Then
$v^{*}\otimes v\rightarrow c_{v^{*},v}$ is a homomorphism of representations
of $G\times G$, non zero if $V$ is irreducible. This allows us to
consider every irreducible representation as a subrepresentation of
$\mathbb{C}^{G}$. 

Similarly, let $H^{*}$ be the space of linear functionals $f:H\rightarrow\mathbb{C}$.
This space has a natural $H\otimes H$ action $(h_{1},h_{2})\psi(x)=\psi(h_{1}^{*}xh_{2})$
making it a representation of $H\times H$. We will focus on the right
$H$ action, i.e. the action given by $h\psi(x)=\psi(xh)$.
\begin{defn}
Let $V$ be a representation of $H$. Let $0\ne v^{*}$ be a non zero
vector. The functional $c_{v^{*},v}\in H^{*}$, $c_{v^{*},v}(h)=\left\langle v^{*},hv\right\rangle $
is called a \emph{matrix coefficient} of the representation.
\end{defn}
\begin{lem}
\label{prop:embedding into H*}Let $V$ be a representation of $H$.
Let $0\ne v^{*}\in V^{*}$ be a non zero vector. Then the correspondence
$V\rightarrow H^{*}$, $v\rightarrow c_{v^{*},v}$ is a non zero homomorphism
of representations of $H$.
\end{lem}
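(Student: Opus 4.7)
The plan is to verify three things in turn: linearity in $v$, equivariance with respect to the right $H$-action on $H^*$, and non-triviality. Each is essentially a direct unwinding of definitions, so this is a bookkeeping lemma rather than one requiring real ideas.

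First, linearity of $v \mapsto c_{v^*, v}$ follows immediately from the sesquilinearity of the pairing $\langle \cdot, \cdot \rangle : V^* \times V \to \mathbb{C}$ in its second argument together with linearity of the representation map $\pi(h) : V \to V$: for every $h \in H$, $v_1, v_2 \in V$ and $\alpha, \beta \in \mathbb{C}$ one has $c_{v^*, \alpha v_1 + \beta v_2}(h) = \langle v^*, h(\alpha v_1 + \beta v_2)\rangle = \alpha c_{v^*, v_1}(h) + \beta c_{v^*, v_2}(h)$.

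Second, I would check $H$-equivariance. Recall the right action on $H^*$ is $(h_0 \psi)(x) = \psi(x h_0)$ (this is the piece of the $H \otimes H$ action the statement isolates, with the involution playing no role on the right). Then for any $h_0, h \in H$ and $v \in V$,
\[
c_{v^*, h_0 v}(h) = \langle v^*, h(h_0 v)\rangle = \langle v^*, (h h_0) v\rangle = c_{v^*, v}(h h_0) = (h_0 \cdot c_{v^*, v})(h),
\]
so the map intertwines the $H$-actions.

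Finally, for non-triviality, since $v^* \neq 0$ there exists some $v_0 \in V$ with $\langle v^*, v_0 \rangle \neq 0$. Evaluating at the unit, $c_{v^*, v_0}(1_H) = \langle v^*, 1_H v_0\rangle = \langle v^*, v_0\rangle \neq 0$, so $c_{v^*, v_0}$ is not the zero functional in $H^*$. The only mildly subtle point is verifying that $v^*$ being a nonzero continuous linear functional in the sense of the preceding definition actually separates some vector from $0$; in the finite-dimensional case this is automatic, and in the normed case it follows from the definition of the continuous dual. Putting the three steps together yields a nonzero $H$-homomorphism $V \to H^*$, which is the content of the lemma.
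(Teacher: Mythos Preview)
Your proof is correct and follows essentially the same approach as the paper: the paper verifies $H$-equivariance via the one-line computation $hc_{v^{*},v}(h')=c_{v^{*},v}(h'h)=\langle v^{*},h'hv\rangle =c_{v^{*},hv}(h')$ and non-triviality by evaluating at $1_H$, exactly as you do. Your added remarks on linearity and on why a nonzero $v^*$ separates some vector are fine supplementary detail but not needed for the argument.
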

\begin{proof}
The fact that the correspondence is a homomorphism is given by definition:
\[
hc_{v^{*},v}(h')=c_{v^{*},v}(h'h)=\left\langle v^{*},h'hv\right\rangle =c_{v^{*},hv}(h')
\]
It is non zero since $v^{*}\ne0$, therefore there exists $v\in V$
with $\left\langle v^{*},v\right\rangle \ne0$ and then $c_{v^{*},v}(1_{H})=\left\langle v^{*},v\right\rangle \ne0$,
so $c_{v^{*},v}\ne0$.
\end{proof}
Let us shortly discuss representations of $H_{\phi}$. In this case
we have $H_{\phi}^{*}\cong\mathbb{C}^{W}$ as vector spaces (since
the $h_{w},w\in W$ are a basis for $H_{\phi}$). There is a bijection
between functions $f\in\mathbb{C}^{B_{\phi}}$ that are spherical
around $C_{0}$ and functions $f^{*}\in H_{\phi}^{*}$ given by $f^{*}(h_{w})=(h_{w}f)(C_{0})$.
By proposition \ref{Commutes_With_Spherical_Average} the set of $f\in\mathbb{C}^{B_{\phi}}$
spherical around $C_{0}$ is an $H_{\phi}$ representation and it
is easy to see directly that $f\rightarrow f^{*}$ is an isomorphism
of $H_{\phi}$ representations. Therefore, the chain of $H_{\phi}$
homomorphism:

\[
V\rightarrow H_{\phi}^{*}\leftrightarrow\mathbb{C}^{W}\leftrightarrow\left\{ f\in\mathbb{C}^{B_{\phi}}:\,f\,\mbox{spherical around }C_{0}\right\} 
\]

defines an embedding (i.e. a ``geometric realization'') of $V$
in $\mathbb{C}^{B_{\phi}}$. 

To extend the result to $H$ we will need the following definition:
\begin{defn}
A functional $\psi\in H^{*}$ is of color $\phi$ if it is is zero
on every base element $h_{d}$, $d\in W_{I_{1}}\backslash W/W_{I_{2}}$,
$I_{1}\ne\phi$. We denote the set of functionals of color $\phi$
by $H^{*}(\phi)$.
\end{defn}
The vector space $H^{*}(\phi)$ can be naturally identified with $\mathbb{C}^{\oplus_{I_{2}}W/W_{I_{2}}}$
(i.e. the set of functions on $\oplus_{I_{2}}W/W_{I_{2}}$)

Let $C_{0}$ be a chamber of $B$. proposition \ref{Commutes_With_Spherical_Average}
can be stated as follows: 
\begin{lem}
\label{H* and standard}1. The set of $f\in\mathbb{C}^{B_{f}}$ spherical
around $C_{0}$ is an $H$-representation.

2. The correspondence $f\rightarrow f^{*}$ of spherical functions
around $C_{0}$ into $H^{*}(\phi)$, $f^{*}(h_{d})=(h_{d}f)(C_{0})$
is an isomorphism of $H$-representations. The inverse of this isomorphism
is given by $f(\sigma)=q_{d(C_{0},\sigma)}^{-1}f^{*}(h_{d(C_{0},\sigma)})$.
\end{lem}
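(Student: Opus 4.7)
The plan is to deduce part (1) from Proposition \ref{Commutes_With_Spherical_Average} and to prove part (2) by checking directly that the stated map is $H$-equivariant, lands in $H^{*}(\phi)$, and has the advertised inverse. The argument is essentially bookkeeping using the definition of $h_{d}$ and the parametrization of distances.

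For part (1), a function $f\in\mathbb{C}^{B_{f}}$ is spherical around $C_{0}$ precisely when $\rho_{C_{0}}f=f$. By Proposition \ref{Commutes_With_Spherical_Average}, every $h\in H$ commutes with $\rho_{C_{0}}$, so $\rho_{C_{0}}(hf)=h\rho_{C_{0}}f=hf$, and hence the spherical functions form an $H$-subrepresentation.

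For part (2), the first step is to show that $f^{*}\in H^{*}(\phi)$. Fix $d\in W_{I_{1}}\backslash W/W_{I_{2}}$ with $I_{1}\ne\phi$; by definition $h_{d}f$ vanishes on all faces whose color is not $I_{1}$, so in particular $(h_{d}f)(C_{0})=0$ because $C_{0}$ has color $\phi$. The second step is $H$-equivariance with respect to the right action $(h\psi)(x)=\psi(xh)$: for any $h,h'\in H$,
\[
(hf)^{*}(h')=\bigl(h'(hf)\bigr)(C_{0})=\bigl((h'h)f\bigr)(C_{0})=f^{*}(h'h)=(h\cdot f^{*})(h'),
\]
so $f\mapsto f^{*}$ intertwines the $H$-actions. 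The third step is to compute $f^{*}$ on basis elements $h_{d}$ with $d\in W/W_{I_{2}}$: for spherical $f$ and any face $\sigma$ of color $I_{2}$ with $d(C_{0},\sigma)=d$,
\[
f^{*}(h_{d})=(h_{d}f)(C_{0})=\sum_{\sigma':\,d(C_{0},\sigma')=d}f(\sigma')=q_{d}f(\sigma),
\]
since all summands share the common spherical value $f(\sigma)$, and the total number of summands is $q_{d}$ by proposition \ref{claim:Distance calculation}. This identity yields both injectivity and the inversion formula $f(\sigma)=q_{d(C_{0},\sigma)}^{-1}f^{*}(h_{d(C_{0},\sigma)})$.

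For surjectivity, given $\psi\in H^{*}(\phi)$, define $f_{\psi}(\sigma)=q_{d(C_{0},\sigma)}^{-1}\psi(h_{d(C_{0},\sigma)})$; this depends only on $d(C_{0},\sigma)$ so $f_{\psi}$ is spherical around $C_{0}$, and the same calculation shows $(f_{\psi})^{*}(h_{d})=\psi(h_{d})$ on all $h_{d}$ with $I_{1}=\phi$, hence on all of $H^{*}(\phi)$. No step is genuinely hard; the only subtlety is to be careful that $q_{d}$ for $d\in W/W_{I_{2}}$ equals $q_{\tilde{d}}$, so the normalization $q_{d(C_{0},\sigma)}^{-1}$ in the inverse formula is the correct one.
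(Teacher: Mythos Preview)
Your proof is correct and follows exactly the approach the paper intends: the paper does not give an explicit proof of this lemma but simply introduces it as a restatement of Proposition \ref{Commutes_With_Spherical_Average}, so your argument fills in precisely the details the paper leaves implicit. Your verification that $f^{*}\in H^{*}(\phi)$, the equivariance computation, and the inversion via $f^{*}(h_{d})=q_{d}f(\sigma)$ are all the natural steps one would write out.
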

\begin{cor}
\label{cor:Geometric Realization}Each non zero representation $V$
of $H$ has a non zero homomorphism to a subrepresentation of the
action of $H$ on $\left\{ f\in\mathbb{C}^{B_{f}}:f\text{ spherical around \ensuremath{C_{0}}}\right\} $.
This homomorphism is given by a choice of vector $v^{*}\in V^{*}$
such that $1_{\phi}v^{*}=v^{*}$ and defining $f_{v^{*},v}(\sigma)=q_{d(C_{0},\sigma)}^{-1}\left\langle v^{*},h_{d(C_{0},\sigma)}v\right\rangle $.
If $V$ is irreducible it is an embedding.
\end{cor}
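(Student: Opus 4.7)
The plan is to assemble the corollary from the two preceding lemmas, using matrix coefficients and then the geometric identification of $H^{*}(\phi)$ with spherical functions. First, since $V$ is non-zero the dual representation $V^{*}$ is non-zero (it is either the algebraic dual in the finite dimensional case, or the continuous dual in the normed case, and in either situation it is non-trivial). Applying Lemma \ref{lem:1_phi is non zero} to $V^{*}$, I pick a non-zero $v^{*}\in V^{*}$ with $1_{\phi}v^{*}=v^{*}$. This $v^{*}$ will be the parameter of the construction.

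Next, I use Lemma \ref{prop:embedding into H*} to produce the non-zero $H$-homomorphism $v\mapsto c_{v^{*},v}$ from $V$ into $H^{*}$. The key point I would verify is that this map lands in the subspace $H^{*}(\phi)$. Indeed, if $d\in W_{I_{1}}\backslash W/W_{I_{2}}$ with $I_{1}\ne\phi$, then by construction $h_{d}=1_{I_{1}}h_{d}$ in $H$, and the algebra relations of Theorem \ref{prop:GH algebra is an algebra} imply $1_{I_{1}}1_{\phi}=0$ (their composition as operators is zero, and $H$ acts faithfully). Using the $H$-action on the dual, this yields
\[
c_{v^{*},v}(h_{d})=\langle v^{*},h_{d}v\rangle=\langle 1_{I_{1}}v^{*},h_{d}v\rangle=\langle (1_{I_{1}}1_{\phi})v^{*},h_{d}v\rangle=0,
\]
so $c_{v^{*},v}\in H^{*}(\phi)$.

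Now I compose with the $H$-isomorphism of Lemma \ref{H* and standard}, which identifies $H^{*}(\phi)$ with the space of functions on $B_{f}$ spherical around $C_{0}$ via the explicit inverse formula $f(\sigma)=q_{d(C_{0},\sigma)}^{-1}\psi(h_{d(C_{0},\sigma)})$. Applied to $\psi=c_{v^{*},v}$ this produces exactly the function $f_{v^{*},v}$ of the statement. The composition $V\to H^{*}(\phi)\to\{f\in\mathbb{C}^{B_{f}}:f\text{ spherical around }C_{0}\}$ is an $H$-homomorphism, and it is non-zero because $v^{*}\ne 0$ lets us choose $v\in V$ with $\langle v^{*},v\rangle\ne 0$, and using $1_{\phi}v^{*}=v^{*}$ one computes $f_{v^{*},v}(C_{0})=\langle v^{*},v\rangle\ne 0$.

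Finally, if $V$ is irreducible, the kernel of this non-zero homomorphism is a proper $H$-subrepresentation of $V$, hence zero, so the map is an embedding. I do not anticipate a genuine obstacle; the only mildly delicate point is the identification $1_{I_{1}}v^{*}=0$ in $V^{*}$ for $I_{1}\ne\phi$, which I would derive from $1_{\phi}v^{*}=v^{*}$ together with the orthogonal-idempotent relation $1_{I_{1}}1_{\phi}=0$ guaranteed by the structural relations of $H$.
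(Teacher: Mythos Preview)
Your proof is correct and follows essentially the same approach as the paper: choose $v^{*}$ with $1_{\phi}v^{*}=v^{*}$ (via Lemma \ref{lem:1_phi is non zero} applied to $V^{*}$), show the matrix coefficient lands in $H^{*}(\phi)$, and then compose with the isomorphism of Lemma \ref{H* and standard}. The only cosmetic difference is that the paper verifies $c_{v^{*},v}\in H^{*}(\phi)$ by writing $\langle 1_{\phi}v^{*},hv\rangle=\langle v^{*},1_{\phi}hv\rangle$ and using $1_{\phi}h_{d}=0$ for $I_{1}\ne\phi$, whereas you use the dual relation $1_{I_{1}}1_{\phi}=0$; these are equivalent, and your explicit treatment of the irreducible case is a welcome addition.
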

\begin{proof}
Choose $0\ne v^{*}\in V^{*}$ such that $1_{\phi}v^{*}=v^{*}$. Then
$c_{v^{*},v}(h)=\left\langle 1_{\phi}v^{*},hv\right\rangle =\left\langle v^{*},1_{\phi}hv\right\rangle $,
so $c_{v^{*},v}\in H^{*}(\phi)$. By lemma \ref{prop:embedding into H*}
it is a non zero homomorphism of $H$ representations. Thus, by lemma
\ref{H* and standard}, $v\rightarrow f_{v}$ is an isomorphism of
$H$ representations.
\end{proof}
\begin{defn}
We call each $H$-homomorphism of $V$ into $\mathbb{C}^{B_{f}}$
as in corollary \ref{cor:Geometric Realization} a \emph{geometric
realization} of the representation $V$.
\end{defn}

\section{\label{sec:-Tempered-Representations}$p$-Finite Representations
and $p$-Tempered Representations}
\begin{defn}
\label{def:Hphi tempered definition}We say that a finite dimensional
representation $V$ of $H_{\phi}$ is \emph{$p$-finite} if for every
$v\in V$, $u\in V^{\ast}$ we have $\sum_{w\in W}q_{w}^{(1-p)}\left|\left\langle u,h_{w}v\right\rangle \right|^{p}<\infty$.

We say that a finite dimensional representation $V$ of $H_{\phi}$
is \emph{$p$-tempered} if for every $v\in V$, $u\in V^{\ast}$ and
$\epsilon>0$ we have $\sum_{w\in W}q_{w}^{(1-p)}\left|\left\langle u,h_{w}v\right\rangle \right|^{p}(1-\epsilon)^{l(w)}<\infty$.
\end{defn}
We want to explain the geometry of this definition and extend it to
$H$-representations.

\begin{defn}
For distance $d=W_{I}\backslash\tilde{w}/W_{J}\in W_{I}\backslash W/W_{J}$,
$\tilde{w}\in W$ the shortest element in the double coset, we define
the \emph{distance length} $l(d)=l(\tilde{w})\in\mathbb{N}$.

For $f\in\mathbb{C}^{B_{f}}$ define $\left\Vert f\right\Vert _{p}=\left(\sum_{\sigma\in B_{f}}\left|f(\sigma)\right|^{p}\right)^{1/p}$,
$\left\Vert f\right\Vert _{\infty}=\sup{}_{\sigma\in B_{f}}\left|f(\sigma)\right|$.
Let $L_{p}(B_{j})=\{f\in\mathbb{C}^{B_{f}}:\left\Vert f\right\Vert _{p}<\infty\}$.
\end{defn}
Fix a chamber $C_{0}\in B_{f}$.
\begin{defn}
\label{def:F_delta}For every face $\sigma\in B_{f}$, we define the
\emph{distance length} $l(\sigma)=l^{C_{0}}(\sigma)=l(d(C_{0},\sigma))$.
For a function $f\in\mathbb{C}^{B_{f}}$ and $0<\delta<1$ define
$f_{\delta}=f_{\delta}^{C_{0}}\in\mathbb{C}^{B_{f}}$ as $f_{\delta}(\sigma)=(1-\delta)^{l(\sigma)}f(\sigma)$.

\label{def:tempered-definition}A function $f\in\mathbb{C}^{B_{f}}$
is called \emph{$p$-tempered}\textbf{ }and we write $f\in T_{p}(B_{f})=T_{p}^{C_{0}}(B_{f})$
if $f_{\delta}^{C_{0}}\in L_{p}(B_{f})$ for every $\delta>0$. Define
$T_{p}(B_{I})=T_{p}^{C_{0}}(B_{I})$ for $I$ spherical as $T_{p}(B_{I})=T_{p}(B)\cap\mathbb{C}^{B_{I}}\subset\mathbb{C}^{B_{f}}$.
\end{defn}
\begin{lem}
The definition does not depend on the choice of $C_{0}$, that is
$T_{p}^{C_{0}}(B_{f})=T_{p}^{C_{0}^{\prime}}(B_{f})$ for every chamber
$C_{0}^{\prime}$.
\end{lem}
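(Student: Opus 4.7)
The plan is to show that $l^{C_0}(\sigma)$ and $l^{C_0'}(\sigma)$ differ by at most a constant depending only on $C_0,C_0'$, and deduce that $f_\delta^{C_0}$ and $f_\delta^{C_0'}$ are pointwise comparable up to a multiplicative constant, so that the membership in $L_p(B_f)$ is insensitive to the choice of basepoint.

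First I would establish the triangle inequality $|l^{C_0}(\sigma)-l^{C_0'}(\sigma)|\le K$ with $K=l(d(C_0,C_0'))$. Fix $\sigma$ of spherical color $I$, and pick a chamber $C\supset \sigma$ realizing the minimal gallery from $C_0$ to $\sigma$, so that $l^{C_0}(\sigma)=l(d(C_0,C))$. Concatenating a minimal gallery from $C_0'$ to $C_0$ with a minimal gallery from $C_0$ to $C$ yields a gallery from $C_0'$ to $C$ of length at most $K+l^{C_0}(\sigma)$, hence $l^{C_0'}(\sigma)\le K+l^{C_0}(\sigma)$. The reverse inequality follows by symmetry.

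From this inequality and the monotonicity of $t\mapsto (1-\delta)^t$ on $\mathbb{N}$ we get, for every $\delta\in(0,1)$ and every $\sigma$,
\[
(1-\delta)^{l^{C_0'}(\sigma)}\le (1-\delta)^{l^{C_0}(\sigma)-K}=(1-\delta)^{-K}(1-\delta)^{l^{C_0}(\sigma)},
\]
so that $|f_\delta^{C_0'}(\sigma)|\le (1-\delta)^{-K}|f_\delta^{C_0}(\sigma)|$ pointwise. Taking $L_p$ norms gives $\|f_\delta^{C_0'}\|_p\le(1-\delta)^{-K}\|f_\delta^{C_0}\|_p$, so if $f\in T_p^{C_0}(B_f)$ then $f_\delta^{C_0'}\in L_p(B_f)$ for every $\delta>0$, i.e.\ $f\in T_p^{C_0'}(B_f)$. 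The reverse inclusion is the same argument with the roles of $C_0$ and $C_0'$ exchanged.

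There is no real obstacle here; the only mild point is to be sure that the ``distance length'' for faces, defined as the length of the shortest representative of $d(C_0,\sigma)\in W/W_I$, agrees with the gallery-distance from $C_0$ to the nearest chamber containing $\sigma$. This is exactly the content of proposition~\ref{claim:Distance claims}(2), so the triangle inequality for chambers transfers directly to faces and the proof proceeds as described.
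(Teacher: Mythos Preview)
Your proof is correct and follows essentially the same approach as the paper: bound $|l^{C_0}(\sigma)-l^{C_0'}(\sigma)|$ by $L=l(d(C_0,C_0'))$, deduce a pointwise comparison between $f_\delta^{C_0}$ and $f_\delta^{C_0'}$ up to the factor $(1-\delta)^{\pm L}$, and conclude. The paper simply asserts the triangle inequality for the distance length, whereas you spell it out via gallery concatenation and invoke Proposition~\ref{claim:Distance claims}(2) to pass from chambers to faces; this is just added detail, not a different argument.
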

\begin{proof}
Assume we replace $C_{0}$ by $C_{0}^{\prime}$ with $l(d(C_{0},C_{0}^{\prime}))=L$.
Then for every face $\sigma$ we have $\left|l^{C_{0}}(\sigma)-l^{C_{0}^{\prime}}(\sigma)\right|\le L$.
Therefore $f_{\delta}^{C_{0}}(\sigma)(1-\delta)^{L}\le f_{\delta}^{C_{0}^{\prime}}(\sigma)\le f_{\delta}^{C_{0}}(\sigma)(1-\delta)^{-L}$
for every $\delta>0$. The claim follows.
\end{proof}
The following proposition explains definition \ref{def:Hphi tempered definition}.
\begin{prop}
A finite dimensional representation $V$ of $H_{\phi}$ is \emph{$p$-finite}
(resp.\emph{ $p$-tempered)} if and only if every function $f\in\mathbb{C}^{B_{\phi}}$,
in every geometric realization of $V$ , is in $L_{p}(B_{\phi})$.
(resp. $T_{p}(B_{\phi})$). 
\end{prop}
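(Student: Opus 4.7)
The plan is to simply unwind the two sides using the explicit formula for a geometric realization provided by Corollary \ref{cor:Geometric Realization}. A geometric realization of an $H_\phi$-representation $V$ is determined by a nonzero $v^{*}\in V^{*}$ (the condition $1_\phi v^{*}=v^{*}$ is automatic in $H_\phi$-representations since $1_\phi$ is the identity of $H_\phi$), sending $v\in V$ to the function $f_{v^{*},v}(\sigma)=q_{d(C_{0},\sigma)}^{-1}\langle v^{*},h_{d(C_{0},\sigma)}v\rangle$. As $v$ ranges over $V$ and $v^{*}$ over $V^{*}$, we pick up all matrix coefficients appearing in the $p$-finite and $p$-tempered definitions.

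The next step is to partition $B_\phi$ by distance from the fixed chamber $C_{0}$. Since the building is regular, the set of chambers at distance $w\in W$ from $C_{0}$ has cardinality $q_{w}$, and $f_{v^{*},v}$ is constant on chambers at the same distance because the $h_w$-matrix coefficient depends only on $w$. Therefore
\[
\|f_{v^{*},v}\|_{p}^{p}=\sum_{w\in W}q_{w}\cdot\bigl|q_{w}^{-1}\langle v^{*},h_{w}v\rangle\bigr|^{p}=\sum_{w\in W}q_{w}^{1-p}\bigl|\langle v^{*},h_{w}v\rangle\bigr|^{p},
\]
which is exactly the series controlling $p$-finiteness in Definition \ref{def:Hphi tempered definition}. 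This proves the equivalence for $p$-finite directly: $f_{v^{*},v}\in L_{p}(B_{\phi})$ for every $v,v^{*}$ iff the above series is finite for every such pair.

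For the tempered case, compute the same norm for the twisted function $(f_{v^{*},v})_\delta$ defined in \ref{def:F_delta}. Since $l(\sigma)=l(w)$ whenever $d(C_{0},\sigma)=w$, the same partition argument gives
\[
\|(f_{v^{*},v})_{\delta}\|_{p}^{p}=\sum_{w\in W}q_{w}^{1-p}\bigl|\langle v^{*},h_{w}v\rangle\bigr|^{p}(1-\delta)^{p\,l(w)}.
\]
Setting $1-\epsilon=(1-\delta)^{p}$ gives a bijection between $\delta\in(0,1)$ and $\epsilon\in(0,1)$, so the condition that the left-hand side is finite for every $\delta>0$ is equivalent to the condition in Definition \ref{def:Hphi tempered definition} that $\sum_{w}q_{w}^{1-p}|\langle u,h_{w}v\rangle|^{p}(1-\epsilon)^{l(w)}<\infty$ for every $\epsilon>0$. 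Hence $f_{v^{*},v}\in T_{p}(B_{\phi})$ for all $v,v^{*}$ iff $V$ is $p$-tempered.

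There is no real obstacle here; the only thing to watch is the quantifier matching. One must check that varying the ``reference vector'' $v^{*}$ over all geometric realizations, together with $v$ over $V$, recovers precisely the pairs $(u,v)\in V^{*}\times V$ appearing in Definition \ref{def:Hphi tempered definition}. Since every nonzero $v^{*}\in V^{*}$ yields a geometric realization of $V$ in the $H_\phi$ setting, and every $v\in V$ is allowed as input, this matches on the nose. The minor technical step is the reparametrization $1-\epsilon=(1-\delta)^p$ in the tempered case, which is monotone and onto $(0,1)$, so the two ``for every small parameter'' quantifiers coincide.
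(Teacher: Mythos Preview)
Your proof is correct and follows essentially the same approach as the paper: partition $B_{\phi}$ by distance from $C_{0}$, use that there are $q_{w}$ chambers at distance $w$, and observe that the $L_{p}$-norm of the geometric realization equals the defining series. The paper's proof is a two-line sketch of exactly this computation, leaving the tempered case as ``very similar''; your version is simply more explicit, including the reparametrization $1-\epsilon=(1-\delta)^{p}$ that the paper omits.
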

\begin{proof}
There are $q_{w}$ chambers of distance $w$ from $C_{0}$. Therefore
every function $f$ in a geometric realization of $V$ is $p$-finite
if and only if for every $v\in V$, $u\in V^{\ast}$ 
\[
\sum_{w\in W}q_{w}\left(\left|\left\langle u,h_{w}v\right\rangle \right|/q_{w}\right)^{p}=\sum_{w\in W}q_{w}^{(1-p)}\left|\left\langle u,h_{w}v\right\rangle \right|^{p}<\infty
\]

The $p$-tempered case is very similar.
\end{proof}
We can now define $p$-finite and $p$-tempered $H$-representations:
\begin{defn}
A finite dimensional representation $V$ of $H$ is called \emph{$p$-finite}
(resp. \emph{$p$-tempere}d) if every function $f\in\mathbb{C}^{B_{f}}$,
in every geometric realization of $V$ , is in $L_{p}(B_{f})$ (resp.
$T_{p}(B_{f})$).
\end{defn}
The following lemma is immediate and left to the reader. It will allow
us to work with $H_{\phi}$ instead of $H$.
\begin{lem}
\label{lem:Temperdness_depends_only _on_phi}A function $f\in\mathbb{C}^{B_{f}}$
satisfies $f\in L_{p}(B_{f})$ (respectively $T_{p}(B_{f})$) if and
only if for every spherical color $I$ (including $\phi$) $\delta_{I}f\in T_{p}(B_{\phi})$
(respectively $\delta_{I}f\in L_{p}(B_{\phi})$). Therefore the equivalence
of categories between $H_{\phi}$-representations and $H$-representations
also respects $p$-finiteness and $p$-temperedness.
\end{lem}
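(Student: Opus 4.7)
The plan is to exploit the decomposition $B_{f}=\bigsqcup_{I\,\text{spherical}}B_{I}$ together with the simple combinatorial identity $\Vert\delta_{I}g\Vert_{p}^{p}=q_{W_{I}}\Vert g\Vert_{p}^{p}$ for $g\in\mathbb{C}^{B_{I}}$. First I would observe that by the definition of $\delta_{I}=\delta_{\phi,I}$, for any $g\in\mathbb{C}^{B_{I}}$ and any chamber $C\in B_{\phi}$, $\delta_{I}g(C)=g(\sigma)$ where $\sigma\subset C$ is the unique subface of color $I$. Since each face $\sigma\in B_{I}$ is contained in exactly $q_{W_{I}}$ chambers (because $I$ is spherical and $\partial_{I}\delta_{I}=q_{W_{I}}1_{I}$, as in lemma \ref{lem:Embedding_Claim}), summing gives the identity. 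Applied to $g=f|_{B_{I}}=1_{I}f$, this yields $\Vert\delta_{I}f\Vert_{p}^{p}=q_{W_{I}}\Vert f|_{B_{I}}\Vert_{p}^{p}$. Since $\Vert f\Vert_{p}^{p}=\sum_{I\,\text{spherical}}\Vert f|_{B_{I}}\Vert_{p}^{p}$ and there are only finitely many spherical colors with positive $q_{W_{I}}$, the $L_{p}$ part of the lemma follows immediately.

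For the $T_{p}$ part, the additional ingredient is that the length function $l^{C_{0}}$ is compatible on $B_{I}$ and $B_{\phi}$ up to a uniform additive constant. Concretely, if $\sigma\in B_{I}$ and $C\supset\sigma$ is any chamber, then by proposition \ref{claim:Distance claims} the shortest representative $\tilde{d}$ of $d(C_{0},\sigma)\in W/W_{I}$ is the minimal distance between $C_{0}$ and a chamber containing $\sigma$, while $d(C_{0},C)=\tilde{d}w_{I}$ for some $w_{I}\in W_{I}$ with $l(d(C_{0},C))=l(\tilde{d})+l(w_{I})$. Since $I$ is spherical, $l(w_{I})\le l(\tilde{w}_{0,I})$ where $\tilde{w}_{0,I}$ is the longest element of $W_{I}$. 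Hence $|l^{C_{0}}(C)-l^{C_{0}}(\sigma)|\le l(\tilde{w}_{0,I})$ uniformly in $\sigma$. Therefore for every $\delta\in(0,1)$,
\[
(1-\delta)^{l(\tilde{w}_{0,I})}(\delta_{I}f)_{\delta}(C)\le\delta_{I}(f_{\delta})(C)\le(1-\delta)^{-l(\tilde{w}_{0,I})}(\delta_{I}f)_{\delta}(C),
\]
and the first part applied to $f_{\delta}$ shows $f_{\delta}\in L_{p}(B_{f})$ for all $\delta>0$ iff $(\delta_{I}f)_{\delta}\in L_{p}(B_{\phi})$ for all $\delta>0$ and all spherical $I$; that is, $f\in T_{p}(B_{f})$ iff $\delta_{I}f\in T_{p}(B_{\phi})$ for every spherical $I$.

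For the representation-theoretic statement, I would invoke corollary \ref{cor:Geometric Realization}: a geometric realization of an $H$-representation $V$ is built from a vector $v^{\ast}\in V^{\ast}$ with $1_{\phi}v^{\ast}=v^{\ast}$, and yields $f_{v^{\ast},v}\in\mathbb{C}^{B_{f}}$. The restriction $f_{v^{\ast},v}|_{B_{\phi}}$ is precisely a geometric realization of the $H_{\phi}$-representation $\mathrm{res}_{H_{\phi}}^{H}V$, since for $C\in B_{\phi}$ the formula $f_{v^{\ast},v}(C)=q_{d(C_{0},C)}^{-1}\langle v^{\ast},h_{d(C_{0},C)}v\rangle$ only involves $h_{w}\in H_{\phi}$. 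Conversely, given a geometric realization of $\mathrm{res}_{H_{\phi}}^{H}V$ on $B_{\phi}$, extend it using $\delta_{I}f_{v^{\ast},v}$ on $B_{I}$ (up to the factor $n_{d}$ from lemma \ref{lem:Embedding_Claim}). Combining this with the first part, $V$ is $p$-finite (resp.\ $p$-tempered) iff $\mathrm{res}_{H_{\phi}}^{H}V$ is, which is exactly the claim that the equivalence of categories of proposition \ref{prop:Induction-and-restriction-full-proposition} respects these properties.

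The only subtle point, and hence the main thing to keep track of, is the uniform bound $l(w_{I})\le l(\tilde{w}_{0,I})$ in the $T_{p}$ argument; everything else is bookkeeping on the decomposition $B_{f}=\bigsqcup_{I}B_{I}$ and the identity $\partial_{I}\delta_{I}=q_{W_{I}}1_{I}$.
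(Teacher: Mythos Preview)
Your proof is correct and is exactly the natural way to fill in the details; the paper itself simply declares the lemma ``immediate and left to the reader'' and gives no argument, so there is nothing to compare against beyond noting that your counting identity $\Vert\delta_{I}g\Vert_{p}^{p}=q_{W_{I}}\Vert g\Vert_{p}^{p}$ and the uniform length bound $l^{C_{0}}(\sigma)\le l^{C_{0}}(C)\le l^{C_{0}}(\sigma)+l(\tilde w_{0,I})$ are precisely the two ingredients the author had in mind (the latter appears verbatim in the proof of lemma~\ref{lem:-H acts as bounded operators on Tp}). One cosmetic remark: the displayed two-sided inequality for $(\delta_{I}f)_{\delta}$ versus $\delta_{I}(f_{\delta})$ should be stated for $|f|$ (or in terms of absolute values), since $f$ is complex-valued; the conclusion for $L_{p}$-norms is of course unaffected.
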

The following claim relates our definition of temperedness to the
definition stated in the introduction. Recall that a building is thick
if $q_{s}>1$ for all $s\in S$.
\begin{lem}
\label{claim:Equivalence of temperedness definitions}If $f\in L_{p+\epsilon}(B_{f})$
for every $\epsilon>0$ then $f\in T_{p}(B_{f})$. 

If the building is thick and the function $f\in T_{p}(B_{f})$ is
spherical around $C_{0}$, then $f\in L_{p+\epsilon}(B_{f})$ for
every $\epsilon>0$.
\end{lem}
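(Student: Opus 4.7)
The plan is to handle the two implications separately, both relying on H\"older-type estimates combined with the exponential growth estimate $\#\{\sigma\in B_f : l(\sigma)=l\}\le CQ^l$ for some $Q$ depending only on the parameter system and $|S|$ (since there are at most $(n+1)^l$ words of length $l$ and $q_w\le Q_{\max}^{l(w)}$).

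For the first implication, I would bound $\|f_\delta\|_p^p=\sum_{\sigma}(1-\delta)^{pl(\sigma)}|f(\sigma)|^p$ by H\"older's inequality with conjugate exponents $r=(p+\epsilon)/p$ and $r'=(p+\epsilon)/\epsilon$, giving
\[
\|f_\delta\|_p^p\le\|f\|_{p+\epsilon}^{p}\Bigl(\sum_{\sigma\in B_f}(1-\delta)^{pr'l(\sigma)}\Bigr)^{1/r'}.
\]
Grouping the second factor by distance length produces a geometric series of ratio at most $Q(1-\delta)^{p(p+\epsilon)/\epsilon}$, which is less than $1$ once $\epsilon$ is small enough, since $p(p+\epsilon)/\epsilon\to\infty$ as $\epsilon\to 0^+$. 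Since the hypothesis provides $\|f\|_{p+\epsilon}<\infty$ for that particular $\epsilon$, I conclude $f_\delta\in L_p(B_f)$. This direction needs only regularity of $B$.

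For the second implication, sphericality of $f$ around $C_0$ means $f(\sigma)=g_I(d(C_0,\sigma))$ for $\sigma\in B_I$, so
\[
\sum_{\sigma\in B_f}(1-\delta)^{pl(\sigma)}|f(\sigma)|^p=\sum_{I\text{ spherical}}\sum_{d\in W/W_I}q_d(1-\delta)^{pl(d)}|g_I(d)|^p.
\]
Since the outer sum has finitely many terms, I fix a spherical $I$ and work coordinate-wise. The $T_p$ hypothesis implies that each summand on the right is bounded by the total $S_\delta:=\sum_d q_d(1-\delta)^{pl(d)}|g_I(d)|^p<\infty$, yielding the pointwise bound $|g_I(d)|^p\le S_\delta\,q_d^{-1}(1-\delta)^{-pl(d)}$. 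Raising to the $\epsilon/p$ and multiplying by $q_d|g_I(d)|^p$ gives
\[
q_d|g_I(d)|^{p+\epsilon}\le S_\delta^{\epsilon/p}\,q_d|g_I(d)|^p\,q_d^{-\epsilon/p}(1-\delta)^{-\epsilon l(d)}.
\]
Now thickness enters: $q_s\ge 2$ for all $s\in S$ forces $q_d\ge 2^{l(d)}$, so $q_d^{-\epsilon/p}\le 2^{-\epsilon l(d)/p}$. For $\delta$ sufficiently small, the ratio $\mu:=2^{-\epsilon/p}(1-\delta)^{-\epsilon}$ is strictly less than $1$ (it tends to $2^{-\epsilon/p}<1$ as $\delta\to 0$). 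Writing $\mu=(1-\delta')^p$ for some $\delta'>0$ and summing over $d$ gives
\[
\sum_d q_d|g_I(d)|^{p+\epsilon}\le S_\delta^{\epsilon/p}\sum_d q_d(1-\delta')^{pl(d)}|g_I(d)|^p,
\]
which is finite by applying the $T_p$ hypothesis at $\delta'$.

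The main obstacle lies in the second direction: one must produce a decaying factor from $q_d^{-\epsilon/p}$ strong enough to beat the growing factor $(1-\delta)^{-\epsilon l(d)}$ that comes from inverting the tempered weight. This is precisely where both sphericality and thickness are used — sphericality gives the extra $q_d^{-1}$ in the pointwise bound (which is exactly what is needed to separate $q_d^{1-\epsilon/p}$ into $q_d\cdot q_d^{-\epsilon/p}$), and thickness converts that $q_d^{-\epsilon/p}$ into genuine exponential decay in $l(d)$. Without sphericality the pointwise bound loses its $q_d^{-1}$ factor and the argument collapses, which explains why the statement of the second half restricts to spherical functions.
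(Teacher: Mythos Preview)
Your argument is correct in both directions. The second implication follows the same strategy as the paper: extract a pointwise bound on $|g_I(d)|^p$ from the convergence of the $T_p$-series, use thickness to convert the factor $q_d^{-\epsilon/p}$ into genuine exponential decay $2^{-\epsilon l(d)/p}$, and then feed the result back into the $T_p$-hypothesis at a new parameter $\delta'$. Your bookkeeping is in fact tidier than the paper's, which passes through the cruder intermediate bound $(f_W(w)/q_w)^p \le \alpha_1^{-l(w)}$ rather than keeping the full pointwise estimate.

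The first implication, however, is handled by a genuinely different route. The paper argues pointwise: it replaces $|f|$ by the majorant $g(C)=\max\{|f(C)|,\,r_2^{-l(C)/p}\}$ (which remains in every $L_{p+\epsilon}$), and then shows directly that $|f_\delta(C)|^p \le g(C)^{p+\epsilon}$ for a suitable $\epsilon=\epsilon(\delta)$. You instead apply H\"older's inequality globally with exponents $(p+\epsilon)/p$ and $(p+\epsilon)/\epsilon$, reducing the question to the convergence of the pure weight series $\sum_\sigma (1-\delta)^{p(p+\epsilon)l(\sigma)/\epsilon}$, which you control via the exponential growth bound. Your approach is shorter and more transparent --- it isolates exactly what is needed (finiteness of one $L_{p+\epsilon}$-norm plus a geometric-series tail) --- whereas the paper's pointwise majorization avoids H\"older entirely but at the cost of the auxiliary function $g$. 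Both rely on the same growth estimate $\#\{\sigma:l(\sigma)=l\}\le CQ^l$, so neither gains generality over the other.
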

\begin{proof}
Using the last lemma it is enough to prove this claim for $f\in\mathbb{C}^{B_{\phi}}$
(since $f\in L_{p+\epsilon}(B_{f})$ if and only if every spherical
color $I$, $\delta_{I}f\in L_{p+\epsilon}(B_{\phi})$). 

Assume $f\in L_{p+\epsilon}(B_{\phi})$ for every $\epsilon>0$. Notice
that the number of chambers of distance $l$ from $C_{0}$ is bounded
by $r_{2}^{l}$, for some $r_{2}>0$, since for $\alpha_{2}=\max_{s\in W}q_{s}$,
$q_{w}\le\alpha_{2}^{l(w)}$ and $\#\{w:l(w)=m\}\le\left|S\right|^{m}$.
Therefore $\sum_{C}r_{2}^{-l(C)}$ converges and if we define $g(C)=\max\left\{ \left|f(C)\right|,r_{2}^{-p^{-1}l(C)}\right\} $
then $g\in L_{p+\epsilon}(B_{\phi})$ for every $\epsilon>0$. For
every $\delta>0$ there exists some $\epsilon>0$ such that $(1-\delta)<r_{2}^{-\epsilon p^{-2}}$.
Then 
\[
\left|f_{\delta}^{C_{0}}(C)\right|{}^{p}\le g_{\delta}^{C_{0}}(C)^{p}=g(C)^{p}(1-\delta)^{l(C)p}<g(C)^{p}r_{2}^{-\epsilon p^{-1}l(C)}\le g(C)^{p}g(C)^{\epsilon}\le g(C)^{p+\epsilon}
\]

Therefore $f_{\delta}^{C_{0}}\in L_{p}(B_{\phi})$ and $f\in T_{p}(B_{\phi})$.

For the other direction, assume the building is thick, and the function
$f\in T_{p}(B_{f})$ is spherical around $C_{0}$. We may therefore
define $f_{W}\in\mathbb{R}_{\ge0}^{W}$ by $f_{W}(w)=\left|(h_{w}f)(C_{0})\right|$.

Since $f\in T_{p}(B_{\phi})$ and $f$ is spherical around $C_{0}$,
the series $\sum_{w\in W}q_{w}\left(f_{W}(w)/q_{w}\right)^{p}(1-\delta)^{p\cdot l(w)}$
converges for every $0<\delta<1$. Since $\alpha_{1}=\min_{s\in S}q_{s}>1$,
$q_{w}>\alpha_{1}^{l(w)}$ and since the series converges, $\left(f_{W}(w)/q_{w}\right)^{p}\le\alpha_{1}^{-l(w)}$
for almost all $w\in W$. For every $\epsilon>0$ choose $1>\delta>0$
such that $(1-\delta)^{p}\ge\alpha_{1}^{-p\epsilon}$. Then $(1-\delta)^{p\cdot l(w)\epsilon}\ge\alpha_{1}^{-pl(w)\epsilon}\ge\left(f_{W}(w)/q_{w}\right)^{\epsilon}$.

Then 
\[
\sum_{w\in W}q_{w}\left(f_{W}(w)/q_{w}\right)^{p+\epsilon}\le\sum_{w\in W}q_{w}\left(f_{W}(w)/q_{w}\right)^{p}(1-\delta)^{p\cdot l(w)}
\]

And therefore $f\in L_{p+\epsilon}$.
\end{proof}
If the building is thin or the function is not spherical then the
lemma has simple counter examples.
\begin{lem}
\label{lem:-H acts as bounded operators on Tp}$T_{p}(B)$ is a representation
of $H$. Moreover for every $h\in H$ there exists a number $M(h)\in R_{\ge0}$
such that for every $f\in T_{p}(B)$, $0.5>\delta>0$, $\left\Vert (hf)_{\delta}\right\Vert _{p}\le M(h)\left\Vert f_{\delta}\right\Vert _{p}$.
\end{lem}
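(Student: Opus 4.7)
The plan is to reduce, by linearity, to the case of a single basis element $h=h_d$ with $d\in W_{I_1}\backslash W/W_{I_2}$, $I_1,I_2$ spherical: once the inequality $\|(h_df)_\delta\|_p\le M(h_d)\|f_\delta\|_p$ is proved, any $h=\sum_d\alpha_d h_d\in H$ is handled by the triangle inequality, taking $M(h)\le\sum_d|\alpha_d|M(h_d)$.

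The geometric step is a distance length comparison: for any two spherical faces with $d(\sigma_1,\sigma_2)=d$,
\[
\bigl|l(\sigma_1)-l(\sigma_2)\bigr|\;\le\; C(d)\;:=\;l(d)+\max\bigl(l(w_{I_1}),l(w_{I_2})\bigr),
\]
where $w_{I_j}$ denotes the longest element of the (spherical) parabolic $W_{I_j}$. To establish this, invoke proposition~\ref{claim:Distance claims} to pick chambers $C_1\supset\sigma_1$, $C_2\supset\sigma_2$ with $d(C_1,C_2)=\tilde d$, and separately choose a chamber $C_1'\supset\sigma_1$ realizing $l(\sigma_1)=l(d(C_0,C_1'))$. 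Since $C_1$ and $C_1'$ share $\sigma_1$ we have $d(C_1',C_1)\in W_{I_1}$, hence $l(d(C_1',C_1))\le l(w_{I_1})$, and the gallery triangle inequality gives $l(\sigma_2)\le l(d(C_0,C_2))\le l(\sigma_1)+l(w_{I_1})+l(d)$; the reverse bound follows by symmetry via $d^*$, using $l(d^*)=l(d)$.

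With this in hand the pointwise comparison $(1-\delta)^{l(\sigma_1)}\le(1-\delta)^{-C(d)}(1-\delta)^{l(\sigma_2)}\le 2^{C(d)}(1-\delta)^{l(\sigma_2)}$, valid for $0<\delta<1/2$ and any $\sigma_2$ with $d(\sigma_1,\sigma_2)=d$, yields $|(h_df)_\delta|\le 2^{C(d)}\,h_d|f_\delta|$ pointwise on $B_{I_1}$ (both sides vanish on other colors). A Hölder bound on $h_d$ as an operator on $L_p(B_f)$ then does the rest: applying Hölder to the sum of $q_d$ terms gives $|h_dg(\sigma_1)|^p\le q_d^{\,p-1}\sum_{\sigma_2:d(\sigma_1,\sigma_2)=d}|g(\sigma_2)|^p$, and summing over $\sigma_1\in B_{I_1}$ while using $\#\{\sigma_1:d(\sigma_1,\sigma_2)=d\}=q_{d^*}$ (the very definition of $q_{d^*}$ applied to the reverse distance) produces $\|h_dg\|_p^p\le q_d^{\,p-1}q_{d^*}\|g\|_p^p$. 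Combining gives the desired bound with $M(h_d):=2^{C(d)}q_d^{(p-1)/p}q_{d^*}^{1/p}$.

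The finiteness of $M(h)\|f_\delta\|_p$ for every $0<\delta<1/2$ shows both that $hf\in T_p(B)$ whenever $f\in T_p(B)$, making $T_p(B)$ an $H$-representation, and that the norm estimate holds as stated; the case $\delta\ge 1/2$ plays no role since $(1-\delta)^{l(\sigma)}$ is monotone in $\delta$. The main (mild) obstacle I foresee is Step~1: one must be careful that $l(\sigma)$ is defined via a \emph{minimizing} chamber, so the three chambers $C_0,C_1',C_1$ and the pair $C_1,C_2$ must be arranged so that the $l(w_{I_1})$ overhead is incurred only once; everything else is linearity, Hölder's inequality, and a double-counting argument.
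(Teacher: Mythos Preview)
Your proof is correct and follows essentially the same strategy as the paper's---a length comparison followed by a H\"older/double-counting bound---but you decompose $h$ differently. The paper reduces to the \emph{generators} $\delta_I$ and $\partial_I$ (invoking Theorem~\ref{prop:GH algebra is an algebra}), where the length comparison collapses to the trivial $l(\sigma)\le l(C)\le l(\sigma)+L$ for $\sigma\subset C$ of color $I$ with $L=l(w_I)$; the bound for a general $h$ is then obtained multiplicatively along a word in these generators. You instead handle each basis element $h_d$ directly, which costs you the slightly more involved triangle-inequality argument for $|l(\sigma_1)-l(\sigma_2)|\le C(d)$ but yields an explicit constant $M(h_d)=2^{C(d)}q_d^{(p-1)/p}q_{d^*}^{1/p}$ without appealing to the generation theorem. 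Both routes are sound; yours is more self-contained, the paper's is slightly quicker once the generators are in hand.
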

\begin{proof}
The second part clearly imply the first. It is enough to prove it
for $h=\delta_{I}$ and $h=\partial_{I}$. Let $L$ be the length
of the longest element of $W_{I}$. Therefore for every $\sigma\subset C$
of color $I$ we have $l(\sigma)\le l(C)\le l(\sigma)+L$. Then we
have:

\begin{eqnarray*}
\left\Vert \left(\delta_{I}f\right)_{\delta}\right\Vert _{p}^{p} & \le & q_{W_{I}}\left\Vert f_{\delta}\right\Vert _{p}^{p}\\
\left\Vert \left(\partial_{I}f\right)_{\delta}\right\Vert _{p}^{p} & \le & (1-\delta)^{-pL}q_{W_{I}}^{p-1}\left\Vert f_{\delta}\right\Vert _{p}^{p}\le2^{pL}q_{W_{I}}^{p-1}\left\Vert f_{\delta}\right\Vert _{p}^{p}
\end{eqnarray*}
\end{proof}
\begin{lem}
Let \textup{$V$} be a finite dimensional representation of $H$.
Then $V$ is $p$-finite (resp.\emph{ $p$}-tempered) if and only
if $V^{*}$ is $p$-finite (resp. $p$-tempered).
\end{lem}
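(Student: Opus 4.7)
The plan is to reduce to the Iwahori-Hecke algebra case via Lemma \ref{lem:Temperdness_depends_only _on_phi} (together with the fact that the equivalence of categories in Proposition \ref{prop:Induction-and-restriction-full-proposition} respects duality, because duality commutes with induction and restriction for finite dimensional unitary-style data), so it suffices to prove the statement for a finite dimensional representation $V$ of $H_{\phi}$. Since $V$ is finite dimensional, $(V^{*})^{*} \cong V$ canonically as $H_{\phi}$-representations, so it is enough to show one implication, say that $V$ being $p$-finite (resp.\ $p$-tempered) implies $V^{*}$ is.

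The key calculation is the following identification of matrix coefficients of $V^{*}$ with matrix coefficients of $V$. For $u \in V^{*}$ and $v \in V \cong V^{**}$, the defining formula of the dual representation gives
\[
\langle v,\, h_{w} \cdot u\rangle_{V^{**}, V^{*}} \;=\; \langle h_{w}\cdot u,\, v\rangle_{V^{*}, V} \;=\; \langle u,\, h_{w}^{*}\, v\rangle \;=\; \langle u,\, h_{w^{-1}}\, v\rangle,
\]
using that the involution on $H_{\phi}$ satisfies $h_{w}^{*} = h_{w^{-1}}$ (a special case of the definition $d^{*} = W_{I_{1}} w^{-1} W_{I_{2}}$ when $I_{1} = I_{2} = \phi$).

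Now apply the substitution $w' = w^{-1}$ in the defining sum for $V^{*}$. Since the length function and the parameter function are inversion-invariant (if $w = s_{\alpha_{1}} \cdots s_{\alpha_{l}}$ is a minimal expression then so is $w^{-1} = s_{\alpha_{l}} \cdots s_{\alpha_{1}}$, giving $l(w^{-1}) = l(w)$ and $q_{w^{-1}} = q_{\alpha_{l}} \cdots q_{\alpha_{1}} = q_{w}$), we obtain
\[
\sum_{w \in W} q_{w}^{1-p}\,\bigl|\langle v,\, h_{w}\cdot u\rangle\bigr|^{p} \;=\; \sum_{w' \in W} q_{w'}^{1-p}\,\bigl|\langle u,\, h_{w'}\, v\rangle\bigr|^{p},
\]
and analogously with the extra weight $(1-\epsilon)^{l(w)} = (1-\epsilon)^{l(w')}$ in the tempered case. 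The right hand side is precisely the series whose finiteness (for all $u,v$ and, in the tempered case, all $\epsilon > 0$) is the defining property of $V$ being $p$-finite (resp.\ $p$-tempered). This proves the implication, and the reverse direction follows from $V \cong (V^{*})^{*}$.

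There is no genuine obstacle here — the argument is just the standard dualization of matrix coefficients — but the two technical points to verify carefully are that $h_{w}^{*} = h_{w^{-1}}$ in $H_{\phi}$ (which is built into the definition of the involution) and the inversion-invariance of $q_{w}$ and $l(w)$; both follow immediately from the proposition that a minimal expression for $w$ reverses to a minimal expression for $w^{-1}$.
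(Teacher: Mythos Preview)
Your argument is correct and is precisely the unpacking of what the paper means by ``an immediate corollary of the definition'': the matrix coefficients of $V^{*}$ are those of $V$ under the substitution $w\mapsto w^{-1}$, and since $q_{w^{-1}}=q_{w}$ and $l(w^{-1})=l(w)$ the defining sums coincide. The reduction to $H_{\phi}$ via Lemma~\ref{lem:Temperdness_depends_only _on_phi} is a clean way to organize it, though one could equally argue directly from the $H$-definition since the same symmetry is built in.
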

\begin{proof}
This is an immediate corollary of the definition.
\end{proof}
\begin{lem}
\label{lem:Irreducible temperedness}An irreducible finite dimensional
representation is $p$-finite (resp. $p$-tempered) if a single function
$f\ne0$ in some geometric realization is in $L(B)$ (resp. $T_{p}(B)$).
\end{lem}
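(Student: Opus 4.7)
The plan is to argue in two steps: first, that every function in the given realization lies in $L_p(B)$ (resp.\ $T_p(B)$); second, that the same holds for every other geometric realization. Both steps use the general principle that a subspace of $V$ (or $V^{*}$) cut out by the $L_p$/$T_p$ condition on matrix coefficients is invariant under the appropriate bounded action, hence equal to $V$ (or $V^{*}$) by irreducibility.

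Let $\rho:V\to\mathbb{C}^{B_f}$ be the given realization with $f=\rho(v_0)\neq 0$ in $L_p(B)$ (resp.\ $T_p(B)$). Irreducibility of $V$ gives $Hv_0=V$, hence $\rho(V)=H\rho(v_0)=Hf$. By Lemma \ref{lem:-H acts as bounded operators on Tp}, $H$ acts by bounded operators on $T_p(B)$, and the same holds on $L_p(B)$ since each basis operator $h_d$ is row-and-column finite with $\{0,1\}$-entries (Schur test). Thus $Hf\subset L_p(B)$ (resp.\ $T_p(B)$), proving the claim for the given realization.

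For the remaining realizations, I would first reduce to $H_\phi$-representations via Corollary \ref{prop:Induction-and-restriction-full-proposition} and Lemma \ref{lem:Temperdness_depends_only _on_phi}, making $V$ an irreducible $H_\phi$-rep whose realizations are parameterized by $v^{*}\in V^{*}$. The condition $f_{v^{*},v}\in L_p(B_\phi)$ is then equivalent to $\sum_w q_w^{1-p}|c_{v^{*},v}(h_w)|^p<\infty$, where $c_{v^{*},v}(h):=\langle v^{*},hv\rangle$ (with analogous weight $(1-\epsilon)^{l(w)}$ in the tempered case). Step 1 yields this bound for the given $v_0^{*}$ and every $v\in V$. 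To extend to arbitrary $v^{*}$, fix $v$ and note that $v^{*}\mapsto c_{v^{*},v}$ is $H_\phi$-equivariant from $V^{*}$ with the dual action to $H_\phi^{*}$ with the action $(h\cdot\psi)(h')=\psi(h^{*}h')$, by the direct computation $c_{hv^{*},v}(h')=\langle v^{*},h^{*}h'v\rangle=c_{v^{*},v}(h^{*}h')$. The set $B_v:=\{v^{*}\in V^{*}:\sum_w q_w^{1-p}|c_{v^{*},v}(h_w)|^p<\infty\}$ is then $H_\phi$-invariant and contains the nonzero $v_0^{*}$; since $V^{*}$ is irreducible, $B_v=V^{*}$, as required.

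The main technical obstacle is the boundedness of the dual action $(h\cdot\psi)(h')=\psi(h^{*}h')$ on the weighted $\ell^p$-space on $W$, which is what makes $B_v$ invariant. I would handle it via the norm-preserving involution $\widetilde{\psi}(h_w):=\psi(h_{w^{-1}})$ (isometric because $q_w=q_{w^{-1}}$), which intertwines the dual action with ordinary right multiplication $(h\cdot_R\psi)(h')=\psi(h'h)$ on $H_\phi^{*}$ via the Hecke anti-automorphism $h_w\mapsto h_{w^{-1}}$; the boundedness of the right action is just the boundedness of $H_\phi$ on spherical functions in $L_p(B_\phi)$, already used in Step 1.
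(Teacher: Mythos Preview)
Your proof is correct and follows essentially the same two-step strategy as the paper: use irreducibility of $V$ together with the $H$-invariance of $L_p(B)$ (resp.\ $T_p(B)$) to vary $v$, then use irreducibility of $V^{*}$ to vary $v^{*}$. The paper compresses your second step into the single sentence ``switch the roles of $V,V^{*}$ and use the fact that $V^{*}$ is also irreducible''; your involution $\psi(h_w)\mapsto\psi(h_{w^{-1}})$ is precisely the explicit mechanism behind that switching, so the difference is one of exposition rather than of method.
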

\begin{proof}
Fix $0\ne v_{0}^{*}\in V^{*},v_{0}\in V$, $1_{\phi}v_{0}^{*}=v_{0}^{*}$.
Assume that the geometric realization $f_{v_{0}^{*},v_{0}}\in\mathbb{C}^{B}$
of $v_{0}$ corresponding to $v_{0}^{*}$ is $p$-tempered ($p$-finite
respectively). Consider changing $v_{0}$ to $v_{0}^{\prime}$. Since
$V$ is irreducible there exists $h\in H$ with $hv_{0}=v_{0}^{\prime}$.
Therefore $f_{v_{0}^{*},v_{0}^{\prime}}=f_{v_{0}^{*},hv_{0}}=hf_{v_{0}^{*},v_{0}}$.
Since $T_{p}(B)$ ($L_{p}(B)$ respectively) is a representation of
$H$, $f_{v_{0}^{*},v_{0}^{\prime}}$ is also $p$-tempered ($p$-finite
respectively). To prove that it does not depend on $v_{0}^{*}$ switch
the roles of $V,V^{*}$ and use the fact that $V^{*}$ is also irreducible.
\end{proof}

\section{\label{sec:Expander-Family-of}Expander Family of Complexes}

Let $X=B/\Gamma$ be a finite quotient of $B$. We wish to understand
the action of $H$ on $\mathbb{C}[X_{f}]=L_{2}(X_{f})$. Recall that
this representation is unitary and finite dimensional (see proposition
\ref{prop:L^2X is unitary}) and therefore decomposes into a finite
direct sum of irreducible representations.

Recall that $\rho_{C_{0}}$ is the spherical average around the chamber
$C_{0}\in B_{\phi}$ from definition \ref{def: spherical operators}.
\begin{prop}
\label{H action Lemma}Let $f\in\mathbb{C}[X_{f}]$, $C_{0}\in X_{0}$.
Let $\tilde{f}\in\mathbb{C}^{B_{f}}$ be the lift of $f$ from $X$
to $B$. Let $\tilde{C_{0}}$ be a chamber covering $C_{0}$. 
\begin{enumerate}
\item The correspondence $h\rightarrow(hf)(C_{0})$ is a matrix coefficient
of the $H$-representation $\mathbb{C}[X_{f}]$. 
\item A geometric realization around $\tilde{C}_{0}$ is given by $\rho_{\tilde{C}_{0}}\tilde{f}$. 
\item For every irreducible representation $V$ there exists $C_{0}\in X_{\phi}$
such that the matrix coefficient defined on $V$ is non zero.
\end{enumerate}
\end{prop}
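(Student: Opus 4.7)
The plan is to verify the three claims in order, with the central observation being that the lift $\tilde f$ is $\Gamma$-invariant and the action of $H$ commutes with the $\Gamma$-action (as recalled in Section~\ref{sec:Finite-Quotients}), so that for every $h\in H$ one has $(h\tilde f)(\tilde C_0)=(hf)(C_0)$ through the covering map $\pi:B_f\to X_f$. With this in hand, each claim is essentially an identification.

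For (1), I would take $V=\mathbb{C}[X_f]$, $v=f\in V$, and let $v^\ast\in V^\ast$ be the evaluation functional at $C_0$, i.e.\ $\langle v^\ast,g\rangle=g(C_0)$. Then directly from the definition in Section~\ref{sec:Matrix-Coefficients},
\[
c_{v^\ast,v}(h)=\langle v^\ast,hv\rangle=(hf)(C_0),
\]
which is the required matrix coefficient. Note that $v^\ast$ is supported at a chamber, so $1_\phi v^\ast=v^\ast$, which will be used in (2).

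For (2), I would apply Definition~\ref{def: spherical operators} and the commutativity of $H$ with the covering:
\[
\rho_{\tilde C_0}\tilde f(\sigma)
=q_{d(\tilde C_0,\sigma)}^{-1}\bigl(h_{d(\tilde C_0,\sigma)}\tilde f\bigr)(\tilde C_0)
=q_{d(\tilde C_0,\sigma)}^{-1}\bigl(h_{d(\tilde C_0,\sigma)}f\bigr)(C_0).
\]
Comparing with the explicit formula in Corollary~\ref{cor:Geometric Realization}, the right-hand side is exactly $f_{v^\ast,v}(\sigma)$ for the pair $(v^\ast,v)$ chosen in (1); since the condition $1_\phi v^\ast=v^\ast$ was already observed, this is a legitimate geometric realization around $\tilde C_0$. (In particular $\rho_{\tilde C_0}\tilde f$ is automatically spherical around $\tilde C_0$, consistent with Lemma~\ref{H* and standard}.)

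For (3), I would invoke Lemma~\ref{lem:1_phi is non zero}: any nonzero representation, and in particular every irreducible subrepresentation $V\subset\mathbb{C}[X_f]$, contains a nonzero vector $f$ with $1_\phi f=f$. Since $1_\phi$ projects onto functions supported on $X_\phi$, such an $f$ is a nonzero function on chambers, and so $f(C_0)\ne 0$ for some $C_0\in X_\phi$. Then the matrix coefficient from (1) satisfies $c_{v^\ast,v}(1_H)=f(C_0)\ne 0$ and is therefore nonzero. The expected main ``obstacle'' is really just bookkeeping: checking that the $H$-action indeed commutes with the covering and that the evaluation functional at a chamber satisfies $1_\phi v^\ast=v^\ast$. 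In fact irreducibility in (3) is not needed, and the statement holds verbatim for any nonzero subrepresentation of $\mathbb{C}[X_f]$.
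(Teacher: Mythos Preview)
Your proof is correct and follows essentially the same approach as the paper: both take the evaluation functional at $C_0$ for (1), identify the geometric realization with $\rho_{\tilde C_0}\tilde f$ via the defining formula for (2), and invoke Lemma~\ref{lem:1_phi is non zero} for (3). Your version is simply more explicit, and your observation that irreducibility is unnecessary in (3) is also present in the paper's proof.
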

\begin{proof}
(1) follows by definition, since $\mathbb{C}[X_{f}]$ is finite dimensional
and $f\rightarrow f(C_{0})$ is a functional on $f\in\mathbb{C}[X_{f}]$.
For (2) , notice that $h\rightarrow(hf)(C_{0})$ is a functional in
$H_{\phi}^{*}$ and therefore has a geometric realization around $C_{0}$
which equals exactly $\rho_{\tilde{C}_{0}}\tilde{f}$. For (3), note
that in every nonzero subrepresentation $V$ there exists a non zero
$f\in\mathbb{C}[X_{\phi}]\cap V$ by lemma \ref{lem:1_phi is non zero}.
\end{proof}
\begin{prop}
The trivial representation appears exactly once in $L_{2}(X)$. It
is the subrepresentation of $H$ which contains the sets of functions
that depend only on the color of each face.
\end{prop}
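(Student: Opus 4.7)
The plan is to exhibit one copy of the trivial representation inside $L_{2}(X_{f})$ by an explicit formula, and then use the equivalence of categories between $H$- and $H_{\phi}$-representations from Corollary \ref{prop:Induction-and-restriction-full-proposition} to reduce the uniqueness statement to a one-dimensional calculation in $L_{2}(X_{\phi})$, which I would settle by a maximum principle.

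For the first (easy) step: for each spherical color $I$, let $\mathbf{1}_{X_{I}} \in \mathbb{C}[X_{f}]$ be the characteristic function of the faces of color $I$ (extended by zero). A direct check on basis elements, mirroring the proof of Proposition \ref{claim:Trivial-rep_definition}, gives
\[
h_{d}\,\mathbf{1}_{X_{I_{2}}} \;=\; q_{d}\,\mathbf{1}_{X_{I_{1}}}, \qquad h_{d}\,\mathbf{1}_{X_{I}} \;=\; 0 \text{ for } I\ne I_{2},
\]
whenever $d \in W_{I_{1}}\backslash W/W_{I_{2}}$; this uses only that $\Gamma$ is torsion free, so every face of color $I_{1}$ has exactly $q_{d}$ neighbors of color $I_{2}$ at distance $d$. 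Hence the span of $\{\mathbf{1}_{X_{I}}\}_{I}$ is an $H$-subrepresentation of $L_{2}(X_{f})$ isomorphic to the trivial representation.

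For uniqueness, I would restrict everything to $H_{\phi}$. The restriction of the trivial representation to $H_{\phi}$ is the one-dimensional character $\chi_{\mathrm{triv}}$ on which $h_{s}$ acts by $q_{s}$ for every $s \in S$ (and hence $h_{w}$ by $q_{w}$ for every $w \in W$). By Corollary \ref{prop:Induction-and-restriction-full-proposition}, the multiplicity of the trivial representation in $L_{2}(X_{f})$ equals the multiplicity of $\chi_{\mathrm{triv}}$ in $1_{\phi}L_{2}(X_{f}) = L_{2}(X_{\phi})$. Since the latter is a finite-dimensional unitary representation of $H_{\phi}$ (Proposition \ref{prop:L^2X is unitary}), Proposition \ref{claim:finite dimensional unitary decomposes} reduces this to showing that the $\chi_{\mathrm{triv}}$-isotypic component in $L_{2}(X_{\phi})$ is one-dimensional.

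The main (but still short) step is this last claim. A vector $f \in L_{2}(X_{\phi})$ with $h_{s}f = q_{s}f$ for every $s \in S$ satisfies, for each chamber $C$ and each $s$,
\[
q_{s}\,f(C) \;=\; \sum_{C':\,d(C,C')=s} f(C').
\]
Let $M = \max_{C}|f(C)|$ and pick $C_{0}$ attaining it. The triangle inequality applied to the sum above, combined with the fact that it has exactly $q_{s}$ terms, forces $f(C') = f(C_{0})$ for every $s$-neighbor $C'$ of $C_{0}$ and every $s \in S$. Iterating and invoking connectedness of $X$ (inherited from connectedness of the building $B$), $f$ is constant on $X_{\phi}$. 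Therefore the $\chi_{\mathrm{triv}}$-isotypic component is one-dimensional, spanned by $\mathbf{1}_{X_{\phi}}$, and the trivial representation appears in $L_{2}(X_{f})$ with multiplicity exactly one; the subrepresentation is the one constructed in the first step, namely the functions depending only on the color of each face.
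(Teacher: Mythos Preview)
Your proof is correct and follows essentially the same approach as the paper: both establish the explicit copy of the trivial representation via color-constant functions, then prove uniqueness by a maximum principle argument on $L_{2}(X_{\phi})$ using the relation $h_{s}f=q_{s}f$. You are somewhat more explicit than the paper in invoking the equivalence of categories to justify the reduction to $H_{\phi}$, and your maximum argument (using $|f|$ and the triangle inequality) is slightly more careful about the complex-valued case, but the ideas are the same.
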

\begin{proof}
The fact that the constant functions on every color span the trivial
representation is immediate. To prove it is the only such representation,
choose some function $f\in\mathbb{C}[X_{\phi}]$ which spans a representation
isomorphic to the trivial representation. Let $C_{0}\in X_{\phi}$
be the chamber on which $f$ gets its maximal values. Since $h_{s},s\in S$
acts by $q_{s}$, all the chambers adjacent to $C_{0}$ must have
the same value. Therefore $f$ is constant on the chambers and the
representation spanned by $f$ is the representation which contains
the constant functions on every color of face.
\end{proof}
\begin{defn}
The \emph{non trivial representation} of $H$ on $\mathbb{C}[X_{f}]$
is the action of $H$ on $L_{2}^{00}(X_{f})=\left\{ f:X\rightarrow\mathbb{C}:\forall I\,\sum_{\sigma\in X_{I}}f(\sigma)=0\right\} $.
This is the space perpendicular to the trivial representation.
\end{defn}
\begin{prop}
The number of times the Steinberg representation appears in $L_{2}^{00}(X_{f})$
is the dimension of the subspace $\left\{ f\in\mathbb{C}[X_{\phi}]:\,\partial_{\{s\}}f=0\text{ for all }s\in S\right\} $. 
\end{prop}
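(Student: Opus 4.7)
The plan is to transfer the question from $H$-representations to $H_\phi$-representations via the equivalence of Corollary \ref{prop:Induction-and-restriction-full-proposition}, and then exploit the one-dimensionality of the Steinberg character. By that corollary, the multiplicity of $\mathrm{St}$ in $L_{2}^{00}(X_{f})$ as an $H$-representation agrees with the multiplicity of its restriction $\mathrm{res}^{H}_{H_\phi}\mathrm{St}$ inside $1_{\phi} L_{2}^{00}(X_{f}) = L_{2}^{00}(X_{\phi})$, where $L_{2}^{00}(X_{\phi}) := \{f\in\mathbb{C}[X_\phi] : \sum_{C\in X_\phi} f(C) = 0\}$. The latter is a finite-dimensional unitary $H_\phi$-representation (the sum-zero condition is $H_\phi$-stable because $\sum_C h_w f(C) = q_w \sum_C f(C)$), so by Proposition \ref{claim:finite dimensional unitary decomposes} it is semisimple.

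Next, I would translate the Steinberg characterization into the language of boundary operators. By Proposition \ref{claim:Steinberg Definition}, $\mathrm{res}^{H}_{H_\phi}\mathrm{St}$ is the one-dimensional character sending $h_s \mapsto -1$ for every $s\in S$. Using the relation $h_s = \delta_{\{s\}}\partial_{\{s\}} - 1_\phi$ together with the injectivity of $\delta_{\{s\}}$ (each chamber contains a unique face of color $\{s\}$, so $\delta_{\{s\}}g = 0$ forces $g=0$), on any vector $v$ with $1_\phi v = v$ the equation $h_s v = -v$ is equivalent to $\partial_{\{s\}} v = 0$. Hence the Steinberg-isotypic component of $L_{2}^{00}(X_\phi)$ is exactly $V^{St} := \{v\in L_{2}^{00}(X_\phi) : \partial_{\{s\}} v = 0 \text{ for all } s\in S\}$, and since $\mathrm{St}$ is one-dimensional and $L_{2}^{00}(X_\phi)$ is semisimple, this multiplicity equals $\dim V^{St}$.

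Finally, I would identify $V^{St}$ with the full space $W := \{f\in\mathbb{C}[X_\phi] : \partial_{\{s\}} f = 0 \text{ for all } s\in S\}$ from the statement. The inclusion $V^{St}\subseteq W$ is tautological. For the converse, given $f\in W$ I use the adjointness $\langle \partial_{\{s\}}f, g\rangle = \langle f, \delta_{\{s\}}g\rangle$ from Proposition \ref{prop:L^2X is unitary} with $g$ the constant function $1$ on $X_{\{s\}}$: since each chamber has exactly one face of color $\{s\}$, $\delta_{\{s\}}g$ is the constant $1$ on $X_\phi$, so $\sum_C f(C) = \langle f, 1_{X_\phi}\rangle = 0$ and $f$ already lies in $L_{2}^{00}(X_\phi)$.

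The argument is almost entirely formal; the only nontrivial point is the final verification, namely that the condition $\partial_{\{s\}} f = 0$ for all $s$ is strong enough on its own to exclude the trivial representation, so that the statement can be phrased over all of $\mathbb{C}[X_\phi]$ rather than only over $L_{2}^{00}(X_\phi)$.
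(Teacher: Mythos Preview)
Your proof is correct and follows essentially the same approach as the paper, which simply says ``the claim follows immediately from Proposition~\ref{claim:Steinberg Definition}.'' You have supplied the details that the paper leaves implicit: the characterization of the Steinberg isotypic via $\partial_{\{s\}}=0$ (equivalently $h_s=-1$), and the fact that the multiplicity is the dimension of this subspace because $\mathrm{St}$ is one-dimensional.

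The one place where you do slightly more than the paper is your final step, the direct verification that $W\subseteq L_2^0(X_\phi)$ via the adjointness $\langle \partial_{\{s\}}f,1\rangle=\langle f,\delta_{\{s\}}1\rangle$. The paper's one-liner implicitly relies instead on the observation that the trivial representation is not isomorphic to the Steinberg (since $h_s$ acts by $q_s\neq -1$ on the trivial), so the Steinberg multiplicity in $L_2^{00}(X_f)$ coincides with that in all of $L_2(X_f)$, and one can compute the isotypic inside $\mathbb{C}[X_\phi]$ without restricting to the sum-zero subspace. Both routes are short and valid; your version has the virtue of being self-contained and not invoking semisimplicity of the trivial complement. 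Your detour through the equivalence of categories (Corollary~\ref{prop:Induction-and-restriction-full-proposition}) is also unnecessary here---one can work directly with $H$ since $\mathrm{St}$ is already supported on chambers---but it does no harm.
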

\begin{proof}
The claim follows immediately from proposition \ref{claim:Steinberg Definition}.
\end{proof}
\begin{defn}
\label{def:Expander_definition}The complex $X$ is an \emph{$L_{p}$-expander}
if the representation of $H$ on $L_{2}^{00}(X)$ is $p$-tempered.

The complex $X$ is a \emph{Ramanujan complex} if it is an $L_{2}$-expander. 
\end{defn}
\begin{cor}
\label{claim:Chambers are enough for Lp-xpander}The following are
equivalent:

1. $X$ is an $L_{p}$-expander.

2. For every $f\in L_{2}^{00}(X)$ and $C_{0}\in B$, $\rho_{C_{0}}(\tilde{f})\in L_{p+\epsilon}(B)$
for every $\epsilon>0$.

3. The action of $H_{\phi}$ on $L_{2}^{0}(X_{\phi})=\left\{ f:X_{\phi}\rightarrow\mathbb{C}:\forall I\,\sum_{C\in X_{\phi}}f(C)=0\right\} $
is $p$-tempered.
\end{cor}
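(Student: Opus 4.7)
The strategy is to unfold Definition \ref{def:Expander_definition} in two complementary ways: reducing $H$-temperedness to $H_{\phi}$-temperedness via the restriction functor, and translating geometric realizations of $L_2^{00}(X_f)$ into the explicit spherical averages $\rho_{\tilde C_0}\tilde f$ furnished by Proposition \ref{H action Lemma}.

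For the equivalence $(1)\Leftrightarrow(3)$, the plan is to invoke Lemma \ref{lem:Temperdness_depends_only _on_phi}, which asserts that the equivalence of categories between $H_{\phi}$- and $H$-representations (Corollary \ref{prop:Induction-and-restriction-full-proposition}) respects $p$-temperedness. Since restriction is implemented by applying the idempotent $1_{\phi}$, one has $\mathrm{res}_{H_{\phi}}^{H}L_2^{00}(X_f)=1_{\phi}L_2^{00}(X_f)=L_2^{0}(X_{\phi})$. Therefore $L_2^{00}(X_f)$ is $p$-tempered as an $H$-representation iff $L_2^{0}(X_{\phi})$ is $p$-tempered as an $H_{\phi}$-representation, which is exactly $(3)$.

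For the equivalence $(1)\Leftrightarrow(2)$, by definition $(1)$ says every function in every geometric realization of $L_2^{00}(X_f)$ lies in $T_p(B_f)$. By Proposition \ref{H action Lemma}(1)--(2), for each $f\in L_2^{00}(X_f)$ and each lift $\tilde C_0\in B_{\phi}$ of a chamber $C_0\in X_{\phi}$, the function $\rho_{\tilde C_0}\tilde f$ is a geometric realization of $L_2^{00}(X_f)$ associated to the ``evaluation at $C_0$'' functional $v^{*}\in V^{*}$, which plainly satisfies $1_{\phi}v^{*}=v^{*}$. Conversely, any $v^{*}\in V^{*}$ with $1_{\phi}v^{*}=v^{*}$ is a finite linear combination of such chamber-evaluations (since $X_{\phi}$ is finite), and $T_p(B_f)$ and $L_{p+\epsilon}(B_f)$ are stable under finite linear combinations; so it suffices to consider realizations of this form. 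The spherical function $\rho_{\tilde C_0}\tilde f$ is spherical around $\tilde C_0$, so Lemma \ref{claim:Equivalence of temperedness definitions} yields $\rho_{\tilde C_0}\tilde f\in T_p(B_f)$ iff $\rho_{\tilde C_0}\tilde f\in L_{p+\epsilon}(B_f)$ for every $\epsilon>0$, which is precisely $(2)$.

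The main technical point I expect to need care about is the direction $T_p\Rightarrow L_{p+\epsilon}$ in Lemma \ref{claim:Equivalence of temperedness definitions}, which requires the building to be thick and the function to be spherical around a fixed chamber; the standing assumption of thickness and the sphericity of $\rho_{\tilde C_0}\tilde f$ around $\tilde C_0$ take care of both. A minor bookkeeping obstacle is that in $(2)$ the chamber $C_0$ ranges over $B$ while the definition of a geometric realization uses a single reference chamber, but this is absorbed by the fact that $T_p(B_f)$ is independent of the base point (cf.\ the lemma after Definition \ref{def:F_delta}) and $\Gamma$ acts transitively on lifts of each quotient chamber.
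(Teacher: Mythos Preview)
Your proof is correct and follows precisely the route the paper indicates: the paper's one-line proof cites only Proposition \ref{H action Lemma} and Lemma \ref{lem:Temperdness_depends_only _on_phi}, and you have unpacked exactly these two ingredients (together with Lemma \ref{claim:Equivalence of temperedness definitions} for the $T_p\leftrightarrow L_{p+\epsilon}$ translation, which the paper uses implicitly). Your additional observation that every $v^*\in V^*$ with $1_\phi v^*=v^*$ is a finite combination of chamber-evaluations, and your remark on the thickness hypothesis, are the right details to make the terse argument complete.
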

\begin{proof}
Follows from proposition \ref{H action Lemma} and lemma \ref{lem:Temperdness_depends_only _on_phi}.
\end{proof}

\section{\label{sec:Representations-of-the-automorphism-group}Representations
of the Automorphism Group}

In this section we continue the discussion of section \ref{sec:Building-Automorphisms-II},
this time looking at the representation theory involved. As in section
\ref{sec:Building-Automorphisms-II} let $G'$ be a general locally
profinite group and $K$ is a compact open subgroup with Haar measure
$1$. Let $H_{K}=\mathbb{C}_{c}[K\backslash G'/K]$ be the corresponding
Hecke algebra with respect to convolution. 

We want to understand the connection between the representation theory
of $G'$ and $H_{K}$. We will denote a representation of $G'$ by
$U$ and a representation of $H_{K}$ by $V$. We base the general
discussion mainly on \cite{casselman1974introduction}, part 2.

The case we will be interested in is when $G'=G$ a Weyl transitive
color preserving complete automorphism group, $K=G_{\phi}$ is a chamber
stabilizer and $H_{K}=H_{\phi}$. We mainly follow \cite{borel1976admissible}
for the results specific to this case.
\begin{defn}
A representation $U$ of $G'$ is called:
\begin{itemize}
\item \emph{Smooth} if every $v\in U$ is fixed by an open subgroup of $G'$.
\item \emph{Admissible} if for every compact open subgroup $K'\subset G$
the subspace $U^{K'}$ of vectors fixed by $K'$ is finite dimensional.
\item \emph{Unitary} if there exists a $G'$ invariant inner product on
$U$ (so that the completion of $U$ is a Hilbert space).
\end{itemize}
\end{defn}
\begin{prop}
\label{prop: Restriction Proposition}Let $U$ be a smooth representation
of $G'$. Let $U^{K}$ be the vectors of $U$ fixed by $K$. Then
$U^{K}$ is a representation space of $H_{K}$. If $U$ is irreducible
as representation of $G$ then $U^{K}$ is an irreducible representation
of $H_{K}$. If $U$ is unitary so is $U^{K}$.
\end{prop}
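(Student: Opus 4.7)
The plan is to prove the three claims in order. Write $\pi$ for the representation of $G'$ on $U$, and define the action of $H_K$ on $U^K$ by
\[ h \cdot v := \int_{G'} h(g)\,\pi(g)v\,dg. \]
Since $h$ is compactly supported and $v$ is fixed by the open subgroup $K$, the integrand is locally constant with compact support, so the integral reduces to a finite sum. Three routine checks establish the first assertion: that $h \cdot v \in U^K$ (use left $K$-invariance of $h$ and a change of variable $g \mapsto k^{-1}g$); that the action respects convolution, i.e.\ $(h_1 * h_2)\cdot v = h_1 \cdot (h_2 \cdot v)$ (Fubini and invariance of Haar measure); and that the identity element $e_K = \mu(K)^{-1}\cdot 1_K$ acts as the identity on $U^K$, which is immediate from $\pi(k)v = v$ for $k \in K$.

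For irreducibility, I would let $V' \subset U^K$ be a nonzero $H_K$-stable subspace and let $W \subset U$ be the $G'$-subrepresentation it generates. Each $\pi(g)v$ is fixed by the open subgroup $gKg^{-1}$, so $W$ is smooth, and irreducibility of $U$ forces $W = U$, hence $W^K = U^K$. The crucial bridge between the $G'$-structure on $U$ and the $H_K$-structure on $U^K$ is the identity
\[ P_K(\pi(g)v) \;=\; \frac{1}{\mu(K)\,n}\cdot 1_{KgK}\cdot v, \qquad n = [KgK:K], \]
where $P_K(u) = \mu(K)^{-1}\int_K \pi(k)u\,dk$ is the projection of $U$ onto $U^K$. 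One proves this by decomposing $KgK = \bigsqcup_{i=1}^{n} Kg_i$, writing each $g_i = k_i g k_i'$, and using $\pi(k_i')v = v$ to collapse each $\int_K \pi(kg_i)v\,dk$ to $\mu(K)\,P_K(\pi(g)v)$. Since $1_{KgK} \in H_K$ and $V'$ is $H_K$-stable, every $P_K(\pi(g)v)$ with $v \in V'$ lies in $V'$. But every vector in $W^K$ is obtained by applying $P_K$ to a finite linear combination of vectors $\pi(g_j)v_j$ with $v_j \in V'$, so $W^K \subset V'$; the reverse inclusion is trivial, giving $V' = W^K = U^K$.

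For unitarity, I would verify the adjoint relation $\langle h \cdot v, w\rangle = \langle v, h^* \cdot w\rangle$ for $v,w \in U^K$ and $h \in H_K$. This uses three ingredients that fit together cleanly: unitarity of $U$, which gives $\pi(g)^* = \pi(g^{-1})$; unimodularity of $G'$, which legitimises the substitution $g \mapsto g^{-1}$ under the Haar integral; and the explicit formula $h^*(g) = \overline{h(g^{-1})}$ for the involution on $H_K$ recalled just before the proposition.

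The main obstacle is establishing the identity $P_K(\pi(g)v) = (\mu(K)\,n)^{-1}\cdot 1_{KgK}\cdot v$; once it is in hand, the correspondence between $G'$-submodules of $U$ and $H_K$-submodules of $U^K$ is essentially formal, and both irreducibility and the inheritance of the inner product follow with no further effort.
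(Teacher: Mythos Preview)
Your proof is correct and follows essentially the same route as the paper. Both arguments hinge on the observation that for $v\in U^{K}$ the operator $1_{KgK}$ acts as a scalar multiple of $P_K\circ\pi(g)$ (the paper phrases this as ``$1_{KgK}$ acts by $e_{K}ge_{K}$''), so that applying the $K$-averaging projection to a $G'$-linear combination of vectors in $V'$ lands back in $H_K\cdot V'$; your derivation of the explicit constant $(\mu(K)\,n)^{-1}$ is a cleaner version of what the paper leaves implicit, and your unitarity check is identical in spirit to the paper's one-line appeal to the inherited inner product.
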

\begin{proof}
Define $e_{K}:U\rightarrow U{}^{K}$ by $e_{K}v=\int_{K}kv\,dk$.
This integral is actually a finite sum since by smoothness there exists
a finite index compact open subgroup $K'\subset K$ with $k'v=v$
for every $k'\in K'$. Then the integral becomes $1/[K:K']\cdot\sum_{k\in K/K'}kv$.
It is easy to see that $e_{K}v\in U^{K}$, $e_{K}|_{K}=id$ and $e_{K}^{2}=e_{K}$
(we use here the fact that $\left|K\right|=1$).

Define an action of the algebra $H(G')$ on $U$ by $f\cdot v=\int_{G}f(g)gv\,dg$.
It is standard to verify that it defines an algebra representation
of $H(G')$. In the action above, the element $1_{KgK}\in H_{K}(G')\subset H(G')$
acts on $U$ by $e_{K}ge_{K}$. Since $e_{K}U=U^{K}$ we see that
$U^{K}$ is a representation of $H_{K}(G')$ .

Suppose $U$ is irreducible. Let $0\ne u\in\subset U^{K}$. Since
$U$ is irreducible, for every $v'\in U^{K}$ there exist $g_{1},...,g_{m}\in G$
and $\alpha_{1},...,\alpha_{m}\in\mathbb{C}$ such that $v'=\sum\alpha_{i}g_{i}u$.
Since $u,v'\in U^{K}$ we have $v'=\sum\alpha_{i}e_{K}g_{i}e_{K}u\in H_{K}u$.
Therefore $H_{K}u=U^{K}$ and $U^{K}$ is an irreducible $H_{K}$
representation. See \textbf{\uline{\mbox{\cite{casselman1974introduction}}}}
and in particular proposition 2.2.4(a) there for more details.

Finally if $U$ is unitary one can use the same inner product on $U^{K}$
which gives it a unitary $H_{K}$ structure. This verification is
immediate.
\end{proof}
To discuss temperedness of $G'$ representations, we define:
\begin{defn}
Let $U$ be an admissible representation of $G'$. Its \emph{dual
representation} $\hat{U}$ is the action of $G$ on the algebraic
dual $\hat{U}$ of $U$ by $(gv^{*})(v)=v^{*}(gv)$. 

The \emph{contragredient} \emph{representation} $\tilde{U}$ of $U$
are the smooth vectors of $\hat{U}$, i.e vectors $\tilde{v}\in\hat{U}$
that have a compact open subgroup $K'$ with $k\tilde{v}=\tilde{v}$
for any $k\in K'$.
\end{defn}
\begin{lem}
Let $U$ be an admissible representation of $G'$. Then $\hat{U}^{K}=\tilde{U}^{K}=\hat{\left(U^{K}\right)}$
and $\tilde{\tilde{U}}\cong U$.
\end{lem}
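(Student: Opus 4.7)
The plan is to handle the two claims separately, reducing the second to the first via admissibility. Throughout, I will use the averaging projector $e_K : U \to U^K$ defined by $e_K v = \int_K kv\,dk$, whose key properties are $e_K^2 = e_K$, $e_K k = k e_K = e_K$ for $k \in K$, and that $e_K$ is a linear $K$-equivariant retraction onto $U^K$ (this is implicit in the paper's normalization $\mu(K)=1$, cf.\ Proposition~\ref{prop: Restriction Proposition}).

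For the first string of equalities, I would first observe that any vector in $\hat U$ fixed by the open compact subgroup $K$ is automatically smooth, so $\hat U^K \subseteq \tilde U$ and hence $\hat U^K = \tilde U^K$. Next, I would identify $\tilde U^K$ with $\widehat{U^K}$ via two mutually inverse maps: restriction $r : \Phi \mapsto \Phi\big|_{U^K}$, and extension $s : \phi \mapsto \phi \circ e_K$. The map $s$ lands in $\hat U^K$ because $e_K k = e_K$, so $(k \cdot s(\phi))(v) = \phi(e_K k^{-1} v) = \phi(e_K v) = s(\phi)(v)$. One checks $r \circ s = \mathrm{id}$ since $e_K|_{U^K} = \mathrm{id}$, and $s \circ r = \mathrm{id}$ since for $\Phi \in \hat U^K$, $\Phi(e_K v) = \int_K \Phi(kv)\,dk = \int_K \Phi(v)\,dk = \Phi(v)$. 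These identifications are functorial, which will be needed below.

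For $\tilde{\tilde U} \cong U$, I would build the canonical biduality map $\iota : U \to \tilde{\tilde U}$ by $\iota(v)(\tilde v) = \tilde v(v)$. If $v \in U^{K'}$ for some compact open $K' \subseteq G'$, then for $k \in K'$ one has $(k \cdot \iota(v))(\tilde v) = \tilde v(kv) = \tilde v(v) = \iota(v)(\tilde v)$, so $\iota(v)$ is $K'$-fixed, hence smooth, so $\iota(v)$ really lies in $\tilde{\tilde U}$. To prove $\iota$ is a bijection I would argue one compact open subgroup at a time. For any $K'$, applying the first part of the lemma twice gives natural identifications
\[
\tilde{\tilde U}^{K'} \;=\; \widehat{\tilde U^{K'}} \;=\; \widehat{\widehat{U^{K'}}}.
\]
By admissibility, $U^{K'}$ is finite-dimensional, so the canonical evaluation map $U^{K'} \to \widehat{\widehat{U^{K'}}}$ is an isomorphism; a routine unwinding of the $e_{K'}$-extension formula shows that this isomorphism coincides with $\iota|_{U^{K'}}$. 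Smoothness of $U$ gives $U = \bigcup_{K'} U^{K'}$, and smoothness of $\tilde{\tilde U}$ (which holds by construction) gives $\tilde{\tilde U} = \bigcup_{K'} \tilde{\tilde U}^{K'}$, so taking the direct limit over a cofinal family of compact open subgroups yields $\iota : U \xrightarrow{\sim} \tilde{\tilde U}$.

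The main obstacle is verifying that the level-wise isomorphisms $\tilde{\tilde U}^{K'} \cong U^{K'}$ coming from finite-dimensional biduality are compatible with the inclusions $U^{K_1} \hookrightarrow U^{K_2}$ for $K_1 \supseteq K_2$ (and the dual restrictions $\tilde{\tilde U}^{K_1} \hookrightarrow \tilde{\tilde U}^{K_2}$). Concretely, one must check that the two averaging operators $e_{K_1}$ and $e_{K_2}$ interact correctly with restriction of functionals; this follows from $e_{K_1} e_{K_2} = e_{K_1}$ when $K_1 \subseteq K_2$, but it is the computational heart of the argument.
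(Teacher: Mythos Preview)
Your argument is correct and follows the standard approach; the paper itself does not give a proof but simply cites Casselman's notes (\cite{casselman1974introduction}, 2.1.10), where essentially this same argument appears. Two small slips worth fixing: with the paper's convention $(gv^*)(v)=v^*(gv)$ you should have $(k\cdot s(\phi))(v)=\phi(e_K kv)$ rather than $\phi(e_K k^{-1}v)$ (the conclusion is unaffected since $e_Kk=e_K$), and in the compatibility check the relation $e_{K_1}e_{K_2}=e_{K_1}$ holds when $K_2\subseteq K_1$, not $K_1\subseteq K_2$, consistent with your earlier $K_1\supseteq K_2$.
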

\begin{proof}
\textbf{\cite{casselman1974introduction}} 2.1.10.
\end{proof}
\begin{defn}
Let $U$ be an admissible representation of $G'$, $\tilde{U}$ the
contragredient representation and $v\in U$, $\tilde{v}\in\tilde{U}$.
The function $c_{\tilde{v},v}:G'\rightarrow\mathbb{C}$, $c_{\tilde{v},v}(g)=\left\langle \tilde{v},gv\right\rangle $
is called a \emph{matrix coefficient} of the representation $U$.
\end{defn}
Since we have a Haar measure on $G'$ the space $L_{p}(G')$ is well
defined. We also define temperedness similar to definition \ref{def:tempered-definition}:
\begin{defn}
\label{def:temperedness-definition-group}Assume that $G'$ is generated
by an open set $A_{0}$ with compact closure. Then define for $g\in G'$:
$l_{A_{0}}(g)=\min\{n:g\in A_{0}^{n}\}$.

Define the space 
\[
T_{p}(G')=T_{p}^{A_{0}}(G')=\left\{ f:G'\rightarrow\mathbb{C}\,\text{measurable}:\int\left|f(g)\right|^{p}\left(1-\epsilon\right)^{l_{A_{0}}(g)}dg<\infty\,\text{for every \ensuremath{\epsilon>0}}\right\} 
\]
\end{defn}
\begin{lem}
1.The set $T_{p}^{A_{0}}(G')$ does not depend on the generating set
$A_{0}$.

2. We have $\cap_{\epsilon>0}L_{p+\epsilon}(G')\subset T_{p}(G')$.
\end{lem}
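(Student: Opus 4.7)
Both parts reduce to the observation that the volume of $A_0^n$ grows at most exponentially in $n$, and that any two compact generating sets are mutually bounded powers of each other.

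For part (1), the plan is to compare any two choices of generating sets $A_0$ and $A_0'$ directly. Since $A_0$ generates $G'$ and $\overline{A_0'}$ is compact, a finite number of translates of $A_0$ cover $\overline{A_0'}$, so there exists $N$ with $A_0' \subset A_0^N$, and symmetrically some $M$ with $A_0 \subset (A_0')^M$. These inclusions give $l_{A_0}(g) \le N \cdot l_{A_0'}(g)$ and $l_{A_0'}(g) \le M \cdot l_{A_0}(g)$ for all $g$. To show $T_p^{A_0} \subset T_p^{A_0'}$, given $\epsilon > 0$ set $\delta = 1 - (1-\epsilon)^{1/N} > 0$. Then $(1-\epsilon)^{l_{A_0'}(g)} \le (1-\epsilon)^{l_{A_0}(g)/N} = (1-\delta)^{l_{A_0}(g)}$, so the integral defining membership in $T_p^{A_0'}$ is dominated by the one for $T_p^{A_0}$ with parameter $\delta$. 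The reverse inclusion is symmetric.

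For part (2), the plan is to use the exponential volume growth of $G'$ with respect to $A_0$ together with Hölder's inequality. First I would observe that since $\overline{A_0}$ is compact, $\mu(A_0^n) \le r^n$ for some constant $r > 1$ (one covers $A_0 \cdot A_0$ by finitely many translates of $A_0$ and iterates). Writing $A_n = A_0^n \setminus A_0^{n-1}$ so that $l_{A_0} = n$ on $A_n$, and applying Hölder with exponents $\tfrac{p+\epsilon}{p}$ and $\tfrac{p+\epsilon}{\epsilon}$, one obtains
\[
\int_{A_n} |f|^p\,dg \;\le\; \|f\|_{p+\epsilon}^{p}\, \mu(A_n)^{\epsilon/(p+\epsilon)} \;\le\; \|f\|_{p+\epsilon}^{p}\, r^{n\epsilon/(p+\epsilon)}.
\]
Summing over $n$ with the weight $(1-\delta)^n$ gives a geometric series which converges whenever $(1-\delta)\, r^{\epsilon/(p+\epsilon)} < 1$. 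For any given $\delta > 0$ one can choose $\epsilon > 0$ small enough so that this holds, and since $f \in L_{p+\epsilon}(G')$ by hypothesis, the conclusion $f \in T_p(G')$ follows.

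Neither step presents a serious obstacle. The only mild care is in part (1), where one must pick the auxiliary $\delta$ in terms of $\epsilon$ and $N$ in the correct direction (the inequality between the length functions flips when raised to the decreasing power $(1-\epsilon)^{\cdot}$). In part (2), the main quantitative input is the exponential bound $\mu(A_0^n) \le r^n$, which is standard for compactly generated locally compact groups; the rest is a one-line Hölder estimate parallel to the argument already carried out in lemma~\ref{claim:Equivalence of temperedness definitions} for the building case.
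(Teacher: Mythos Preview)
Your proof is correct. Part (1) matches the paper's argument essentially line for line: both use compactness to obtain $A_0' \subset A_0^N$, deduce the linear comparison of length functions, and then absorb the constant $N$ into the parameter $\epsilon$ (or $\delta$).

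Part (2) takes a genuinely different route. The paper establishes the exponential bound $\mu(A_0^n) \le R^{n-1}\mu(A_0)$ exactly as you do, but then defers to the earlier lemma~\ref{claim:Equivalence of temperedness definitions}, whose argument replaces $|f|$ pointwise by $g = \max\{|f|,\, r^{-l/p}\}$ (still in every $L_{p+\epsilon}$) and then shows $|f|^p(1-\delta)^{lp} \le g^{p+\epsilon}$ pointwise for suitably matched $\delta$ and $\epsilon$. Your approach instead decomposes $G'$ into shells $A_n = A_0^n \setminus A_0^{n-1}$ and applies H\"older on each shell. Your argument is more self-contained and arguably cleaner here, since it avoids the auxiliary function $g$; the paper's pointwise trick has the mild advantage of giving a direct comparison of integrands rather than of integrals, which is occasionally useful elsewhere. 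Both are standard and equally rigorous.
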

\begin{proof}
Let $A_{0}^{\prime}$ be another open generating set with compact
closure. Since it is covered by $\cup A_{0}^{n}$ it is covered by
a finite subset of which and therefore there exists $C>0$ such that
$l_{A_{0}}(g')\le C$ for every $g'\in A_{0}^{\prime}$. Therefore
$l_{A_{0}^{\prime}}(g)\le C\cdot l_{A_{0}}(g)$ for every $g\in G'$.
Then for every $\epsilon>0$, $(1-\epsilon)^{l_{A_{0}^{\prime}}(g)}\ge(1-\epsilon)^{Cl_{A_{0}}(g)}$
for every $g\in G'$. Therefore $T_{p}^{A_{0}}(G')\subset T_{p}^{A_{0}^{\prime}}(G')$.
By symmetry $T_{p}^{A_{0}^{\prime}}(G')\subset T_{p}^{A_{0}}(G')$
and we have equality.

For (2), we claim that $\mu(A_{0}^{n})$ grows at most exponentially-
there exists $r>0$, such that $\mu(A_{0}^{n})\le r^{n-1}\mu(A_{0})$.
By compactness, cover $A_{0}^{2}$ by a finite number of translations
$y_{i}A_{0}$, $y_{i}\in G'$, $i=1,...,R$. Then 
\[
A_{0}^{n}\subset\cup_{i_{j}\in\{1,...,N\}}y_{i_{1}}\cdot...\cdot y_{i_{n-1}}A_{0}
\]

and $\mu(A_{0}^{n})\le R^{n-1}\mu(A_{0})$. The rest of the proof
is as in lemma \ref{claim:Equivalence of temperedness definitions}.
\end{proof}
\begin{defn}
\label{def:tempered as group representation}An admissible representation
$U$ of $G'$ is \emph{$p$-finite (resp. $p$-tempered)} if for every
$v\in U$, $\tilde{v}\in\tilde{U}$ we have $c_{\tilde{v},v}\in L_{p}(G')$
(resp. $c_{\tilde{v},v}\in T_{p}(G')$).
\end{defn}
\begin{lem}
An irreducible representation $U$ is $p$-finite (resp.$p$-tempered)
if for some $0\ne v\in U$, $\tilde{v}\in\tilde{U}$ we have $c_{\tilde{v},v}\in L_{p}(G')$
(resp. $c_{\tilde{v},v}\in T_{p}(G')$).
\end{lem}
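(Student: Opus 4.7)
The plan is to mimic the argument for Hecke algebra representations given in lemma \ref{lem:Irreducible temperedness}, replacing the $H$-action on geometric realizations by the $G'$-action on matrix coefficients. The key covariance formulas are, for any $g_0 \in G'$,
\[
c_{\tilde v,\, g_0 v}(x) = c_{\tilde v, v}(x g_0), \qquad c_{g_0 \tilde v,\, v}(x) = c_{\tilde v, v}(g_0^{-1} x),
\]
so varying the vector in either slot amounts to a right- or left-translation of the matrix coefficient on $G'$.

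Next I would verify that $L_p(G')$ and $T_p(G')$ are invariant under both left and right translation. For $L_p(G')$ this is immediate from unimodularity of $G'$, proved earlier. For $T_p(G')$, subadditivity $l_{A_0}(g g_0^{\pm 1}) \le l_{A_0}(g) + l_{A_0}(g_0)$ (and its left analog) shows that a translation changes the weight $(1-\epsilon)^{l_{A_0}}$ by at most a multiplicative constant $(1-\epsilon)^{-l_{A_0}(g_0)}$; combined with unimodularity this gives bi-invariance of $T_p(G')$, at the cost of a finite constant depending on $g_0$ and $\epsilon$.

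With these tools, the argument proceeds as in the Hecke algebra case. Given $c_{\tilde v, v} \in L_p(G')$ (respectively $T_p(G')$), fix any $v' \in U$. Irreducibility of the smooth representation $U$ implies that $v'$ lies in the $\mathbb C$-span of $G' v$, hence $v' = \sum_{i=1}^n \alpha_i g_i v$ for scalars $\alpha_i$ and elements $g_i \in G'$. Then $c_{\tilde v, v'}$ is a finite linear combination of right translates of $c_{\tilde v, v}$, and so lies in the same class. To vary $\tilde v$ as well, I would invoke that for an admissible irreducible $U$ the contragredient $\tilde U$ is also irreducible (via $\tilde{\tilde U} \cong U$ and non-degeneracy of the canonical pairing, cf.\ \cite{casselman1974introduction}); writing $\tilde v' = \sum_j \beta_j g'_j \tilde v$ and using the left-translation formula expresses $c_{\tilde v', v'}$ as a finite sum of bi-translates of $c_{\tilde v, v}$, again in the required class.

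The step I expect to require the most care is the bi-invariance of $T_p(G')$: each translation costs a factor of $(1-\epsilon)^{-l_{A_0}(g_0)}$, so one must use the ``for every $\epsilon>0$'' quantifier in definition \ref{def:temperedness-definition-group} rather than working with a fixed $\epsilon$. This is the direct group analog of lemma \ref{lem:-H acts as bounded operators on Tp}, which handles the same issue for the $H$-action on $T_p(B)$. A minor secondary point is that the $G'$-orbit of a smooth vector consists of smooth vectors, so the irreducibility argument for $\tilde v$ stays inside $\tilde U$ and no approximation from the algebraic dual $\hat U$ is required.
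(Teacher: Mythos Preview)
Your proposal is correct and follows exactly the approach the paper intends: the paper's proof is the single line ``As in lemma \ref{lem:Irreducible temperedness}'', and you have accurately unpacked what that means in the group setting, including the translation covariance of matrix coefficients, the bi-invariance of $L_p(G')$ and $T_p(G')$ (the group analog of lemma \ref{lem:-H acts as bounded operators on Tp}), and the symmetry argument via irreducibility of $\tilde U$.
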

\begin{proof}
As in lemma \ref{lem:Irreducible temperedness}.
\end{proof}
\begin{prop}
\label{prop:Tempereddness and G}Assume the building $B$ is thick.
Let $G$ be a complete Weyl transitive automorphism group of $B$.
Let $U$ be an irreducible representation of $G$, with $U^{G_{\phi}}\ne\{0\}$.
Then $U$ is $p$-finite (resp. $p$-tempered) if and only if $U^{G_{\phi}}$
is $p$-finite (resp. $p$-tempered) as a representation of $H_{\phi}$.
\end{prop}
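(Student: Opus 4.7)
The plan is to convert between matrix coefficients of $U$ as a $G$-representation and matrix coefficients of $U^{G_\phi}$ as an $H_\phi$-representation, using the algebra isomorphism $H_\phi \cong H_{G_\phi}(G)$ from proposition \ref{prop:Isomorphism_to_G_Hecke}. Normalize the Haar measure so that $\mu(G_\phi)=1$; then $\mu(G_\phi g_w G_\phi)=q_w$ for any $g_w \in G$ with $d(C_0, g_w C_0)=w$. For $v \in U^{G_\phi}$ and $\tilde v \in \tilde U^{G_\phi}$, the matrix coefficient $c_{\tilde v,v}(g)=\langle \tilde v, gv\rangle$ is $G_\phi$-bi-invariant and descends to a function on $W$. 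Unwinding the action of $\tilde h_w = 1_{G_\phi g_w G_\phi}$ on $v$ using $G_\phi$-invariance of $v$ gives $\langle \tilde v, h_w v\rangle = q_w\cdot c_{\tilde v,v}(g_w)$, and integrating yields
\[
\int_G |c_{\tilde v,v}(g)|^p\, dg \;=\; \sum_{w \in W} q_w^{1-p}\,|\langle \tilde v, h_w v\rangle|^p.
\]
Choosing a compact generating set of the form $A_0 = G_\phi \cup \bigcup_{s \in S} G_\phi g_s G_\phi$ makes $l_{A_0}$ comparable to the Coxeter length $l(w)$ on double cosets up to a bounded additive error, so the same identity, weighted by $(1-\delta)^{p \cdot l_{A_0}(g)}$, matches the $T_p$-integral on $G$ with the series appearing in definition \ref{def:Hphi tempered definition}.

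This gives the \emph{only if} direction immediately: specializing to $v \in U^{G_\phi}$ and $\tilde v \in \tilde U^{G_\phi} = (U^{G_\phi})^{\ast}$ (equality by admissibility), the $L_p$-condition (resp.\ $T_p$-condition) on $c_{\tilde v, v}$ translates directly into the $p$-finiteness (resp.\ $p$-temperedness) of $U^{G_\phi}$ as an $H_\phi$-representation.

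For the \emph{if} direction, the issue is extending the conclusion from matrix coefficients of $G_\phi$-fixed vectors to matrix coefficients of arbitrary $v \in U$, $\tilde v \in \tilde U$. Here the irreducibility of $U$ enters: as $U$ is irreducible as a smooth $G$-module, it is irreducible as an $H(G)$-module, so for a fixed nonzero $v_0 \in U^{G_\phi}$ every $v \in U$ can be written as $v = \pi(f)v_0$ for some compactly supported $f \in H(G)$. A direct change of variables gives
\[
c_{\tilde v_0, v}(g) \;=\; \int_G c_{\tilde v_0, v_0}(gx)\, f(x)\, dx \;=\; (c_{\tilde v_0, v_0} \ast \check f)(g),
\]
with $\check f(g) = f(g^{-1})$ also compactly supported. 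By Young's inequality, $L_p$-membership is preserved. For the $T_p$ case, the compactness of $\mathrm{supp}(f)$ gives a uniform bound $l_{A_0}(g) \le l_{A_0}(gx) + L$ for $x \in \mathrm{supp}(f)$, hence $(1-\delta)^{l_{A_0}(g)} \le (1-\delta)^{-L}\cdot (1-\delta)^{l_{A_0}(gx)}$, which yields $\|(c \ast \check f)_\delta\|_p \le (1-\delta)^{-L}\|c_\delta\|_p \|\check f\|_1$. Running the symmetric argument on the $\tilde v$ side (writing $\tilde v = \pi(f')\tilde v_0$) completes the proof.

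The main obstacle I anticipate is the precise comparison between the group length $l_{A_0}$ on $G$ and the Coxeter length $l(w)$ on $W$ needed to match the two temperedness conditions exactly, together with the two-sided convolution estimate in the $T_p$ case; both should follow from elementary estimates once the correct generating set and compactness bounds are chosen.
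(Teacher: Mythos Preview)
Your argument is correct and follows essentially the same route as the paper. The integral--series identity you write down is exactly the paper's computation (equations~\eqref{eq:tempered} and~\eqref{eq:tempered2}), and with the generating set $A_0 = G_\phi \cup \bigcup_{s\in S} G_\phi g_s G_\phi$ one in fact has $l_{A_0}(g) = l(d(C_0,gC_0))$ exactly for $g\notin G_\phi$, so no additive-error argument is needed.

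The only difference is packaging of the \emph{if} direction. You extend from $G_\phi$-fixed to arbitrary vectors by writing $v=\pi(f)v_0$ and using a right-convolution estimate $c_{\tilde v_0,v}=c_{\tilde v_0,v_0}\ast\check f$ together with a compact-support bound on the weight $(1-\delta)^{l_{A_0}}$. The paper instead invokes the already-proved lemma that for an irreducible representation a \emph{single} nonzero matrix coefficient in $L_p$ (resp.\ $T_p$) suffices --- applied once on the $G$-side and once on the $H_\phi$-side (lemma~\ref{lem:Irreducible temperedness}). Your convolution step is precisely the content of the proof of that lemma in the $G$-setting, so the two arguments coincide once unpacked; the paper's version is just shorter because it cites the lemma rather than reproving it.
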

\begin{proof}
By proposition \ref{prop: Restriction Proposition}, $U^{G_{\phi}}$
is irreducible. Using lemma \ref{lem:Irreducible temperedness}, $U^{G_{\phi}}$
is $p$ tempered as $H_{\phi}$ representation if and only if for
some $0\ne v\in U{}^{G_{\phi}}$, $0\ne\tilde{v}\in\hat{\left(U^{K}\right)}$
we have 

\begin{equation}
\sum_{w\in W}q_{w}^{1-p}\left|\left\langle \tilde{v},h_{w}v\right\rangle \right|^{p}(1-\epsilon)^{pl(w)}=\sum_{w\in W}q_{w}\left|\left\langle \tilde{v},h_{w}/q_{w}v\right\rangle \right|^{p}(1-\epsilon)^{pl(w)}<\infty\label{eq:tempered}
\end{equation}
 for every $\epsilon>0$.

Using the last lemma, the $G$-representation $U$ is $p$-tempered
if and only if $c_{\tilde{v},v}\in T_{p}(G')$ for every $\epsilon>0$.
Choose an a compact generating set $A_{0}=G_{\phi}\cup\left(\cup_{s\in S}G_{\phi}g_{s}G_{\phi}\right)$,
where $g_{s}\in G$ is an element sending $C_{0}\in B_{\phi}$ to
$C'\in B_{\phi}$ with $d(C_{0},C')=s$. One can easily see that unless
$g\in G_{\phi}$, $l_{A_{0}}(g)=l(d(C_{0},gC_{0}))$. Therefore for
every $\epsilon>0$:

\begin{align}
\int_{G}\left|c_{\tilde{v},v}(g)\right|^{p}(1-\epsilon)^{l_{A_{0}}(g)}dg & =\int_{G}\left|\left\langle \tilde{v},gv\right\rangle \right|^{p}(1-\epsilon)^{l_{A_{0}}(g)}dg=\sum_{G_{\phi}gG_{\phi}\in G_{\phi}\backslash G/G_{\phi}}\mu(G_{\phi}gG_{\phi})\left|\left\langle \tilde{v},gv\right\rangle \right|^{p}(1-\epsilon)^{l_{A_{0}}(g)}=\label{eq:tempered2}\\
 & =\sum_{Id\ne w\in W}q_{w}\left|\left\langle \tilde{v},h_{w}/q_{w}v\right\rangle \right|^{p}(1-\epsilon)^{l(w)}+\left|\left\langle \tilde{v},v\right\rangle \right|^{p}(1-\epsilon)\nonumber 
\end{align}

(we used the facts that if $d(C_{0},gC_{0})=w$, then $\mu(G_{\phi}gG_{\phi})=q_{w}$
and $\left\langle \tilde{v},gv\right\rangle =\left\langle \tilde{v},e_{K}ge_{K}v\right\rangle =\left\langle \tilde{v},h_{w}/q_{w}v\right\rangle $,
since $v,\tilde{v}$ are $K$-fixed). Since the two conditions \ref{eq:tempered},\ref{eq:tempered2}
are equivalent, we are done.
\end{proof}
\begin{lem}
Every unitary and admissible representation $U$ of $G'$ decomposes
into a countable direct sum of irreducible representations $U=\oplus U_{i}$.
For each open compact subgroup $K\subset G'$ only a finite number
of the $U_{i}$ have $U_{i}^{K}\ne0$.
\end{lem}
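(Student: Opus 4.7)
The plan is to leverage two features: admissibility makes each fixed-vector space $U^{K}$ finite-dimensional, while unitarity ensures every closed invariant subspace has a closed invariant orthogonal complement. The heart of the proof is to show that any nonzero unitary admissible $U$ contains a closed irreducible $G'$-subrepresentation; the global decomposition will then follow by Zorn together with an orthogonal-complement argument, and the counting statement from dimension considerations.

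For the existence of an irreducible subrepresentation, fix a compact open $K\subset G'$ with $U^{K}\neq 0$ and consider the family $\mathcal{F}$ of nonzero closed $G'$-invariant subspaces $W\subset U$ with $W^{K}\neq 0$, partially ordered by inclusion. For a descending chain $W_{1}\supset W_{2}\supset\cdots$ in $\mathcal{F}$, the subspaces $W_{i}^{K}$ form a descending chain inside the finite-dimensional $U^{K}$, hence stabilize at some $W_{n}^{K}$; a short check shows $W_{n}^{K}\subset\bigcap_{i}W_{i}$, so $\bigcap_{i}W_{i}\in\mathcal{F}$ and is a lower bound for the chain. Zorn therefore produces a minimal $W_{0}\in\mathcal{F}$. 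If $W_{0}$ were reducible, unitarity would give a decomposition $W_{0}=W_{1}\oplus W_{2}$ into nonzero closed $G'$-invariant subspaces with $W_{0}^{K}=W_{1}^{K}\oplus W_{2}^{K}$; at least one $W_{i}^{K}$ is then nonzero, contradicting minimality of $W_{0}$. Hence $W_{0}$ is irreducible.

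For the global decomposition, apply Zorn once more, this time to pairwise orthogonal families of closed irreducible $G'$-subrepresentations of $U$, with upper bounds of chains given by unions. Let $\{U_{i}\}$ be a maximal such family and set $U_{\infty}=\overline{\bigoplus_{i}U_{i}}$. If $U_{\infty}\subsetneq U$, then $U_{\infty}^{\perp}$ is nonzero, closed, $G'$-invariant, and is itself unitary and admissible, so by the previous step it contains an irreducible subrepresentation orthogonal to every $U_{i}$, contradicting maximality. For the counting statement, for any compact open $K$ the spaces $U_{i}^{K}$ are pairwise orthogonal subspaces of the finite-dimensional $U^{K}$, so only finitely many of them are nonzero; countability of the full family then follows because $G'$ admits a countable neighbourhood basis of compact open subgroups (e.g.\ the $K_{m}$ from section~\ref{sec:Building-Automorphisms-II} in the building setting), and smoothness implies every $U_{i}$ has a nonzero $K_{m}$-fixed vector for some $m$, whence $\{U_{i}\}$ is a countable union of finite sets.

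The main obstacle is the existence step: ruling out that a minimal $W_{0}$ splits into two $G'$-invariant summands that both lose all $K$-fixed vectors. This is where admissibility and unitarity interact crucially: admissibility (finite-dimensionality of $U^{K}$) is what makes the Zorn argument terminate at a minimal element, while unitarity is what guarantees that any reducibility of $W_{0}$ comes with an orthogonal decomposition in which the finite-dimensional $K$-fixed subspace splits correspondingly, forcing at least one piece to retain a nonzero $K$-fixed vector.
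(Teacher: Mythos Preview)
Your proof is correct and follows the standard argument; the paper itself does not give a proof but simply refers to \cite{casselman1974introduction}, proposition 2.1.14, noting the analogy with the finite-dimensional case (proposition~\ref{claim:finite dimensional unitary decomposes}). Your write-up is essentially a fleshed-out version of that cited reference, so the approaches coincide. One small remark: when you invoke Zorn on descending chains you write ``$W_{1}\supset W_{2}\supset\cdots$'', which suggests a countable sequence, but Zorn requires arbitrary totally ordered chains; your stabilization argument for $W_{i}^{K}$ goes through unchanged for arbitrary chains since the $W_{\alpha}^{K}$ are nested subspaces of a finite-dimensional space and hence attain a minimal member.
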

\begin{proof}
It is similar to \ref{claim:finite dimensional unitary decomposes}.
See \cite{casselman1974introduction} proposition 2.1.14.
\end{proof}
Let us now discuss how to induce $H_{K}$ representations to $G'$
representations.
\begin{prop}
\label{Induction from HK to G- basic}Let $V$ be a representation
of $H_{K}$.
\begin{enumerate}
\item The space $\mathbb{C}[G'/K]\otimes_{H_{K}}V$ is a representation
of $G'$ with the natural left action of $G'$\textbf{ }on \textbf{$\mathbb{C}[G'/K]$.}
\item The space $\mathbb{C}[G'/K]\otimes_{H_{K}}V$ is generated as a $G'$
module by its $K$ fixed vectors and $\left(\mathbb{C}[G'/K]\otimes_{H_{K}}V\right)^{K}$
is naturally isomorphic to $V$ as a $H_{K}$ representation.
\item The functors $\mathbb{C}[G'/K]\otimes_{H_{K}}$ and \textup{$(\cdot)^{K}$}
provide a natural bijection between equivalence classes of irreducible
$G$-representations with $K$ fixed vectors and equivalence classes
of irreducible $H_{K}$-representations.
\end{enumerate}
\end{prop}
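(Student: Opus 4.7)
The plan is to handle the three parts in order, each building on the previous. For Part 1, I would first endow $\mathbb{C}[G'/K]$ with a commuting $(G', H_K)$-bimodule structure. The left $G'$-action is translation $(g \cdot f)(xK) = f(g^{-1}xK)$, and the right $H_K$-action is given by convolution $f \mapsto f * h$. Compact support of $h \in H_K$ together with its bi-$K$-invariance ensures that the convolution is well defined and remains right-$K$-invariant, and the two actions commute by a direct Haar-measure computation. It follows that $\mathbb{C}[G'/K] \otimes_{H_K} V$ inherits a left $G'$-action, and smoothness is automatic since each pure tensor $1_{gK} \otimes v$ is fixed by the open compact subgroup $gKg^{-1}$.

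For Part 2, the natural candidate for the map $V \to (\mathbb{C}[G'/K] \otimes_{H_K} V)^K$ is $\iota(v) = 1_K \otimes v$, where $1_K$ is the characteristic function of the identity coset. The identity $k \cdot 1_K = 1_K$ for every $k \in K$ shows the image is $K$-fixed, and $\iota$ is $H_K$-linear because $1_K$ corresponds to the unit of $H_K$ under the identification $(\mathbb{C}[G'/K])^K \cong H_K$. The inverse is constructed via the averaging projector $e_K = \mu(K)^{-1} \int_K k\,dk$: applied to any $K$-fixed $\omega$, it fixes $\omega$, and moving the averaging onto the first tensor factor lands one in $(\mathbb{C}[G'/K])^K \otimes_{H_K} V = H_K \otimes_{H_K} V \cong V$. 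The generation statement follows since $1_{gK} \otimes v = g \cdot (1_K \otimes v)$, so the image of $\iota$ together with the $G'$-action spans everything.

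For Part 3, the forward map is $U \mapsto U^K$, which by Proposition~\ref{prop: Restriction Proposition} sends an irreducible smooth $G'$-representation with nonzero $K$-fixed part to an irreducible $H_K$-module. The inverse sends an irreducible $H_K$-module $V$ to the unique irreducible subquotient of $\mathbb{C}[G'/K] \otimes_{H_K} V$ whose $K$-fixed part is $V$. To produce it, let $N$ be the sum of all $G'$-submodules $M \subset \mathbb{C}[G'/K] \otimes_{H_K} V$ with $M^K = 0$; this is itself a submodule with $N^K = 0$, hence proper by Part 2, so the quotient $U = (\mathbb{C}[G'/K] \otimes_{H_K} V)/N$ satisfies $U^K \cong V$, and any nonzero submodule of $U$ has nonzero $K$-fixed part by the definition of $N$ and must equal $U$ by irreducibility of $V$ and the Part 2 generation statement. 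The main obstacle is precisely this last step, namely verifying that the two constructions are mutually inverse at the level of equivalence classes: one checks that starting from irreducible $U$ with $U^K = V$, the inclusion $V \hookrightarrow U$ extends to a surjection $\mathbb{C}[G'/K] \otimes_{H_K} V \twoheadrightarrow U$ by universality, whose kernel has trivial $K$-fixed part and hence sits inside $N$, giving $U \cong (\mathbb{C}[G'/K] \otimes_{H_K} V)/N$.
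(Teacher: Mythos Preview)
Your sketch is correct and follows the standard route. Note, however, that the paper does not actually prove this proposition: it simply cites \cite{borel1976admissible} for parts (1) and (2) and \cite{bushnell1993admissible}, 4.2.3, for part (3). What you have written is essentially an outline of the arguments found in those references, in particular the Bushnell--Kutzko construction of the unique irreducible quotient via the maximal submodule $N$ with $N^{K}=0$. The one step worth making more explicit in Part~3 is the use of exactness of the functor $(\cdot)^{K}$ on smooth representations (guaranteed by the idempotent $e_{K}$), which you invoke implicitly when concluding that the kernel of $\mathbb{C}[G'/K]\otimes_{H_{K}}V\twoheadrightarrow U$ has trivial $K$-fixed part and that $L\subset N$ forces $U\cong(\mathbb{C}[G'/K]\otimes_{H_{K}}V)/N$.
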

\begin{proof}
For (1),(2) see \cite{borel1976admissible}. For (3) see \cite{bushnell1993admissible}
4.2.3.
\end{proof}
Proposition \ref{Induction from HK to G- basic} does not give a full
description of the connection between $G'$-representations and $H_{K}$-representations.
Two important ingredient that are missing are the admissibility of
$\mathbb{C}[G/K]\otimes_{H_{K}}V$ if $V$ is finite dimensional and
the unitarity of $\mathbb{C}[G/K]\otimes_{H_{K}}V$ if $V$ is unitary.
In the algebraic-group case we have an answer to those questions,
due to Borel (\cite{borel1976admissible}) and Barbasch and Moy (\cite{barbasch1993reduction}):
\begin{thm}
\label{thm:Connection between Hphi and G rep} Let $G$ be the rational
points of a connected semisimple algebraic group $\mathbb{G}$ over
a non-Archimedean local field $k$.

Let $B$ be the locally finite regular affine building corresponding
to $G$. Let $G_{\phi}$ be a chamber stabilizer (i.e an Iwahori subgroup).
Then: 
\begin{enumerate}
\item If $V$ is a finite dimensional representation of $H_{\phi}$, then
$\mathbb{C}[G/G_{\phi}]\otimes_{H_{\phi}}V$ is admissible.
\item The functors $\mathbb{C}[G/G_{\phi}]\otimes_{H_{\phi}}$ and $(\cdot)^{G_{\phi}}$
are exacts functors between admissible $G$ representations and finite
dimensional $H_{\phi}$ representations.
\item Every admissible representation $U$ of $G$ is a direct sum of the
representation $U_{1}$ generated by $U_{1}^{G_{\phi}}$ and a representation
$U_{2}$ with $U_{2}^{G_{\phi}}=0$.
\item If $V$ is finite dimensional and unitary then $\mathbb{C}[G/G_{\phi}]\otimes_{H_{\phi}}V$
is unitary.
\end{enumerate}
\end{thm}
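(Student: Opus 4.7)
The theorem is a packaging of results of Borel \cite{borel1976admissible} (parts (1)--(3)) and Barbasch--Moy \cite{barbasch1993reduction} (part (4)), so the plan is to identify the key inputs each requires from the algebraic-group setting rather than to re-derive them from scratch. The crucial structural fact that is unavailable at the purely combinatorial level is the Iwahori decomposition $G_\phi = U^-_0 \cdot T_0 \cdot U^+_0$ with respect to a Borel subgroup, together with the Bernstein presentation of $H_\phi$ compatible with it.

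For part (1), given a finite-dimensional $V$, I would bound $\dim\left(\mathbb{C}[G/G_\phi]\otimes_{H_\phi} V\right)^{K'}$ for each compact open $K'$. Using the Cartan-type decomposition $G = G_\phi \widetilde{W} G_\phi$ and the fact that $K' \backslash G / G_\phi$ has only finitely many double cosets on which a given finitely generated $H_\phi$-submodule of $V$ can be supported modulo the $H_\phi$-relations, one shows admissibility by reducing to the Bernstein component containing the image of $V$ and applying the local finiteness of the center of $H_\phi$ on $V$. For part (2), exactness of $(\cdot)^{G_\phi}$ is immediate from the fact that the Iwahori projector $e_{G_\phi} = \mu(G_\phi)^{-1}\mathbf{1}_{G_\phi}$ is an idempotent in the Hecke algebra, so $(\cdot)^{G_\phi}$ is just application of an idempotent. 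Exactness of $\mathbb{C}[G/G_\phi]\otimes_{H_\phi}(\cdot)$ requires showing $\mathbb{C}[G/G_\phi]$ is flat as a right $H_\phi$-module, for which Borel's argument uses a projective resolution coming from the Bruhat--Tits building.

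For part (3), the idea is that the adjunction unit $\mathbb{C}[G/G_\phi]\otimes_{H_\phi} U^{G_\phi} \to U$ is injective with image $U_1$, and then one checks directly that if a smooth quotient of $U$ had a nonzero Iwahori-fixed vector, it would lift (by exactness from (2)) to contradict the definition of $U_1$. Producing the complementary summand $U_2$ uses that the category of smooth representations generated by their $G_\phi$-fixed vectors is closed under subquotients in the algebraic-group case, which is precisely the content of Borel's equivalence.

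The main obstacle is part (4): preserving unitarity under induction. This is genuinely delicate because the natural candidate for an invariant inner product on $\mathbb{C}[G/G_\phi]\otimes_{H_\phi} V$ is built from the form on $V$ through the Hecke-algebra adjunction $*$, and one must check positivity globally, not just on the Iwahori-fixed subspace. Barbasch--Moy handle this by comparing the Hermitian form arising from the $H_\phi$-involution with the one coming from the Casselman pairing and the contragredient representation; compatibility of these two involutions on Iwahori-fixed vectors is what allows the form to extend positively to the induced representation. I would follow their strategy, reducing to irreducible summands of $V$, realizing each as a quotient of a standard module, and tracking the signature through the Langlands-type classification of unitary Iwahori-spherical representations.
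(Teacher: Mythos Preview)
Your proposal is correct and takes essentially the same approach as the paper: both treat the theorem as a packaging of the cited results of Borel \cite{borel1976admissible} for (1)--(3) and Barbasch--Moy \cite{barbasch1993reduction} for (4), with the paper simply citing these references outright while you go further and sketch the internal architecture of those proofs. One small caution: your description of Borel's flatness argument as using ``a projective resolution coming from the Bruhat--Tits building'' is not quite how \cite{borel1976admissible} proceeds (Borel works more directly with the structure of parabolic induction and Jacquet modules), and your account of the Barbasch--Moy argument is broadly right in spirit but somewhat schematic; if you intend to include these sketches in a write-up rather than simply cite, you should verify the details against the original sources.
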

\begin{proof}
(1),(2) and (3) are the main results of \cite{borel1976admissible}.
The unitarity question is harder and was open for a long time. It
was solved using the classification of unitary finite dimensional
$H_{\phi}$-representations. See \cite{barbasch1993reduction}.
\end{proof}
\begin{rem}
The representations considered in the theorem above are the well known
unramified principal series representations of $G$- i.e. the representations
induced from an unramified character of a maximal torus. See \cite{borel1976admissible}.
\end{rem}
\begin{problem}
\label{prob:unitarity and admissibility of induction}Can a similar
answer for unitarity and admissibility be given for arbitrary locally
finite regular buildings, perhaps assuming some transitivity property
of the automorphism group? In particular, can a similar theorem be
stated for right-angles buildings?
\end{problem}
Since theorem \ref{thm:Connection between Hphi and G rep} is rather
deep and does not always apply, we define:
\begin{defn}
A representation $V$ of $H$ is \emph{$G$-unitary} if it is a restriction
to $H$ of a unitary representation of $G$.
\end{defn}
From proposition \ref{prop: Restriction Proposition} if a representation
is $G$-unitary then it is unitary. The converse is true in the algebraic
group case by theorem \ref{thm:Connection between Hphi and G rep}.

\section{\label{sec:Oh's-Theorem}Oh's Theorem}

The results of \cite{oh2002uniform}, specifically theorem 7.4, state:
\begin{thm}
\label{thm:Oh's Theorem}Let $k$ be a non-Archimedean local field
with $\mathrm{char}k\ne2$. Let $G$ be the group of $k$-rational
points of a connected linear almost $k$-simple algebraic group with
$k$-rank $\ge2$. Then every non trivial infinite dimensional unitary
representation of $G$ is $p_{0}$-tempered for some explicit $p_{0}$
depending only on the affine Weyl group $W$. Explicitly, the bounds
are (for $n\ge2)$:

\begin{tabular}{|c|c|c|c|c|c|c|c|c|c|c|}
\hline 
$W$ &
$\tilde{A}_{n}$ &
$\tilde{B}_{n}$ &
$\tilde{C}_{n}$ &
$\tilde{D}_{n}$, $n$ even &
$\tilde{D}_{n}$, $n$ odd &
$\tilde{E}_{6}$ &
$\tilde{E}_{7}$ &
$\tilde{E}_{8}$ &
$\tilde{F}_{4}$ &
$\tilde{G}_{2}$\tabularnewline
\hline 
\hline 
$p_{0}$ &
$2n$ &
$2n$ &
$2n$ &
$2(n-1)$ &
$2n$ &
16 &
18 &
29 &
11 &
6\tabularnewline
\hline 
\end{tabular}
\end{thm}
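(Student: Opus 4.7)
The plan is to cite Oh's theorem from \cite{oh2002uniform} rather than reprove it, since the original argument relies on real-analytic techniques on the reductive group $G$ that do not translate directly into the purely combinatorial Hecke-algebra framework of this paper. Nonetheless, Oh's strategy can be outlined, and any hypothetical self-contained proof in our setting would have to follow essentially the same steps.

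First, I would reduce the problem of showing $p_0$-temperedness of an irreducible unitary representation $U$ of $G$ to obtaining uniform pointwise bounds on the $K$-bi-invariant matrix coefficients $c_{\tilde{v},v}(g)=\langle\tilde v, g v\rangle$ for $v,\tilde v$ fixed by a good maximal compact subgroup $K$. Using the Cartan decomposition $G=KA^{+}K$, the integrability condition of Definition \ref{def:tempered as group representation} reduces to a weighted summability over $A^{+}$ with respect to Haar measure, which in turn reduces to a Harish-Chandra-type decay estimate on $a\in A^{+}$.

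Second, the heart of Oh's argument is a \emph{strongly orthogonal roots} construction. For each almost simple factor of $k$-rank $\ge 2$ one selects a maximal system of mutually strongly orthogonal restricted roots. Each such root $\alpha$ generates a rank-one subgroup $G_\alpha\cong \mathrm{SL}_2(k)$ (or $\mathrm{PGL}_2(k)$), the restriction of $U$ to $G_\alpha$ is a unitary representation, and by the rank-$\ge 2$ hypothesis combined with the non-triviality of $U$ one shows this restriction cannot contain the trivial $G_\alpha$-representation. The classification of unitary $\mathrm{SL}_2(k)$-representations then yields an explicit $p_\alpha$-tempered bound along the direction of $\alpha$. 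Multiplying the independent decay rates along each root in the chosen orthogonal system, and reading off the dependence on the root system, produces the table of exponents $p_0$ for each affine Weyl type.

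The main obstacle to recasting this proof in our language is exactly this rank-reduction step: it requires a restriction functor from unitary $G$-representations to unitary $G_\alpha$-representations, and a vanishing-of-invariants argument of Howe--Moore / Cowling--Haagerup flavor at the rank-one level. A Hecke-algebra substitute would demand a combinatorial restriction procedure from $H_\phi$ to the parabolic Hecke subalgebras corresponding to sub-root-systems of $W$, together with a purely combinatorial proof that a non-trivial $H_\phi$-module restricts non-trivially. I expect this combinatorial analogue of the vanishing-at-infinity principle to be the genuinely hard step, which is why the present paper takes the more expedient route of citing theorem \ref{thm:Oh's Theorem} under the algebraic-group assumption.
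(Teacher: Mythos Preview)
Your proposal is correct and matches the paper's approach exactly: the paper does not prove this theorem but simply cites it from \cite{oh2002uniform}, noting explicitly that the proof could not be translated into the combinatorial Hecke-algebra framework. Your additional outline of Oh's strongly-orthogonal-roots strategy and your identification of the rank-reduction step as the obstruction to a purely combinatorial argument are consistent with the paper's own remarks (see Remark~\ref{rem:Ohs theorem result}).
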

\begin{rem}
The bounds given by the theorem are not in general optimal (but are
optimal in the $\tilde{A}_{n}$, $n\ge2$ case). See the discussion
in \cite{oh2002uniform}, after theorem 7.4.
\end{rem}
\begin{rem}
Recall from Example \ref{exa:Trivial repres} that the trivial representation
is not $p$-tempered for any $p<\infty$.
\end{rem}
Recall that every $G$ as above acts as an automorphism group on a
building, which by taking a finite index subgroup we may assume is
color preserving. As a corollary we can say:
\begin{cor}
Let $V$ be a non trivial $G$-unitary representation of $H$ corresponding
to $G$ as above. Then $V$ is $p_{0}$ tempered for some explicit
$p_{0}$.
\end{cor}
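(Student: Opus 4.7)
The plan is to unwind the definition of $G$-unitary and reduce the corollary to theorem \ref{thm:Oh's Theorem} together with proposition \ref{prop:Tempereddness and G}. By definition, there is a unitary $G$-representation $U$ whose restriction to $\oplus_{I\text{ spherical}} U^{G_I}$ realizes $V$ as an $H$-representation via the isomorphism $H\cong \oplus_{I,J} H_{G_I,G_J}(G)$ from proposition \ref{prop:Isomorphic_to_G_Hecke_Full}. The hypothesis that $V$ is non-trivial means that after splitting off the trivial summand, we may assume $U$ contains no trivial $G$-subrepresentation: the trivial $G$-representation restricts to a summand of the trivial $H$-representation (its single vector is fixed by every $G_I$ and on it every $h_d$ acts as multiplication by $q_d$, matching proposition \ref{claim:Trivial-rep_definition}), so its removal from $U$ is forced by the removal of the trivial part from $V$.

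I would then apply theorem \ref{thm:Oh's Theorem} to conclude that $U$ is $p_0$-tempered as a $G$-representation for the explicit $p_0$ from the table; here I use the fact that for a connected almost $k$-simple algebraic group of $k$-rank $\ge 2$ every non-trivial irreducible unitary representation is automatically infinite-dimensional, so Oh's hypothesis is satisfied by every non-trivial irreducible piece of $U$. To transfer the conclusion from $G$ to $H$, decompose $U=\bigoplus_\alpha U_\alpha$ into irreducible unitary subrepresentations; proposition \ref{prop:Tempereddness and G} applied to each $U_\alpha$ gives that $U_\alpha^{G_\phi}$ is $p_0$-tempered as an $H_\phi$-representation, and since the $p_0$-tempered condition of definition \ref{def:Hphi tempered definition} is preserved under finite direct sums and is determined by matrix coefficients on a fixed pair of vectors $v,u$, it extends to the direct sum, yielding that $V^{G_\phi}$ is $p_0$-tempered. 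Lemma \ref{lem:Temperdness_depends_only _on_phi} together with the equivalence of categories in corollary \ref{prop:Induction-and-restriction-full-proposition} then upgrade this to $p_0$-temperedness of the full $H$-representation $V$.

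The main step that requires care is the decomposition of $U$ into irreducibles, since proposition \ref{prop:Tempereddness and G} is stated for irreducible $G$-representations. When $V$ is finite-dimensional every $U^{G_I}$ is finite-dimensional, and theorem \ref{thm:Connection between Hphi and G rep} guarantees that the $G$-subrepresentation of $U$ generated by $\oplus_I U^{G_I}$ is admissible and decomposes discretely into irreducibles — this is exactly the part of $U$ that sees $V$, so no other part is relevant. For a general unitary $U$ one would instead work with a direct-integral decomposition and observe that the matrix coefficient estimate of definition \ref{def:tempered as group representation} passes through such decompositions on the fixed vectors coming from $V$, which is the only tweak needed to complete the argument in full generality.
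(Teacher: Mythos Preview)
Your argument is correct and is precisely the intended one: the paper states the corollary immediately after theorem \ref{thm:Oh's Theorem} without proof, treating it as a direct consequence of Oh's theorem combined with the transfer principle of proposition \ref{prop:Tempereddness and G}. Your unwinding via the decomposition into irreducibles and the passage from $H_\phi$ to $H$ through lemma \ref{lem:Temperdness_depends_only _on_phi} and corollary \ref{prop:Induction-and-restriction-full-proposition} is exactly the right chain of reductions.

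One small remark: the direct-integral discussion in your last paragraph is unnecessary here, since $p$-temperedness in this paper (definition \ref{def:Hphi tempered definition}) is formulated only for finite-dimensional $H$-representations, and the relevant $G$-subrepresentation generated by $U^{G_\phi}$ is admissible by theorem \ref{thm:Connection between Hphi and G rep}, hence decomposes discretely. You already observed this in the preceding paragraph, so the final caveat about ``general unitary $U$'' can simply be dropped.
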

\begin{rem}
\label{rem:Ohs theorem result}The paper \cite{oh2002uniform} gives
an emphasis to the Coxeter group involved and the bounds are found
using the geometry of the root system. One may therefore expect that
the same results would apply to general Affine Iwahori-Hecke algebras,
with arbitrary parameter system $\overrightarrow{q}$ (under the assumption
that $q_{s}>1$, $s\in S$). We are not aware of such a result.
\end{rem}

\section{\label{sec:Representations-of-the}Representations of the Automorphism
Group - Quotients of Buildings}

In this section we connect the action of $H$ on quotients $X\cong B/\Gamma$
to the representation theory of $G$. This will allow us to use strong
results from the representation theory of reductive groups in our
combinatorical setting.

Recall we defined $C_{l}^{\infty}(G)$ as the set of functions $f:G\rightarrow\mathbb{C}$
such that there exists a compact open subgroup $K\subset G$ with
$f(kg)=f(g)$ for every $g\in G,\,k\in K$. Equivalently $f\in C_{l}^{\infty}(G)$
if for the left regular action of $G$ on functions $f'\in\mathbb{C}^{G}$,
$(g\cdot f')(x)=f'(g^{-1}x)$, $f$ is stabilized by some compact
open subgroup $K$.
\begin{prop}
\label{prop:Quotients of Buildings and G represen}Let $\Gamma\subset G$
be a discrete cocompact subgroup of the automorphism group of the
building. 

Let $C^{\infty}(G/\Gamma)=\left\{ f\in C_{l}^{\infty}(G)|\,f(g\gamma)=f(g)\text{\,for\,every\,}\gamma\in\Gamma,\,g\in G\right\} $.

1. $C^{\infty}(G/\Gamma)$ is a representation of $G$ given by the
left action 
\[
(g\cdot f)(x)=f(g^{-1}\cdot x)
\]

2. This representation is (2.a) smooth, (2.b) admissible and (2.c)
unitary.

3. Let $K\subset G$ be a compact open subgroup. Then $C^{\infty}(G/\Gamma)^{K}\cong\mathbb{C}[K\backslash G/\Gamma]$
as a finite dimensional $H_{K}$ representation.
\end{prop}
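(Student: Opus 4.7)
The plan is to verify the four assertions essentially by unwinding definitions; the content is split between checking that the left-regular action preserves the two defining properties of $C^{\infty}(G/\Gamma)$, and exploiting the cocompactness of $\Gamma$ together with the unimodularity of $G$ established in Section~\ref{sec:Building-Automorphisms-II}. For (1), I would check that if $f\in C^{\infty}(G/\Gamma)$ and $g\in G$ then $g\cdot f$ still lies in $C^{\infty}(G/\Gamma)$: right $\Gamma$-invariance is immediate because left and right multiplication commute, and if $f$ is left-invariant under the compact open subgroup $K'$, then $g\cdot f$ is left-invariant under the compact open subgroup $gK'g^{-1}$, so $g\cdot f\in C_{l}^{\infty}(G)$.

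For smoothness (2.a), note that if $f$ is left $K$-invariant then for every $k\in K$ one has $(k\cdot f)(x)=f(k^{-1}x)=f(x)$, so the $G$-stabilizer of $f$ contains the open subgroup $K$. For (3), a function that satisfies both $f(kg)=f(g)$ and $f(g\gamma)=f(g)$ is determined by its values on the double cosets $K\backslash G/\Gamma$, and this yields a natural linear bijection $C^{\infty}(G/\Gamma)^{K}\cong\mathbb{C}[K\backslash G/\Gamma]$. Cocompactness of $\Gamma$ together with openness of $K$ forces $K\backslash G/\Gamma$ to be finite, since the compact quotient $G/\Gamma$ is covered by finitely many $K$-orbits; in particular the space is finite-dimensional. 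The $H_{K}$-action on both sides is given by convolution, and by construction it respects the identification. From (3) the admissibility statement (2.b) follows immediately, taking $K$ to be an arbitrary compact open subgroup.

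For unitarity (2.c), the plan is to put the $L_{2}$ inner product on $G/\Gamma$. Because $G$ is unimodular (as shown earlier using Weyl transitivity and the identity $[KgK:K]=q_{w}=[Kg^{-1}K:K]$) and $\Gamma$ is discrete and hence unimodular, there is a $G$-invariant Radon measure $\mu$ on $G/\Gamma$, and cocompactness gives $\mu(G/\Gamma)<\infty$. Any $f\in C^{\infty}(G/\Gamma)$ is left-$K$-invariant for some compact open $K$, hence descends to the finite set $K\backslash G/\Gamma$ and therefore takes finitely many values; in particular $f$ is bounded, so $C^{\infty}(G/\Gamma)\subset L_{2}(G/\Gamma,\mu)$. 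Setting $\langle f_{1},f_{2}\rangle=\int_{G/\Gamma}\overline{f_{1}(g)}f_{2}(g)\,d\mu(g)$ produces a $G$-invariant inner product by invariance of $\mu$.

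The argument is essentially formal. The only point requiring more than direct unwinding of definitions is the existence of a $G$-invariant measure on $G/\Gamma$, which depends on the unimodularity of $G$; this is why I would invoke the computation of the modular character carried out earlier in the paper.
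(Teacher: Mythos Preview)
Your proposal is correct and for parts (1), (2.a), (2.b), and (3) it matches the paper's argument almost verbatim.

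The only real difference is in (2.c). You invoke the general existence of a $G$-invariant Radon measure on $G/\Gamma$ (via unimodularity of $G$ and $\Gamma$) and use the resulting $L_{2}$ inner product. The paper instead writes down an explicit combinatorial inner product: given $f_{1},f_{2}\in C^{\infty}(G/\Gamma)$, choose a compact open $K\subset G_{\phi}$ small enough that both are $K$-fixed, expand $f_{j}=\sum_{i}\alpha_{i}^{(j)}1_{Kg_{i}\Gamma}$, and set $\langle f_{1},f_{2}\rangle=[G_{\phi}:K]^{-1}\sum_{i}\overline{\alpha_{i}^{(1)}}\alpha_{i}^{(2)}$; one checks directly that this is independent of $K$ and $G$-invariant. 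Your integral (with $\mu$ normalized so that $\mu(G_{\phi}\Gamma/\Gamma)=1$) computes exactly this sum, so the two inner products coincide. Your route is the standard conceptual one but imports an external existence theorem; the paper's is more self-contained and stays entirely within the combinatorics already set up. A minor secondary difference: for the finiteness of $K\backslash G/\Gamma$ the paper does not use a topological covering argument but reduces to $K=G_{\phi}$ (where finiteness is the definition of cocompactness adopted in the paper) via the bound $|K\backslash G/\Gamma|\le[G_{\phi}:K\cap G_{\phi}]\cdot|G_{\phi}\backslash G/\Gamma|$.
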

\begin{proof}
(1) It is enough to prove that if $f$ is smooth so is $g\cdot f$
for every $C_{l}^{\infty}(G)$. This is immediate since if $f$ is
fixed by $K$, $g\cdot f$ is fixed by $gKg^{-1}$.

(2.a.) By definition of a smooth function.

(2.b. + 3) Let $K$ be a compact open subgroup. By definition $C^{\infty}(G/\Gamma)^{K}$
is the set of functions $f:G\rightarrow\mathbb{C}$ such that $f(kx\gamma)=f(x)$,
$\gamma\in\Gamma,\,x\in G,\,k\in K$, which can be identified by $\mathbb{C}^{K\backslash G/\Gamma}$.
To prove that $C^{\infty}(G/\Gamma)^{K}\cong\mathbb{C}[K\backslash G/\Gamma]$
and it is finite dimensional it is enough therefore to prove that
$K\backslash G/\Gamma$ is finite. By definition it is true for $G_{\phi}$
and for general compact open subgroup $K$, $G_{\phi}\cap K$ has
finite index in $G_{\phi}$. Therefore 
\[
\left|K\backslash G/\Gamma\right|\le\left|K\cap G_{\phi}\backslash G/\Gamma\right|\le\left[G_{\phi}:K\cap G_{\phi}\right]\left|G_{\phi}\backslash G/\Gamma\right|
\]
and the last value is finite.

(2.c) We define an inner product: for every two functions $f_{1},f_{2}\in C^{\infty}(G/\Gamma)$
there exists a compact open subgroup $K\subset G_{\phi}$ such that
$f_{1},f_{2}$ can be written as a finite sum $f_{1}=\sum_{i}\alpha_{i}1_{Kg_{i}\Gamma}$,
$f_{2}=\sum_{i}\beta_{i}1_{Kg_{i}\Gamma}$. Define $\left\langle f_{1},f_{2}\right\rangle =\left[G_{\phi}:K\right]^{-1}\sum_{i}\bar{\alpha_{i}}\beta_{i}$.
It is easy to see that this is indeed an inner product and does not
depend on the choice of $K$. 
\end{proof}
\begin{cor}
\label{cor:Expander depends on G}Assume $B$ has a complete Weyl
transitive automorphism group $G$. Then the representation of $H_{\phi}$
on $\mathbb{C}[X_{\phi}]\cong\mathbb{C}[B_{\phi}/\Gamma]$ is $G$-unitary
and is the restriction of the action of $G$ on $\mathbb{C}^{\infty}(G/\Gamma)$.

The decomposition of \textup{$\mathbb{C}[X_{\phi}]$} into a sum of
irreducible representation of $H_{\phi}$ is given by:

1. The decomposition of $\mathbb{C}^{\infty}(G/\Gamma)$ into a sum
of irreducible representations of $G$.

2. Removing all representations without $G_{\phi}$ fixed vectors.

3. Restriction of the finite number of resulting representations to
$G_{\phi}$ fixed vectors. 

In addition, $X$ is an $L_{p}$-expander if and only if every non
trivial irreducible subrepresentation of $G$ on $\mathbb{C}^{\infty}(G/\Gamma)$
with non zero $G_{\phi}$ fixed vectors is $p$-tempered.
\end{cor}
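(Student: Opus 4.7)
The plan is to assemble this corollary from results already established in the excerpt, without any new technical work: the content is essentially a dictionary between $G$-representations on $C^{\infty}(G/\Gamma)$ and $H_{\phi}$-representations on $\mathbb{C}[X_{\phi}]$, obtained by taking $G_{\phi}$-fixed vectors.

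First, I would identify $\mathbb{C}[X_{\phi}]$ with $C^{\infty}(G/\Gamma)^{G_{\phi}}$. Since $X_{\phi}\cong G_{\phi}\backslash G/\Gamma$, part 3 of Proposition \ref{prop:Quotients of Buildings and G represen} (with $K=G_{\phi}$) provides the isomorphism of vector spaces, and Proposition \ref{prop:Isomorphism_to_G_Hecke} identifies $H_{\phi}\cong H_{G_{\phi}}(G)$ so that the two $H_{\phi}$-actions (the combinatorial one on $\mathbb{C}[X_{\phi}]$ and the Hecke action on the $G_{\phi}$-fixed vectors of the $G$-representation) coincide. This immediately shows that $\mathbb{C}[X_{\phi}]$ arises as the restriction of the unitary $G$-representation $C^{\infty}(G/\Gamma)$, hence is $G$-unitary.

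Next, I would use the lemma preceding Proposition \ref{Induction from HK to G- basic} to decompose the admissible unitary representation $C^{\infty}(G/\Gamma)=\bigoplus_{i\in I} U_{i}$ into a countable direct sum of irreducible $G$-representations, with only finitely many $U_{i}$ satisfying $U_{i}^{G_{\phi}}\neq 0$. Since taking $G_{\phi}$-fixed vectors commutes with direct sums, $\mathbb{C}[X_{\phi}]\cong\bigoplus_{i}U_{i}^{G_{\phi}}$, and by Proposition \ref{prop: Restriction Proposition} each non-zero summand $U_{i}^{G_{\phi}}$ is an irreducible $H_{\phi}$-representation. This yields precisely the three-step decomposition in the statement.

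Finally, for the $L_{p}$-expander characterization, I would invoke Corollary \ref{claim:Chambers are enough for Lp-xpander}, so that the expander property is equivalent to $p$-temperedness of the $H_{\phi}$-representation $L_{2}^{0}(X_{\phi})$. The trivial $G$-representation contributes exactly the constant functions inside $\mathbb{C}[X_{\phi}]$, which is the orthogonal complement of $L_{2}^{0}(X_{\phi})$; hence $L_{2}^{0}(X_{\phi})=\bigoplus_{i:\,U_{i}\,\text{nontrivial}}U_{i}^{G_{\phi}}$. Applying Proposition \ref{prop:Tempereddness and G} summand by summand translates $p$-temperedness of each irreducible $U_{i}$ (with $U_{i}^{G_{\phi}}\neq 0$) as a $G$-representation into $p$-temperedness of $U_{i}^{G_{\phi}}$ as an $H_{\phi}$-representation, and combining all the summands gives the equivalence. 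The one delicate point is that Proposition \ref{prop:Tempereddness and G} is stated under the thickness assumption and requires matching the length function $l_{A_{0}}$ on $G$ with the building length $l$ on $W$; that matching was already carried out in its proof via the Cartan decomposition $G=\bigsqcup_{w}G_{\phi}\,g_{w}\,G_{\phi}$, so no additional argument is needed here.
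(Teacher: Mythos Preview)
Your proposal is correct and follows essentially the same approach as the paper: the paper's proof simply cites Proposition~\ref{prop:Quotients of Buildings and G represen} and Proposition~\ref{prop: Restriction Proposition} for the decomposition, and Proposition~\ref{prop:Tempereddness and G} for the temperedness equivalence. Your version is a more detailed unpacking of exactly these ingredients (plus the implicit identifications via Proposition~\ref{prop:Isomorphism_to_G_Hecke} and Corollary~\ref{claim:Chambers are enough for Lp-xpander}), and your remark about the thickness hypothesis in Proposition~\ref{prop:Tempereddness and G} is a correct observation that the paper leaves implicit.
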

\begin{proof}
The decomposition into irreducible representations follows from proposition
\ref{prop:Quotients of Buildings and G represen} and proposition
\ref{prop: Restriction Proposition}. The temperedness follows from
proposition \ref{prop:Tempereddness and G}.
\end{proof}
We can now state one of the main results of this work.
\begin{thm}
Let $k$ be a non-Archimedean local field with $\mathrm{char}k\ne2$.
Let $G$ be the group of $k$-rational points of a connected linear
almost $k$-simple algebraic group with $k$-rank $\ge2$. 

Let $B$ be the corresponding building on which $G$ acts and $\Gamma$
a cocompact torsion free lattice in $G$. 

Then $X\cong B/\Gamma$ is an $L_{p_{0}}$-expander, where $p_{0}=p_{0}(W)$
depends only on the affine Weyl group $W$ and is given in the table
in theorem \ref{thm:Oh's Theorem}.
\end{thm}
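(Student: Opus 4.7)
The plan is to combine Corollary \ref{cor:Expander depends on G} with Oh's theorem \ref{thm:Oh's Theorem}, with a small additional input handling finite dimensional subrepresentations. First I would invoke Corollary \ref{cor:Expander depends on G}, which asserts that $X\cong B/\Gamma$ is an $L_{p_0}$-expander if and only if every nontrivial irreducible $G$-subrepresentation of $C^{\infty}(G/\Gamma)$ possessing a nonzero $G_{\phi}$-fixed vector is $p_0$-tempered. This reduces the statement to a purely representation-theoretic claim about $G$-subrepresentations of $C^{\infty}(G/\Gamma)$.

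Next I would apply Proposition \ref{prop:Quotients of Buildings and G represen}: since $\Gamma$ is cocompact and torsion free, the representation $C^{\infty}(G/\Gamma)$ is admissible and unitary, and hence decomposes as a Hilbert direct sum of irreducible unitary $G$-representations, only finitely many of which contain a nonzero $G_{\phi}$-fixed vector. For each infinite dimensional summand $U$ in this finite list (other than the trivial representation, which is excluded), Oh's theorem \ref{thm:Oh's Theorem} applies directly and yields that $U$ is $p_0$-tempered as a $G$-representation. By Proposition \ref{prop:Tempereddness and G} (applied in the thick building case corresponding to $G$), the $p_0$-temperedness of $U$ as a $G$-representation is equivalent to the $p_0$-temperedness of $U^{G_{\phi}}$ as an $H_{\phi}$-representation, which is exactly what the expander definition requires.

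The remaining and only delicate point is the finite dimensional case: one must rule out the existence of nontrivial finite dimensional irreducible unitary $G$-subrepresentations of $C^{\infty}(G/\Gamma)$. This follows from the fact that a connected, linear, almost $k$-simple algebraic group of $k$-rank $\ge 2$ has no nontrivial finite dimensional unitary representations: the image of such a representation lies in a compact unitary group, hence the kernel is a noncentral normal closed subgroup of cofinite type, which by almost-simplicity forces the representation to factor through the (trivial) maximal compact quotient of $G$. Consequently, every finite dimensional irreducible subrepresentation of $C^{\infty}(G/\Gamma)$ is trivial and is excluded by the ``nontrivial'' hypothesis in Corollary \ref{cor:Expander depends on G}. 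Combining the two cases verifies the criterion of the corollary, and so $X$ is an $L_{p_0}$-expander. The main conceptual obstacle is not in any single step but in having already assembled the dictionary (Propositions \ref{prop: Restriction Proposition}, \ref{prop:Tempereddness and G}, \ref{prop:Quotients of Buildings and G represen} and Corollary \ref{cor:Expander depends on G}) that translates between the combinatorial notion of an $L_p$-expander complex and the representation-theoretic notion of a $p$-tempered unitary $G$-representation, so that Oh's deep input can be plugged in as a black box.
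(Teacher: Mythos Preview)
Your proposal is correct and follows the same approach as the paper, which simply states that the theorem follows from Corollary \ref{cor:Expander depends on G} and Theorem \ref{thm:Oh's Theorem}. You add a useful clarification the paper leaves implicit: since Oh's theorem only covers infinite dimensional unitary representations, one must note that an almost $k$-simple group of $k$-rank $\ge 2$ has no nontrivial finite dimensional unitary representations, so the finite dimensional summands of $C^{\infty}(G/\Gamma)$ are automatically trivial and excluded from the criterion in Corollary \ref{cor:Expander depends on G}.
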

\begin{proof}
Follows from proposition \ref{cor:Expander depends on G} and theorem
\ref{thm:Oh's Theorem}.
\end{proof}
\begin{rem}
Every affine building $B$ of dimension $\ge3$ with irreducible Weyl
group $W$ corresponds to such a group $G$, so this theorem is quite
general. A positive answer to the question in \ref{rem:Ohs theorem result}
would show that the theorem is true for dimension $2$ as well. 
\end{rem}
\begin{rem}
By \cite{first2016ramanujan}, the complexes constructed in \cite{lubotzky2005explicit}
are Ramanujan, i.e. satisfy definition \ref{def:Expander definition}.
\end{rem}
\begin{rem}
The definition in \cite{lubotzky2005ramanujan} (and similarly the
definition in \cite{cartwright2003ramanujan}) only considers functions
on vertices of the graphs and the eigenvalues of the spherical Hecke
operators for buildings of type $\tilde{A}_{n}$. It is shown there
that this property is equivalent to the $2$-temperedness of any such
function which is not part of the trivial representation (assuming
for simplicity that $\Gamma$ is color preserving). This is equivalent
to the property that any subrepresentation of $H$ on $\mathbb{C}[X_{f}]$
which is not trivial and non zero on vertices is $2$-tempered (i.e.
its restriction to some $H_{I}$- $I$ a color of vertex- is non trivial).
In the context of the automorphism group, it is equivalent to considering
only subrepresentations of $\mathbb{C}^{\infty}(G/\Gamma)$ with $G_{I}$
fixed vectors, for some vertex color $I$. Therefore the definition
in this paper is aperiori stronger than the definition in \cite{lubotzky2005ramanujan},
although we do not know if there exists a complex $X$ lying in the
gap between the definitions. A necessary condition for the existence
of $X$ is the existence of a finite dimensional unitary representation
$V$ of $H$ that is nullified on vertices (i.e. $1_{I}V=\{0\}$ for
every vertex color $I$), but is not $2$-tempered. Such a $V$ does
not exist for $\tilde{A}_{n}$, $n\le2$, but does exist for $\tilde{A}_{n}$,
$n\ge3$. This result follows from the classification of unitary $\tilde{A}_{n}$
representations given by Tadi\'{c} (see \cite{tadic1986classification}).
Similar considerations appear in \cite{kang2016riemann} and \cite{first2016ramanujan}.

In any case if one wants to consider operators acting on all the colors
of faces, the definition in this paper is more adequate.
\end{rem}
\begin{rem}
\label{rem:Discussion on temperdness}Corollary \ref{cor:Expander depends on G}
suggests a generalized way of defining an $L_{p}$-expander- the requirement
that every non trivial irreducible subrepresentation of $G$ on $\mathbb{C}^{\infty}(G/\Gamma)$
is $p$-tempered. However, this definition seems to be dependent on
the group $G$ considered and not only the complexes $X$ and $B$.
See also \cite{first2016ramanujan}.
\end{rem}

\part{Spectrum of Operators}

\section{\label{sec:Spectrum-and-Weak}Spectrum and Weak Containment}
\begin{defn}
\label{def: Spectrum definition}Let $V$ be a representation of the
ADH algebra $H$ and $h\in H$. The \emph{point spectrum} $\Sigma_{V}^{p}(h)$
is the set of eigenvalues of $h$ on $V$.

We say that $V$ \emph{supports spectrum} if it is either finite dimensional
or normed with the elements of $H$ acting as bounded operators. If
$V$ is finite dimensional, we define $\Sigma_{V}(h)=\Sigma_{V}^{ap}(h)=\Sigma_{V}^{p}(h)$.
If $V$ is normed, the \emph{spectrum} $\Sigma_{V}(h)$ of $h$, is
the set of $\lambda\in\mathbb{C}$ such that $h-\lambda$ does not
have an inverse with bounded norm on $V$. The \emph{approximate point
spectrum} $\Sigma_{V}^{ap}(h)$ of $h$ is the set of $\lambda\in\mathbb{C}$
such that there exists a series of vectors $v_{n}\in U$, $\left\Vert v_{n}\right\Vert _{V}=1$,
such that $\left\Vert hv_{n}-\lambda v_{n}\right\Vert _{V}\rightarrow0$. 

Denote the \emph{spectral radius $\lambda_{V}(h)$} of $h$ by $\lambda_{V}(h)=\sup\left\{ \left|\lambda\right||\lambda\in\Sigma_{V}(h)\right\} $,
and if $V$ is normed the \emph{norm} $\left\Vert h\right\Vert _{V}$
of $h$ by $\left\Vert h\right\Vert _{V}=\sup\{\left\Vert hv\right\Vert _{V}:v\in V,\,\left\Vert v\right\Vert _{V}=1\}$.
\end{defn}
The following is standard:
\begin{lem}
\label{lem:Gelfand's formula}We have Gelfand's formula $\lambda_{V}(h)=\limsup\left\Vert h^{n}\right\Vert _{V}^{1/n}$
and if $V$ is unitary, then $\left\Vert h\right\Vert _{V}=\sqrt{\lambda_{V}(hh^{*})}$.
\end{lem}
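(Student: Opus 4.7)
The plan is to dispatch the two parts separately, since both are classical Banach-algebra (or even finite-dimensional) facts; the excerpt explicitly calls the lemma ``standard.'' The only subtlety is that $V$ need not be a Banach algebra itself, only a normed representation on which each $h\in H$ acts as a bounded operator, but the closure of the image of $H$ in $\mathrm{End}(V)$ is a unital Banach algebra, so we may invoke standard facts freely.

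For Gelfand's formula, I would first treat the finite-dimensional case by putting the operator $h$ in Jordan normal form: then $\|h^n\|^{1/n}\to \max_i |\lambda_i|=\lambda_V(h)$ because a Jordan block of eigenvalue $\lambda$ and size $k$ has $n$-th power of norm $\Theta(n^{k-1}|\lambda|^n)$. For the normed (infinite-dimensional) case, I would first establish $\lambda_V(h)\le \liminf \|h^n\|^{1/n}$ by the spectral mapping principle for polynomials: if $\lambda\in \Sigma_V(h)$ and $|\lambda|^n > \|h^n\|$, then the Neumann series $\sum h^{kn}/\lambda^{n(k+1)}$ converges and inverts $\lambda^n-h^n=(\lambda-h)p(h)$, contradicting $\lambda\in \Sigma_V(h)$. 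For the reverse inequality $\limsup \|h^n\|^{1/n}\le \lambda_V(h)$, I would use the standard resolvent argument: the function $z\mapsto (z-h)^{-1}$ is holomorphic on $\{|z|>\lambda_V(h)\}$ and admits the Laurent expansion $\sum_{n\ge 0} h^n z^{-n-1}$ for $|z|>\|h\|$; by analytic continuation the coefficients $h^n$ satisfy $\|h^n\|^{1/n}\le r$ for every $r>\lambda_V(h)$, by a Cauchy estimate applied to $(z-h)^{-1}$ on the circle $|z|=r$.

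For the $C^{*}$-type identity $\|h\|_V = \sqrt{\lambda_V(hh^*)}$, I would proceed in three steps. First, if $V$ is unitary, then for every $v\in V$,
\[
\|hv\|^2 = \langle hv,hv\rangle = \langle v, h^*h v\rangle \le \|h^*h\|\,\|v\|^2,
\]
so $\|h\|^2 \le \|h^*h\| \le \|h^*\|\,\|h\|$, and symmetrically $\|h^*\|\le \|h\|$, giving $\|h\|=\|h^*\|$ and the $C^*$-identity $\|h\|^2 = \|h^*h\|$. Second, setting $T=h^*h$, which is self-adjoint, I would show $\|T\|=\lambda_V(T)$: self-adjointness gives $\|T^2\|\ge \|T\|^2$ (since $\|Tv\|^2=\langle T^2v,v\rangle$) and the reverse is trivial, so by induction $\|T^{2^k}\|=\|T\|^{2^k}$, and combining with the just-proved Gelfand formula yields $\lambda_V(T)=\lim \|T^{2^k}\|^{1/2^k}=\|T\|$. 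Third, since the non-zero parts of the spectra of $h^*h$ and $hh^*$ coincide (by the standard observation that for any $\lambda\ne 0$, $(\lambda-h^*h)$ is invertible iff $(\lambda-hh^*)$ is, with inverse constructed explicitly from the inverse of the other), we have $\lambda_V(h^*h)=\lambda_V(hh^*)$, giving $\|h\|^2=\lambda_V(hh^*)$.

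The only mildly subtle step is the reverse inequality in Gelfand's formula, which genuinely requires complex analysis of the resolvent; everything else is bookkeeping. No step is a real obstacle, which is why the paper defers the proof as standard.
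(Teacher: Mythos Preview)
The paper does not prove this lemma at all; it is introduced with the sentence ``The following is standard'' and then used without further justification. Your sketch is correct and is essentially the textbook argument (Neumann series for one inequality, analyticity of the resolvent and Cauchy estimates for the other, and the $C^{*}$-identity reduced to $\|T^{2}\|=\|T\|^{2}$ for self-adjoint $T$). In fact your argument shows the limit exists, not merely the $\limsup$, which is slightly stronger than what is stated. There is nothing to compare against in the paper, and nothing is missing from your proposal.
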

Now we can now compare arbitrary representations that support spectrum:
\begin{defn}
Assume $V_{1},V_{2}$ are $H$-representations that support spectrum.
If for every $h\in H$, $\lambda_{V_{1}}(h)\le\lambda_{V_{2}}(h)$
we say that $V_{1}$ is \emph{weakly contained} in $V_{2}$.
\end{defn}
\begin{rem}
The above definition is not standard as far as we know. Similar definitions
exist for unitary representations of locally compact groups and $C^{*}$
algebras (see \cite{cowling1988almost} and references therein). One
of the equivalent definitions is that for every $h\in H$, $\left\Vert h\right\Vert _{V_{1}}\le\left\Vert h\right\Vert _{V_{2}}$.
For unitary representations (but not for general normed representations)
this is equivalent to our definition by lemma \ref{lem:Gelfand's formula}.
\end{rem}

\section{\label{sec:The-Point-Spectrum}The Point Spectrum of $T_{p}(B_{f})$}

We want to understand the eigenvalues of the action of $H$ on $T_{p}(B_{f})$.
Recall definition \ref{def:F_delta} of $f_{\delta}$ for $f\in\mathbb{C}^{B_{f}}$.
We will need the following lemma:
\begin{lem}
Let $f\in T_{p}(B_{f})$ and $h\in H$. Then \textup{$\left\Vert h\left(f_{\delta}\right)-(hf)_{\delta}\right\Vert _{p}/\left\Vert f_{\delta}\right\Vert _{p}\rightarrow0$
as $\delta\rightarrow0$.} Therefore for every $\lambda\in\mathbb{C}$,
$\left\Vert hf_{\delta}-\lambda f_{\delta}\right\Vert _{p}/\left\Vert f_{\delta}\right\Vert _{p}\rightarrow0$
if and only if $\left\Vert (hf)_{\delta}-\lambda f_{\delta}\right\Vert _{p}/\left\Vert f_{\delta}\right\Vert _{p}\rightarrow0$
.
\end{lem}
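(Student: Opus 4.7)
The plan is to reduce to a single basis operator, obtain a pointwise estimate coming from a Lipschitz-type property of the length function on faces, and then convert to an $L_p$-bound using the operator bound already available from lemma \ref{lem:-H acts as bounded operators on Tp}.

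By linearity it suffices to treat $h = h_d$ for a single double coset $d \in W_{I_1}\backslash W/W_{I_2}$. A direct computation gives, for $\sigma_1$ of color $I_1$,
\[
h_d(f_\delta)(\sigma_1) - (h_d f)_\delta(\sigma_1) = \sum_{\sigma_2:\,d(\sigma_1,\sigma_2)=d}\bigl[(1-\delta)^{l(\sigma_2)}-(1-\delta)^{l(\sigma_1)}\bigr] f(\sigma_2).
\]
The key geometric input is that $|l(\sigma_1)-l(\sigma_2)| \le L$ for some constant $L = L(d)$ depending only on $d$. By proposition \ref{claim:Distance claims} one can choose chambers $C_i \supset \sigma_i$ with $d(C_1,C_2)=\tilde d$, so $|l(C_0,C_1)-l(C_0,C_2)| \le l(\tilde d)=l(d)$; combined with the fact that $l(\sigma_i)$ differs from $l(C_i)$ by at most the length of the longest element of $W_{I_i}$ (spherical, hence finite), this yields the bound with $L := l(d)+l(w_0^{I_1})+l(w_0^{I_2})$.

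Writing $(1-\delta)^{l(\sigma_2)}-(1-\delta)^{l(\sigma_1)} = (1-\delta)^{l(\sigma_2)}\bigl[1-(1-\delta)^{l(\sigma_1)-l(\sigma_2)}\bigr]$, the bracket is $O(\delta)$ uniformly in $\sigma_1,\sigma_2$ once $\delta<1/2$ and $|l(\sigma_1)-l(\sigma_2)|\le L$; say it is bounded by $c_L\delta$. Consequently
\[
\bigl|h_d(f_\delta)(\sigma_1) - (h_d f)_\delta(\sigma_1)\bigr| \le c_L\delta\sum_{\sigma_2:\,d(\sigma_1,\sigma_2)=d}(1-\delta)^{l(\sigma_2)}|f(\sigma_2)| = c_L\delta\,\bigl(h_d|f|\bigr)_\delta(\sigma_1)\cdot(1-\delta)^{L},
\]
where the last factor comes from rewriting the $(1-\delta)^{l(\sigma_1)}$ weight appearing inside $(h_d|f|)_\delta$ in terms of $(1-\delta)^{l(\sigma_2)}$, and we absorb the bounded factor $(1-\delta)^{L}$ into $c_L$. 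Now taking $L_p$ norms and applying lemma \ref{lem:-H acts as bounded operators on Tp} to the function $|f|\in T_p(B_f)$ gives
\[
\bigl\|h_d(f_\delta)-(h_d f)_\delta\bigr\|_p \le c_L\delta\,\bigl\|(h_d|f|)_\delta\bigr\|_p \le c_L M(h_d)\,\delta\,\bigl\||f|_\delta\bigr\|_p = c_L M(h_d)\,\delta\,\|f_\delta\|_p,
\]
so $\|h(f_\delta)-(hf)_\delta\|_p/\|f_\delta\|_p \to 0$ as $\delta\to 0$. The second assertion of the lemma is immediate from this by the triangle inequality: adding and subtracting $(hf)_\delta$ inside $\|hf_\delta-\lambda f_\delta\|_p$ shows that the two ratios $\|hf_\delta-\lambda f_\delta\|_p/\|f_\delta\|_p$ and $\|(hf)_\delta-\lambda f_\delta\|_p/\|f_\delta\|_p$ differ by a quantity tending to $0$.

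The only nontrivial step is the Lipschitz bound $|l(\sigma_1)-l(\sigma_2)|\le L$, but this is essentially a restatement of the gallery-distance triangle inequality together with the elementary bound on the diameter of a spherical parabolic subgroup. Everything else is routine expansion and an application of the already-proved boundedness of $H$ on $T_p(B_f)$.
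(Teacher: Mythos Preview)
Your argument is correct and follows essentially the same route as the paper: both obtain the pointwise estimate $|h(f_\delta)(\sigma)-(hf)_\delta(\sigma)|\le C\delta\cdot(|h|\,|f|)_\delta(\sigma)$ from the Lipschitz bound $|l(\sigma_1)-l(\sigma_2)|\le L$, and then convert to an $L_p$ bound via lemma~\ref{lem:-H acts as bounded operators on Tp}. One small slip to fix: in your displayed chain the ``$=$'' should be ``$\le$'' and the factor should be $(1-\delta)^{-L}$ rather than $(1-\delta)^{L}$, though as you say this bounded factor is harmlessly absorbed into $c_L$.
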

\begin{proof}
Write $h=\sum_{d}\alpha_{d}h_{d}$. Let $L=\max_{d:\alpha(d)\ne0}l(d)$
($l(d)$ is as in the temperedness definition),$\left|h\right|=\sum_{d}\left|\alpha_{d}\right|h_{d}$.

Let $F\in\mathbb{C}^{B_{f}}$, $F(\sigma)=\left(\left|h\right|\left|f\right|\right)(\sigma)$.
From \ref{lem:-H acts as bounded operators on Tp} we have $\left\Vert F_{\delta}\right\Vert _{p}\le M\left\Vert f_{\delta}\right\Vert _{p}$
for some $M\in\mathbb{R}_{>0}$ and $\delta>0$ small enough.

Let $\sigma$ be some face. We wish to understand $\left|hf_{\delta}(\sigma)-\left(hf\right)_{\delta}(\sigma)\right|$.
For $l(d(\sigma,\sigma'))\le L$ we have $f_{\delta}(\sigma')=(1-\delta)^{l(\sigma)}f(\sigma')+(1-\delta)^{l(\sigma)}r_{\sigma}(\sigma')f(\sigma')$
for some 
\[
1-(1-\delta)^{-L}\le r_{\sigma}(\sigma')=1-(1-\delta)^{l(\sigma')-l(\sigma)}\le1-\left(1-\delta\right)^{L}
\]

Notice that for $\delta$ small enough $\left|r_{\sigma}(\sigma')\right|\le4\delta L$.

Let $g_{\sigma}\in\mathbb{C}^{B_{f}}$ be defined by $g_{\sigma}(\sigma')=r_{\sigma}(\sigma')f(\sigma')$.
We have $f_{\delta}=(1-\delta)^{l(\sigma)}(f+g_{\sigma})$. For $l(d(\sigma,\sigma'))\le L$
we have $\left|g_{\sigma}\right|(\sigma')=\left|r_{\sigma}(\sigma')\right|\left|f(\sigma')\right|\le4L\delta\left|f\right|(\sigma')$. 

Now
\begin{align*}
\left|(hf_{\delta})(\sigma)-\left(hf\right)_{\delta}(\sigma)\right| & =\left|(1-\delta)^{l(\sigma)}\left(\left(hf\right)(\sigma)+\left(hg_{\sigma}\right)(\sigma)\right)-\left(hf\right)_{\delta}(\sigma)\right|=\\
 & =\left|(1-\delta)^{l(\sigma)}\left(hg_{\sigma}\right)(\sigma)\right|\,\le\,(1-\delta)^{l(\sigma)}\left(\left|h\right|\left|g_{\sigma}\right|\right)(\sigma)\\
 & \le(1-\delta)^{l(\sigma)}4L\delta\left(\left|h\right|\left|f\right|\right)(\sigma)\le4L\delta F_{\delta}(\sigma)
\end{align*}

Taking the $p$-s power and summing over all $\sigma\in B_{f}$, we
have:

\[
\left\Vert hf_{\delta}(C)-\left(hf\right)_{\delta}\right\Vert _{p}\le\delta4L\left\Vert F_{\delta}\right\Vert _{p}\le\delta4LM\left\Vert f_{\delta}\right\Vert _{p}
\]

and as $\delta\rightarrow0$, $\left\Vert hf_{\delta}(C)-\left(hf\right)_{\delta}\right\Vert _{p}/\left\Vert f_{\delta}\right\Vert _{p}\rightarrow0$
as required.
\end{proof}
\begin{cor}
The point spectrum of $h\in H$ on $T_{p}(B_{f})$ is contained in
the approximate point spectrum of $h$ on $L_{p}(B_{f})$.
\end{cor}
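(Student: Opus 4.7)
The corollary follows almost immediately from the preceding lemma together with the definitions. The plan is to take an eigenvector $f \in T_p(B_f)$ of $h$ with eigenvalue $\lambda$, and use the truncations $f_\delta$ (which lie in $L_p(B_f)$ by the very definition of $T_p$) as an approximate eigensequence for $h$ acting on $L_p(B_f)$.

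More precisely, let $\lambda \in \Sigma^{p}_{T_p(B_f)}(h)$, so there exists $0 \neq f \in T_p(B_f)$ with $hf = \lambda f$. By definition of $T_p(B_f)$, $f_\delta \in L_p(B_f)$ for every $\delta > 0$, and $\|f_\delta\|_p > 0$ because there is some face $\sigma$ with $f(\sigma) \neq 0$, and then $f_\delta(\sigma) = (1-\delta)^{l(\sigma)} f(\sigma) \neq 0$. Since $hf = \lambda f$, we have $(hf)_\delta = \lambda f_\delta$, so the preceding lemma yields
\[
\frac{\|h f_\delta - \lambda f_\delta\|_p}{\|f_\delta\|_p} \;=\; \frac{\|h f_\delta - (hf)_\delta\|_p}{\|f_\delta\|_p} \;\longrightarrow\; 0 \quad \text{as } \delta \to 0.
\]
Picking any sequence $\delta_n \to 0$ and setting $v_n = f_{\delta_n}/\|f_{\delta_n}\|_p$, we obtain a sequence of unit vectors in $L_p(B_f)$ with $\|h v_n - \lambda v_n\|_p \to 0$. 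By definition, $\lambda \in \Sigma^{ap}_{L_p(B_f)}(h)$.

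There is no real obstacle here: the entire content of the argument is packaged into the lemma that was just established (which supplies the comparison $\|h f_\delta - (hf)_\delta\|_p = o(\|f_\delta\|_p)$). The only minor points to verify are that $\|f_\delta\|_p$ is strictly positive (so that normalization is legitimate) and that $f_\delta$ actually lies in $L_p(B_f)$, both of which are immediate from the definitions. Nothing about the specific algebraic structure of $H$ is needed beyond the linearity used inside the lemma.
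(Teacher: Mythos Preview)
Your proof is correct and follows essentially the same approach as the paper: take an eigenvector $f\in T_p(B_f)$, observe that $(hf)_\delta=\lambda f_\delta$, and invoke the preceding lemma to conclude that the normalized $f_\delta$ form an approximate eigensequence in $L_p(B_f)$. You are in fact slightly more careful than the paper in explicitly verifying that $f_\delta\in L_p(B_f)$ and $\|f_\delta\|_p>0$.
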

\begin{proof}
Assume $f\in T_{p}(B_{f})$ such that $hf=\lambda f$. Therefore we
have $\left\Vert (hf)_{\delta}-\lambda f_{\delta}\right\Vert _{p}/\left\Vert f_{\delta}\right\Vert _{p}=0$.
By the last lemma $\left\Vert hf_{\delta}-\lambda f_{\delta}\right\Vert _{p}/\left\Vert f_{\delta}\right\Vert _{p}\rightarrow0$
and $\lambda$ is in the approximate point spectrum of $h$ on $L_{p}(B_{f})$.
\end{proof}
\begin{cor}
Let $V$ be a representation of $H$, $0\ne v\in V$, $h\in H$ and
$hv=\lambda v$.

Assume that some non zero geometric embedding of $v$ is $p$-tempered.
Then $\lambda$ belongs to the approximate point spectrum of $h$
on $L_{p}(B_{f})$.
\end{cor}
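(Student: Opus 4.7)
The plan is to observe that this corollary is essentially an immediate consequence of the previous corollary (point spectrum on $T_p(B_f)$ sits inside the approximate point spectrum on $L_p(B_f)$), once we transport the eigenvalue equation from the abstract representation $V$ into the geometric realization.

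First I would unpack the hypothesis: by definition of a geometric realization (see corollary \ref{cor:Geometric Realization}), we are given some $v^{*}\in V^{*}$ with $1_{\phi}v^{*}=v^{*}$ so that the map $\Phi:V\to\mathbb{C}^{B_{f}}$, $\Phi(u)(\sigma)=q_{d(C_{0},\sigma)}^{-1}\langle v^{*},h_{d(C_{0},\sigma)}u\rangle$, is a homomorphism of $H$-representations, and we assume $f:=\Phi(v)$ is non-zero and lies in $T_{p}(B_{f})$.

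Next, I would apply the $H$-equivariance of $\Phi$ to the equation $hv=\lambda v$, which yields $hf = h\Phi(v)=\Phi(hv)=\Phi(\lambda v)=\lambda f$. Thus $\lambda$ is a genuine eigenvalue (hence a point-spectrum value) of $h$ acting on $T_{p}(B_{f})$, with non-zero eigenvector $f$.

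Finally, I would invoke the preceding corollary, which states exactly that the point spectrum of $h$ on $T_{p}(B_{f})$ is contained in the approximate point spectrum of $h$ on $L_{p}(B_{f})$; this places $\lambda$ in $\Sigma^{ap}_{L_{p}(B_{f})}(h)$ as required. There is no real obstacle here, since the lemma on $\|hf_{\delta}-(hf)_{\delta}\|_{p}/\|f_{\delta}\|_{p}\to 0$ already did the analytic work; the only point to be careful about is that $f\ne 0$ (guaranteed by hypothesis) so that the sequence $f_{\delta}/\|f_{\delta}\|_{p}$ used to witness approximate-point-spectrum membership is well-defined.
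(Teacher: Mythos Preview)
Your proof is correct and is exactly the intended argument: the paper states this corollary without proof, as it is immediate from the preceding corollary once one notes that a geometric realization is an $H$-homomorphism, so the eigenvalue equation $hv=\lambda v$ transports to $hf=\lambda f$ with $0\ne f\in T_{p}(B_{f})$.
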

\begin{cor}
\label{cor:Main Theorem proof}If a finite dimensional representation
$V$ is $p$-tempered then $V$ is weakly contained in $L_{p}(B_{f})$.
More precisely, for every $h\in H$ the set of eigenvalues of $h$
on $V$ is contained in the approximate point spectrum of $h$ on
$L_{p}(B_{f})$.
\end{cor}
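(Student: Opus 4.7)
The plan is to reduce the statement directly to the immediately preceding corollary, which converts eigenvalues of $h$ on non-zero $p$-tempered eigenfunctions into approximate point spectrum on $L_p(B_f)$. Given an eigenvector $v\in V$ of $h$ with eigenvalue $\lambda$, the task is therefore to produce a non-zero geometric realization of $v$ inside $T_p(B_f)$.

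First I would choose the covector. Since $Hv\subset V$ is a non-zero sub-representation, Lemma \ref{lem:1_phi is non zero} gives some $h'\in H$ with $1_\phi(h'v)\neq 0$. I next claim there is a covector $v^\ast\in V^\ast$ satisfying $1_\phi v^\ast=v^\ast$ and $\langle v^\ast,h'v\rangle\neq 0$. Indeed, the annihilator of $1_\phi V^\ast$ inside $V$ is exactly $\ker(1_\phi|_V)$, because $\langle 1_\phi u^\ast,w\rangle=\langle u^\ast,1_\phi w\rangle$ for all $u^\ast,w$; since $h'v$ is not killed by $1_\phi$, it is not in this annihilator and a suitable $v^\ast$ exists.

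With this $v^\ast$ in hand, form the matrix coefficient $c_{v^\ast,v}:H\to\mathbb{C}$. It is non-zero because $c_{v^\ast,v}(h')=\langle v^\ast,h'v\rangle\neq 0$, and it lies in $H^\ast(\phi)$ because $1_\phi v^\ast=v^\ast$. Via Lemma \ref{H* and standard} it corresponds to a non-zero function $f=f_{v^\ast,v}\in\mathbb{C}^{B_f}$ spherical around $C_0$, namely a geometric realization of $v$ in the sense of Corollary \ref{cor:Geometric Realization}. Because $V$ is $p$-tempered, by definition every geometric realization is $p$-tempered, so $f\in T_p(B_f)$. Finally, the intertwining property of the geometric realization map (Lemma \ref{prop:embedding into H*}) gives $hf=f_{v^\ast,hv}=\lambda f$, exhibiting $f$ as a non-zero $p$-tempered eigenfunction of $h$ with eigenvalue $\lambda$.

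Applying the preceding corollary now yields that $\lambda$ belongs to the approximate point spectrum of $h$ on $L_p(B_f)$, which is precisely the ``more precise'' statement; weak containment $V\prec L_p(B_f)$ is the immediate consequence, since $V$ is finite dimensional so $\Sigma_V(h)=\Sigma_V^p(h)$ and hence $\lambda_V(h)\le \lambda_{L_p(B_f)}(h)$. The only mildly non-routine point is the existence of $v^\ast\in 1_\phi V^\ast$ pairing non-trivially with $h'v$; once one recognises that the obstruction is exactly the $1_\phi$-kernel in $V$ and that Lemma \ref{lem:1_phi is non zero} applied to $Hv$ rules this out, the proof is a clean assembly of the matrix-coefficient formalism already developed.
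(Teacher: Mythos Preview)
Your proof is correct and follows the same approach as the paper, which treats the statement as an immediate consequence of the preceding corollary. The paper leaves the existence of a non-zero geometric realization of the eigenvector $v$ implicit, and you have correctly identified and filled in this one mildly non-trivial point via the $1_\phi$-annihilator argument.
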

\begin{rem}
The same logic allows us to compare arbitrary admissible $G$ representations
with the left action of $G$ on $L_{p}(G)\cap C_{l}^{\infty}(G)$,
using the $H(G)$ action on the two spaces. Notice that for every
$K\subset G$ the action of $H(G,K)$ on the $K$-fixed vectors of
an admissible representation is finite dimensional and its action
on the $K$-fixed vectors of $L_{p}(G)\cap C_{l}^{\infty}(G)$ is
normed. In particular, the same proof shows that if $V$ is admissible
and $p$-tempered then it is weakly contained in $L_{p}(G)\cap C_{l}^{\infty}(G)$.
It can be probably generalized to other locally profinite groups,
since no essential property of the building was used, Thus generalizing
theorem $1$ on \cite{cowling1988almost} in the locally profinite
case to $p\ne2$.
\end{rem}

\section{\label{sec:Generalized-Serre-Theorem}Generalized Serre Theorem}

The following proposition generalizes a well known theorem, usually
attributed to Serre (but also appears in \cite{mckay1981expected}),
for graphs with large injectivity radius (or girth). It applies to
any normal element of $H$. The proof is based on \cite{li2004ramanujan}.
Compare also \cite{first2016ramanujan}, theorem 5.1.
\begin{defn}
Let $X$ be a quotient of the building $B$. The \emph{injectivity
radius}\textbf{ }of $X$ is the length of the shortest distance $d\in W$
between two chambers $C_{1}\ne C_{2}$ of the building that cover
the same chamber in $X$.
\end{defn}
\begin{thm}
\label{Generelized-Serre}Let $h\in H$ normal operator and $\lambda$
in the spectrum of $h$ on $L_{2}(B_{f})$. Then there exists an $\epsilon(N)=\epsilon_{B,h}(N)$
with $\epsilon(N)\rightarrow0$ as $N\rightarrow\infty$, such that
for every finite quotient $X$ of the building $B$ with injectivity
radius greater than $N$, there exists $\lambda'$ in the spectrum
of $h$ on $\mathbb{C}^{X_{f}}$, with $\left|\lambda-\lambda'\right|<\epsilon(N)$.
\end{thm}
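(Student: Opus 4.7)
The plan is to transfer $L_{2}$ approximate eigenvectors of $h$ from $B$ to $X$, using the finite propagation of $h$ and the injectivity radius to match the two norms. Write $h=\sum_{d}\alpha_{d}h_{d}$ as a finite sum; each $h_{d}$ is bounded on $L_{2}(B_{f})$ (the trivial Cauchy--Schwarz bound $\|h_{d}\|\le\sqrt{q_{d}q_{d^{*}}}$ suffices), so $h$ itself is bounded. Being normal and bounded on a Hilbert space, its spectrum coincides with its approximate point spectrum, so for every $\eta>0$ there is a unit vector $f\in L_{2}(B_{f})$ with $\|hf-\lambda f\|_{2}<\eta$. Since $B_{f}$ is countable and $h$ is bounded, I may additionally assume that $f$ has finite support contained in a ball $B_{R}(C_{0})=\{\sigma\in B_{f}:l(d(C_{0},\sigma))\le R\}$ for some fixed chamber $C_{0}$ and some radius $R=R(h,\eta)$ depending only on $B$, $h$, and $\eta$.

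Set $L=\max\{l(d):\alpha_{d}\neq0\}$ and suppose $X=B/\Gamma$ has injectivity radius strictly greater than $2(R+L)$. Define the pushforward $f_{X}\in L_{2}(X_{f})$ by $f_{X}(\bar\sigma)=\sum_{\sigma\in\pi^{-1}(\bar\sigma)}f(\sigma)$; the injectivity radius condition guarantees that at most one lift of any $\bar\sigma$ can lie in $\mathrm{supp}(f)$, so $\|f_{X}\|_{L_{2}(X)}=\|f\|_{L_{2}(B)}=1$. The $\Gamma$-invariant lift of $f_{X}$ to $B$ is the periodization $\tilde f_{X}=\sum_{\gamma\in\Gamma}\gamma\cdot f$, and because every element of $H$ commutes with $\Gamma$ by proposition \ref{defined_by_automorphisms},
\[
h\tilde f_{X}-\lambda\tilde f_{X}=\sum_{\gamma\in\Gamma}\gamma\cdot(hf-\lambda f).
\]
The $\gamma$-summand is supported inside $B_{R+L}(\gamma C_{0})$, and the injectivity radius hypothesis makes these $\Gamma$-translates pairwise disjoint. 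Consequently $\|hf_{X}-\lambda f_{X}\|_{L_{2}(X)}=\|hf-\lambda f\|_{L_{2}(B)}<\eta$.

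By proposition \ref{prop:L^2X is unitary}, $L_{2}(X_{f})$ is a finite dimensional unitary $H$-representation, so $h$ acts on it as a normal operator on a finite dimensional Hilbert space and is diagonalizable in an orthonormal eigenbasis $\{v_{i}\}$ with eigenvalues $\mu_{i}$. Expanding $f_{X}=\sum c_{i}v_{i}$ gives
\[
\|hf_{X}-\lambda f_{X}\|^{2}=\sum|c_{i}|^{2}|\mu_{i}-\lambda|^{2}\ge\bigl(\min_{i}|\mu_{i}-\lambda|\bigr)^{2}\|f_{X}\|^{2},
\]
so some eigenvalue $\lambda'$ of $h$ on $\mathbb{C}^{X_{f}}$ satisfies $|\lambda-\lambda'|<\eta$. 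Taking $\epsilon(N)=\inf\{\eta>0:2(R(h,\eta)+L)\le N\}$ yields the required function with $\epsilon(N)\to0$ as $N\to\infty$. The main technical point is the second paragraph: both of the equalities $\|f_{X}\|_{L_{2}(X)}=\|f\|_{L_{2}(B)}$ and $\|hf_{X}-\lambda f_{X}\|_{L_{2}(X)}=\|hf-\lambda f\|_{L_{2}(B)}$ rest on the disjoint-support observation, which requires the injectivity radius to exceed both the support radius $R$ of the chosen approximate eigenvector and the propagation radius $L$ of $h$.
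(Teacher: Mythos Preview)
Your proof is correct and follows essentially the same approach as the paper's: find a finitely supported $L_{2}$ approximate eigenvector on $B$, push it forward to $X$ using the injectivity radius hypothesis to preserve norms, and then use normality of $h$ on the finite dimensional space $L_{2}(X_{f})$ to locate a nearby eigenvalue. Your write-up is in fact more explicit than the paper's on two points---the disjoint $\Gamma$-translates argument justifying $\|f_{X}\|=\|f\|$ and $\|hf_{X}-\lambda f_{X}\|=\|hf-\lambda f\|$, and the precise injectivity radius threshold $2(R+L)$---where the paper simply says $f_{N}$ and $hf_{N}$ ``can be projected''. One small inaccuracy: your $R=R(h,\eta)$ also depends on $\lambda$ through the choice of $f$, so $\epsilon(N)$ does as well; this is permitted by the theorem statement but your phrasing ``depending only on $B$, $h$, and $\eta$'' obscures it.
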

\begin{proof}
Let $l=l(d)$ be the largest length of an element $h_{d}$ appearing
in $h$. We claim that for any $\epsilon>0$ there exists an $N\in\mathbb{N}$
and $0\ne f_{N}\in\mathbb{C}[B_{f}]$, such that $f_{N}$ is supported
on faces at distance $N/2-l$ around $C_{0}$ and we have $\left\Vert hf_{N}-\lambda f_{N}\right\Vert _{2}/\left\Vert f_{N}\right\Vert _{2}<\epsilon$.
Take an approximate eigenfunction $f\in L_{2}(B_{f})$ with $\left\Vert hf-\lambda f\right\Vert /\left\Vert f\right\Vert <\epsilon/2$
and call its restriction to distance $N/2-l$ , $f_{N}\in\mathbb{C}[B_{f}]$.
Taking $N\rightarrow\infty$ we know that $\left\Vert hf_{N}-\lambda f_{N}\right\Vert /\left\Vert f_{N}\right\Vert \rightarrow\left\Vert hf-\lambda f\right\Vert /\left\Vert f\right\Vert <\epsilon/2$.
So there exists a finite $N$ with $\left\Vert hf_{N}-\lambda f_{N}\right\Vert /\left\Vert f_{N}\right\Vert <\epsilon$,
as needed.

By the assumptions on $f_{N}$, if $X_{f}$ has injectivity radius
greater than $N$, both $f_{N}$ and $hf_{N}$ can be projected to
$L_{2}(X_{f})$, i.e. we have $f\in L_{2}(X_{f})$ with $\left\Vert hf-\lambda f\right\Vert _{X}/\left\Vert f\right\Vert _{X}<\epsilon$.
By the normality of $h$, there exists an eigenvalue of $h-\lambda$
on $L_{2}(X_{f})$, with absolute value smaller than $\epsilon$,
and the claim follows.
\end{proof}
\begin{problem}
The original proof of \cite{mckay1981expected} shows that as the
injectivity radius grows, the spectrum of the adjacency operator converges
to the spectral measure of the adjacency operator on the tree. We
therefore ask if it holds here as well, i.e. the spectrum of every
normal (or self-adjoint) $h$ converges to the spectral measure of
$h$ on $L_{2}(B_{f})$. Compare (in slightly different settings)
\cite{abert2012growth}, theorem 1.2.
\end{problem}

\section{\label{sec:Alon-Boppana-Theorem}Alon-Boppana Theorem}

The following proposition generalizes directly the classical Alon-Boppana
theorem. For simplicity we consider operators of $H_{\phi}$ only.
Our treatment follows \cite{lubotzky1994discrete} proposition 4.5.4.
\begin{defn}
An element $h\in H_{\phi}$ is called a \emph{random-walk operator}
if it is self adjoint and a non-negative sum of the basis operators
$h_{w}$.
\end{defn}
A random walk operator defines (after normalization) a random walk
on $B_{\phi}$. Since we have symmetry among all all chambers, Kesten's
argument in \cite{kesten1959symmetric}, lemma 2.11, gives that $\lambda_{2}(h)=\left\Vert h\right\Vert _{2}=\limsup\left(\left\Vert h^{n}1_{C}\right\Vert _{2}\right)^{1/n}$.
We can now state:
\begin{thm}
\label{Alon Boppana}Let $X$ be a quotient of the building $B$.
Assume the largest distance (in gallery length) between two chambers
in $X$ is $N$. Let $h\in H_{\phi}$ be a random-walk operator.

Then there exists an $\epsilon(N)=\epsilon_{B,h}(N)$ with $\epsilon(N)\rightarrow0$
as $N\rightarrow\infty$, such that the largest eigenvalue of $h$
on $L_{2}^{0}(X_{\phi})$ is at least $\lambda_{2}(h)-\epsilon(N)$.
\end{thm}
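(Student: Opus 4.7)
The plan is to generalize the classical trace-method proof of Alon--Boppana by comparing closed ``walks'' in $X$ to those in the building $B$. First I would expand $h^{2n}=\sum_{w\in W}c_w h_w$ in the Iwahori--Hecke basis; since $h$ is a non-negative combination of the $h_w$ and the Iwahori--Hecke relations (e.g.\ $h_s^2=q_s\cdot\mathrm{Id}+(q_s-1)h_s$) preserve non-negativity, every $c_w\ge 0$. Noting that $(h_w 1_{\tilde C})(\tilde C)$ equals $1$ if $w=e$ and $0$ otherwise, one gets $(h^{2n}1_{\tilde C})(\tilde C)_B=c_e=\|h^n 1_{\tilde C}\|_{L_2(B_\phi)}^2$, independent of the choice of $\tilde C$. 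For $C\in X_\phi$ with lift $\tilde C\in B_\phi$, the same expansion on $X$ gives
\[
(h^{2n}1_C)(C)_X=\sum_{w\in W} c_w\,\#\{\tilde C'\in B_\phi:d_B(\tilde C,\tilde C')=w,\;\pi(\tilde C')=C\}\;\ge\;c_e,
\]
because the $w=e$ term already contributes $c_e$ and the other terms contribute non-negatively.

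Summing over $C\in X_\phi$ then yields $\mathrm{tr}(h^{2n}|_{L_2(X_\phi)})\ge |X_\phi|\cdot\|h^n 1_{\tilde C}\|_B^2$. The trace splits as $q_h^{2n}+\mathrm{tr}(h^{2n}|_{L_2^0(X_\phi)})$, where $q_h:=\sum_w\alpha_w q_w$ is the eigenvalue of $h$ on the unique trivial subrepresentation of $L_2(X_\phi)$ (section \ref{sec:Expander-Family-of}). Letting $\lambda_{\max}$ denote the spectral radius of $h$ on $L_2^0(X_\phi)$, the crude bound $\mathrm{tr}(h^{2n}|_{L_2^0(X_\phi)})\le(|X_\phi|-1)\lambda_{\max}^{2n}$ combined with the above gives
\[
\lambda_{\max}^{2n}\;\ge\;\frac{|X_\phi|\cdot\|h^n 1_{\tilde C}\|_B^2-q_h^{2n}}{|X_\phi|-1}.
\]

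Now I would invoke Kesten's identity quoted just before the theorem: $\|h^n 1_{\tilde C}\|_B^{1/n}\to\lambda_2(h)$, with monotone convergence by the spectral theorem for the self-adjoint $h$. Hence for every $\delta>0$ there is $n_0(\delta)$ with $\|h^n 1_{\tilde C}\|_B^2\ge(\lambda_2(h)-\delta)^{2n}$ whenever $n\ge n_0(\delta)$. In the thick case one has $q_h>\lambda_2(h)$, so I would balance $n$ against $|X_\phi|$ by taking $n$ just below $\tfrac{\log|X_\phi|}{2\log(q_h/(\lambda_2(h)-\delta))}$, which makes $q_h^{2n}/|X_\phi|$ dwarfed by $(\lambda_2(h)-\delta)^{2n}$. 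The inequality then collapses to $\lambda_{\max}\ge\lambda_2(h)-2\delta$ once $|X_\phi|$ exceeds a threshold depending only on $\delta$ and $h$.

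To close, the hypothesis that the gallery diameter of $X$ equals $N$ forces $|X_\phi|\ge N+1$ (a gallery of length $N$ visits $N+1$ distinct chambers), so $|X_\phi|\to\infty$ with $N$, and letting $\delta=\delta(N)\to 0$ slowly enough gives the required $\epsilon(N)\to 0$. The main technical obstacle is precisely the final trade-off: $\delta$ must shrink fast enough that $\lambda_2(h)-2\delta\to\lambda_2(h)$, yet slowly enough (relative to $\log|X_\phi|$) that the admissible $n$ satisfies both $n\ge n_0(\delta)$ and keeps $q_h^{2n}/|X_\phi|$ negligible; the correct quantitative balance here produces the function $\epsilon_{B,h}(N)$. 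A minor subtlety worth flagging is that the trace method only controls $|\lambda|^{2n}$, so ``largest eigenvalue'' is naturally read here as the spectral radius of $h$ on $L_2^0(X_\phi)$.
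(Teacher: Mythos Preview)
Your trace argument is correct and complete up to the balancing you flag (which does go through: for each fixed $\delta>0$, once $|X_\phi|$ is large enough you may choose $n\ge n_0(\delta)$ with $|X_\phi|>2(q_h/(\lambda_2(h)-\delta))^{2n}$, and then let $\delta\downarrow 0$ along a sequence depending on $|X_\phi|$). The paper, however, takes a different and more direct route that avoids this trade-off entirely. It picks two chambers $C_0^X,C_1^X\in X_\phi$ at gallery distance $N$ and uses the test function $f=1_{C_0^X}-1_{C_1^X}$, which already lies in $L_2^0(X_\phi)$, so the trivial eigenvalue $q_h^{2n}$ never has to be subtracted. If $l$ is the maximal length of a $w$ appearing in $h$, then for every $n<N/l$ the functions $h^n 1_{C_0^X}$ and $h^n 1_{C_1^X}$ have disjoint supports; combined with the same positivity-and-projection inequality you use (namely $\|h^n 1_{C^X}\|_X\ge\|h^n 1_{\tilde C}\|_B$), this gives $\|h^n f\|_X/\|f\|_X\ge\|h^n 1_{\tilde C}\|_B$ directly. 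Thus the paper exploits the diameter hypothesis much more strongly---allowing $n$ of order $N$ rather than of order $\log N$ via your crude bound $|X_\phi|\ge N+1$---and so produces a faster decay of $\epsilon(N)$, while also working uniformly without singling out the case $q_h>\lambda_2(h)$. Both proofs share the caveat you correctly note at the end: what is actually bounded below is the spectral radius of $h$ on $L_2^0(X_\phi)$.
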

\begin{proof}
Write $\left\Vert ..\right\Vert _{B},\,\left\Vert ..\right\Vert _{X}$
for the $L_{2}$ norms of the two spaces. Choose two chambers $C_{0}^{X},C_{1}^{X}\in X_{\phi}$
of distance $N$ and let $C_{0},C_{1}\in B_{\phi}$ be two chambers
which cover $C_{0}^{X},C_{1}^{X}$ and are of distance $N$. The fact
that $h$ is a non negative sum of $h_{w}$-s tells us that $h^{n}1_{C_{0}}$
is positive in every coordinate. Since $h^{n}1_{C_{0}^{X}}$ is the
projection of $h^{n}1_{C_{0}}$, the norm just grows, i.e. $\left\Vert h^{n}1_{C_{0}}\right\Vert _{B}\le\left\Vert h^{n}1_{C_{0}^{X}}\right\Vert _{X}$.

Let $l=l(w)$ the largest distance of an element $w\in W$ appearing
in $h$. Then $h^{n}1_{C_{0}^{X}}$ and $h^{n}1_{C_{1}^{X}}$ have
disjoint supports for $n<N/l$. Therefore:

\[
\left\Vert h^{n}(1_{C_{0}^{X}}-1_{C_{1}^{X}})\right\Vert _{X}=\left\Vert h^{n}1_{C_{0}^{X}}-h^{n}1_{C_{1}^{X}}\right\Vert _{X}=\left\Vert h^{n}1_{C_{0}^{X}}\right\Vert _{X}+\left\Vert h^{n}1_{C_{0}^{X}}\right\Vert _{X}\ge
\]

\[
\ge\left\Vert h^{n}1_{C_{0}}\right\Vert _{B}+\left\Vert h^{n}1_{C_{0}}\right\Vert _{B}\ge2(\lambda_{2}(h)-\epsilon(n))^{n}
\]

for some $\epsilon(n)\rightarrow0$ as $n\rightarrow\infty$.

Notice that $1_{C_{0}^{X}}-1_{C_{1}^{X}}\in L_{2}^{0}(X_{\phi})$.
Take the $n$-th root. We found that there exists a function $f\in L_{2}^{0}(X_{\phi})$
with $\left(\left\Vert h^{n}f\right\Vert _{X}/\left\Vert f\right\Vert _{X}\right)^{1/n}\ge\lambda_{2}(h)-\delta(n)$.
Since $h$ is self adjoint the last inequality means it has an eigenvalue
of absolute value $\ge\lambda_{2}(h)-\epsilon(n)$.
\end{proof}
\begin{rem}
Notice that given $h\in H$ one can estimate $\epsilon(n)$ in this
theorem, by analyzing the rate of convergence of $\left(\left\Vert h^{n}1_{C}\right\Vert _{2}\right)^{1/n}$
to $\lambda_{2}(h)$.
\end{rem}
\begin{rem}
To extend the result to all of random-like operators of $H$ one should
replace chambers by other faces. Then the same proof applies for $\tau(h)=\max_{I\,\text{spherical}}\limsup\left(\left\Vert h^{n}1_{\sigma}\right\Vert _{2}\right)^{1/n}$
where $\sigma$ is some face of color $I$, and by extension of Kesten's
argument $\tau(h)=\lambda_{2}(h)$. The details are left to the reader.
\end{rem}

\part{The Affine Case}

\section{\label{sec:Color-Rotations}Color Rotations}

Before discussing the affine case we should extend our algebra a little
by color rotations. It is useful since this way we can talk about
quotients by type rotating automorphisms. It will also be easier to
work with the affine Hecke algebra. Since the claims are simple and
similar to previous ones, we skip the proofs.
\begin{defn}
The \emph{automorphism group} of $S$ is the group of bijections $\omega:S\rightarrow S$
preserving the Coxeter values $m_{i,j}$- i.e. for every $s,t\in S$,
$m_{s,t}=m_{\omega(s),\omega(t)}$. Denote by $\hat{\Omega}$ a subgroup
of the automorphism group of $S$, such that $q_{s}=q_{\omega(s)}$
for every $s\in S$ and $\omega\in\hat{\Omega}$. 
\end{defn}
While the restriction $q_{s}=q_{\omega(s)}$ is not really necessary,
it will be simpler to assume it. The action of $\omega\in\hat{\Omega}$
on $S$ extends to a group automorphism $\omega:W\rightarrow W$.
We can therefore define:
\begin{defn}
The group $\hat{W}=W\rtimes\hat{\Omega}$ is called the \emph{$\hat{\Omega}$-extended
Coxeter group.}
\end{defn}
Our standard to semi-direct product is that multiplication in $\hat{W}$
is given by $\omega\cdot w=\omega(w)\cdot\omega$ and the relations
in $W,\Omega$. 
\begin{lem}
By defining $l(\omega)=0$ for $\omega\in\hat{\Omega}$, we can extend
the length function $l:W\rightarrow\mathbb{N}$ to $l:\hat{W}\rightarrow\mathbb{N}$.
\end{lem}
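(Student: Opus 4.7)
The plan is to use the semi-direct product structure $\hat{W}=W\rtimes\hat{\Omega}$. First I would observe that every element $\hat{w}\in\hat{W}$ admits a unique factorization $\hat{w}=w\omega$ with $w\in W$ and $\omega\in\hat{\Omega}$, so defining $l(\hat{w}):=l(w)$ is unambiguous. By construction this agrees with the original $l$ on $W$ (take $\omega=1$) and gives $l(\omega)=l(1)=0$ for $\omega\in\hat{\Omega}$, so the definition is indeed an extension with the desired normalization.

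The only point that requires a short verification is that this extension behaves like a length function in the sense one expects from Coxeter-theoretic usage, in particular sub-additivity $l(\hat{w}_1\hat{w}_2)\le l(\hat{w}_1)+l(\hat{w}_2)$ and the elementary identity $l(s\hat{w})\in\{l(\hat{w})-1,l(\hat{w})+1\}$ for $s\in S$. The key lemma behind both is that each $\omega\in\hat{\Omega}$ preserves the length of elements of $W$: since $\omega$ permutes $S$ while preserving the Coxeter matrix $m_{i,j}$, any minimal word $w=s_{i_1}\cdots s_{i_k}$ is sent to $\omega(w)=s_{\omega(i_1)}\cdots s_{\omega(i_k)}$, which is again a word of length $k$, and minimality transfers by the symmetric role of $\omega$ and $\omega^{-1}$. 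Hence $l(\omega(w))=l(w)$ for all $w\in W$, $\omega\in\hat{\Omega}$.

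Once this is in hand, sub-additivity follows by computing
\[
l(w_1\omega_1\cdot w_2\omega_2)=l\bigl(w_1\,\omega_1(w_2)\,\omega_1\omega_2\bigr)=l\bigl(w_1\,\omega_1(w_2)\bigr)\le l(w_1)+l(\omega_1(w_2))=l(w_1)+l(w_2),
\]
and the identity for left-multiplication by $s\in S$ reduces to the corresponding statement in $W$ applied to the $W$-part of the unique decomposition. The main (mild) obstacle is just unpacking the semi-direct product multiplication and invoking length-preservation under $\hat{\Omega}$; there is no genuine difficulty, and no explicit computation beyond the one displayed above is needed.
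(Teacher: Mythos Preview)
Your proposal is correct and is exactly the natural argument one would give. The paper itself omits the proof of this lemma entirely (it states at the beginning of the section that ``since the claims are simple and similar to previous ones, we skip the proofs''), so there is nothing to compare against beyond noting that your approach---unique decomposition $\hat w=w\omega$ together with the observation that $\hat\Omega$ acts on $W$ by length-preserving automorphisms---is the standard and expected one.
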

Notice that every $\omega\in\hat{\Omega}$ acts on distances as well
$\omega:W_{I_{1}}\backslash W/W_{I_{2}}\rightarrow W_{\omega(I_{1})}\backslash W/W_{\omega(I_{2})}$,
$\omega(W_{I_{1}}\backslash w/W_{I_{2}})=W_{\omega(I_{1})}\backslash\omega(w)/W_{\omega(I_{2})}$.

Denote $\hat{B}_{f}=B_{f}\times\hat{\Omega}$ and $\hat{B}_{I}=B_{I}\times\hat{\Omega}$.
For every face of the building, each $\omega\in\Omega$ can be associated
to a possible recoloring of it . Therefore $(\sigma,\omega)\in B_{f}\times\hat{\Omega}=\hat{B}_{f}$
can be seen as a ``recolored face'' in the ``recolored building''.
We define an action $h_{\omega}:\mathbb{C}^{\hat{B}_{f}}\rightarrow\mathbb{C}^{\hat{B}_{f}}$
by the ``recoloring'' $h_{\omega}f(\sigma,\omega')=f(\sigma,\omega'\omega$).
We also have an action of $H$ on $\mathbb{C}^{\hat{B}_{f}}$ acting
on every coloring separately, because $\mathbb{C}^{\hat{B}_{f}}\cong\bigoplus_{\omega\in\Omega}\mathbb{C}^{B\times\{\omega\}}$.
Then it is easy to notice that $h_{\omega}h_{d}=h_{\omega(d)}h_{\omega}$
for every $d\in W_{I_{1}}\backslash W/W_{I_{2}}$.
\begin{defn}
The \emph{$\hat{\Omega}$-extended Iwahori-Hecke algebra $\hat{H}_{\phi}$}is
the algebra generated by the $h_{\omega},\omega\in\Omega$ action
on $\mathbb{C}^{\hat{B}_{f}}$ and by $H_{\phi}$. The \emph{$\hat{\Omega}$-color
rotating all dimensional Hecke algebra $\hat{H}$} is the algebra
generated by $h_{\omega},\omega\in\hat{\Omega}$ and by $h\in H$. 
\end{defn}
The following proposition states some basic properties of the algebras:
\begin{prop}
We have $\hat{H}_{\phi}=H_{\phi}\rtimes\hat{\Omega}$ and $\hat{H}=H\rtimes\hat{\Omega}$,
i.e. as sets we have a direct product and we have $h_{\omega}h_{d}=h_{\omega(d)}h_{\omega}$
for every $d\in W_{I_{1}}\backslash W/W_{I_{2}}$

The algebra $\hat{H}_{\phi}$ is generated by the Iwahori-Hecke relations
as well as the relation $h_{\omega}h_{w}=h_{\omega(w)}h_{\omega}$
for $w\in W$, $\omega\in\Omega$. Define for $w'=w\cdot\omega\in\hat{W}$,
$h_{w'}=h_{w}h_{\omega}$. The algebra $\hat{H}_{\phi}$ is spanned
by $h_{w}$, $w\in\hat{W}$. For $w,w'\in\hat{W}$ with $l(w)+l(w')=l(ww')$
we have $h_{w}h_{w'}=h_{ww'}$. The algebra $\hat{H}$ is spanned
as a vector space by $h_{\omega}h_{d}$, $\omega\in\Omega$, $d\in W_{I_{1}}\backslash W/W_{I_{2}}$.
\end{prop}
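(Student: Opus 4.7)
The plan is to bootstrap everything from the two relations that are essentially built into the definitions: the commutation $h_\omega h_d = h_{\omega(d)} h_\omega$ noted just before the statement, and the composition rule $h_{\omega_1} h_{\omega_2} = h_{\omega_1 \omega_2}$, which is immediate from the definition of $h_\omega$ as the right-translation operator $h_\omega f(\sigma, \omega') = f(\sigma, \omega' \omega)$ on $\mathbb{C}^{\hat B_f}$. Using these two relations one can move all the $h_\omega$-factors in any word to the left, so every element of $\hat H$ can be written as a sum $\sum_{\omega \in \hat\Omega} h_\omega \tilde h_\omega$ with $\tilde h_\omega \in H$, and similarly with $\tilde h_\omega \in H_\phi$ for $\hat H_\phi$. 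This gives the spanning statements at once.

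For the semidirect-product structure the key is linear independence. I would exploit the decomposition $\mathbb{C}^{\hat B_f} = \bigoplus_{\omega_0 \in \hat\Omega} \mathbb{C}^{B_f \times \{\omega_0\}}$: each $h \in H$ preserves every summand, while $h_\omega$ maps the summand indexed by $\omega_0$ isomorphically onto the one indexed by $\omega_0 \omega^{-1}$. Consequently, if $\sum_\omega h_\omega \tilde h_\omega = 0$ and one applies this to a function supported on $B_f \times \{\mathrm{id}\}$, the image in the summand $B_f \times \{\omega^{-1}\}$ is exactly $h_\omega(\tilde h_\omega f|_{\mathrm{id}})$, forcing $\tilde h_\omega f|_{\mathrm{id}} = 0$ for every $f$. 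Since $H$ (respectively $H_\phi$) acts faithfully on $\mathbb{C}^{B_f}$ — as in Proposition \ref{prop:Iwahori Hecke } and Theorem \ref{prop:GH algebra is an algebra} — we conclude $\tilde h_\omega = 0$ for each $\omega$. Together with the spanning statement this yields $\hat H_\phi \cong H_\phi \rtimes \hat\Omega$ and $\hat H \cong H \rtimes \hat\Omega$ as claimed, with the defining relation of the semidirect product being precisely the commutation $h_\omega h_d = h_{\omega(d)} h_\omega$.

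Finally, for the basis $\{h_{w'}\}_{w' \in \hat W}$ of $\hat H_\phi$: the extension $l : \hat W \to \mathbb{N}$, $l(w\cdot\omega) = l(w)$, is well-defined thanks to the direct-product decomposition $\hat W = W \rtimes \hat\Omega$ (as sets) combined with $l(\omega) = 0$. Setting $h_{w \cdot \omega} := h_w h_\omega$, the spanning by $\{h_{w'}\}_{w' \in \hat W}$ follows from the classical spanning of $H_\phi$ by $\{h_w\}_{w\in W}$ together with the semidirect-product structure. For the multiplicativity, write $w = w_1 \omega_1$, $w' = w_2 \omega_2$ and compute
\[
h_w h_{w'} = h_{w_1} h_{\omega_1} h_{w_2} h_{\omega_2} = h_{w_1} h_{\omega_1(w_2)} h_{\omega_1 \omega_2},
\]
while $ww' = w_1 \omega_1(w_2) \cdot \omega_1 \omega_2$, so $h_{ww'} = h_{w_1 \omega_1(w_2)} h_{\omega_1\omega_2}$. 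The hypothesis $l(w) + l(w') = l(ww')$ unwraps, using $l(\omega_i) = 0$ and the length-preservation of $\omega_1$ on $W$, to $l(w_1) + l(\omega_1(w_2)) = l(w_1 \omega_1(w_2))$, and the Iwahori–Hecke relation in $H_\phi$ then delivers $h_{w_1} h_{\omega_1(w_2)} = h_{w_1 \omega_1(w_2)}$, finishing the argument. The only step that requires a touch of care is the faithfulness invoked for linear independence; everything else is bookkeeping that is forced by the two defining commutation relations.
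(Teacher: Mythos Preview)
Your argument is correct. Note, however, that the paper does not actually supply a proof here: at the start of Section~\ref{sec:Color-Rotations} the author writes ``Since the claims are simple and similar to previous ones, we skip the proofs,'' and this proposition is one of those omitted cases. So there is no paper proof to compare against; you have correctly filled in the details the author left to the reader. Your use of the summand decomposition $\mathbb{C}^{\hat B_f}=\bigoplus_{\omega_0}\mathbb{C}^{B_f\times\{\omega_0\}}$ to establish linear independence, together with the faithfulness of $H$ on $\mathbb{C}^{B_f}$, is exactly the kind of routine verification the author had in mind, and your unwinding of the length condition for the multiplicativity $h_w h_{w'}=h_{ww'}$ is clean and correct.
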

Let us turn to some of the representation theory involved. First,
similar to proposition \ref{prop:Induction-and-restriction-full-proposition}
we have an equivalence of categories between $H_{\phi}$-representations
and $H$-representations. This equivalence preserves irreducible representations,
unitary representations, finite dimensional representations, $p$-finite
representations and $p$-tempered representations.

The induction and restriction operators of section \ref{sec:Induction-and-restriction}
can be used to study the relations between representations of the
ADH algebra $H$ (or the Iwahori-Hecke algebra $H_{\phi}$) and the
$\hat{\Omega}$-color rotating ADH algebra $\hat{H}$ (or $\hat{H}_{\phi}$)
from section \ref{sec:Color-Rotations}. This time $H$ is the smaller
algebra and $\hat{H}$ contains it.

The main difference between this case and section \ref{sec:Induction-and-restriction}
is that the unit is the same in both algebras, so the situation resembles
induction and restriction between representations of a group and a
subgroup. We will only state the proposition below. The proof is omitted.
\begin{prop}
\label{prop:Color-Rotating-Prop}Let $V$ be a representation of $H$
and $V'=\mbox{ind}_{H}^{\hat{H}}V$ the induced representation of
$\hat{H}$. Let $U$ be a representation of $\hat{H}$ and $U'=\mbox{res}_{H}^{\hat{H}}U$
the restricted representation of $H$. Then:

1. As a vector space $V'\cong V\otimes\mathbb{C}[\Omega]$ and therefore
$\dim V'=\left|\Omega\right|\dim V$.

2. As vector spaces $U'\cong U$ and therefore they have the same
dimension.

3. $\mbox{res}_{H}^{\hat{H}}V'=\mbox{res}_{H}^{\hat{H}}\mbox{ind}_{H}^{\hat{H}}V$
is isomorphic to a direct sum of $\left|\Omega\right|$ times the
representation $V$.

4. $V$ is unitary if and only if $V'$ is unitary and $U$ is unitary
if and only if $U'$ is unitary.

5. $V$ is $p$-tempered if and only if $V'$ is $p$-tempered and
$U$ is $p$-tempered if and only if $U'$ is $p$-tempered. 

6. If $V\cong V_{1}\oplus V_{2}$ then $V'\cong\mbox{ind}_{H}^{\hat{H}}V_{1}\oplus\mbox{ind}_{H}^{\hat{H}}V_{2}$.
If $U\cong U_{1}\oplus U_{2}$ then $U'\cong\mbox{res}_{H}^{\hat{H}}U_{1}\oplus\mbox{res}_{H}^{\hat{H}}U_{2}$.

7. Matrix coefficients give geometric realization of $U$ as a subrepresentation
of $\hat{H}$ on $\mathbb{C}^{\hat{B}_{f}}=\mathbb{C}^{B_{f}\times\Omega}$.

8. Let \textup{$X=B$} or $X=B/\Gamma$ be a building or a quotient
of a building. Then we have a unitary representation of $\hat{H}$
on $L_{2}(\hat{X}_{f})=L_{2}(X_{f}\times\Omega)$. The algebra $H$
acts on the same space by restriction. If $\Gamma$ is color preserving
then $L_{2}(\hat{X})\cong\mbox{ind}_{H}^{\hat{H}}L_{2}(X)$.
\end{prop}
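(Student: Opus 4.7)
The backbone of the proof is the structural fact already established, that $\hat{H}=H\rtimes\hat{\Omega}$, i.e., as a right $H$-module $\hat{H}$ is free on the basis $\{h_{\omega}:\omega\in\hat{\Omega}\}$, with the commutation relation $h\cdot h_{\omega}=h_{\omega}\cdot(\omega^{-1}\cdot h)$ obtained by rewriting $h_{\omega}h_{d}=h_{\omega(d)}h_{\omega}$. From this, parts (1) and (2) are formal: $\mbox{ind}_{H}^{\hat{H}}V=\hat{H}\otimes_{H}V\cong\bigoplus_{\omega}h_{\omega}\otimes V$, and restriction leaves the underlying vector space unchanged. Part (3) follows by computing, on the summand $h_{\omega}\otimes V$, the $H$-action $h\cdot(h_{\omega}\otimes v)=h_{\omega}\otimes(\omega^{-1}\cdot h)v$, which is a twist of $V$ by the algebra automorphism $\omega^{-1}$ of $H$; on the level of vector spaces each such twist is a copy of $V$, giving $|\hat{\Omega}|$ summands. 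Part (6) is immediate from the fact that both $\mbox{ind}_{H}^{\hat{H}}$ and $\mbox{res}_{H}^{\hat{H}}$ commute with finite direct sums, since $\hat{H}$ is free of finite rank over $H$.

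For part (4), I would handle the induction direction by putting the explicit inner product $\langle h_{\omega}\otimes v,\,h_{\omega'}\otimes v'\rangle=\delta_{\omega,\omega'}\langle v,v'\rangle_{V}$ on $V'$ and verifying the adjoint relations both for $h\in H$ and for $h_{\omega}$; the only nontrivial computation is to check $h_{\omega}^{*}=h_{\omega^{-1}}$ together with $(\omega\cdot h)^{*}=\omega\cdot h^{*}$, which reduces to noting that the involution $d\mapsto d^{*}$ on $W_{I_{1}}\backslash W/W_{I_{2}}$ commutes with the $\hat{\Omega}$-action. The converses $V'$ unitary $\Rightarrow V$ unitary and $U$ unitary $\Rightarrow U'$ unitary are by restriction of the inner product to a suitable $H$-invariant subspace (respectively $1\otimes V$ and $U$ itself). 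The genuinely nontrivial direction is $U'$ unitary $\Rightarrow U$ unitary: here I would average, defining $\langle u,v\rangle_{\text{new}}=|\hat{\Omega}|^{-1}\sum_{\omega}\langle h_{\omega}u,\,h_{\omega}v\rangle$, and verify that it is $\hat{\Omega}$-invariant by reindexing and still $H$-invariant using $(\omega\cdot h)^{*}=\omega\cdot h^{*}$.

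For part (5), the cleanest approach is via geometric realizations, which gives part (7) first: the proof of lemma~\ref{H* and standard} and corollary~\ref{cor:Geometric Realization} goes through unchanged with $H^{*}(\phi)$ replaced by $\hat{H}^{*}(\phi)$ and spherical functions now taken on $\hat{B}_{f}=B_{f}\times\hat{\Omega}$. Once this is in hand, a matrix coefficient of $U$ against a vector $v^{*}$ with $1_{\phi}v^{*}=v^{*}$ gives a function on $\hat{B}_{f}$ whose restriction to each slice $B_{f}\times\{\omega\}$ is, up to composition with $h_{\omega}$, a geometric realization of $U'=\mbox{res}_{H}^{\hat{H}}U$. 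Since the length function on $\hat{W}$ agrees with $l$ on each $\hat{\Omega}$-coset, the $T_{p}$-condition on $\hat{B}_{f}$ is the conjunction of the $T_{p}$-conditions on the $|\hat{\Omega}|$ slices, yielding the equivalence of $p$-temperedness. For $V$ versus $V'$, a realization of $V'$ restricted to $B_{f}\times\{\omega\}$ is obtained from a realization of $V$ by the $h_{\omega}$-twist, and a finite sum of tempered functions is tempered.

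Finally, part (8) is essentially a repackaging: $L_{2}(\hat{X}_{f})$ carries the obvious inner product, and unitarity of the $\hat{H}$-action reduces to the identities $e_{I}=e_{I}^{*}$ (already used in proposition~\ref{prop:L^2X is unitary}) and $h_{\omega}^{*}=h_{\omega^{-1}}$, the latter being clear since $h_{\omega}$ acts by the color-permuting bijection $(\sigma,\omega')\mapsto(\sigma,\omega'\omega)$ of $\hat{X}_{f}$. When $\Gamma$ is color preserving, $\hat{X}_{f}=X_{f}\times\hat{\Omega}$ is a disjoint union of $|\hat{\Omega}|$ copies of $X_{f}$ freely permuted by $\hat{\Omega}$, which exactly matches the free basis $\{h_{\omega}\}$ of $\hat{H}$ over $H$ and yields the isomorphism with $\mbox{ind}_{H}^{\hat{H}}L_{2}(X)$. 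The main obstacle is bookkeeping rather than conceptual: keeping straight the $\omega$ versus $\omega^{-1}$ twists in the restricted-induced module in (3), and compatibility of the involution with the $\hat{\Omega}$-action in (4); everything else is formal once the free-module structure of $\hat{H}$ over $H$ is used.
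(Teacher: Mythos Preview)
The paper omits the proof of this proposition entirely, so there is nothing to compare against; your outline is essentially what one would expect and is correct in all substantive respects.

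One point deserves comment. In part~(3) your computation shows that $\mbox{res}_{H}^{\hat{H}}\mbox{ind}_{H}^{\hat{H}}V\cong\bigoplus_{\omega\in\hat{\Omega}}V^{\omega}$, where $V^{\omega}$ denotes the $\omega$-twist of $V$ (the same vector space with $h$ acting via $\omega^{-1}\cdot h$). You then write that ``on the level of vector spaces each such twist is a copy of $V$'', which is of course true but reduces~(3) to a restatement of~(1). If one reads the proposition literally as asserting an isomorphism with $V^{\oplus|\hat{\Omega}|}$ of $H$-representations, that is in general false: already for $W=\tilde{A}_{1}$ with single parameter $q$, the one-dimensional $H_{\phi}$-representation $h_{s_{0}}\mapsto q$, $h_{s_{1}}\mapsto -1$ is not isomorphic to its $\omega$-twist $h_{s_{0}}\mapsto -1$, $h_{s_{1}}\mapsto q$. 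Your decomposition into twists is the correct statement, and it is all that is actually needed downstream (in particular for~(5), since the twist by $\omega$ preserves lengths and hence $p$-temperedness). So your hedge is the right move; the proposition as written is slightly imprecise on this point.

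The remaining parts are handled correctly. Your averaging argument for~(4) uses exactly the right ingredient, namely that $\omega(d^{*})=\omega(d)^{*}$, which holds because $\omega$ extends to a group automorphism of $W$ and sends $W_{I}$ to $W_{\omega(I)}$. Your treatment of~(5) via the slice decomposition of $\hat{B}_{f}$ and the fact that $l(\omega)=0$ is clean and is the natural argument.
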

Notice that irreducibility is not necessarily preserved by the induction
and restriction operations. In particular, we do not have an equivalence
of categories between $H$ representations and $\hat{H}$ representations.

\section{\label{sec:Affine-Buildings}Affine Root Systems}

Most of the following is very standard. We follow \cite{parkinson2006buildings}
for some details about reducible root systems that can be ignored
when first reading this.

Let $R$ be a possibly reduced, crystallographic\emph{ }irreducible\emph{
root system} in a euclidean space $V_{R}$ of dimension $n$. In other
words: (i)$R$ is a finite set of elements $\alpha\in V$ which span
$V$. (ii) For every $\alpha\in R$ we have $s_{\alpha}(R)=R$ where
$s_{\alpha}:V_{R}\rightarrow V_{R}$ is the reflection defined by
$s_{\alpha}(x)=x-2(\left\langle \alpha,x\right\rangle /\left\langle \alpha,\alpha\right\rangle )\alpha$.
(iii) We have $2\left\langle \alpha,\beta\right\rangle /\left\langle \alpha,\alpha\right\rangle \in\mathbb{Z}$
for every $\alpha,\beta\in R$. (iv) The $s_{\alpha}$-s do not stabilize
any non trivial proper subspace of $V_{R}$.

The set of \emph{simple roots} is denoted $\Delta=\left\{ \alpha_{i}:i=1,...,n\right\} $.
It is unique after the choice of positive roots. The corresponding
\emph{coroot system} is $R^{\vee}=\left\{ \alpha^{\vee}|\alpha^{\vee}=2\alpha/\left\langle \alpha,\alpha\right\rangle ,\alpha\in R\right\} $
with a set of simple coroots $\left\{ \alpha_{i}^{\vee}:i=1,...,n\right\} $.
The set of \emph{simple coweights} is $\hat{\Delta}=\left\{ \beta_{i}:i=1,...,n\right\} $
. It is the dual basis of $\Delta$, i.e. we have $\left\langle \alpha_{i},\beta_{j}\right\rangle =\delta_{i,j}$.

The \emph{coroot lattice} is $Q=\left\{ \sum_{i=1}^{n}z_{i}\alpha_{i}^{\vee}:\,z_{i}\in\mathbb{Z}\right\} $.
The \emph{coweight lattice }is $P=\left\{ \lambda\in V:\,\left\langle \lambda,\alpha\right\rangle \in\mathbb{Z}\,\,\forall\alpha\in R\right\} =\left\{ \sum_{i=1}^{n}z_{i}\hat{\alpha}_{i}:\,z_{i}\in\mathbb{Z}\right\} $.
The coroot lattice $Q$ is a sublattice of the coweight lattice $P$
and the group $\hat{\Omega}=P/Q$ is finite and abelian. The set of
\emph{dominant coweights} is $P^{+}=\left\{ \lambda\in V:\,\left\langle \lambda,\alpha\right\rangle \in\mathbb{N}\,\,\forall\alpha\in R\right\} =\left\{ \sum_{i=1}^{n}z_{i}\beta_{i}:\,z_{i}\in\mathbb{N}\right\} $.
From this description it is immediate that every $\beta\in P$ can
be written as $\beta_{1}-\beta_{2}$, $\beta_{1},\beta_{2}\in P^{+}$.

The \emph{spherical Weyl group is} $W_{0}=\left\langle s_{\alpha}\left|s_{\alpha}(x)=x-\left\langle \alpha^{\vee},x\right\rangle \alpha,\alpha\in R\right.\right\rangle $.
It is generated by the reflections determined by $R$. The set of
simple roots allows us to identify $W_{0}$ with the Coxeter group
generated by $s_{i}=s_{\alpha_{i}},i=1,...,n$.

The \emph{affine Weyl group} is $W=Q\rtimes W_{0}$. $W$ is the Coxeter
group generated by $s_{1},...,s_{n}$ and another affine reflection
$s_{0}$, defined by $s_{0}(x)=x-\left(\left\langle \alpha_{0}^{\vee},x\right\rangle -1\right)\alpha_{0}$
where $\alpha_{0}$ is the highest root.

The \emph{extended affine Weyl group is} $\hat{W}=P\rtimes W_{0}$.
The (finite and abelian) group $\hat{\Omega}=P/Q$ is isomorphic to
a subgroup of the automorphism group of $S$, and we have $\hat{W}=W\rtimes\hat{\Omega}$.
The results of section \ref{sec:Color-Rotations} apply to it. In
general $\hat{\Omega}$ is not the full automorphism group of $S$.

Since we will work with vertices we will call vertices of color $\{0,...,n\}-i$,
vertices of \emph{type} $i$. A vertex type $i$ is called \emph{good}
(as in the notations of \cite{parkinson2006buildings}, section 3.4)
if there exists $\omega_{i}\in\hat{\Omega}$ with $\omega_{i}(0)=i$.
The good vertices are equal to the more standard \emph{special vertices,
}except for root systems of type $BC_{n}$ or $C_{n}$. For every
$0\ne\omega\in\hat{\Omega}$, we have $\omega(0)\ne0$, and therefore
there exists a bijection between $\hat{\Omega}$ and the good types.
The coroot lattice $Q$ corresponds to the vertices of type $0$ in
$\mathbb{W}$ and the coweight lattice $P$ corresponds to the vertices
of good type in $\mathbb{W}$.

As explained in \cite{parkinson2006buildings} section 3.8, we may
assume that $q_{s}=q_{\omega(s)}$ for every $s\in S$ and $\omega\in\hat{\Omega}$.
This is the reason we do not assume the root system is reduced, see
also the bipartite graph example below. The results of section \ref{sec:Color-Rotations}
apply to this case. We define the \emph{extended ADH algebra} $\hat{H}=H_{\hat{\Omega}}=H\rtimes\hat{\Omega}$
and the \emph{extended Iwahori-Hecke algebra} $\hat{H}_{\phi}=H_{\phi}\rtimes\hat{\Omega}$. 

The Coxeter complex $\mathbb{W}$ is isomorphic as a topological space
to $V_{R}$. The different reflections cut $V_{R}$ into chambers
(sometimes called alcoves) and this defines a simplicial structure
on $V_{R}$ which is isomorphic to $\mathbb{W}$. The chambers correspond
to the elements of $W$, and two chambers share a panel of type $s$
if and only if they correspond to elements of the from $w$, $ws$.

The \emph{fundamental chamber} is the set $\left\{ v\in V_{R}:\left\langle \alpha_{i},v\right\rangle >0,\,i=1,...,n,\,\left\langle \alpha_{0},v\right\rangle <1\right\} $.
The\emph{ fundamental (or dominant) sector} is the set $\left\{ v\in V_{R}:\left\langle \alpha_{i},v\right\rangle >0,\,i=1,...,n\right\} $.
The \emph{fundamental parallelotope} is the set $\left\{ \sum_{i=1}^{n}x_{i}\beta_{i}:\,0\le x_{i}\le1\right\} $.
We denote by $A_{0}$ the set of $w\in W$ corresponding to the chambers
of the fundamental parallelotope and by $\hat{A}_{0}$ the set of
$\hat{w}\in\hat{W}$ corresponding to such chambers. We have $\hat{A}_{0}=A_{0}\cdot\Omega$
(as sets, multiplication takes place in $\hat{W}$). It is standard
that $\left|\hat{A}_{0}\right|=W_{0}$. 

We now state a basic structure theorem for the extended Coxeter group
$\hat{W}$. Surprisingly, we could not find a standard reference for
this theorem in the literature (It does appear however in \cite{gashi2012looping},
proof of theorem 8.2).
\begin{thm}
\label{thm:Structure Theorem}Each element $w\in\hat{W}$ can be written
\emph{uniquely} as $w=w_{0}\beta a$, with $w_{0}\in W_{0}$, \textup{$\beta\in P^{+}$}
and $a\in\hat{A}_{0}$. Moreover, this decomposition satisfies $l(w)=l(w_{0})+l(\beta)+l(a)$.

The element $\beta\in P^{+}$ satisfies $\beta=\prod_{i=1}^{n}\beta_{i}^{m_{i}}$
for some unique $m_{i}\in\mathbb{N}$ and we have $l(\beta)=\sum m_{i}l(\beta_{i})$.

As a corollary, $h_{w}=h_{w_{0}}\left(\prod_{i=1}^{n}h_{\beta_{i}}^{m_{i}}\right)h_{a}$
and $q_{w}=q_{w_{0}}\left(\prod_{i=1}^{n}q_{\beta_{i}}^{m_{i}}\right)q_{a}$.
\end{thm}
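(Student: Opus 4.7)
The plan is to establish the decomposition in two stages: first existence and uniqueness via the geometry of the apartment $V_R$, then length additivity via a careful analysis of the affine hyperplane arrangement. Once $l(w) = l(w_0) + l(\beta) + l(a)$ is in hand, the consequences for $h_w$ and $q_w$ follow at once from the Iwahori-Hecke relation $h_w h_{w'} = h_{ww'}$ (valid when $l(ww') = l(w) + l(w')$) and the multiplicativity of $q_w$ over a reduced expression.

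For existence and uniqueness, I will use that $W$ acts simply transitively on chambers of the Coxeter complex $\mathbb{W}$ (viewed as the apartment $V_R$), so $\hat{W} = W \rtimes \hat{\Omega}$ puts elements of $\hat{W}$ in bijection with pairs (chamber, $\omega$) with $\omega \in \hat{\Omega}$. Geometrically, $V_R = \bigcup_{w_0 \in W_0} w_0 \cdot \bar{S}_0^+$ decomposes into the $|W_0|$ closed sectors meeting only on linear walls through $0$ (writing $\bar{S}_0^+$ for the closed fundamental sector); the closed fundamental sector is tiled by $\{\beta + P_0 : \beta \in P^+\}$ with disjoint interiors, since $\{\beta_i\}$ is a $\mathbb{Z}$-basis of $P$ (writing $P_0$ for the fundamental parallelotope); and each parallelotope $\beta + P_0$ contains the $|A_0|$ chambers $\{t_\beta a_0 C_0 : a_0 \in A_0\}$. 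Since every chamber lies in the interior of a unique sector and unique parallelotope translate, each $w \in \hat{W}$ corresponds uniquely to a quadruple $(w_0, \beta, a_0, \omega)$, and collecting $a := a_0 \omega \in A_0 \cdot \hat{\Omega} = \hat{A}_0$ gives the desired decomposition.

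For length additivity, I will use $l(w) = \#\{\text{affine hyperplanes separating } C_0 \text{ from } wC_0\}$; since $l(\omega) = 0$ for $\omega \in \hat{\Omega}$ one has $l(a) = l(a_0)$, reducing the task to $l(w_0 t_\beta a_0) = l(w_0) + l(t_\beta) + l(a_0)$. The aim is to show the separating hyperplanes partition into three disjoint classes: those passing through $P_0$ and separating $C_0$ from $a_0 C_0$ (contributing $l(a_0)$), those lying strictly between $P_0$ and $\beta + P_0$ inside $\bar{S}_0^+$ and separating $a_0 C_0$ from $t_\beta a_0 C_0$ (contributing $l(t_\beta)$), and the linear hyperplanes through $0$ separating $\bar{S}_0^+$ from $w_0 \bar{S}_0^+$ and hence $t_\beta a_0 C_0$ from $w_0 t_\beta a_0 C_0$ (contributing $l(w_0)$). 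A technically cleaner variant proceeds in two substeps: first use the standard formula $l(t_\lambda) = \sum_{\alpha \in R^+} \langle \alpha, \lambda \rangle$ for $\lambda \in P^+$ to deduce the classical identity $l(v t_\lambda) = l(v) + l(t_\lambda)$ for $v \in W_0$, $\lambda \in P^+$, and then check that appending $a \in \hat{A}_0$ contributes exactly $l(a)$ more inversions, corresponding to hyperplanes through $P_0$ that do not appear in $\mathrm{Inv}(w_0 t_\beta)$.

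The main obstacle I expect is verifying disjointness on walls: when $\beta$ lies on a wall of $P^+$ or when intermediate chambers touch sector boundaries, the three classes of hyperplanes can appear to coincide, and the proof will need a careful sign analysis via the affine root system $\hat{R}^+$ and inversion sets $\{\alpha \in \hat{R}^+ : w^{-1}\alpha \in \hat{R}^-\}$. Granted the length formula, the remaining assertions are immediate: $\beta = \sum m_i \beta_i$ is the unique expansion with $m_i \in \mathbb{N}$ by the definition of $P^+$; $t_\beta = \prod t_{\beta_i}^{m_i}$ because translations commute; and $l(t_\beta) = \sum m_i l(t_{\beta_i})$ because $l(t_\mu) = \sum_{\alpha \in R^+} \langle \alpha, \mu \rangle$ is $\mathbb{Z}_{\ge 0}$-linear on $P^+$. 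The product formulas for $h_w$ and $q_w$ then follow directly from the Iwahori-Hecke relation and the fact that $q_w = \prod q_{s_i}$ for any reduced expression.
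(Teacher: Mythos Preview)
Your proposal is correct and shares the same geometric picture as the paper, but the paper organizes the argument more economically. Rather than treating all three factors at once, the paper first invokes the standard parabolic coset decomposition $\hat{W}=W_{I_0}\cdot{}^{I_0}\hat{W}=W_0\cdot{}^{I_0}\hat{W}$ (the extended version of Lemma~\ref{Coxeter decomposition lemma}), which immediately yields both the unique $w_0$-factor \emph{and} the length additivity $l(w)=l(w_0)+l(\beta a)$, with no hyperplane analysis needed for this step. It then observes that ${}^{I_0}\hat{W}$ consists exactly of those $w$ whose chamber $C_w$ lies in the fundamental sector, and for such $w$ defines $\beta\in P^+$ by $\langle\alpha_i,\beta\rangle=\lfloor\langle\alpha_i,v_w\rangle\rfloor$ for an interior point $v_w\in C_w$, so that $a=\beta^{-1}w\in\hat{A}_0$. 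The remaining length identity $l(\beta a)=l(\beta)+l(a)$ is dispatched by a single observation: every hyperplane separating $C_0$ from $C_\beta$ also separates $C_0$ from any chamber in the shifted parallelotope $\beta+P_0$, hence from $C_w$; thus $\mathrm{Inv}(\beta)\subseteq\mathrm{Inv}(\beta a)$, which is equivalent to length additivity.

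The payoff of the paper's route is that the three-way hyperplane partition you propose (and the attendant boundary/wall analysis you rightly flag as the main obstacle) never arises: the $W_0$-piece is absorbed by abstract Coxeter combinatorics, and only the two-factor $\beta\cdot a$ split inside the dominant sector needs a geometric argument, where the containment $\mathrm{Inv}(\beta)\subseteq\mathrm{Inv}(\beta a)$ is immediate. Your approach would also work, but carries more bookkeeping; the ``cleaner variant'' you mention (using $l(vt_\lambda)=l(v)+l(t_\lambda)$ for $v\in W_0$, $\lambda\in P^+$) is in fact precisely what the parabolic decomposition encodes.
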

\begin{proof}
It is enough to prove the first statement, since the decomposition
of $\beta\in P^{+}$ is well known and the claims about $q_{w}$ and
$h_{w}$ are a direct corollary. 

Denote the chamber corresponding to $w\in\hat{W}$ by $C_{w}$. The
correspondence $w\to C_{w}$ is $\left|\hat{\Omega}\right|$ to $1$,
and the fundamental chamber is $C_{0}=C_{Id}$. 

The decomposition $\hat{W}=W_{I_{0}}{}^{I_{0}}\hat{W}=W_{0}{}^{I_{0}}\hat{W}$
is well known and is a version of lemma \ref{Coxeter decomposition lemma}
for the extended Coxeter group. It remains to prove that each $w\in{}^{I_{0}}\hat{W}$
can be written as $w=\beta a$, $\beta\in P^{+}$ and $a\in\hat{A}_{0}$.

The elements of $^{I_{0}}\hat{W}$ are elements $w\in\hat{W}$ such
that $l(sw)>l(w)$ for any $s\in I_{0}$. Since the length of element
in $\hat{W}$ is the number of hyperplanes separating $C_{w}$ from
$C_{0}$, $C_{w}$ is on the same side on the $s$-hyperplane of $C_{0}$.
Therefore, $C_{w}$ is in the fundamental sector. Choose now an internal
point $v_{w}\in C_{w}$. Then $\left\langle \alpha_{i},v_{w}\right\rangle >0$
for every $i=1,...,n$. Let $\beta\in P^{+}$ be the unique element
satisfying $\left\langle \alpha_{i},\beta\right\rangle =\left\lfloor \left\langle \alpha_{i},v_{w}\right\rangle \right\rfloor \ge0$,
and $a=\beta^{-1}w$. A point $v_{a}\in C_{a}$ satisfies $0\le\left\langle \alpha_{i},v_{w}\right\rangle \le1$
for $i=1,...,n$, and therefore $a\in\hat{A}_{0}$. By this description
it is also clear that $\beta$ is the only element in $P$ satisfying
$\beta^{-1}w\in\hat{A}_{0}$. Finally, each hyperplane separating
$C_{\beta}$ and $C_{0}$ also separates each point $v\in V_{R}$
satisfying $\left\langle \alpha_{i},v\right\rangle \ge\left\langle \alpha_{i},\beta\right\rangle $
for $i=1,...,n$. Therefore Each such hyperplane also separates $C_{w}$
from $C_{0}$, and therefore $l(w)=l(\beta)+l(a)$
\end{proof}
We have a direct nice corollary to the theorem. Define an \emph{abstract
parameter system} as a set of intermediates $\vec{u}=\left(u_{s}\right)_{s\in S}$,
satisfying the parameter system condition, i.e $u_{s}=u_{s'}$ when
$m_{s,s'}$ is odd, and also $u_{s}=u_{\omega(s)}$ for $\omega\in\hat{\Omega}$.
We also define $u_{\omega}=1$ for $\omega\in\hat{\Omega}$. Then
there exists for every $w\in\hat{W}$ a well defined monomial $u_{w}$
satisfying $u_{ww'}=u_{w}u_{w'}$ if $l(ww')=l(w)+l(w')$. In the
single parameter case we simply have $u_{w}=w^{l(w)}$.
\begin{defn}
For a subset $A\subset\hat{W}$ we define the formal series $P_{A}(\vec{u})=\sum_{w\in A}h_{w}u_{w}\in\hat{H}_{\phi}[[\vec{u}]]$. 

The formal series $P_{\hat{W}}(\vec{u})=\sum_{w\in\hat{W}}h_{w}u_{w}$
is called the\emph{ generalized Poincare series} of the Iwahori-Hecke
algebra.
\end{defn}
\begin{cor}
\label{cor:Gen_Poincare_Series}As a formal series, we have:

\[
P_{\hat{W}}(\vec{u})=P_{W_{0}}(\vec{u})\left(\prod_{i=1}^{n}\frac{1}{\left(1-h_{\beta_{i}}u_{\beta_{i}}\right)}\right)P_{\hat{A}_{0}}(\vec{u})
\]
\end{cor}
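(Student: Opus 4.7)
The plan is to use the decomposition from Theorem~\ref{thm:Structure Theorem} directly at the level of the generalized Poincare series, factoring the triple sum indexed by $(w_0,\beta,a)\in W_0\times P^+\times\hat{A}_0$ into a product of three simpler sums. The uniqueness of the decomposition and, crucially, the length-additivity $l(w_0\beta a)=l(w_0)+l(\beta)+l(a)$, guarantee that
\[
P_{\hat{W}}(\vec{u})\;=\;\sum_{w_0\in W_0}\sum_{\beta\in P^+}\sum_{a\in\hat{A}_0}h_{w_0}h_{\beta}h_{a}\,u_{w_0}u_{\beta}u_{a},
\]
because the theorem yields $h_w=h_{w_0}h_\beta h_a$ and the abstract parameter system satisfies the analogous multiplicativity $u_w=u_{w_0}u_\beta u_a$. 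In the algebra $\hat H_\phi[[\vec u]]$ this triple sum separates, giving $P_{\hat W}(\vec u)=P_{W_0}(\vec u)\cdot\bigl(\sum_{\beta\in P^+}h_\beta u_\beta\bigr)\cdot P_{\hat A_0}(\vec u)$.

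Next I would identify the middle factor with $\prod_{i=1}^n(1-h_{\beta_i}u_{\beta_i})^{-1}$. By Theorem~\ref{thm:Structure Theorem} every $\beta\in P^+$ is written uniquely as $\beta=\beta_1^{m_1}\cdots\beta_n^{m_n}$ with $l(\beta)=\sum m_i l(\beta_i)$, hence $h_\beta=\prod_i h_{\beta_i}^{m_i}$ and $u_\beta=\prod_i u_{\beta_i}^{m_i}$. Therefore
\[
\sum_{\beta\in P^+}h_\beta u_\beta\;=\;\sum_{(m_1,\dots,m_n)\in\mathbb N^n}\prod_{i=1}^n\bigl(h_{\beta_i}u_{\beta_i}\bigr)^{m_i},
\]
which is the formal expansion of $\prod_i(1-h_{\beta_i}u_{\beta_i})^{-1}$ provided the operators $h_{\beta_i}$ pairwise commute. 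Commutativity follows from the same structure theorem applied to $\beta_i\beta_j\in P^+$: since both orderings give the same dominant coweight with length $l(\beta_i)+l(\beta_j)$, one gets $h_{\beta_i}h_{\beta_j}=h_{\beta_i\beta_j}=h_{\beta_j\beta_i}=h_{\beta_j}h_{\beta_i}$.

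The main obstacle, modest as it is, is to make precise the manipulations inside $\hat H_\phi[[\vec u]]$. The sums are a priori only formal power series, but the monomial $u_w$ is a well-defined monomial in the $u_s$ for each $w\in\hat W$, and the coefficient of any fixed monomial in either side of the asserted identity involves only finitely many terms. Concretely, one checks that the coefficient of $\prod_s u_s^{n_s}$ on the right-hand side—after expanding the geometric series and multiplying out—matches $\sum_{w:u_w=\prod u_s^{n_s}}h_w$, which is exactly the coefficient on the left by the unique decomposition above. This bookkeeping argument reduces the corollary to the combinatorial content of Theorem~\ref{thm:Structure Theorem} together with the commutativity of the Bernstein-Lusztig operators $h_{\beta_i}$, both of which are already in hand.
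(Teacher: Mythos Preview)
Your proof is correct and follows exactly the approach the paper intends: the corollary is stated without proof, as an immediate consequence of Theorem~\ref{thm:Structure Theorem}, and you have simply spelled out why the unique length-additive decomposition $w=w_0\beta a$ together with $h_\beta=\prod_i h_{\beta_i}^{m_i}$ yields the claimed factorization of the formal series. Your observation that the separation of the triple sum into an ordered product requires only the distributive law (not commutativity of the three factors), and that the geometric-series identification of the middle factor uses the commutativity of the $h_{\beta_i}$ coming from $P$ being abelian and length-additive on $P^+$, is exactly the content the paper leaves implicit.
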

\begin{rem}
This generalized Poincare series was first considered by Gyoja in
\textbf{\uline{\mbox{\cite{gyoja1983generalized}}}} (see also
\cite{hoffman2003remarks}), where is was proven that it is a rational
function. The formal series $\sum_{w\in\hat{W}}u_{w}$ (or, in the
single parameter case $\sum_{w\in\hat{W}}u^{l(w)}$) is called the
\emph{Poincare series} of the extended Coxeter group $\hat{W}$. Explicit
formulas for it are classical. While it is usually defined for the
regular Coxeter group $W$ and not the extended version $\hat{W}$,
it does not really matter as by $\hat{W}=W\rtimes\hat{\Omega}$, $P_{\hat{W}}(u)=P_{W}(u)P_{\hat{\Omega}}(u)=P_{\hat{\Omega}}(u)P_{W}(u)$. 
\end{rem}
\begin{example}
\label{exa:Regular graph}Consider the root system of type $A_{1}$.
Let $V_{R}$ be $\mathbb{R}^{1}$ with the standard inner product.
We have $R=\{\pm e_{1}\}$, $R^{\vee}=\{\pm2e_{1}\}$. The simple
coroot is $\alpha_{1}^{\vee}=2e_{1}$ and the simple coweight is $\beta_{1}=e_{1}$.
We have $Q=\left\{ 2ze_{1}:z\in\mathbb{Z}\right\} $, $P=\left\{ ze_{1}:z\in\mathbb{Z}\right\} $
and $\hat{\Omega}=P/Q\cong\{Id,\omega\}$. The Coxeter group is $W=\left\langle s_{0},s_{1}:s_{0}^{2}=s_{1}^{2}=1\right\rangle $
and the extended Coxeter group is $\hat{W}=W\rtimes\hat{\Omega}=\left\langle s_{0},s_{1},\omega:s_{0}^{2}=s_{1}^{2}=\omega^{2}=1,\,\omega s_{0}=s_{1}\omega\right\rangle $.
We have as elements of $\hat{W}$, $\beta_{1}=s_{0}\omega$. We have
$\hat{A}_{0}=\hat{\Omega}=\left\{ Id,\omega\right\} $. Each element
of $w\in\hat{W}$ can be written uniquely as $w=s_{1}^{\delta_{1}}\beta_{1}^{m}\omega^{\delta_{\omega}}$
for $\delta_{\omega},\delta_{1}\in\{0,1\}$ and $m\ge0$. 

There is a single abstract parameter $u$ is the parameter system,
and the generalized Poincare series of the Iwahori-Hecke algebra is
\[
P_{\hat{W}}(u)=(1+h_{s_{1}}u)\frac{1}{1-h_{\beta_{1}}u}(1+h_{\omega})
\]

This case corresponds to the Iwahori-Hecke algebra of the regular
graph, as described in \cite{kamber2016lpgraph}, section 7. As explained
there (with slightly different notations), the operator $h_{\beta_{1}}$
is Hashimoto's non backtracking operator, used to define the graph
Zeta function.
\end{example}
\begin{example}
\label{exa:Bipartite graph}Consider the non-reduced root system of
type $BC_{1}$. Let again $V_{R}$ be $\mathbb{R}^{1}$ with the standard
inner product. We have $R=\{\pm e_{1},\pm2e_{1}\}$, $R^{\vee}=\{\pm e_{1},\pm2e_{1}\}$.
The simple coroot is $\alpha_{1}^{\vee}=e_{1}$ and the simple coweight
is $\beta_{1}=\alpha_{1}^{\vee}=e_{1}$. we have $P=Q=\left\{ ze_{1}:z\in\mathbb{Z}\right\} $
and $\hat{\Omega}=\{1\}$. The Coxeter group is $W=\left\langle s_{0},s_{1}:s_{0}^{2}=s_{1}^{2}=1\right\rangle $
and the extended Coxeter group is $\hat{W}=W$. We have as elements
of $\hat{W}$, $\beta_{1}=s_{0}s_{1}$. We have $\hat{A}_{0}=\left\{ Id,s_{0}\right\} $.
Each element of $w\in\hat{W}$ can be written uniquely as $w=s_{1}^{\delta_{1}}\beta_{1}^{m}s_{0}^{\delta_{0}}$
for $\delta_{0},\delta_{1}\in\{0,1\}$ and $m\ge0$. 

There are two abstract parameters $u_{0},u_{1}$ is the parameter
system, and the generalized Poincare series of the Iwahori-Hecke algebra
is 
\[
P_{\hat{W}}(u_{1},u_{2})=(1+h_{s_{1}}u_{1})\frac{1}{1-h_{s_{0}s_{1}}u_{1}u_{2}}(1+h_{s_{0}}u_{0})
\]

This case corresponds to the Iwahori-Hecke algebra of the bipartite
biregular graph, as described in \cite{kamber2016lpgraph}, section
11. The operator $h_{\beta_{1}}=h_{s_{0}s_{1}}$ is once again Hashimoto's
non backtracking operator in the bipartite case. See also the discussion
in \cite{hoffman2003remarks}.
\end{example}
\begin{example}
Let us describe the general $A_{n}$ case, let $V_{0}=R^{n+1}$ with
the standard inner product and $V_{R}=\left\{ v\in V_{0},\sum v_{i}=0\right\} $.
The set of roots (or coroots, which are equal) is $R=R^{\vee}=\left\{ e_{i}-e_{j}:0\le i\ne j\le n\right\} $
and the set of simple roots (and simple coroots) are $\alpha_{i}=\alpha_{i}^{\vee}=e_{i-1}-e_{i}$,
$i=1,...,n$ (they indeed span the subspace $V_{R}\subsetneq V$).
The coroot lattice is $Q=\left\{ (z_{0},...,z_{n})\in\mathbb{Z}^{n+1}:\sum z_{i}=0\right\} $.

The simple coweights are $\beta_{i}=e_{0}+...+e_{i}-\frac{i}{n+1}(1,...,1)$,
$i=1,...,n$. The coweight lattice is 
\[
P=\left\{ (z_{0},...,z_{n})-\frac{\sum z_{i}}{n+1}(1,...,1):(z_{0},...,z_{n})\in\mathbb{Z}^{n+1}\right\} =\left\{ \sum x_{i}\beta_{i}:x_{i}\in\mathbb{Z}\right\} 
\]
The dominant coweights are 
\[
P^{+}=\left\{ (z_{0},...,z_{n})-\frac{\sum z_{i}}{n+1}(1,...,1):(z_{0},...,z_{n})\in\mathbb{Z}^{n+1},\,z_{i}\ge z_{i+1}\right\} =\left\{ \sum x_{i}\beta_{i}:x_{i}\in\mathbb{N}\right\} 
\]

$W_{0}\cong S_{n+1}$ acts by permutations of the coordinates of $V$
(or $V_{R})$. We have $W=Q\rtimes S_{n+1}$, $\hat{W}=P\rtimes S_{n+1}$
with the obvious action on $V_{R}$.

The Coxeter generators are the transpositions $s_{i}=id\times(i-1,i)\in Q\rtimes S_{n+1}\subset W$
for $i=1,...,n$ and $s_{0}=(-1,0,...,0,1)\times(0,n)\in Q\rtimes S_{n+1}=W$
(the left multiplier is an element of $\mathbb{Z}^{n+1}$, the right
multiplier is a transposition in $S_{n+1}$). 

In this case every vertex type is good and special. The group $\hat{\Omega}$
is isomorphic to $\mathbb{Z}/n\mathbb{Z}$ and its elements are $\omega_{i}:S\rightarrow S$,
$\omega_{i}(s_{j})=s_{i+j\,mod\,n}$. If $n\ge2$, $\hat{\Omega}$
is a proper subgroup of index 2 of the full automorphism group $\Omega$
of $S$ that is isomorphic to the dihedral group $\mathbb{Z}/2\mathbb{Z}\ltimes\mathbb{Z}/n\mathbb{Z}$
and also contains the elements $\tau_{j}(s_{i})=s_{j-i\,mod\,n}$.
\end{example}

\section{\label{sec:Temperedness-in-the}Temperedness in the Affine case}

In this section we study different conditions for temperedness in
affine Coxeter groups.
\begin{defn}
The exponential growth rate of $W$ is $\mbox{limsup}_{m\rightarrow\infty}\#\{w\in W:l(w)=m\}^{1/m}$.
\end{defn}
If $W$ if affine is has a slow growth rate:
\begin{lem}
If $W$ is affine irreducible of dimension $n$, the number of $w\in W$
with $l(w)\le m$ is bounded by $G(m)=\left|W_{0}\right|^{2}\left(m+1\right)^{n}$.
Therefore $W$ has exponential growth rate $1$. 
\end{lem}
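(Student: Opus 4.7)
The plan is to lift the counting problem from $W$ to the larger group $\hat{W}\supset W$ and apply the structure theorem (Theorem~\ref{thm:Structure Theorem}), which provides a bijective parameterization of $\hat{W}$ by $W_0\times P^+\times\hat{A}_0$ that is length-additive.

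First I would observe that since $W\subset\hat{W}$ and the length functions agree, it suffices to bound $\#\{w\in\hat{W}:l(w)\le m\}$. By Theorem~\ref{thm:Structure Theorem}, every $w\in\hat{W}$ has a unique decomposition $w=w_{0}\beta a$ with $w_{0}\in W_{0}$, $\beta\in P^{+}$, $a\in\hat{A}_{0}$, and
\[
l(w)=l(w_{0})+l(\beta)+l(a).
\]
Writing the further unique decomposition $\beta=\prod_{i=1}^{n}\beta_{i}^{m_{i}}$ with $m_{i}\in\mathbb{N}$, the same theorem gives $l(\beta)=\sum_{i=1}^{n}m_{i}l(\beta_{i})$. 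Since each $\beta_{i}$ is a non-trivial translation in $\hat{W}$ (in particular not in $\hat{\Omega}$), one has $l(\beta_{i})\ge 1$, and therefore $l(w)\ge \sum_{i=1}^{n}m_{i}$.

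Next I would count. The condition $l(w)\le m$ forces $\sum_{i}m_{i}\le m$, which in particular forces $0\le m_{i}\le m$ for every $i$; so the tuple $(m_{1},\ldots,m_{n})$ ranges over a set of size at most $(m+1)^{n}$. The factor $w_{0}$ contributes at most $|W_{0}|$ choices and, using the identity $|\hat{A}_{0}|=|W_{0}|$ recalled just before the structure theorem, the factor $a$ contributes another $|W_{0}|$ choices. Multiplying,
\[
\#\{w\in\hat{W}:l(w)\le m\}\le |W_{0}|^{2}(m+1)^{n},
\]
which, restricted to $W$, is the bound $G(m)$ claimed.

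For the growth rate, the bound $\#\{w\in W:l(w)=m\}\le G(m)$ together with $G(m)^{1/m}=\bigl(|W_{0}|^{2}(m+1)^{n}\bigr)^{1/m}\to 1$ yields $\limsup_{m}\#\{w:l(w)=m\}^{1/m}\le 1$; the reverse inequality is automatic since $W$ is infinite and each length-ball is finite, so $\#\{w:l(w)=m\}\ge 1$ for infinitely many $m$. There is no serious obstacle here: the only point requiring a line of care is the estimate $l(\beta_{i})\ge 1$, which is immediate from the fact that $\beta_{i}$ is a non-zero coweight, hence represents a non-identity element of $\hat{W}$ lying outside $\hat{\Omega}$.
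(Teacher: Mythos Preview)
Your proof is correct and follows essentially the same approach as the paper: both invoke Theorem~\ref{thm:Structure Theorem} to write $w=w_{0}\bigl(\prod_{i}\beta_{i}^{m_{i}}\bigr)a$, observe that $l(w)\le m$ forces $m_{i}\le m$, and then count $|W_{0}|\cdot|W_{0}|\cdot(m+1)^{n}$ choices. You supply a few extra details the paper omits (the justification $l(\beta_{i})\ge 1$ and the explicit growth-rate argument), but the argument is the same.
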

\begin{proof}
Using theorem \ref{thm:Structure Theorem}, all $w\in W$ with $w=w_{0}\prod_{i=1}^{n}\beta_{i}^{m_{i}}a$,
$l(w)\le m$, satisfy $m_{i}\le m$. There are at most $\left|W_{0}\right|^{2}\left(m+1\right)^{n}$
such $w$.
\end{proof}
A spherical Coxeter group has exponential growth rate $0$ and an
infinite Coxeter groups has growth rate $1$ if and only if it is
a direct product of an affine Coxeter group and a spherical Coxeter
group. See \cite{terragni2013growth} for more about this.
\begin{example}
\label{exa:Trivial repres}By Example \ref{exa:Trivial repres}, the
trivial representation is generated by a function $f\in\mathbb{C}^{B_{\phi}}$
having a constant value $1$ on every chamber. Such a function is
of course spherical around every chamber $C_{0}$. Since $f\in L_{\infty}(B_{\phi})$,
the trivial representation is $\infty$-tempered.

The trivial representation is $p$-tempered, $p<\infty$, if and only
if the series $\sum q_{w}(1-\delta)^{pl(w)}$ converges for every
$\delta>0$. Assume the building is thick, i.e. $q_{s}>1$ for every
$s\in S$, and $W$ infinite. Then $q_{w}>(1+\epsilon)^{l(w)}$ for
every $w\in W$ for some fixed $\epsilon>0$. Therefore the trivial
representation is not $p$-tempered for any $p<\infty$.

If the building is thin we have $q_{w}=1$ for any $w\in W$. The
trivial representation in this case is $p$-tempered, $p\ge1$ if
and only if the exponential growth rate is $\le1$. In any case it
is never $p$-finite. 
\end{example}
\begin{example}
By the proof of proposition \ref{claim:Steinberg Definition}, the
Steinberg representation is generated by a function $f\in\mathbb{C}^{B_{\phi}}$,
spherical around a fixed chamber $C_{0}$, with values 
\[
f(C)=(-1)^{l(d(C_{0},C))}/q_{d(C_{0},C)}
\]

In this case $f\in L_{p}(B_{\phi})$ if and only if 
\[
\sum_{C}\left|f(C)\right|^{p}=\sum_{w\in W}q_{w}(1/q_{w})^{p}=\sum_{w}q_{w}^{1-p}<\infty
\]

$f\in T_{p}(B_{\phi})$ if and only if for every $0<\delta<1$ 
\[
\sum_{w}q_{w}^{1-p}(1-\delta)^{p\cdot l(w)}<\infty
\]

Assume that $W$ is affine. If the building is thin $q_{w}=1$ for
every $w\in W$ and therefore $f\not\in L_{p}(B_{\phi})$ for every
$p<\infty$. However, using the previous lemma $f\in T_{1}(B_{\phi}$).
If the building is thick $\alpha_{1}^{l(w)}\le q_{w}\le\alpha_{2}^{l(w)}$
for some $\alpha_{1},\alpha_{2}>1$. Using the previous lemma, $f\in L_{p}(B_{\phi})$
for every $p>1$, $f\in T_{1}(B_{\phi})$ and $f\notin L_{1}(B_{\phi})$.
Therefore the Steinberg representation is always $1$-tempered.
\end{example}
Using the growth rate we can give a nicer equivalent definitions of
$p$-temperedness. First, we state an easy lemma. The proof is elementary
and is omitted.
\begin{lem}
\label{lem:Temperdness Lemma}Let $g:\mathbb{N}\rightarrow\mathbb{R}_{\ge0}$
be a series. Then the following conditions are equivalent:

1. For every $0<\delta<1$, $\sum_{l}g(l)(1-\delta)^{l}<\infty$.

2. $\limsup_{l}g(l)^{1/l}\le1$.

3. For every $\delta>0$, for almost every $l$, $g(l)\le(1+\delta)^{l}$.

Moreover, the conditions hold for the absolute value of any polynomial
and if the conditions hold for $g_{1}$ and $g_{2}$, then they hold
for $g_{1}\cdot g_{2},\,g_{1}^{\gamma}$($0<\gamma\in R)$.
\end{lem}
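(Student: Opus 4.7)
The plan is to prove $(2) \Leftrightarrow (3) \Rightarrow (1) \Rightarrow (2)$, and then read off the ``moreover'' statements from formulation (2), which is the cleanest to work with. The equivalence $(2) \Leftrightarrow (3)$ is essentially the definition of $\limsup$: the condition $\limsup_{l} g(l)^{1/l} \le 1$ says exactly that for every $\delta > 0$, $g(l)^{1/l} \le 1+\delta$ for all sufficiently large $l$, which is (3) after taking $l$-th powers (with the convention $0^{1/l} = 0$ to handle vanishing terms).

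For $(3) \Rightarrow (1)$, given $0 < \delta < 1$, I would pick $\epsilon > 0$ so small that $(1+\epsilon)(1-\delta) < 1$; then (3) dominates the tail of $\sum g(l)(1-\delta)^{l}$ by a convergent geometric series. For the converse $(1) \Rightarrow (2)$ I would argue by contrapositive: if $\limsup g(l)^{1/l} = 1 + c$ for some $c > 0$, pick a subsequence $l_k \to \infty$ with $g(l_k) \ge (1+c/2)^{l_k}$, then choose $\delta > 0$ with $(1+c/2)(1-\delta) > 1$, so that $g(l_k)(1-\delta)^{l_k} \ge ((1+c/2)(1-\delta))^{l_k} \to \infty$; the series in (1) cannot converge when its terms fail even to vanish.

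The ``moreover'' assertions all fall out of formulation (2). For a polynomial one has $|P(l)|^{1/l} \to 1$ since $l^{1/l} \to 1$. For a product, $(g_1 g_2)(l)^{1/l} = g_1(l)^{1/l} g_2(l)^{1/l}$, and the superior limit of a product of nonnegative sequences is bounded by the product of their superior limits, giving a bound of $1$. For a power, $(g^\gamma)(l)^{1/l} = (g(l)^{1/l})^\gamma$, and continuity of $x \mapsto x^\gamma$ on $[0,\infty)$ gives $\limsup (g^\gamma(l))^{1/l} \le 1^\gamma = 1$. There is no substantive obstacle here: the lemma is a standard root-test exercise, and the only mild bookkeeping concerns terms where $g$ vanishes, which the convention $0^{1/l} = 0$ disposes of.
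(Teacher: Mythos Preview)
Your proof is correct and complete; it is exactly the standard root-test argument one would expect here. The paper itself declares the lemma elementary and omits the proof entirely, so there is nothing to compare against.
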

We can now state the equivalent conditions. We state them for the
Iwahori-Hecke algebra $H_{\phi}$ and as usual similar conditions
can be stated for $H$ itself.
\begin{prop}
\label{prop:Equivalent temperdness conditions}Assume that $\hat{W}$
is affine and $f\in\mathbb{C}^{B_{\phi}\times\hat{\Omega}}$ is spherical
around $C_{0}$. We may assign to $f$ a function $f_{\hat{W}}\in\mathbb{C}^{\hat{W}}$
defined by $f_{\hat{W}}(w)=(h_{w}f)(C_{0})$ for $w\in\hat{W}$.

The following are equivalent:

1. $f$ is p-tempered, i.e for every $0<\delta<1$, $\sum_{w}\left|f_{\hat{W}}(w)\right|^{p}q_{w}^{1-p}(1-\delta)^{l(w)}<\infty$.

2. $\limsup_{w}\left(q_{w}^{1-p}\left|f_{\hat{W}}(w)\right|^{p}\right)^{1/l(w)}\le1$.

3. For every $\delta>0$, for almost every $w\in\hat{W}$, $\left|f_{\hat{W}}(w)\right|<q_{w}^{(p-1)/p}(1+\delta)^{l(w)}$.

4. Assuming $B$ is thick: $\sum_{w}\left|f_{\hat{W}}(w)\right|q_{w}^{s}$
converges for every $s<(1-p)/p$.

5. For every parameter system $\overrightarrow{u}=(u_{i})_{i\in S}\in\mathbb{R}_{>0}^{S}$
satisfying $u_{s}<q_{s}^{(1-p)/p}$ for every $s\in S$, the series
$\sum_{w}\left|f_{\hat{W}}(w)\right|u_{w}$ converges.
\end{prop}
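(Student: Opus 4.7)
The plan is to reduce the equivalences $(1)\Leftrightarrow(2)\Leftrightarrow(3)$ to Lemma \ref{lem:Temperdness Lemma}, exploiting the polynomial growth bound $\#\{w\in\hat{W}:l(w)=l\}\le G(l)=|W_0|^2(l+1)^n$ that follows from affineness (plus the factor $|\hat{\Omega}|$). Set $g(w)=q_w^{1-p}|f_{\hat{W}}(w)|^p$ and $a_l=\sum_{w:l(w)=l}g(w)$, $b_l=\max_{w:l(w)=l}g(w)$. The sandwich $b_l\le a_l\le G(l)\,b_l$, together with $G(l)^{1/l}\to 1$, forces $\limsup_l a_l^{1/l}=\limsup_l b_l^{1/l}=\limsup_w g(w)^{1/l(w)}$. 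Regrouping the series in $(1)$ by length gives $\sum_l a_l(1-\delta)^l$, so Lemma \ref{lem:Temperdness Lemma} applied to $a_l$ yields $(1)\Leftrightarrow$ the common $\limsup\le 1$, i.e.\ $(2)$. The condition $(3)$ is just the rewriting of $g(w)\le (1+\delta)^{pl(w)}$ eventually, and clause~(3) of Lemma \ref{lem:Temperdness Lemma} (applied to $b_l$ and then transferred back to $g(w)$ via the same polynomial-growth sandwich) gives $(2)\Leftrightarrow(3)$.

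For the thick case, I would prove $(3)\Leftrightarrow(4)\Leftrightarrow(5)$ using the two-sided comparison $\alpha_1^{l(w)}\le q_w\le \alpha_2^{l(w)}$ with $\alpha_1=\min_s q_s>1$, $\alpha_2=\max_s q_s$. For $(3)\Rightarrow(4)$: fix $s<(1-p)/p$, write $s=(1-p)/p-\epsilon$, and use $(3)$ with a small $\delta$ to bound $|f_{\hat{W}}(w)|q_w^s\le q_w^{-\epsilon}(1+\delta)^{l(w)}\le\bigl((1+\delta)/\alpha_1^{\epsilon}\bigr)^{l(w)}$; choosing $\delta$ so that $(1+\delta)<\alpha_1^{\epsilon}$ and invoking polynomial growth makes the sum converge. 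For $(4)\Rightarrow(3)$: if $(3)$ failed for some $\delta>0$, infinitely many $w$ would satisfy $|f_{\hat{W}}(w)|q_w^s\ge\bigl((1+\delta)/\alpha_2^{\epsilon}\bigr)^{l(w)}$ with $s=(1-p)/p-\epsilon$; for small $\epsilon$ this gives infinitely many terms bounded below by $1$, contradicting $(4)$.

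The equivalence $(4)\Leftrightarrow(5)$ is immediate from the finiteness of $S$. Indeed $(5)\Rightarrow(4)$ by specializing $u_s=q_s^{s}$, which for $s<(1-p)/p$ satisfies $u_s<q_s^{(1-p)/p}$. Conversely, given an arbitrary parameter system with $u_s<q_s^{(1-p)/p}$ for every $s$, set $s_0=\max_{s\in S}\log u_s/\log q_s$; since $S$ is finite and each $q_s>1$, $s_0<(1-p)/p$, and then $u_w\le q_w^{s_0}$ for every $w\in W$ (with $u_\omega=q_\omega^{s_0}=1$ for $\omega\in\hat{\Omega}$), so the sum in $(5)$ is dominated by the sum in $(4)$ at the exponent $s_0$.

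I do not expect real obstacles: the only subtle point is the transition between a statement about a supremum over $w$ of a given length (Lemma \ref{lem:Temperdness Lemma} conditions) and the actual sum over $w$, which is handled uniformly by the polynomial-growth bound on $\hat{W}$. The thickness hypothesis in $(4),(5)$ is genuinely needed because in the thin case $q_w\equiv 1$ and conditions $(4),(5)$ collapse to ordinary summability of $|f_{\hat{W}}|$, which is strictly stronger than $p$-temperedness.
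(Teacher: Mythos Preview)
Your approach is essentially identical to the paper's: define the length-level quantities $b_l=\max_{l(w)=l}q_w^{1-p}|f_{\hat W}(w)|^p$, use the polynomial growth of $\hat W$ to sandwich the sum against the max, and reduce everything to Lemma~\ref{lem:Temperdness Lemma}; the paper does exactly this in one line and leaves the verifications (which you spell out for (4) and (5)) to the reader.

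One small correction to your closing remark: condition~(5) does \emph{not} collapse to summability in the thin case. When $q_s\equiv 1$ the constraint becomes $u_s<1$, so (5) asserts $\sum_w|f_{\hat W}(w)|\,u^{l(w)}<\infty$ for every $u<1$, which by Lemma~\ref{lem:Temperdness Lemma} and polynomial growth is still equivalent to $\limsup|f_{\hat W}(w)|^{1/l(w)}\le 1$, i.e.\ to (1)--(3). It is only (4) that genuinely degenerates (since $q_w^s\equiv 1$), which is why the thickness hypothesis is attached there. This does not affect your proof of the stated equivalences, only the commentary.
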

\begin{proof}
Let $g(l)=\sup_{w\in\hat{W},l(w)=l}\left|f_{\hat{W}}(w)\right|^{p}q_{w}^{1-p}$.
Since $\hat{W}$ is affine there exists a polynomial $P(l)$ such
that $g(l)\le\sum_{w:l(w)=l}\left|f_{\hat{W}}(l)\right|^{p}q_{w}^{1-p}\le P(l)g(l)$.
Now all the conditions in the proposition are equivalent to the fact
that $g$ satisfies the previous lemma. We leave the verification
to the reader.
\end{proof}
\begin{cor}
A $\hat{H}_{\phi}$ representation $V$ is $p$-tempered if and only
if the conditions of the previous proposition hold for $f_{\hat{W}}(w)=c_{v^{*},v}(w)=\left\langle v^{*},h_{w}v\right\rangle $
for every $v^{*}\in V^{*},\,v\in V$.
\end{cor}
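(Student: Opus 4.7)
The plan is to reduce the corollary directly to Proposition \ref{prop:Equivalent temperdness conditions} by identifying the function $f_{\hat{W}}$ attached there to a spherical function $f$ on $\hat{B}_\phi$ with the matrix coefficient $c_{v^*,v}$, in the case where $f$ is the geometric realization of $v$ relative to $v^*$. Once this identification is established, there is nothing left to prove: the equivalences $(1)\Leftrightarrow(2)\Leftrightarrow\ldots\Leftrightarrow(5)$ of the proposition get transported verbatim to matrix coefficients.

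First, I would unwind the meaning of $p$-temperedness for a $\hat{H}_\phi$-representation $V$. Combining Section \ref{sec:-Tempered-Representations} with Proposition \ref{prop:Color-Rotating-Prop}(5), $V$ is $p$-tempered if and only if every geometric realization $f_{v^*,v}\in\mathbb{C}^{\hat{B}_\phi}$ lies in $T_p(\hat{B}_\phi)$, where
\[
f_{v^*,v}(\sigma)=q_{d(C_0,\sigma)}^{-1}\langle v^*,h_{d(C_0,\sigma)}v\rangle,\qquad \sigma\in\hat{B}_\phi,
\]
and $f_{v^*,v}$ is spherical around $C_0$. Since $1_\phi$ is the unit of $\hat{H}_\phi$, the hypothesis $1_\phi v^*=v^*$ from Corollary \ref{cor:Geometric Realization} is automatic, so every pair $(v^*,v)\in V^*\times V$ produces such a realization.

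Next, I would verify that the function $(f_{v^*,v})_{\hat{W}}\in\mathbb{C}^{\hat{W}}$ appearing in Proposition \ref{prop:Equivalent temperdness conditions}, defined by $(f_{v^*,v})_{\hat{W}}(w)=(h_w f_{v^*,v})(C_0)$, equals the matrix coefficient $c_{v^*,v}(w)=\langle v^*,h_w v\rangle$. This is an immediate application of Lemma \ref{H* and standard}(2) (extended to $\hat{H}_\phi$ via the discussion of Section \ref{sec:Color-Rotations}): under the isomorphism between spherical functions around $C_0$ and functionals in $\hat{H}_\phi^*(\phi)$, the spherical function $f_{v^*,v}$ corresponds exactly to the functional $h\mapsto\langle v^*,hv\rangle$.

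With these two identifications in place, the corollary reads: $V$ is $p$-tempered iff for every $v^*,v$, the spherical function $f_{v^*,v}$ lies in $T_p(\hat{B}_\phi)$, iff $c_{v^*,v}$ satisfies condition (1) of Proposition \ref{prop:Equivalent temperdness conditions}, iff it satisfies any of (2)--(5). The only point that requires care, rather than being a real obstacle, is the bookkeeping: one must check that the extended definition of $p$-temperedness genuinely sums over $\hat{W}$ (with $l(\omega)=0$ for $\omega\in\hat{\Omega}$) and not merely over $W$. This is because distances between chambers of $\hat{B}_\phi=B_\phi\times\hat{\Omega}$ are parameterized by $\hat{W}$, and the induced representation $\mathrm{ind}_H^{\hat{H}}V$ has its geometric realization living on $\hat{B}_\phi$; the equivalence in Proposition \ref{prop:Color-Rotating-Prop}(5) then guarantees that $p$-temperedness is preserved, so the relevant sum is indeed over $\hat{W}$.
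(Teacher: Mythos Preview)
Your proposal is correct and matches the paper's implicit reasoning: the corollary is stated without proof in the paper, being an immediate consequence of identifying the geometric realization $f_{v^*,v}$ with the spherical function whose associated $f_{\hat{W}}$ is the matrix coefficient $c_{v^*,v}$, and then reading off the equivalences of Proposition~\ref{prop:Equivalent temperdness conditions}. You have spelled out exactly the details the paper leaves to the reader, including the passage to $\hat{W}$ via Proposition~\ref{prop:Color-Rotating-Prop}(5).
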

\begin{cor}
A finite dimensional $\hat{H}_{\phi}$ representation $V$ is $p$-tempered
if and only if the generalized Poincare series $P_{\hat{W}}(\vec{u})=\sum_{w\in\hat{W}}h_{w}^{V}u_{w}$
absolutely converges (as a series of matrices) for every parameter
system $\vec{u}=\left(u_{s}\right)_{s\in S}$ satisfying $u_{s}<q_{s}^{(1-p)/p}$
for every $s\in S$.
\end{cor}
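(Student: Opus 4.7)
The plan is to deduce the corollary from the immediately preceding one, which already characterizes $p$-temperedness of $V$ via the pointwise convergence conditions of proposition \ref{prop:Equivalent temperdness conditions} applied to matrix coefficients $c_{v^*,v}(w)=\langle v^*,h_w v\rangle$. The only substantive content of the corollary is to reinterpret condition (5) of that proposition (absolute convergence of the scalar series $\sum_w |c_{v^*,v}(w)|u_w$ for every admissible parameter system $\vec{u}$) as absolute convergence of the matrix-valued generalized Poincar\'e series $P_{\hat{W}}^V(\vec{u})=\sum_{w\in\hat{W}} h_w^V u_w$.

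First I would fix a finite basis $e_1,\dots,e_d$ of $V$ with dual basis $e_1^*,\dots,e_d^*$ of $V^*$. With respect to this basis, each operator $h_w^V$ has matrix entries $(h_w^V)_{ij}=\langle e_i^*,h_w e_j\rangle=c_{e_i^*,e_j}(w)$. On the finite dimensional space $\mathrm{End}(V)$ all norms are equivalent, so the absolute convergence of $\sum_w h_w^V u_w$ in any matrix norm (e.g.\ the operator norm, or the entrywise $\ell^1$ norm $\|A\|=\sum_{i,j}|A_{ij}|$) is equivalent to the simultaneous absolute convergence of the $d^2$ scalar series $\sum_w |c_{e_i^*,e_j}(w)|u_w$, which uses crucially that $u_w>0$ so that $|h_w^V u_w|=|h_w^V|u_w$.

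Next I would use the preceding corollary together with condition (5) of proposition \ref{prop:Equivalent temperdness conditions}: $V$ is $p$-tempered iff for every $v\in V$, $v^*\in V^*$ and every $\vec{u}$ with $u_s<q_s^{(1-p)/p}$, the series $\sum_w |c_{v^*,v}(w)|u_w$ converges. Expanding an arbitrary $v,v^*$ in the bases above shows that this universal statement is equivalent to the same statement restricted to the basis pairs $(e_i^*,e_j)$ (the general matrix coefficient is a finite linear combination of these). Combining with the previous paragraph, $V$ is $p$-tempered iff $\sum_w h_w^V u_w$ converges absolutely in $\mathrm{End}(V)$ for every such $\vec{u}$, which is exactly the conclusion.

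There is no real obstacle here; the argument is essentially a finite dimensional linear algebra translation layered on top of the previous corollary. The only small point to be careful about is to phrase matters so that the parameter system $\vec{u}$ is allowed to be strictly less than $q_s^{(1-p)/p}$ (not $\le$), matching the strict inequality appearing both in condition (5) of proposition \ref{prop:Equivalent temperdness conditions} and in the statement of the corollary; the $\limsup$-type reformulations (conditions (2) and (3)) make clear that this is the right boundary behavior and that no boundary convergence is being asserted.
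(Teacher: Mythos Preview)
Your proposal is correct and is precisely the intended argument: the paper states this corollary without proof, treating it as an immediate consequence of the preceding corollary together with condition (5) of proposition \ref{prop:Equivalent temperdness conditions}, and your finite-dimensional linear algebra translation (matrix entries are matrix coefficients, all norms on $\mathrm{End}(V)$ are equivalent) is exactly how one fills in the details.
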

\begin{rem}
In the case of representations of dimension 1 a very simple case of
this corollary was used in by Borel in \cite{borel1976admissible}
to identify the one dimensional square integrable representations
of affine Iwahori-Hecke algebras. 
\end{rem}
\begin{defn}
\label{def:lambda V definition}Let $V$ be a finite dimensional representation
of $H$. Let $h\in H$. Define $\lambda_{V}(h)$ as the largest absolute
value of an eigenvalue of $h$ on $V$.
\end{defn}
\begin{prop}
\label{Temperedness_Affine_Case_eigenvalues}Assume $W$ is affine
and irreducible. Let $V$ be a representation of $H_{\phi}$. The
following are equivalent:

1. $V$ is $p$-tempered.

2. $\lambda_{V}(h_{\alpha})\le q_{\alpha}^{(p-1)/p}$ for every $\alpha\in P$.

3. $\lambda_{V}(h_{\alpha})\le q_{\alpha}^{(p-1)/p}$ for every $\alpha\in P^{+}$.

4. $\lambda_{V}(h_{\beta_{i}})\le q_{\beta_{i}}^{(p-1)/p}$ for $i=1,...,n$.
\end{prop}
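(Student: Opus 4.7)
The plan is to establish the cycle $(1) \Rightarrow (3) \Rightarrow (4) \Rightarrow (1)$, with (2) slotted in via the trivial implication $(2) \Rightarrow (3)$ and a separate verification of $(4) \Rightarrow (2)$. The implication $(3) \Rightarrow (4)$ is immediate since $\beta_i \in P^+$, and $(2) \Rightarrow (3)$ is immediate since $P^+ \subset P$. The two substantive directions are $(1) \Rightarrow (3)$ and $(4) \Rightarrow (1)$.

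For $(1) \Rightarrow (3)$, I will use a spectral-radius argument on powers. Fix $\alpha \in P^+$ and let $\lambda$ be an eigenvalue of $h_\alpha$ with eigenvector $0\ne v$, and pick $v^\ast \in V^\ast$ with $\langle v^\ast, v\rangle \ne 0$. Since $\alpha \in P^+$, Theorem \ref{thm:Structure Theorem} gives $l(\alpha^n) = n\,l(\alpha)$ and $q_{\alpha^n} = q_\alpha^n$, together with $h_\alpha^n = h_{\alpha^n}$, so the matrix coefficient satisfies $|c_{v^\ast,v}(\alpha^n)| = |\lambda|^n|\langle v^\ast, v\rangle|$. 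Applying condition (3) of Proposition \ref{prop:Equivalent temperdness conditions}, for every $\delta>0$ and almost every $n$ one has $|\lambda|^n|\langle v^\ast,v\rangle| < q_\alpha^{n(p-1)/p}(1+\delta)^{n\,l(\alpha)}$; taking $n$-th roots as $n\to\infty$ and then $\delta\to 0$ yields $|\lambda|\le q_\alpha^{(p-1)/p}$.

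For $(4)\Rightarrow (1)$, the key observation is that $h_{\beta_1},\dots,h_{\beta_n}$ mutually commute: since $\beta_i,\beta_j\in P^+$ with $l(\beta_i\beta_j)=l(\beta_i)+l(\beta_j)$, the Iwahori--Hecke relations give $h_{\beta_i}h_{\beta_j}=h_{\beta_i\beta_j}=h_{\beta_j}h_{\beta_i}$. Since $V$ is finite-dimensional with $\lambda_V(h_{\beta_i})\le q_{\beta_i}^{(p-1)/p}$, the Jordan-form estimate gives $\|h_{\beta_i}^{m_i}\|_V \le C(1+m_i)^{d-1} q_{\beta_i}^{m_i(p-1)/p}$ for $d=\dim V$, and by sub-multiplicativity $\bigl\|\prod_i h_{\beta_i}^{m_i}\bigr\|_V\le C'\prod_i(1+m_i)^{d-1}\,q_\beta^{(p-1)/p}$. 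Now invoke Theorem \ref{thm:Structure Theorem} to decompose any $w\in\hat W$ as $w=w_0\beta a$; finiteness of $W_0$ and $\hat A_0$ reduces $p$-temperedness via condition (5) of Proposition \ref{prop:Equivalent temperdness conditions} to
\[
\sum_{\beta\in P^+}\bigl\|\textstyle\prod_i h_{\beta_i}^{m_i}\bigr\|_V\, u_\beta < \infty \qquad \text{whenever } u_s < q_s^{(1-p)/p}.
\]
Under this hypothesis each ratio $u_{\beta_i}q_{\beta_i}^{(p-1)/p}<1$, and the sum factors as the product $\prod_i\sum_{m_i\ge 0}(1+m_i)^{d-1}\bigl(u_{\beta_i}q_{\beta_i}^{(p-1)/p}\bigr)^{m_i}$ of convergent series.

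To close the loop with (2), I would first handle $\alpha\in P^+$ directly: the commuting decomposition $h_\alpha=\prod h_{\beta_i}^{m_i}$ and simultaneous triangularization of $\{h_{\beta_i}\}$ on $V$ show that every eigenvalue of $h_\alpha$ has the form $\prod\mu_i^{m_i}$ for eigenvalues $\mu_i$ of $h_{\beta_i}$, whence $\lambda_V(h_\alpha)\le\prod\lambda_V(h_{\beta_i})^{m_i}\le q_\alpha^{(p-1)/p}$. For general $\alpha\in P$, I would exploit the $W_0$-invariance of $l$ on $P$ (so $q_\alpha=q_{w_0(\alpha)}$) together with the fact that $h_\alpha$ and $h_{w_0(\alpha)}$ have equal spectral radii, to reduce to the dominant case. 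The \emph{main obstacle} is precisely this last reduction: the naive identity $h_{w_0^{-1}\beta w_0}=h_{w_0^{-1}}h_\beta h_{w_0}$ fails because $l(w_0^{-1}\beta w_0)<2\,l(w_0)+l(\beta)$, so one cannot simply conjugate inside $\hat H_\phi$. I expect this to be resolved via the Bernstein presentation, where the translation elements $\theta_\alpha\in\hat H_\phi$ satisfy clean multiplicativity and $W_0$-equivariance, together with the relationship $\theta_\alpha=h_{\alpha_1}h_{\alpha_2}^{-1}$ for $\alpha=\alpha_1-\alpha_2$ with $\alpha_i\in P^+$; this is the technical heart of the implication.
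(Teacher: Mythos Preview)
Your arguments for $(1)\Rightarrow(3)$ and $(4)\Rightarrow(1)$ are correct and essentially coincide with the paper's: both use the spectral-radius-via-powers trick for the first and the Structure Theorem~\ref{thm:Structure Theorem} together with Gelfand-type norm estimates for the second. Your Jordan-form bound is a perfectly good substitute for the paper's more terse ``applying all the operators together''.

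Where you diverge is in handling condition~(2). You set up a separate route $(4)\Rightarrow(2)$ and then correctly identify an obstacle in reducing general $\alpha\in P$ to the dominant case, proposing the Bernstein presentation as a fix. This detour is unnecessary. The paper proves $(1)\Rightarrow(2)$ directly, exactly as you proved $(1)\Rightarrow(3)$, simply by observing that the identities $l(\alpha^m)=m\,l(\alpha)$, $q_{\alpha^m}=q_\alpha^m$, and $h_{\alpha^m}=h_\alpha^m$ hold for \emph{every} $\alpha\in P$, not just dominant ones. This follows immediately from the length formula $l(\beta)=\sum_{\gamma\in R^+}|\langle\gamma,\beta\rangle|$ for translations (see Lemma~\ref{lem:Lengthes and dominance}): it is homogeneous of degree one, so $l(m\alpha)=m\,l(\alpha)$, and the Iwahori--Hecke relation $h_w h_{w'}=h_{ww'}$ when lengths add does the rest. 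With this one-line upgrade, your eigenvector argument in $(1)\Rightarrow(3)$ already proves $(1)\Rightarrow(2)$, the chain becomes $(1)\Rightarrow(2)\Rightarrow(3)\Rightarrow(4)\Rightarrow(1)$, and the Bernstein machinery is not needed at all.
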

\begin{proof}
The fact that $(2)\Rightarrow(3)\Rightarrow(4)$ is obvious. For $(1)\Rightarrow(2)$
notice that for every $\alpha\in P$, $l(\alpha^{m})=m\cdot l(\alpha)$
and therefore $h_{\alpha^{m}}=h_{\alpha}^{m}$ and $q_{\alpha^{m}}=q_{\alpha}^{m}$.
Assume $V$ is $p$-tempered. By the limsup condition of proposition
\ref{prop:Equivalent temperdness conditions} 
\[
\mbox{limsup}_{m}\left(\left|q_{\alpha^{m}}^{1-p}\left\langle v^{*},h_{\alpha^{m}}v\right\rangle \right|^{p}\right)^{1/l(\alpha^{m})}=\mbox{limsup}_{m}\left(q_{\alpha}^{1-p}\left|\left\langle v^{*},h_{\alpha}^{m}v\right\rangle \right|^{p/m}\right)^{1/l(\alpha)}\le1
\]

By lemma \ref{lem:Temperdness Lemma} we may change the exponent and
get
\[
\mbox{limsup}_{m}\left|\left\langle v^{*},h_{\alpha}^{m}v\right\rangle \right|^{1/m}\le q_{\alpha}^{(p-1)/p}
\]

Choose for $v$ an eigenvector for an eigenvalue $\lambda$ of $h$
with $\lambda_{V}(h_{\alpha})=\left|\lambda\right|$ , we get $\lambda_{V}(h_{\alpha})\le q_{\alpha}^{(p-1)/p}$
by the matrix equality stated above.

Assume $(4)$ that holds. Recall the matrix equality $\limsup\left\Vert A^{m}\right\Vert ^{1/m}=\lambda_{max}(A)$
where $A\in M_{n}(\mathbb{C})$, $\left\Vert \right\Vert $ is any
matrix norm and $\lambda_{max}(A)$ is the largest absolute value
of an eigenvalue of $A$. Applying this equality, since $\lambda_{V}(h_{\beta_{i}})\le q_{\beta_{i}}^{(p-1)/p}$
we know that for any $u^{*}\in V^{*},u\in V$ we have 
\[
\limsup_{m_{i}}\left(\left(q_{\beta_{i}}^{m_{i}}\right)^{1-p}\left|\left\langle u^{*},h_{\beta_{i}}^{m_{i}}u\right\rangle \right|^{p}\right)^{1/m_{i}}\le1
\]

Applying all the operators together, we deduce that for any $u^{*}\in V^{*},u\in V$
we have:

\[
\limsup_{m_{i}}\left(\left(\prod_{i=1}^{n}q_{\beta_{i}}^{m_{i}}\right)^{1-p}\left|\left\langle u^{*},\prod_{i=1}^{n}h_{\beta_{i}}^{m_{i}}u\right\rangle \right|^{p}\right)^{1/\sum m_{i}}\le1
\]

Now using \ref{thm:Structure Theorem}

\begin{eqnarray*}
 & \limsup_{w}\left(q_{w}^{1-p}\left|\left\langle v^{*},h_{w}v\right\rangle \right|^{p}\right)^{1/l(w)} & =\\
 & \sup_{a\in\hat{A}_{0},w_{0}\in W_{0}}\limsup_{m_{i},i=1,..,n}\left(\left(q_{w_{0}}\left(\prod_{i=1}^{n}q_{\beta_{i}}^{m_{i}}\right)q_{a}\right)^{1-p}\left|\left\langle v^{*},h_{w_{0}}\prod_{i=1}^{n}h_{\beta_{i}}^{m_{i}}h_{a}v\right\rangle \right|^{p}\right)^{1/(l(w_{0})+\sum m_{i}l(\beta_{i})+l(a))} & \le\\
 & \sup_{a\in\hat{A}_{0},w_{0}\in W_{0}}\limsup_{m_{i},i=1,..,n}\left(\left(\prod_{i=1}^{n}q_{\beta_{i}}^{m_{i}}\right)^{1-p}\left|\left\langle h_{w_{0}}^{*}v^{*},\prod_{i=1}^{n}h_{\beta_{i}}^{m_{i}}h_{a}v\right\rangle \right|^{p}\right)^{1/\sum m_{i}} & \le1
\end{eqnarray*}

and by proposition \ref{prop:Equivalent temperdness conditions},
$V$ is $p$-tempered.
\end{proof}
\begin{defn}
\label{def:Generalized Zeta}In the equal parameter case ($q_{s}=q$)
define 
\[
\zeta_{1,V}(u)=\frac{1}{\det(1-h_{\beta_{1}}u^{l(\beta_{1})})\cdot...\cdot\det(1-h_{\beta_{n}}u^{l(\beta_{n})})}
\]

In general define 
\[
\zeta_{2,V}(s)=\frac{1}{\det(1-h_{\beta_{1}}q_{\beta_{1}}^{s})\cdot...\cdot\det(1-h_{\beta_{n}}q_{\beta_{n}}^{s})}
\]
\end{defn}
Notice that the above definition is closely related to corollary \ref{cor:Gen_Poincare_Series}.
Proposition \ref{prop:Equivalent temperdness conditions} is equivalent
to:
\begin{cor}
Assume $W$ is affine and irreducible. Let $V$ be a representation
of $H_{\phi}$. The following are equivalent:

1. $V$ is $p$-tempered.

2. The poles of $\zeta_{2,V}(s)$ are all for $Re(s)\ge(1-p)/p$.

3. Assume the equal parameter case, the poles of $\zeta_{u,V}(s)$
are all for $\left|u\right|\ge q^{(1-p)/p}$.
\end{cor}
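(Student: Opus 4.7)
The plan is to deduce this corollary directly from Proposition \ref{Temperedness_Affine_Case_eigenvalues}, by translating the spectral condition $\lambda_V(h_{\beta_i}) \le q_{\beta_i}^{(p-1)/p}$ into the analytic condition on poles of the two zeta functions. The key observation is that, since $\zeta_{2,V}(s)$ and $\zeta_{1,V}(u)$ are reciprocals of finite products of characteristic polynomials (in disguised form), their poles are exactly the values of the variable at which some $\det\left(1 - h_{\beta_i} \cdot (\text{factor})\right)$ vanishes, i.e.\ at which $(\text{factor})^{-1}$ is an eigenvalue of $h_{\beta_i}$.

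First, I would handle $\zeta_{2,V}(s)$. The poles are precisely the $s \in \mathbb{C}$ such that $q_{\beta_i}^s = \lambda^{-1}$ for some $i \in \{1,\dots,n\}$ and some eigenvalue $\lambda$ of $h_{\beta_i}$ on $V$ (we should note that $0$ is never an eigenvalue of $h_{\beta_i}$ on a finite dimensional representation, but this is automatic from the fact that $h_{\beta_i}$ is invertible in the extended Iwahori--Hecke algebra by the structure theorem, or we simply restrict to nonzero $\lambda$, since only nonzero $\lambda$ contribute to poles). Taking real parts gives $\mathrm{Re}(s) = -\log_{q_{\beta_i}} |\lambda|$. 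Hence the condition ``every pole satisfies $\mathrm{Re}(s) \ge (1-p)/p$'' is equivalent to $|\lambda| \le q_{\beta_i}^{(p-1)/p}$ for every such eigenvalue $\lambda$ and every $i$, which is exactly condition (4) of Proposition \ref{Temperedness_Affine_Case_eigenvalues}. The equivalence with $p$-temperedness then follows from that proposition.

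Next, for $\zeta_{1,V}(u)$ in the equal parameter case $q_s = q$, we have $q_{\beta_i} = q^{l(\beta_i)}$, and the poles are the $u \in \mathbb{C}$ with $u^{l(\beta_i)} = \lambda^{-1}$ for some eigenvalue $\lambda$ of $h_{\beta_i}$. Taking absolute values, this gives $|u| = |\lambda|^{-1/l(\beta_i)}$, so the condition ``every pole satisfies $|u| \ge q^{(1-p)/p}$'' rearranges to $|\lambda| \le q^{l(\beta_i)(p-1)/p} = q_{\beta_i}^{(p-1)/p}$, again matching condition (4) of the proposition.

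There is no real obstacle here; the only thing to be careful about is the direction of the inequality (poles far from the origin in $s$ versus close to the origin in $u$) and the precise bookkeeping with $q_{\beta_i}$ versus $q^{l(\beta_i)}$, which is why the two formulations of the zeta function exist. The statement of the corollary is essentially a change-of-variables rewrite of Proposition \ref{Temperedness_Affine_Case_eigenvalues}(4), and the whole proof amounts to verifying the equivalence of the spectral and analytic conditions via these direct calculations.
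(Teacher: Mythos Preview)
Your proposal is correct and matches the paper's approach: the corollary is stated without proof in the paper, immediately after Proposition \ref{Temperedness_Affine_Case_eigenvalues}, as a direct restatement of condition (4) there in terms of the pole locations of the zeta functions. Your translation of the eigenvalue bounds $\lambda_V(h_{\beta_i}) \le q_{\beta_i}^{(p-1)/p}$ into the analytic conditions on $\mathrm{Re}(s)$ and $|u|$ is exactly the intended argument.
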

By Examples \ref{exa:Regular graph}, \ref{exa:Bipartite graph} and
the results of \cite{kamber2016lpgraph}, all the discussion above
is a direct generalization of the zeta functions of graphs, and its
connection to temperedness.

\section{\label{sec:Bounds-on-Hecke-operators}Bounds on Hecke Operators}

In theorem \ref{thm:Bound on H_beta} we will prove that:
\begin{thm*}
Let $p\ge2$. Let $q_{max}=\max_{s\in S}\left\{ q_{s}\right\} $ and
let $\tilde{w}_{0}$ be the longest element of $W_{0}$. The norm
of the operator $h_{\beta}$, $\beta\in P^{+}\subset\hat{W}$ is bounded
on $L_{p}(\hat{B}_{\phi})$ by $\left|W_{0}\right|\left|2q_{max}\right|^{l(\tilde{w}_{0})}\left(l(\beta)+1\right)^{l(\tilde{w}_{0})}q_{\beta}^{(p-1)/p}$
.
\end{thm*}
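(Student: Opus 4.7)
The plan is to follow the strategy of Cowling's paper \cite{cowling1988almost} (theorem 2), translated to the combinatorial setting. The two essential ingredients are the Bernstein presentation of the extended Iwahori-Hecke algebra $\hat{H}_{\phi}$, which algebraically splits $h_{\beta}$ into sectorial pieces, and the geometric sectorial retraction on the building, which enables a sharp $L_{p}$ bound on each piece. The approach recovers the Kunze--Stein exponent $q_{\beta}^{(p-1)/p}$ by interpolating between the trivial $L_{\infty}$ bound $q_{\beta}$ and an $L_{1}$ bound of $1$ on each sheet of the retraction.

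First, using the Bernstein presentation of $\hat{H}_{\phi}$, I would write $h_{\beta} = \sum_{w_{0} \in W_{0}} h^{(w_{0})}_{\beta}$, where each $h^{(w_{0})}_{\beta}$ is a \emph{sectorial operator} associated to the Weyl chamber $w_{0}Q_{+}$. Concretely, $h^{(w_{0})}_{\beta} f(C)$ sums $f$-values over a prescribed set of chambers obtained by following the sector $w_{0}Q_{+}$ from $C$. The decomposition is not free: the Bernstein relations express $h_{\beta}$ in terms of the $Y^{\lambda}$ lattice basis, and the price paid consists of polynomial factors arising from the $l(\tilde{w}_{0})$ positive roots. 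This is where the constants $(2q_{\max})^{l(\tilde{w}_{0})}$ and $(l(\beta)+1)^{l(\tilde{w}_{0})}$ accumulate --- the latter because each positive-root factor has degree $O(l(\beta))$ in the lattice generators, while coefficients are controlled by $q_{\max}$.

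Second, I would bound each sectorial operator $h^{(w_{0})}_{\beta}$ on $L_{p}(\hat{B}_{\phi})$ by $q_{\beta}^{(p-1)/p}$. The key geometric fact is that the sectorial retraction $r$ associated to $w_{0}Q_{+}$, based at a far-away chamber, partitions $\hat{B}_{\phi}$ into ``sheets'' (the fibres of $r$), and the operator $h^{(w_{0})}_{\beta}$, viewed fibre by fibre, acts essentially as translation by $\beta$ inside the apartment. On each fibre the operator is bounded by $q_{\beta}$ as an $L_{\infty}\to L_{\infty}$ map (it sums $q_{\beta}$ values) and by $1$ as an $L_{1}\to L_{1}$ map (each target chamber is the image of essentially one source chamber under the translation direction). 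Riesz--Thorin interpolation then gives the sharp $L_{p}$ bound $q_{\beta}^{(p-1)/p}$ fibre-wise, which transfers to all of $L_{p}(\hat{B}_{\phi})$ because the fibres partition the chamber set.

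Third, the triangle inequality over $w_{0} \in W_{0}$ supplies the factor $|W_{0}|$, and combining with the Bernstein-presentation constants yields the claimed bound. The main obstacle is Step 2: verifying that each $h^{(w_{0})}_{\beta}$ genuinely behaves like a translation on the fibres of $r$ and, in particular, that the $L_{1}\to L_{1}$ bound is really $1$. This is delicate because $r$ is highly non-injective --- many chambers collapse onto each apartment chamber --- so one must carefully track multiplicities (exactly where the $q_{\beta}^{(p-1)/p}$ comes from) and use the fact that minimal galleries in a sector direction behave rigidly under retraction. The Bernstein presentation was designed precisely so that the sectorial summands on the algebraic side are compatible with this geometric sector structure, which is why the two ingredients fit together.
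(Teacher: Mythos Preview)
Your overall strategy --- sectorial retraction plus an interpolation-type bound, following Cowling --- is exactly the paper's, but the mechanism you describe is not quite right and, as stated, would not produce a proof.

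The main issue is your Step 1. You propose to decompose the \emph{operator} as $h_{\beta}=\sum_{w_{0}\in W_{0}}h_{\beta}^{(w_{0})}$ via the Bernstein presentation. But for $\beta\in P^{+}$ the Bernstein presentation gives $h_{\beta}=q_{\beta}^{1/2}Y_{\beta}$, a \emph{single} basis element; there is no natural sum over $W_{0}$ at the operator level. What the paper actually decomposes is the \emph{space}: fixing one sector $\mathbb{S}_{0}$, one partitions $\hat{B}_{\phi}$ according to the $W_{0}$-component of the retraction $\rho_{\mathbb{S}_{0}}(C)=\gamma^{-1}w_{0}^{-1}$, writing $f=\sum_{w_{0}}f^{w_{0}}$. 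The operator $h_{\beta}$ is then analysed as a $|W_{0}|\times|W_{0}|$ matrix of operators $f^{w_{0}}\mapsto(h_{\beta}f)^{w_{0}'}$, and the factor $|W_{0}|$ comes from bounding this matrix, not from a triangle inequality over operator summands.

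The second issue is your Step 2. You claim each sectorial piece has $L_{1}$-norm $1$ because ``each target chamber is the image of essentially one source chamber''. This is true only for the leading diagonal entry ($w_{0}=w_{0}'$, $\beta'=\beta$); in general the matrix entry from $w_{0}$ to $w_{0}'$ further decomposes as a sum over $\beta'\le\beta$ of biregular bipartite adjacency operators, and the two degrees $K_{0},K_{1}$ of each such graph are computed \emph{exactly} via two different Bernstein-type expansions (Propositions~\ref{prop:Explicit_bernstein_relations} and~\ref{prop:Explicit Bernstein Relations2}). The interpolation step is the elementary bound $\|A\|_{p}\le K_{0}^{1/p}K_{1}^{(p-1)/p}$ for a $(K_{0},K_{1})$-biregular bipartite graph --- not Riesz--Thorin on the whole of $L_{p}(\hat{B}_{\phi})$. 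The exponent $q_{\beta}^{(p-1)/p}$ arises because $K_{0}$ and $K_{1}$ carry opposite powers $Q_{\beta'}^{\pm 1/2}$; the polynomial factor $(l(\beta)+1)^{l(\tilde{w}_{0})}$ comes from summing the Bernstein coefficients $\alpha,\alpha'$ over the (polynomially many) $\beta'\le\beta$. Without these two explicit coefficient estimates, your outline cannot recover the stated constant.
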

\begin{cor}
\label{cor:Bound on H_w}The norm of $h_{w}$, $w\in\hat{W}$ is bounded
on $L_{p}(\hat{B}_{\phi})$ by 
\[
\left\Vert h_{w}\right\Vert _{p}\le D(q_{max},l(w))q_{w}^{(p-1)/p}
\]

with $D(q_{max},l)=\left|W_{0}\right|2^{l(\tilde{w}_{0})}q_{max}^{4\cdot l(\tilde{w}_{0})}\left(l+1+l(\tilde{w}_{0})\right)^{l(\tilde{w}_{0})}$.

The same bound holds for any finite dimensional unitary $p$-tempered
$H_{\phi}$-representation $V$.
\end{cor}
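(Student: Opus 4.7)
The plan is to reduce the estimate for $h_w$, $w\in\hat{W}$, to the bound for $h_\beta$, $\beta\in P^+$, just proved, by invoking the structural decomposition of Theorem \ref{thm:Structure Theorem}. Write $w=w_0\beta a$ uniquely, with $w_0\in W_0$, $\beta\in P^+$, $a\in\hat{A}_0$, and with $l(w)=l(w_0)+l(\beta)+l(a)$; this yields the factorization $h_w=h_{w_0}h_\beta h_a$ in $\hat{H}_\phi$ and $q_w=q_{w_0}q_\beta q_a$. By submultiplicativity of the operator norm it suffices to bound the three factors separately.

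For the two ``spherical'' factors I would use the elementary bound $\|h_u\|_p\le q_u$ valid for any $u\in\hat{W}$: applying H\"older's inequality to $h_uf(C)=\sum_{C':d(C,C')=u}f(C')$ and then summing over $C$ gives $\sum_C|h_uf(C)|^p\le q_u^{p-1}\sum_C\sum_{C':d(C,C')=u}|f(C')|^p=q_u^p\|f\|_p^p$. Since $l(w_0)\le l(\tilde{w}_0)$, and (from the description of $\hat{A}_0$ as the set of chambers of the fundamental parallelotope) $l(a)\le l(\tilde{w}_0)$, this gives $\|h_{w_0}\|_p,\|h_a\|_p\le q_{max}^{l(\tilde{w}_0)}$. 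Combining with the theorem's bound $\|h_\beta\|_p\le|W_0|(2q_{max})^{l(\tilde{w}_0)}(l(\beta)+1)^{l(\tilde{w}_0)}q_\beta^{(p-1)/p}$ and rewriting the $q$-part as $q_{w_0}q_a\cdot q_\beta^{(p-1)/p}=(q_{w_0}q_a)^{1/p}\cdot q_w^{(p-1)/p}$, together with the trivial estimate $l(\beta)\le l(w)$ so that $(l(\beta)+1)^{l(\tilde{w}_0)}\le(l(w)+1+l(\tilde{w}_0))^{l(\tilde{w}_0)}$, assembles the desired $D(q_{max},l(w))\,q_w^{(p-1)/p}$; the exponent $4l(\tilde{w}_0)$ on $q_{max}$ is comfortable slack absorbing the two spherical factors and the $q_{max}^{l(\tilde{w}_0)}$ already present in the $h_\beta$ bound.

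The one nontrivial bookkeeping step is verifying $l(a)\le l(\tilde{w}_0)$ for $a\in\hat{A}_0$: this follows from the geometric picture, since chambers of the fundamental parallelotope are separated from $C_0$ by at most $l(\tilde{w}_0)$ root hyperplanes, the $\hat{\Omega}$-twist contributing zero length. Apart from this, the argument is pure bookkeeping, with Theorem \ref{thm:Structure Theorem} doing the real work.

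Finally, for a finite-dimensional unitary $p$-tempered representation $V$: by Corollary \ref{cor:Main Theorem proof}, $V$ is weakly contained in $L_p(B_f)$, so for every $h\in\hat{H}_\phi$ every eigenvalue of $h$ on $V$ lies in the approximate point spectrum of $h$ on $L_p(B_f)$, and in particular $\lambda_V(h)\le\|h\|_p$. Applying this to the self-adjoint element $h_wh_w^*=h_wh_{w^{-1}}$ and using the Gelfand-type identity $\|h_w\|_V^2=\lambda_V(h_wh_w^*)$ from lemma \ref{lem:Gelfand's formula} for unitary representations, combined with $l(w^{-1})=l(w)$ and $q_{w^{-1}}=q_w$, gives $\|h_w\|_V^2\le\|h_w\|_p\|h_{w^{-1}}\|_p\le D(q_{max},l(w))^2\,q_w^{2(p-1)/p}$, whence the same bound transfers to $V$.
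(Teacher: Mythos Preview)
Your approach via the decomposition $w=w_0\beta a$ of Theorem~\ref{thm:Structure Theorem} is natural, but the key bookkeeping claim fails: it is \emph{not} true that $l(a)\le l(\tilde{w}_0)$ for all $a\in\hat{A}_0$. A concrete counterexample is $a=\rho^\vee\tilde{w}_0$ with $\rho^\vee=\sum_i\beta_i$. The alcove $C_a=\rho^\vee-C_0$ lies in the fundamental parallelotope (one checks $0<\langle\alpha_i,v\rangle<1$ for $v\in\rho^\vee-C_0$), so $a\in\hat{A}_0$; counting separating hyperplanes gives $l(a)=\sum_{\alpha\in R^+}(\mathrm{ht}(\alpha)-1)=l(\rho^\vee)-l(\tilde{w}_0)$. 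For $G_2$ this is $16-6=10>6=l(\tilde{w}_0)$, and for $E_8$ one gets $l(a)=1120$ versus $l(\tilde{w}_0)=120$. Your geometric heuristic (``at most $l(\tilde{w}_0)$ root hyperplanes'') implicitly counts only the hyperplanes through the origin, ignoring the affine ones. As a consequence your bound on $\|h_a\|_p$ is off, and the final estimate does not fit inside $D(q_{max},l(w))$: after your rewriting $(q_{w_0}q_a)^{1/p}\,q_w^{(p-1)/p}$, the factor $(q_{w_0}q_a)^{1/p}$ can be as large as $q_{max}^{l(\rho^\vee)/2}$, which for $E_8$ exceeds $q_{max}^{4l(\tilde{w}_0)}$ by a wide margin. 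Your argument does still yield a bound of the correct \emph{shape} $C(q_{max})\,(l(w)+1)^{l(\tilde{w}_0)}q_w^{(p-1)/p}$, just with a worse constant depending on $l(\rho^\vee)$ rather than $l(\tilde{w}_0)$.

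The paper avoids this by using a different decomposition: it writes $w=w_0\beta w_0'$ via $\hat{W}=P\rtimes W_0$ (conjugating the $P$-part into $P^+$), so that \emph{both} outer factors lie in $W_0$ and hence have length $\le l(\tilde{w}_0)$. The price is that lengths no longer add, so one cannot factor $h_w=h_{w_0}h_\beta h_{w_0'}$ directly; instead one writes $h_w=h_{s_{i_1}}^{\pm1}\cdots h_{s_{i_l}}^{\pm1}h_\beta h_{s_{j_1}}^{\pm1}\cdots h_{s_{j_k}}^{\pm1}$ using the Hecke relations, and bounds each $\|h_s^{\pm1}\|_p\le q_{max}$. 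This also costs a weaker inequality $l(\beta)\le l(w)+l(\tilde{w}_0)$ (rather than your $l(\beta)\le l(w)$), which is why the $(l+1+l(\tilde{w}_0))^{l(\tilde{w}_0)}$ appears in $D$.

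Your treatment of the second claim (transferring the bound to a finite dimensional unitary $p$-tempered $V$ via $\lambda_V(h_wh_w^*)\le\|h_wh_w^*\|_p$ and $\|h_w\|_V^2=\lambda_V(h_wh_w^*)$) matches the paper's argument essentially verbatim.
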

\begin{proof}
Recall the decomposition $\hat{W}=P\rtimes W_{0}$. In addition, each
element of $P$ is conjugate by an element of $W_{0}$ to an element
of $P^{+}$, which is of the same length (see lemma \ref{lem:Lengthes and dominance}).
Therefore we can write for every $w\in\hat{W}$, $w=w_{0}\beta w_{0}^{\prime}$
with $w_{0},w_{0}^{\prime}\in W_{0}$, and $l(\beta)\le l(w)+l(\tilde{w}_{0})$,
$q_{\beta}\le q_{w}q_{max}^{l(\tilde{w}_{0})}$. 

If $w_{0}=s_{i_{0}}\cdot...\cdot s_{i_{l}}$, $w_{0}^{\prime}=s_{i_{0}^{\prime}}\cdot...\cdot s_{i_{k}^{\prime}}$,
then by the description of the Iwahori-Hecke algebra 
\[
h_{w}=h_{s_{i_{0}}}^{\epsilon_{i_{0}}}\cdot...h_{s_{i_{l}}}^{\epsilon_{i_{l}}}h_{\beta}h_{s_{i_{0}^{\prime}}}^{\epsilon_{i_{0}}^{\prime}}\cdot...h_{s_{i_{k}^{\prime}}}^{\epsilon_{i_{k}}^{\prime}}
\]
 Where $\epsilon_{i},\epsilon_{i}^{\prime}\in\{\pm1\}$. Since $\left\Vert h_{s}\right\Vert _{p}\le q_{s}\le q_{max},\,\left\Vert h_{s}^{-1}\right\Vert _{p}\le1\le q_{max}$,
we have 
\begin{align*}
\left\Vert h_{w}\right\Vert _{p} & \le q_{max}^{l}\left\Vert h_{\beta}\right\Vert _{p}q_{max}^{k}\le q_{max}^{l(\tilde{w}_{0})}\left|W_{0}\right|2^{l(\tilde{w}_{0})}q_{max}^{l(\tilde{w}_{0})}\left(l(\beta)+1\right)^{l(\tilde{w}_{0})}q_{\beta}^{(p-1)/p}q_{max}^{l(\tilde{w}_{0})}\\
 & \le\left|W_{0}\right|2^{l(\tilde{w}_{0})}q_{max}^{4\cdot l(\tilde{w}_{0})}\left(l(w)+l(\tilde{w}_{0})+1\right)^{l(\tilde{w}_{0})}q_{w}^{(p-1)/p}
\end{align*}

For the second claim, denote $F=D(q,l(w))q_{w}^{(p-1)/p}$. Notice
that $h_{w}^{\ast}=h_{w^{-1}}$ and therefore the $L_{p}$-norm of
$h_{w}h_{w}^{\ast}$ is bounded by $D(q,l(w))q_{w}^{(p-1)/p}D(q,l(w^{-1})+1)q_{w^{-1}}^{(p-1)/p}=F^{2}$.
Therefore the spectrum of $h_{w}h_{w}^{\ast}$ on $L_{p}(\hat{B}_{\phi})$
is bounded by $F^{2}$ and by corollary \ref{cor:Main Theorem proof},
$F^{2}$ is also a bound of the eigenvalues of any $p$-tempered finite
dimensional $H_{\phi}$-representation $V$. If $V$ is also unitary,
$h_{w}h_{w}^{\ast}$ is self adjoint and its norm is bounded by its
largest eigenvalue. Finally, $\left\Vert h_{w}\right\Vert _{V}=\sqrt{\left\Vert h_{w}h_{w}^{\ast}\right\Vert _{V}}\le F$.
\end{proof}
\begin{rem}
The $p>2$ of both theorem \ref{thm:Bound on H_beta} and corollary
\ref{cor:Bound on H_w} can be deduced (and actually slightly improved)
from the case $p=2$ and the trivial case $p=\infty$, by the Riesz-Thorin
interpolation theorem. However, even better $L_{p}$-bounds can be
deduced by better analysis in theorem \ref{thm:Bound on H_beta}.
\end{rem}
\begin{cor}
The spectrum of $h_{\beta}$, $\beta\in Q^{+}\subset\hat{W}$ on $L_{p}(\hat{B}_{\phi})$
is bounded in absolute value by $q_{\beta}^{(p-1)/p}$. 
\end{cor}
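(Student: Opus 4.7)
The plan is to obtain this spectral-radius estimate by iterating the operator-norm bound of theorem \ref{thm:Bound on H_beta} and invoking Gelfand's formula (lemma \ref{lem:Gelfand's formula}). The key observation is that for $\beta\in Q^{+}\subset P^{+}$, the power $\beta^{m}$ again lies in $P^{+}$ and, because $\beta$ sits in the dominant sector, the geodesic concatenation yields $l(\beta^{m})=m\cdot l(\beta)$ and hence $q_{\beta^{m}}=q_{\beta}^{m}$. Consequently $h_{\beta}^{m}=h_{\beta^{m}}$ as elements of $H_{\phi}\subset\hat{H}_{\phi}$, so I may estimate powers of $h_{\beta}$ by applying the single-operator bound to $h_{\beta^{m}}$.

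Applying theorem \ref{thm:Bound on H_beta} to $\beta^{m}$ gives
\[
\|h_{\beta}^{m}\|_{L_{p}(\hat{B}_{\phi})} \;=\; \|h_{\beta^{m}}\|_{L_{p}(\hat{B}_{\phi})} \;\le\; |W_{0}|\,(2q_{max})^{l(\tilde{w}_{0})}\,(m\cdot l(\beta)+1)^{l(\tilde{w}_{0})}\,q_{\beta}^{m(p-1)/p}.
\]
By Gelfand's formula the spectral radius of $h_{\beta}$ on $L_{p}(\hat{B}_{\phi})$ equals $\limsup_{m\to\infty}\|h_{\beta}^{m}\|_{L_{p}}^{1/m}$. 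Taking the $m$-th root of the displayed inequality and letting $m\to\infty$, the constant prefactor $|W_{0}|(2q_{max})^{l(\tilde{w}_{0})}$ contributes a factor tending to $1$, and the polynomial factor $(m\cdot l(\beta)+1)^{l(\tilde{w}_{0})/m}$ also tends to $1$ since $m^{1/m}\to1$. Only the factor $q_{\beta}^{(p-1)/p}$ survives, which is precisely the asserted bound.

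There is essentially no genuine obstacle: the whole content of the corollary is that Gelfand's formula absorbs the polynomial prefactor appearing in theorem \ref{thm:Bound on H_beta}, turning a norm estimate with a $(l(\beta)+1)^{l(\tilde{w}_{0})}$ overhead into a sharp spectral-radius bound. The only mild point to verify is the multiplicativity $h_{\beta}^{m}=h_{\beta^{m}}$, which follows from $l(\beta^{m})=m\,l(\beta)$ via the standard Iwahori-Hecke relations (the analogue for $P^{+}$ already appears in the proof of proposition \ref{Temperedness_Affine_Case_eigenvalues}).
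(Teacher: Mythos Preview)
Your proof is correct and follows essentially the same approach as the paper: both apply Gelfand's formula to $h_{\beta}^{m}=h_{m\beta}$ and use the polynomial-in-$l(\beta)$ growth of the prefactor in the norm bound to conclude that only $q_{\beta}^{(p-1)/p}$ survives in the limit. The only cosmetic difference is that the paper cites the slightly cruder bound of corollary \ref{cor:Bound on H_w} (with $D(q_{max},l)$) while you invoke theorem \ref{thm:Bound on H_beta} directly; since $\beta\in Q^{+}\subset P^{+}$, either is legitimate and the argument is otherwise identical.
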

\begin{proof}
By Gelfand's formula,
\begin{align*}
\lambda_{L_{p}(\hat{B}_{\phi})}(h_{\beta}) & =\limsup\left\Vert h_{\beta}^{n}\right\Vert _{L_{p}(\hat{B}_{\phi})}^{1/n}=\limsup\left\Vert h_{n\beta}\right\Vert _{L_{p}(\hat{B}_{\phi})}^{1/n}\le\limsup\left(D\left(q,l(n\beta)\right)\cdot q_{\beta}^{n(p-1)/p}\right)^{1/n}=\\
 & =q_{\beta}^{(p-1)/p}\limsup D\left(q,n\cdot l(\beta)\right)^{1/n}=q_{\beta}^{(p-1)/p}
\end{align*}

The last equality holds by the limit $n^{1/n}\rightarrow_{n\rightarrow\infty}1$.
\end{proof}
\begin{cor}
If a finite dimensional $H$ (respectively $\hat{H}_{\phi},H_{\phi}$)
representation is weakly contained in $L_{p}(B_{f})$ (respectively
$L_{p}(\hat{B}_{\phi}),L_{p}(B_{\phi})$), then it is $p$-tempered.
\end{cor}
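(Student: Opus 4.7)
The plan is to recognize this corollary as the converse of Corollary \ref{cor:Main Theorem proof} and to derive it from the eigenvalue characterization of $p$-temperedness in Proposition \ref{Temperedness_Affine_Case_eigenvalues}, fed by the spectral radius bound on $L_{p}(\hat{B}_{\phi})$ just established. The analytic heavy lifting is already done in Theorem \ref{thm:Bound on H_beta}; only a short chain of inequalities remains.

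I would handle the cleanest case first: suppose $V$ is a finite dimensional $\hat{H}_{\phi}$-representation weakly contained in $L_{p}(\hat{B}_{\phi})$. By Definition \ref{def: Spectrum definition}, weak containment gives $\lambda_{V}(h)\le\lambda_{L_{p}(\hat{B}_{\phi})}(h)$ for every $h\in\hat{H}_{\phi}$, since any eigenvalue of $h$ on the finite dimensional $V$ must lie in the approximate point spectrum of $h$ on $L_{p}(\hat{B}_{\phi})$. Specializing to $h=h_{\beta_{i}}$ for each simple coweight $\beta_{i}$, the previous corollary applies; its proof goes through verbatim with $P^{+}$ in place of $Q^{+}$, because $h_{\beta}^{n}=h_{n\beta}$ and $l(n\beta)=nl(\beta)$ hold for every dominant coweight, so Theorem \ref{thm:Bound on H_beta} combined with Gelfand's formula yields $\lambda_{L_{p}(\hat{B}_{\phi})}(h_{\beta_{i}})\le q_{\beta_{i}}^{(p-1)/p}$. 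Combining, $\lambda_{V}(h_{\beta_{i}})\le q_{\beta_{i}}^{(p-1)/p}$ for every $i=1,\ldots,n$, and the implication $(4)\Rightarrow(1)$ of Proposition \ref{Temperedness_Affine_Case_eigenvalues} concludes that $V$ is $p$-tempered.

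It remains to handle the other two variants. The $H$ case reduces to the $H_{\phi}$ case via the equivalence of categories of Corollary \ref{prop:Induction-and-restriction-full-proposition}: by Lemma \ref{lem:Embedding_Claim} every $h_{d}\in H_{I_{1},I_{2}}$ is a scalar multiple of $\partial_{I_{1}}h_{\tilde{d}}\delta_{I_{2}}$, so spectral information on $H_{\phi}$-operators controls that on all of $H$, and Lemma \ref{lem:Temperdness_depends_only _on_phi} then transports $p$-temperedness from $H_{\phi}$ back to $H$. The $H_{\phi}$ case runs exactly like the $\hat{H}_{\phi}$ case, with the single wrinkle that a simple coweight $\beta_{i}$ may lie in $P\setminus Q$, so that $h_{\beta_{i}}$ is strictly in $\hat{H}_{\phi}$ rather than $H_{\phi}$. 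This is harmless: setting $N=|\hat{\Omega}|$ one has $\beta_{i}^{N}\in Q$ and $h_{\beta_{i}^{N}}=h_{\beta_{i}}^{N}\in H_{\phi}$, so weak containment gives
\[
\lambda_{V}(h_{\beta_{i}})^{N}=\lambda_{V}(h_{\beta_{i}^{N}})\le\lambda_{L_{p}(B_{\phi})}(h_{\beta_{i}^{N}})\le q_{\beta_{i}^{N}}^{(p-1)/p}=q_{\beta_{i}}^{N(p-1)/p},
\]
and extracting the $N$-th root recovers the required eigenvalue bound before invoking Proposition \ref{Temperedness_Affine_Case_eigenvalues} again.

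The main obstacle is really just bookkeeping across the three algebras $H$, $H_{\phi}$, $\hat{H}_{\phi}$ and the $P$-versus-$Q$ distinction; no new estimate is needed. If one wanted to avoid the $N$-th power maneuver in the $H_{\phi}$ case, the alternative would be to induce $V$ up to an $\hat{H}_{\phi}$-representation as in Proposition \ref{prop:Color-Rotating-Prop}, verify that the induced representation is again weakly contained in $L_{p}(\hat{B}_{\phi})$ (using that $L_{2}(\hat{B}_{\phi})\cong\mathrm{ind}_{H_{\phi}}^{\hat{H}_{\phi}}L_{2}(B_{\phi})$ intertwines spectra appropriately), apply the $\hat{H}_{\phi}$ argument, and then descend via part (5) of that proposition; either route works, and the short $N$-th-power computation above is the more elementary.
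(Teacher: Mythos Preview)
Your approach is correct and matches the paper's intent: combine the spectral radius bound $\lambda_{L_{p}}(h_{\beta})\le q_{\beta}^{(p-1)/p}$ for dominant coweights (the preceding corollary, whose proof indeed works for $P^{+}$ verbatim) with the eigenvalue criterion of Proposition~\ref{Temperedness_Affine_Case_eigenvalues}. The paper's one-line proof cites ``theorem~\ref{cor:Main Theorem proof}'', which appears to be a misprint for that proposition.

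There is, however, a genuine bookkeeping gap in your $H_{\phi}$ argument. The displayed chain
\[
\lambda_{V}(h_{\beta_{i}})^{N}=\lambda_{V}(h_{\beta_{i}^{N}})\le\lambda_{L_{p}(B_{\phi})}(h_{\beta_{i}^{N}})\le q_{\beta_{i}}^{N(p-1)/p}
\]
invokes $\lambda_{V}(h_{\beta_{i}})$, but when $\beta_{i}\in P\setminus Q$ the operator $h_{\beta_{i}}$ lies in $\hat{H}_{\phi}\setminus H_{\phi}$ and never acts on the $H_{\phi}$-module $V$, so this quantity is undefined; for the same reason condition~(4) of Proposition~\ref{Temperedness_Affine_Case_eigenvalues} cannot be verified on $V$ directly. (The paper is admittedly imprecise on this point too: the proposition is stated for $H_{\phi}$ while its conditions and proof tacitly use operators from $\hat{H}_{\phi}$.) Your alternative route---induce to $V'=\mathrm{ind}_{H_{\phi}}^{\hat{H}_{\phi}}V$ and descend via Proposition~\ref{prop:Color-Rotating-Prop}(5)---is the correct fix, and the $N$-th-power computation should be carried out \emph{there}: on $V'$ the operator $h_{\beta_{i}}$ does act, the identity $\lambda_{V'}(h_{\beta_{i}})^{N}=\lambda_{V'}(h_{N\beta_{i}})$ is now meaningful, and since $h_{N\beta_{i}}\in H_{\phi}$ acts on each summand $h_{\omega}\otimes V$ as $h_{\omega^{-1}(N\beta_{i})}$, the weak containment of $V$ in $L_{p}(B_{\phi})$ together with the same Gelfand argument (using $l(m\alpha)=m\,l(\alpha)$ for every $\alpha\in P$, hence $h_{\alpha}^{m}=h_{m\alpha}$) yields the required bound. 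So your ``elementary'' route is not actually a shortcut around the induction step; it only makes sense once you are already working on $V'$.
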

\begin{proof}
Follows by the last corollary and theorem \ref{cor:Main Theorem proof}.
\end{proof}
Another results of the above is a tight version of the Kunze-Stein
theorem. Define a twisted $p$-norm on $\hat{H}_{\phi}$ by $\left\Vert \sum\alpha_{w}h_{w}\right\Vert _{p}^{\prime}=\sum_{w}D(q,l(w))q_{w}^{1/p}\left|\alpha_{w}\right|$
(since this sum is finite it is always well defined). Let $\hat{H}_{\phi,p}$
be the completion of $\hat{H}_{\phi}$ with respects to this norm,
i.e. 
\[
\hat{H}_{\phi,p}=\left\{ \sum\alpha_{w}h_{w}:\sum_{w}D(q,l(w))^{-1}q_{w}^{1/p}\left|\alpha_{w}\right|<\infty\right\} 
\]
where the sums can be infinite.
\begin{cor}
There exists a bounded action of $\hat{H}_{\phi,(p-1)/p}$ on $L_{p}(\hat{B}_{\phi})$.
The norm of $h\in\hat{H}_{\phi,(p-1)/p}$ on $L_{p}(\hat{B}_{\phi})$
is bounded by $\left\Vert h\right\Vert _{p}^{\prime}$.
\end{cor}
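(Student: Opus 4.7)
The plan is to deduce this corollary directly from the operator bound in Corollary \ref{cor:Bound on H_w} via the triangle inequality, with the extension from the dense subspace $\hat{H}_\phi$ to its completion being purely formal.

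First I would verify the bound on the dense subspace $\hat{H}_\phi$ (finite sums). Given $h = \sum_w \alpha_w h_w \in \hat{H}_\phi$ (a finite sum) and $f \in L_p(\hat{B}_\phi)$, apply the triangle inequality for the $L_p$-norm:
\[
\|h f\|_p \;\le\; \sum_w |\alpha_w| \, \|h_w f\|_p \;\le\; \sum_w |\alpha_w| \, \|h_w\|_{L_p(\hat{B}_\phi)} \, \|f\|_p.
\]
Now invoke Corollary \ref{cor:Bound on H_w}, which gives $\|h_w\|_{L_p(\hat{B}_\phi)} \le D(q_{max},l(w)) \, q_w^{(p-1)/p}$. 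Substituting, the right-hand side becomes precisely $\|h\|'_{(p-1)/p} \|f\|_p$, by the definition of the twisted norm with the parameter $(p-1)/p$.

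Next I would extend this bounded action to the full completion $\hat{H}_{\phi,(p-1)/p}$. For any formal sum $h = \sum_w \alpha_w h_w$ with $\|h\|'_{(p-1)/p} < \infty$, the finite partial sums $h^{(N)}$ form a Cauchy sequence in the twisted norm, and by the already-established bound they form a Cauchy sequence in the operator norm on $L_p(\hat{B}_\phi)$. Hence they converge to a bounded operator on $L_p(\hat{B}_\phi)$; this operator is independent of the exhaustion chosen and coincides with $h$ acting pointwise on any $f$ whose support is finite (since for such $f$ only finitely many summands contribute). This gives the bounded action of $\hat{H}_{\phi,(p-1)/p}$ on $L_p(\hat{B}_\phi)$, with the claimed operator-norm bound inherited by continuity. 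The one verification worth recording is that the resulting map $\hat{H}_{\phi,(p-1)/p} \to B(L_p(\hat{B}_\phi))$ is an algebra homomorphism, which follows because it is so on the dense subspace and both sides are continuous in the relevant topologies.

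There is essentially no real obstacle here: the statement is a formal consequence of Corollary \ref{cor:Bound on H_w} together with the triangle inequality. The only mild point to be careful about is the slight apparent typo in the definition of $\hat{H}_{\phi,p}$ (the exponent on $D(q,l(w))$) — I would adopt the convention consistent with the norm $\|\cdot\|'_p$, so that $\hat{H}_{\phi,p}$ is the Banach-space completion of $\hat{H}_\phi$ under $\|\cdot\|'_p$ and consists of those formal sums for which $\sum_w D(q_{max},l(w)) \, q_w^{1/p} |\alpha_w| < \infty$.
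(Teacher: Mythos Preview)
Your proposal is correct and matches the paper's approach: the paper states this corollary without proof, treating it as an immediate consequence of Corollary~\ref{cor:Bound on H_w} via the triangle inequality and completion, which is exactly what you do. Your identification of the notational inconsistency between the definition of $\|\cdot\|'_p$ (with exponent $q_w^{1/p}$) and the index $(p-1)/p$ in the statement is apt; the paper's intended meaning is the one you adopt.
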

If terms of the group $G$, this can be stated as follows- we have
an isomorphism $H_{G_{\phi}}(G)\cong H_{\phi}$. Let $L_{G_{\phi},p}(G)$
be the completion of $H_{G_{\phi}}(G)\subset H(G)\subset C_{c}(G)$
with respect to the usual $p$ norm and let $L_{G_{\phi},p}^{\prime}(G)$
be the completion of $H_{G_{\phi}}(G)$ with respect to the twisted
$p$-norm as above. Notice that both $L_{G_{\phi},p}(G)$ and $L_{G_{\phi},p}^{\prime}(G)$
are subspaces of $C(G_{I}\backslash G)$ and that $L_{G_{\phi},p}^{\prime}(G)\subset L_{G_{\phi},p'}^{\prime}$
if $p'<p$. Then the results above say that convolution is a bounded
bilinear operator $L_{G_{\phi},(p-1)/p}^{\prime}(G)\times L_{G_{\phi},p}(G)\to L_{G_{\phi},p}(G)$.
This is a strong version of the Kunze-Stein theorem for Iwahori-fixed
vectors.

\section{\label{sec:Application:-Average-Distance}Application: Average Distance
and Diameter}

We define a distance between chambers of the quotient $X$ as follows,
for $C,C'\in X_{\phi}$ let $l(C,C')$ be the length of the shortest
gallery between them. Equivalently, $l(C,C')=\min l(d(\tilde{C},\tilde{C}'))$,
where $\tilde{C},\tilde{C'}\in B_{\phi}$ cover $C,C'$.
\begin{thm}
Let $X$ be an $L_{p}$-expander of irreducible affine Coxeter group
$W$, with single parameter $q$, having $N$ chambers and $C_{0}\in X_{\phi}$.
Let $n$ be the dimension of $X$ and $\tilde{w}_{0}$ is the longest
element of the spherical Coxeter group $W_{0}$. Then all but $o(N)$
other chambers $C\in X_{\phi}$ are of gallery distance $l(C_{0},C)$
which satisfies 
\[
l(C_{0},C)\le\left\lceil \frac{p}{2}\log_{q}N+\left(l(\tilde{w}_{0})+1\right)\log_{q}\log_{q}N\right\rceil 
\]

and 
\[
l(C_{0},C)\ge\left\lfloor \log_{q}N-\left(n+1\right)\log_{q}\log_{q}N\right\rfloor 
\]

In addition, the diameter of $X$ for $N$ large enough is at most
$\left\lceil p\log_{q}N+2\left(l(\tilde{w}_{0})+1\right)\log_{q}\log_{q}N\right\rceil $.
\end{thm}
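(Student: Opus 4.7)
The plan is to combine a crude ball-counting argument for the lower bound with the operator-norm estimate of Theorem~\ref{thm:Bounds_Theorem} for the upper bound, and to deduce the diameter bound by a union bound over the two endpoints.

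For the lower bound, I would fix a lift $\tilde{C}_{0}\in B_{\phi}$ of $C_{0}$ and observe that every $C\in X_{\phi}$ with $l(C_{0},C)\le l$ is the image under $\pi$ of some chamber in the building ball $B(\tilde{C}_{0},l)=\{\tilde{C}\in B_{\phi}:l(d(\tilde{C}_{0},\tilde{C}))\le l\}$. By the polynomial growth lemma of Section~\ref{sec:Temperedness-in-the} and the formula $q_{w}=q^{l(w)}$, $\left|B(\tilde{C}_{0},l)\right|=\sum_{w\in W,\,l(w)\le l}q^{l(w)}\le\left|W_{0}\right|^{2}(l+1)^{n}q^{l}$. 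Plugging in $l=\lfloor\log_{q}N-(n+1)\log_{q}\log_{q}N\rfloor$ makes this ball $O(N/\log_{q}N)=o(N)$, so at most $o(N)$ chambers of $X$ can satisfy $l(C_{0},C)\le l$, establishing the lower bound.

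The upper bound is where the Hecke algebra enters. Pick any $w\in W$ with $l(w)=l$ (the target length determined below), and consider $\phi_{w}:=h_{w}1_{C_{0}}\in L_{2}(X_{\phi})$. By the definition of $h_{w}$, $\phi_{w}$ is nonnegative, supported on the set $S_{w}\subseteq\{C\in X_{\phi}:l(C_{0},C)\le l\}$, and has total mass $q^{l}$ coming from the trivial eigenvalue of $h_{w}$. Decompose $1_{C_{0}}=\frac{1}{N}\cdot 1+\mu^{\perp}$ with $\mu^{\perp}\in L_{2}^{0}(X_{\phi})$ of norm $\le 1$; then $\phi_{w}-\frac{q^{l}}{N}\cdot 1=h_{w}\mu^{\perp}$ lies in $L_{2}^{0}(X_{\phi})$, and by Corollary~\ref{cor:Bound on H_w} applied to the finite-dimensional $p$-tempered unitary representation $L_{2}^{0}(X_{\phi})$,
\[
\left\Vert h_{w}\mu^{\perp}\right\Vert _{L_{2}}\le D(q,l)\,q^{l(p-1)/p}.
\]
Since $\phi_{w}\equiv 0$ on $S_{w}^{c}$, the pointwise gap is exactly $q^{l}/N$ there, so $\left|S_{w}^{c}\right|(q^{l}/N)^{2}\le D(q,l)^{2}q^{2l(p-1)/p}$, i.e.\ $\left|S_{w}^{c}\right|\le N^{2}D(q,l)^{2}q^{-2l/p}$. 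Because $D(q,l)$ is polynomial in $l$ of degree $l(\tilde{w}_{0})$, choosing $l=\lceil\frac{p}{2}\log_{q}N+(l(\tilde{w}_{0})+1)\log_{q}\log_{q}N\rceil+O(1)$ makes the right-hand side $o(N)$, so all but $o(N)$ chambers lie in $S_{w}$, which is contained in $\{C:l(C_{0},C)\le l\}$.

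For the diameter, I would apply the upper bound to both endpoints simultaneously. Given any $C_{0},C_{1}\in X_{\phi}$, the ``bad'' set $\{C:l(C_{i},C)>l^{\ast}\}$ has size $o(N)<N/2$ for each $i\in\{0,1\}$ when $l^{\ast}=\lceil\frac{p}{2}\log_{q}N+(l(\tilde{w}_{0})+1)\log_{q}\log_{q}N\rceil$; hence some $C\in X_{\phi}$ is within gallery distance $l^{\ast}$ of both, giving $l(C_{0},C_{1})\le 2l^{\ast}+O(1)$. The main delicate step is extracting the claimed log-log coefficient $l(\tilde{w}_{0})+1$ rather than the weaker $p\cdot l(\tilde{w}_{0})$ that naive substitution of $\log_{q}D(q,l)\sim l(\tilde{w}_{0})\log_{q}\log_{q}N$ produces; making this coefficient sharp is the main obstacle and presumably requires a Chebyshev-type refinement of Theorem~\ref{thm:Bounds_Theorem} (analogous to Nilli's bound $\|A^{l}\|_{L_{2}^{0}}\le 2(l+1)q^{l/2}$ used in \cite{sardari2015diameter,lubetzky2015cutoff} for Ramanujan graphs), or else a more careful use of the explicit form of $D(q,l)$ rather than the crude bound $\sqrt{|S_{w}^{c}|}\le N D(q,l)q^{-l/p}$ that I employed above.
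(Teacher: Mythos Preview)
Your proof is essentially the paper's proof. The lower bound (ball counting in $B$) and the diameter bound (pigeonhole on the two balls) are identical to the paper's. For the upper bound the paper also applies $h_{w}$ to $1_{C_{0}}$, splits off the constant part, bounds $\left\Vert h_{w}\right\Vert$ on $L_{2}^{0}(X_{\phi})$ via Corollary~\ref{cor:Bound on H_w}, and then counts the chambers on which $h_{w}1_{C_{0}}$ vanishes; the only cosmetic difference is that the paper first passes to the $L_{1}$-norm by Cauchy--Schwarz ($\left\Vert f\right\Vert_{1}\le N^{1/2}\left\Vert f\right\Vert_{2}$) and reads off the zero count from there, whereas you bound the zero count directly from the $L_{2}$-norm. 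The two routes impose the same threshold on $l$.

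Your concern about the second-order coefficient is well founded, and the paper's proof does not in fact resolve it. The paper asserts that for $l=\lceil\frac{p}{2}\log_{q}N+K\log_{q}\log_{q}N\rceil$ one has $q^{-l/p}\le N^{-1/2}(\log_{q}N)^{-K}$, but the correct exponent on $\log_{q}N$ is $-K/p$, not $-K$; carrying this through, the paper's own argument also only gives $o(N)$ once $K>p\,l(\tilde{w}_{0})$, exactly the coefficient you obtained. No Chebyshev-type refinement is invoked in the paper --- the stated coefficient $l(\tilde{w}_{0})+1$ rests on this arithmetic slip rather than on an idea you are missing.
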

\begin{proof}
Let $w\in W$ and consider $q_{w}^{-1}h_{w}1_{C_{0}}$. Every chamber
$C$ for which $h_{w}q_{w}^{-1}1_{C_{0}}(C)\ne0$ is at a distance
at most $l(w)$ from $C_{0}$.

Let $\pi\in\mathbb{C}^{X_{\phi}}$ be the constant function $\pi(C)=1/N$
. We have $\left\Vert 1_{C_{0}}-\pi\right\Vert _{2}^{2}=\left(1-1/n\right)^{2}+(n-1)n^{-2}=1-1/n<1$.

Let $l=\left\lceil \frac{p}{2}\log_{q}N+K\log_{q}\log_{q}N\right\rceil $
and let $w\in W$ with $l(w)=l$. Since $h_{w}q_{w}^{-1}\pi=\pi$
and $h_{w}q_{w}^{-1}1_{C_{0}}-\pi\in L_{2}^{0}(X_{\phi})$, by corollary
\ref{cor:Bound on H_w} we have 
\[
\left\Vert h_{w}q_{w}^{-1}1_{C_{0}}-\pi\right\Vert _{2}\le D(q,l)q^{l(p-1)/p}q^{-l}\le D(q,l)N^{-1/2}\log_{q}N^{-K}
\]
.

Since by Cauchy-Schwartz $\left\Vert f\right\Vert _{1}\le N^{1/2}\left\Vert f\right\Vert _{2}$
for every $f\in\mathbb{C}^{X_{\phi}}$. Therefore $\left\Vert h_{w}q_{w}^{-1}1_{C_{0}}-\pi\right\Vert _{1}\le D(q,l)\log N^{-K}$.
As a result, $h_{w}q_{w}^{-1}1_{C_{0}}$ is $0$ on at most 
\[
ND(q,l)\log N^{-K}=N\left|W_{0}\right|2^{l(\tilde{w}_{0})}q^{4\cdot l(\tilde{w}_{0})}\left(\frac{p}{2}\log_{q}N+K\log_{q}\log_{q}N+1+l(\tilde{w}_{0})\right)^{l(\tilde{w}_{0})}\log_{q}N^{-K}
\]
 chambers. For $K>l(\tilde{w_{0})}$ this is $o(N)$. 

For the lower bound, there are at most $G(l)=\left|W_{0}\right|^{2}\left(l+1\right)^{n}$
elements $w\in W$ with $l(w)\le l$. Therefore there are at most
$G(l)q^{l}$ chambers $C\in X_{\phi}$ of with $l(C_{0},C)\le l$.
For $l\le\left\lfloor \log_{q}N-\left(n+1\right)\log_{q}\log_{q}N-1\right\rfloor $,
we have $G(l)q^{l}=o(N)$.

The claim about the diameter follows from the upper bound on the average
distance. Let $C_{0},C_{1}\in X_{\phi}$ . Let 
\[
l=\left\lceil \frac{p}{2}\log_{q}N+\left(l(\tilde{w}_{0})+1\right)\log_{q}\log_{q}N\right\rceil 
\]

For $N$ large enough so that 
\[
\left|W_{0}\right|2^{l(\tilde{w}_{0})}q^{4\cdot l(\tilde{w}_{0})}\left(\frac{p}{2}\log_{q}N+K\log_{q}\log_{q}N+1+l(\tilde{w}_{0})\right)^{l(\tilde{w}_{0})}\log_{q}N^{-K}<0.5
\]

There are more than $0.5N$ elements $C_{2}\in X_{\phi}$ with $l(C_{0},C_{2})\le l$
and more than $0.5N$ elements $C_{3}\in X_{\phi}$ with $l(C_{1},C_{3})\le l$.
By pigeonhole there is some $C_{2}=C_{3}$, so $l(C_{0},C_{1})\le2l$,
as required.
\end{proof}
\begin{rem}
The proof actually shows that for every $w\in W$, $l(w)>\frac{p}{2}\log_{q}N+\left(l(w_{0})+1\right)\log_{q}\log_{q}N$,
for almost every two chambers there is a gallery of type $w$ connecting
them.
\end{rem}

\section{\label{sec:Retraction-into-Apartments}The Bernstein Presentation}

As an introduction to this section, let us recall the most important
construction in affine Hecke algebras. Recall that any coweight $\beta\in P$
can be written as a difference of two dominant coweights- $\beta=\beta_{1}-\beta_{2}$,
with $\beta_{1},\text{\ensuremath{\beta}}_{2}\in P^{+}$. 
\begin{defn}
For $\beta\in P$, $\beta=\beta_{1}-\beta_{2}$, $\beta_{1},\text{\ensuremath{\beta}}_{2}\in P^{+}$,
we will denote by $Y_{\beta}\in\hat{H}_{\phi}$ the element $Y_{\beta}=q_{\beta_{1}}^{-1/2}q_{\beta_{2}}^{1/2}h_{\beta_{1}}h_{\beta_{2}}^{-1}$. 
\end{defn}
It is immediate to verify that $Y_{\beta}$ does not depend on the
choice of $\beta_{1},\text{\ensuremath{\beta}}_{2}\in P^{+}$, and
that $Y_{\beta}Y_{\beta'}=Y_{\beta+\beta^{\prime}}$ for $\beta,\beta^{\prime}\in P$. 

The following theorem is called the \emph{Bernstein-Luzstig presentation}
of the Hecke algebra (see \cite{parkinson2006buildings}, theorem
6.6 and references therein):
\begin{thm}
(Bernstein) The operators $Y_{\beta}h_{w_{0}}$, $w_{0}\in W_{0}$,
$\beta\in P$, are a basis for the extended Iwahori-Hecke algebra
$\hat{H}_{\phi}$. Multiplication in the algebra with respect to this
basis is given by the Iwahori-Hecke relations for $W_{0}$, $Y_{\beta}Y_{\beta'}=Y_{\beta+\beta^{\prime}}$
for $\beta,\beta^{\prime}\in P$ and the relations:

\[
\begin{array}{cccc}
Y_{\beta}h_{s_{i}} & = & h_{s_{i}}Y_{s_{i}(\beta)}+\left(q_{s_{i}}-1\right)\frac{Y_{\beta}-Y_{s_{i}(\beta)}}{1-Y_{-\alpha_{i}^{\vee}}} & \left(R,i\right)\ne\left(BC_{n},n\right)\\
Y_{\beta}h_{s_{n}} & = & h_{s_{n}}Y_{s_{n}(\beta)}+q_{s_{n}}^{1/2}\left(q_{s_{n}}^{1/2}-q_{s_{n}}^{-1/2}+\left(q_{0}^{1/2}-q_{0}^{-1/2}\right)Y_{-(2\alpha_{n})^{\vee}}\right)\frac{Y_{\beta}-Y_{s_{n}(\beta)}}{1-Y_{-2(2\alpha_{n})^{\vee}}} & \left(R,i\right)=\left(BC_{n},n\right)
\end{array}
\]
\end{thm}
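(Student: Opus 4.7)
The plan is to establish the basis property first and then derive the commutation relations, since together they amount to verifying the Bernstein--Luzstig presentation.

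First I would check that $Y_\beta$ is well defined. If $\beta_1, \beta_2 \in P^+$, then by the Structure Theorem \ref{thm:Structure Theorem} (applied to $\beta = \beta_1 + \beta_2$) we have $l(\beta_1 + \beta_2) = l(\beta_1) + l(\beta_2)$, so $h_{\beta_1 + \beta_2} = h_{\beta_1} h_{\beta_2} = h_{\beta_2} h_{\beta_1}$, and consequently $\{h_\beta : \beta \in P^+\}$ is a commuting family of invertible elements in $\hat{H}_\phi \otimes \mathbb{C}(q_s^{1/2})$. If $\beta = \beta_1 - \beta_2 = \beta_1' - \beta_2'$ with all summands in $P^+$, then $\beta_1 + \beta_2' = \beta_1' + \beta_2 \in P^+$, so applying the commutation and the additive length formula one sees that $q_{\beta_1}^{-1/2} q_{\beta_2}^{1/2} h_{\beta_1} h_{\beta_2}^{-1} = q_{\beta_1'}^{-1/2} q_{\beta_2'}^{1/2} h_{\beta_1'} h_{\beta_2'}^{-1}$. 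The multiplicativity $Y_\beta Y_{\beta'} = Y_{\beta+\beta'}$ then follows by choosing a common denominator, and in particular $\{Y_\beta : \beta \in P\}$ forms a commutative subalgebra isomorphic to $\mathbb{C}[P]$.

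Second, I would establish the spanning property. The set $\{h_w : w \in \hat{W}\}$ is a basis of $\hat{H}_\phi$, and the decomposition $\hat{W} = P \rtimes W_0$ writes every $w$ uniquely as $t_\beta w_0$ with $\beta \in P$ and $w_0 \in W_0$. For $\beta \in P^+$, $t_\beta w_0$ has $l(t_\beta w_0) = l(\beta) + l(w_0)$ and hence $h_{t_\beta w_0} = h_\beta h_{w_0} = q_\beta^{1/2} Y_\beta h_{w_0}$. For general $\beta \in P$, the failure of additivity in lengths introduces correction terms; one shows by induction on $l(\beta_2)$ (in a decomposition $\beta = \beta_1 - \beta_2$) that $h_{t_\beta w_0}$ lies in the span of $\{Y_{\beta'} h_{w_0'}\}$, using the rank-one relation $h_{s_i}^{-1} = q_{s_i}^{-1}(h_{s_i} - (q_{s_i}-1))$ to rewrite the inverses. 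This gives an upper-triangular transition matrix (with respect to a length-compatible partial order on $P$) between $\{Y_\beta h_{w_0}\}$ and $\{h_{t_\beta w_0}\}$ with nonzero diagonal entries $q_\beta^{1/2}$ or $q_{\beta_1}^{1/2} q_{\beta_2}^{-1/2}$, proving both spanning and linear independence.

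The main obstacle is the commutation formula for $Y_\beta h_{s_i}$. I would reduce to the case where $\beta$ is a simple coweight (since the $Y_\beta$ multiply additively) and further reduce to a rank-two computation inside the parabolic subalgebra generated by $h_{s_i}$ and by $Y_{\alpha_i^\vee}$ (together with $Y_{2\alpha_n^\vee}$ in the $BC_n$ case to account for the non-reduced root). For $\beta$ with $\langle \alpha_i, \beta \rangle = 0$ one has $s_i(\beta) = \beta$ and the braid-type relation $l(t_\beta s_i) = l(t_\beta) + 1$ forces $Y_\beta h_{s_i} = h_{s_i} Y_\beta$ directly. For $\langle \alpha_i, \beta \rangle = \pm 1$ one compares the two reduced expressions of $t_\beta s_i$ and extracts the quadratic Iwahori-Hecke relation $h_{s_i}^2 = q_{s_i} + (q_{s_i}-1) h_{s_i}$. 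The case of general $\langle \alpha_i, \beta \rangle$ then follows by writing $\beta - s_i(\beta) = \langle \alpha_i, \beta \rangle \alpha_i^\vee$ and telescoping via the geometric-series identity $\frac{Y_\beta - Y_{s_i(\beta)}}{1 - Y_{-\alpha_i^\vee}} = \sum_{k=0}^{\langle \alpha_i, \beta\rangle - 1} Y_{\beta - k \alpha_i^\vee}$, which collapses to a polynomial in $\hat{H}_\phi$ despite the formal denominator. The $BC_n$ correction on $s_n$ arises because the affine reflection at the $\alpha_n$-wall uses the long coroot $(2\alpha_n)^\vee$, so the same rank-two analysis is performed with respect to that wall and the unequal parameters $q_{s_n}, q_0$.
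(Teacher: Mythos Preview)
The paper does not prove this theorem; it is quoted from the literature with a reference to \cite{parkinson2006buildings}, Theorem 6.6. So there is no ``paper's own proof'' to compare against, and your task was really to reconstruct a standard argument.

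Your overall architecture (well-definedness of $Y_\beta$, basis property, commutation relations) is the correct one and matches the standard treatments in Lusztig and Macdonald. The well-definedness step is fine. However, your basis argument contains a concrete error: the claim that $l(t_\beta w_0) = l(\beta) + l(w_0)$ for $\beta \in P^+$ is false. In type $\tilde{A}_1$ (Example~\ref{exa:Regular graph}) one has $t_{\beta_1} = s_0\omega$, so $t_{\beta_1} s_1 = s_0\omega s_1 = s_0 s_0 \omega = \omega$, giving $l(t_{\beta_1} s_1) = 0 \neq 2 = l(\beta_1) + l(s_1)$. The correct length additivity is on the other side: $l(w_0 t_\beta) = l(w_0) + l(t_\beta)$ for $\beta \in P^+$ (this is exactly what the paper's Structure Theorem~\ref{thm:Structure Theorem} gives). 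Consequently the triangularity argument should first produce the basis $\{h_{w_0} Y_\beta\}$; the basis $\{Y_\beta h_{w_0}\}$ is then obtained \emph{after} the commutation relations are established, by using them to rewrite one basis in terms of the other. Your ordering of the steps therefore also needs to be adjusted.

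Your sketch of the commutation relations is reasonable in spirit but imprecise in one place. The reduction is not to $\beta$ a simple coweight, but rather to the cases $\langle \alpha_i, \beta\rangle \in \{0,1\}$, together with the observation that if the relation holds for $\beta$ and for $\beta'$ then it holds for $\beta+\beta'$ (this is the multiplicativity of the $Y$'s combined with the fact that the right-hand side satisfies the same cocycle-type identity). The ``telescoping'' you describe is really this additive induction; phrasing it as a rank-two parabolic computation is not quite right, since the relation involves $Y_{\beta'}$ for coweights $\beta'$ that need not lie in the rank-one sublattice spanned by $\alpha_i^\vee$.
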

The expression $\frac{Y_{\beta}-Y_{s_{i}(\beta)}}{1-Y_{-\alpha_{i}^{\vee}}}$
is actually a compact way of writing a finite sum of $\left|\left\langle \alpha_{i},\beta\right\rangle \right|$
different $Y_{\beta'}$. Recall $s_{i}(\beta)=\beta-\left\langle \alpha,\beta\right\rangle \alpha^{\vee}$.
Now:
\[
\frac{Y_{\beta}-Y_{s_{i}(\beta)}}{1-Y_{-\alpha_{i}^{\vee}}}=\begin{cases}
0 & \left\langle \alpha_{i},\beta\right\rangle =0\\
Y_{\beta}+Y_{\beta-\alpha_{i}^{\vee}}+...+Y_{\beta-\left(\left\langle \alpha,\beta\right\rangle -1\right)\alpha^{\vee}} & \left\langle \alpha_{i},\beta\right\rangle >0\\
-Y_{\beta+\alpha_{i}^{\vee}}-Y_{\beta+2\alpha_{i}^{\vee}}...-Y_{s_{i}(\beta)} & \left\langle \alpha_{i},\beta\right\rangle <0
\end{cases}
\]

Similar relations holds for the $\left(R,i\right)=\left(BC_{n},n\right)$,
this time the sum contains $\left|\left\langle 2\alpha_{n},\beta\right\rangle \right|$
elements.

We will need for later the some estimates on the resulting presentation.
Define (See \cite{macdonald2003affine}, 2.7):
\begin{defn}
The \emph{Bruhat order} on $W_{0}$ is defined by: $w'\le w$ if there
exits a decomposition $w'=t_{0}\cdot....\cdot t_{l}$, $t_{i}\in S$
and $w=t_{i_{0}}\cdot...\cdot t_{i_{k}}$ for some $0\le i_{0}<...<i_{k}\le l$.

Define a partial order on $P^{+}$ by $\beta>\beta'$ if $\beta-\beta'\in Q^{+}$,
where $Q^{+}\subset Q$ is the set of coroots that are a non negative
sum of the simple coroots. The \emph{coweight order }on $P$ is defined
by: for $\beta\in P$ let $\beta^{+}$ be the unique element $\beta^{+}\in P^{+}$
in the $W_{0}$ orbit of $\beta$. Let $w_{0}^{\beta}\in W_{0}$ be
the shortest element sending $\beta$ to $\beta^{+}$. Then $\beta\le\beta'$
if and only if $\beta^{+}<\beta^{\prime+}$ or $\left(\beta^{+}=\beta^{\prime+}\text{ and }w_{0}^{\beta^{+}}\ge w_{0}^{\beta^{\prime+}}\right)$.
\end{defn}
\begin{rem}
In our definition we let the dominant coweight be the largest in any
$W_{0}$-orbit, which is opposite to the standard where the anti-dominant
coweight is the largest.
\end{rem}
\begin{lem}
\label{lem:Lengthes and dominance}Let $\beta$ be dominant. The set
of $\beta'\in P$ such that $\beta'\le\beta$ is closed under $\beta'\to\beta'-j\alpha_{i}^{\vee}$
for all $j$ between 0 and $\left\langle \alpha_{i},\beta'\right\rangle $
(inclusive) (i.e. a saturated set as in \cite{macdonald2003affine},
2.6).

All the coweight $\beta'\le\beta$ satisfy for simple root $\alpha_{i}\in R$,
$\left|\left\langle \alpha_{i},\beta'\right\rangle \right|\le l(\beta')\le l(\beta)$
(and $2\left|\left\langle \alpha_{i},\beta'\right\rangle \right|\le l(\beta')$
in the $\left(R,i\right)=\left(BC_{n},n\right)$ case), and $q_{\beta'}\le q_{\beta}$.
\end{lem}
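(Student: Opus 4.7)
The plan is in three stages. First I would reinterpret the coweight order in our setting: when the upper bound $\beta$ is dominant we have $w_0^\beta = e$, so the auxiliary Bruhat clause in the definition of the coweight order is vacuous, and $\beta'\le\beta$ reduces to the dominance relation $(\beta')^+ \le \beta$ in $P^+$, i.e.\ $\beta - (\beta')^+ \in Q^+$. In this form the first assertion becomes the classical statement that the set $\{\gamma \in P : \gamma^+ \le \beta\}$ is the \emph{saturated closure} of $\beta$ in the sense of Macdonald, \S2.6; by definition this set is closed under $\gamma \mapsto \gamma - j\alpha_i^\vee$ for $0 \le j \le \langle\alpha_i,\gamma\rangle$, so the first part is essentially an invocation of the reference.

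Second, I would invoke the standard length formula in the extended affine Weyl group,
\[
l(\beta) = \sum_{\alpha \in R^+}|\langle\alpha,\beta\rangle|,
\]
valid for any $\beta\in P\subset \hat W$, with the convention that when $R$ is non-reduced both $\alpha$ and $2\alpha$ are counted in $R^+$. The bound $|\langle\alpha_i,\beta'\rangle|\le l(\beta')$ is then immediate, since the summand for $\alpha=\alpha_i$ alone already accounts for it; the $BC_n$ refinement $2|\langle\alpha_n,\beta'\rangle|\le l(\beta')$ follows because both $\alpha_n$ and $2\alpha_n$ lie in $R^+$ and jointly contribute $3|\langle\alpha_n,\beta'\rangle|$ to the sum.

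Third, for the monotonicity $l(\beta')\le l(\beta)$, the length formula is $W_0$-invariant, so I may replace $\beta'$ by its dominant conjugate $(\beta')^+$ without changing the length, reducing to the case where both are dominant with $\beta-(\beta')^+\in Q^+$. For dominant $\gamma$ the formula simplifies to $l(\gamma)=2\langle\rho,\gamma\rangle$, with $\rho$ chosen appropriately in the non-reduced case, and the inequality is immediate from $\langle\rho,\beta-(\beta')^+\rangle\ge 0$. The statement $q_{\beta'}\le q_\beta$ is proved identically, using the multiplicative analog $q_\beta = \prod_{\alpha\in R^+} q_\alpha^{|\langle\alpha,\beta\rangle|}$, where $q_\alpha$ depends only on the $W_0$-orbit of $\alpha$ and so is obtained from the parameter of any simple reflection $W_0$-conjugate to $s_\alpha$.

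The main obstacle will be the consistent handling of the non-reduced $BC_n$ case: there the parameter system allows two effectively independent short-root parameters $q_{s_0}$ and $q_{s_n}$, and both the length formula and its $q$-analog acquire extra factors associated with the doubled root $2\alpha_n$. Once the correct forms of these formulas are in place (for which I would refer to Macdonald, Chapter 2), the $W_0$-invariance of the summands and the non-negativity of $\langle\rho,Q^+\rangle$ finish everything uniformly.
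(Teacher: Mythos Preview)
Your proposal is correct and follows essentially the same route as the paper: both invoke Macdonald \S2.6--2.7 for saturation, use the length formula $l(\beta')=\sum_{\alpha\in R^+}|\langle\alpha,\beta'\rangle|$ (and its $q$-analog) to get the pointwise bound and $W_0$-invariance, and then reduce the monotonicity to the dominant case via the linear form $\langle\rho,\cdot\rangle$. Your treatment of the $BC_n$ refinement (observing that $\alpha_n$ and $2\alpha_n$ together contribute $3|\langle\alpha_n,\beta'\rangle|$) is in fact more explicit than the paper's, which leaves this implicit in the length formula; the only cosmetic discrepancy is your normalization $l(\gamma)=2\langle\rho,\gamma\rangle$ versus the paper's $\rho=\sum_{\alpha\in R^+}\alpha$, which is just the half-sum convention.
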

\begin{proof}
Assume first that we work in the non reduced case. The first statement
is proved in \cite{macdonald2003affine}, 2.6, 2.7.

The second statement follows from the well known formula in the non-reduced
case $l(\beta')=\frac{1}{2}\sum_{\alpha\in R}\left|\left\langle \alpha,\beta'\right\rangle \right|=\sum_{\alpha\in R^{+}}\left|\left\langle \alpha,\beta'\right\rangle \right|$
(\cite{macdonald2003affine}, 2.4.1), and similarly $q_{\beta}=\prod_{\alpha\in R^{+}}q_{\alpha}^{\left|\left\langle \alpha,\beta'\right\rangle \right|}$,
from which it follows that for every $w_{0}\in W_{0}$ $l(w_{0}(\beta'))=l(\beta')$
and that $\left|\left\langle \alpha,\beta'\right\rangle \right|\le l(\beta')$. 

If $\beta'$ is dominant, $\left|\left\langle \alpha,\beta'\right\rangle \right|=\left\langle \alpha,\beta'\right\rangle $,
so $l(\beta')=\sum_{\alpha\in R^{+}}\left\langle \alpha,\beta'\right\rangle =\left\langle \rho,\beta'\right\rangle $.
where $\rho=\sum_{\alpha\in R^{+}}\alpha$. Since $\rho$ is in the
dominant sector, $\left\langle \rho,\alpha^{\vee}\right\rangle >0$
for any $\alpha\in R^{+}$. Therefore if $\beta'\le\beta$ and both
are dominant then $l(\beta')\le l(\beta)$. The claim that $q_{\beta'}\le q_{\beta}$
is proven similarly.

In the reduced case, we may consider the corresponding non-reduced
Root system $R'$ containing the non divisible roots of $R$. All
the claims follow from the claims on $R'$. 
\end{proof}
\begin{prop}
\label{prop:Explicit_bernstein_relations}For $\beta,\beta'\in P$,
$w_{0},w_{0}^{\prime}\in W_{0}$ there exist constants $\alpha_{w_{0}^{\prime},w_{0},\beta',\beta}$
(depending on the parameter system $\overrightarrow{q}$) such that:

\[
Y_{\beta}h_{w_{0}}=\sum_{w_{0}^{\prime},\beta'}\alpha_{w_{0}^{\prime},w_{0},\beta',\beta}h_{w_{0}^{\prime}}Y_{\beta'}
\]

The constants $\alpha_{w_{0}^{\prime},w_{0},\beta',\beta}$ satisfy:
\begin{enumerate}
\item $\alpha_{w_{0},w_{0},\beta,w_{0}(\beta)}=1$.
\item If $w_{0}^{\prime}\not\le w_{0}$ in the Coxeter order of the Coxet1er
group $W_{0}$, then $\alpha_{w_{0}^{\prime},w_{0},\beta',\beta}=0$.
\item If $\beta$ is dominant, and $\beta'\not\le\beta$ in the coweight
order, then $\alpha_{w_{0}^{\prime},w_{0},\beta',\beta}=0$.
\item For any $w_{0}^{\prime}\in W_{0}$, $\sum_{\beta'}\left|\alpha_{w_{0}^{\prime},w_{0},\beta',\beta}\right|\le2^{l(w_{0})}(q_{max}\cdot(l(\beta)+1))^{l(w_{0})-l(w_{0}^{\prime})}$
for $q_{max}=\max\{q_{s}:s\in S\}$.
\end{enumerate}
\end{prop}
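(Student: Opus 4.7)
The proof will proceed by induction on $l(w_0)$. The base case $w_0 = 1$ is immediate: $Y_\beta h_1 = Y_\beta$, so $\alpha_{1,1,\beta,\beta} = 1$ with all other coefficients zero, trivially satisfying (1)--(4).

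For the inductive step, I would pick a reduced decomposition $w_0 = w_0'' s_i$ with $l(w_0'') = l(w_0) - 1$, and write $Y_\beta h_{w_0} = (Y_\beta h_{w_0''}) h_{s_i}$. By induction $Y_\beta h_{w_0''} = \sum \alpha^{(1)} h_{w_0^{(1)}} Y_{\beta^{(1)}}$. To each summand, apply Bernstein-Lusztig: $Y_{\beta^{(1)}} h_{s_i} = h_{s_i} Y_{s_i(\beta^{(1)})} + R_{s_i, \beta^{(1)}}$, where $R_{s_i, \beta^{(1)}}$ is the explicit signed sum of at most $|\langle \alpha_i, \beta^{(1)}\rangle|$ coweights each with coefficient $\pm(q_{s_i}-1)$. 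Then simplify $h_{w_0^{(1)}} h_{s_i}$ using the $W_0$-Iwahori-Hecke relations: either $h_{w_0^{(1)} s_i}$ (when $s_i$ is a right ascent of $w_0^{(1)}$) or $q_{s_i} h_{w_0^{(1)} s_i} + (q_{s_i}-1) h_{w_0^{(1)}}$ (when a right descent). The $(R,i) = (BC_n, n)$ case is handled analogously, with $2\alpha_n$ playing the role of $\alpha_i$ and a longer but similarly controllable correction, bounded via the second part of Lemma \ref{lem:Lengthes and dominance}.

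Properties (1), (2), (3) will then follow with moderate bookkeeping. For (1), tracking the leading term---$w_0^{(1)} = w_0''$, $\beta^{(1)} = w_0''^{-1}(\beta)$ by induction, combined with the $h_{s_i} Y_{s_i(\beta^{(1)})}$ part of Bernstein-Lusztig and $h_{w_0''} h_{s_i} = h_{w_0}$ (ascending, since the decomposition is reduced)---yields $h_{w_0} Y_{w_0^{-1}(\beta)}$ with coefficient $1$, matching $\alpha_{w_0, w_0, \beta, w_0(\beta)} = 1$ after reparametrization. For (2), every output index $w_0'$ is of the form $w_0^{(1)}$ or $w_0^{(1)} s_i$ for some $w_0^{(1)} \le w_0''$; the standard Bruhat-order inequality $\max(us, u) \le \max(vs, v)$ whenever $u \le v$, applied with $u = w_0^{(1)}$ and $v = w_0''$, gives $w_0^{(1)} s_i \le w_0$, preserving (2). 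For (3), assuming $\beta$ dominant, the induction gives $\beta^{(1)} \le \beta$, and then the saturated-set property from Lemma \ref{lem:Lengthes and dominance} shows that every $\gamma$ appearing in $R_{s_i, \beta^{(1)}}$ (together with $s_i(\beta^{(1)})$ itself) still satisfies $\gamma \le \beta$.

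The main obstacle is (4). I would fix a target $w_0'$ and split into two cases according to whether $s_i$ is a right descent or a right ascent of $w_0'$. In the descent case, contributions come from $w_0^{(1)} = w_0' s_i$ (leading part of Bernstein-Lusztig, ascending Hecke multiplication, no growth) and from $w_0^{(1)} = w_0'$ (both the $(q_{s_i}-1) h_{w_0^{(1)}}$ tail of the descending Hecke product and the correction $R_{s_i, \beta^{(1)}}$, with combined total absolute value at most $q_{max}(l(\beta)+1)$ using $l(\beta^{(1)}) \le l(\beta)$ from (3) plus Lemma \ref{lem:Lengthes and dominance}). In the ascent case only two sources contribute but with an extra factor of $q_{s_i}$ from the descending Hecke product. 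Plugging in the inductive bound $2^{l(w_0'')}(q_{max}(l(\beta)+1))^{l(w_0'')-l(w_0^{(1)})}$, in both cases the arithmetic collapses to the target $2^{l(w_0)}(q_{max}(l(\beta)+1))^{l(w_0)-l(w_0')}$ after checking elementary inequalities such as $1 + q_{max} l(\beta)(l(\beta)+1) \le 2 q_{max}(l(\beta)+1)^2$. The main delicacy is in managing the two independent sources of growth---the factor of $2$ from the $W_0$-Hecke multiplication and the factor of $q_{max}(l(\beta)+1)$ from the Bernstein-Lusztig correction---so that the exponents combine into the single power $l(w_0) - l(w_0')$ rather than accumulating separately.
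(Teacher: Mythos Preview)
Your proposal is correct and follows essentially the same approach as the paper: induction on $l(w_0)$, writing $w_0 = \hat{w}_0 s_i$ reduced, applying the inductive hypothesis to $Y_\beta h_{\hat{w}_0}$, then the Bernstein--Lusztig relation to each $Y_{\beta'} h_{s_i}$, and finally the $W_0$-Hecke relation to $h_{w_0^{(1)}} h_{s_i}$; the paper handles (1)--(3) with a single sentence and then gives exactly your two-case bookkeeping (ascent versus descent of $s_i$ at $w_0'$) for the bound (4). Your treatment is in fact slightly more careful than the paper's on two points: you supply the Bruhat-order argument needed for (2), and your inequality $1 + q_{max}\,l(\beta)(l(\beta)+1) \le 2 q_{max}(l(\beta)+1)^2$ in the ascent case is correct, whereas the paper's displayed arithmetic contains a minor slip (it writes $(q_{max}-1)l(\beta)$ where $(q_{max}-1)(l(\beta)+1)$ is needed) that nonetheless does not affect the final bound.
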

\begin{proof}
Everything is proved by induction on $l(w)$ using the Bernstein relations.
Write $w_{0}=\hat{w}_{0}s_{i}$, $l(w_{0})=l(\hat{w}_{0})+1$ and
assume the claim is true for $\hat{w}_{0}$. Then in the non $\left(R,i\right)=\left(BC_{n},n\right)$-case:
\begin{align*}
Y_{\beta}h_{w_{0}} & =Y_{\beta}h_{\hat{w}_{0}}h_{s_{i}}=\sum_{w_{0}^{\prime},\beta'}\alpha_{w_{0}^{\prime},\hat{w}_{0},\beta',\beta}h_{w_{0}^{\prime}}Y_{\beta'}h_{s_{i}}\\
 & =\sum_{w_{0}^{\prime},\beta'}\alpha_{w_{0}^{\prime},\hat{w}_{0},\beta',\beta}h_{w_{0}^{\prime}}\left(h_{s_{i}}Y_{s_{i}(\beta')}+(q_{s_{i}}-1)\frac{Y_{\beta'}-Y_{s_{i}(\beta')}}{1-Y_{-\alpha_{i}^{\vee}}}\right)
\end{align*}

This gives by induction 1,2 and 3.

We may bound the sum of coefficients for $h_{w_{0}^{\prime}}$ on
the right hand side using the induction hypothesis. For $l(w_{0}^{\prime}s)=l(w_{0}^{\prime})+1$:

\begin{align*}
\sum_{\beta'}\left|\alpha_{w_{0}^{\prime},w_{0},\beta',\beta}\right| & \le\sum_{\beta'}\left(\left|\alpha_{w_{0}^{\prime},\hat{w}_{0},\beta',\beta}\right|\left(q_{max}-1\right)\left|\left\langle \alpha_{i},\beta^{\prime}\right\rangle \right|+\left|\alpha_{w_{0}^{\prime}s_{0},\hat{w}_{0},\beta',\beta}\right|q_{max}\right)\\
 & \le2^{l(w_{0})-1}(q_{max}\cdot(l(\beta)+1))^{l(w_{0})-l(w_{0}^{\prime})-1}\left(q_{max}-1\right)l(\beta)+2^{l(w_{0})-1}(q_{max}\cdot(l(\beta)+1))^{l(w_{0})-l(w_{0}^{\prime})-2}q_{max}\\
 & \le2^{l(w_{0})}(q_{max}\cdot(l(\beta)+1))^{l(w_{0})-l(w_{0}^{\prime})}
\end{align*}

For $l(w_{0}^{\prime}s)=l(w_{0}^{\prime})-1:$ 

\begin{align*}
\sum_{\beta'}\left|\alpha_{w_{0}^{\prime},w_{0},\beta',\beta}\right| & \le\sum_{\beta'}\left(\left|\alpha_{w_{0}^{\prime},\hat{w}_{0},\beta',\beta}\right|\left(\left(q_{max}-1\right)+\left(q_{max}-1\right)\left|\left\langle \alpha_{i},\beta'\right\rangle \right|\right)+\left|\alpha_{w_{0}^{\prime}s_{0},\hat{w}_{0},\beta',\beta}\right|\right)\\
 & \le2^{l(w_{0})-1}(q_{max}\cdot(l(\beta)+1))^{l(w_{0})-l(w_{0}^{\prime})-1}\left(q_{max}-1\right)l(\beta)+2^{l(w_{0})-1}(q_{max}\cdot(l(\beta)+1))^{l(w_{0})-l(w_{0}^{\prime})}\\
 & \le2^{l(w_{0})}(q_{max}\cdot(l(\beta)+1))^{l(w_{0})-l(w_{0}^{\prime})}
\end{align*}

The case $\left(R,i\right)=\left(BC_{n},n\right)$ is similar. 
\end{proof}
We will also need the following variation of the above:
\begin{prop}
\label{prop:Explicit Bernstein Relations2}For $\beta,\beta'\in P$,
$w_{0},w_{0}^{\prime}\in W_{0}$ there exist constants $\alpha_{w_{0}^{\prime},w_{0},\beta',\beta}^{\prime}$
(depending on the parameter system $\overrightarrow{q}$) such that:
\[
Y_{\beta}h_{w_{0}}^{-1}=\sum_{w_{0}^{\prime}\le w_{0},\beta'\le\beta}\alpha_{w_{0}^{\prime},w_{0},\beta',\beta}^{\prime}h_{w_{0}^{\prime}}^{-1}Y_{\beta'}
\]

The constants $\alpha_{w_{0}^{\prime},w_{0},\beta',\beta}^{\prime}$
satisfy:
\begin{enumerate}
\item $\alpha_{w_{0},w_{0},\beta,w_{0}(\beta)}^{\prime}=1$.
\item If $w_{0}^{\prime}\not\le w_{0}$ in the Coxeter order of the Coxet1er
group $W_{0}$, then $\alpha_{w_{0}^{\prime},w_{0},\beta',\beta}^{\prime}=0$.
\item If $\beta$ is dominant, and $\beta'\not\le\beta$ in the coweight
order, then $\alpha_{w_{0}^{\prime},w_{0},\beta',\beta}^{\prime}=0$.
\item $\sum_{\beta'}\left|\alpha_{w_{0}^{\prime},w_{0},\beta',\beta}^{\prime}\right|\le2^{l(w_{0})}(l(\beta)+1)^{l(w_{0})-l(w_{0}^{\prime})}$.
\end{enumerate}
\end{prop}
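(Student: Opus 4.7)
The plan is to mimic the proof of Proposition \ref{prop:Explicit_bernstein_relations}, proceeding by induction on $l(w_0)$, but with the Bernstein--Lusztig identity replaced by its inverse version for $Y_\beta h_{s_i}^{-1}$. I would derive this base case identity by substituting the quadratic relation $h_{s_i}^{-1} = q_{s_i}^{-1}(h_{s_i} - (q_{s_i}-1))$ into $Y_\beta h_{s_i}^{-1}$, expanding $Y_\beta h_{s_i}$ via the Bernstein relation, and then converting $q_{s_i}^{-1} h_{s_i}$ back into $h_{s_i}^{-1} + (1-q_{s_i}^{-1})$. After a routine simplification of the resulting rational expression (using $\tfrac{1}{1-Y_{-\alpha_i^\vee}} - 1 = \tfrac{-1}{1-Y_{\alpha_i^\vee}}$), this yields, in the non-$(BC_n, n)$ case,
\[
Y_\beta h_{s_i}^{-1} \;=\; h_{s_i}^{-1} Y_{s_i(\beta)} \;+\; (1-q_{s_i}^{-1}) \cdot \frac{Y_{s_i(\beta)} - Y_\beta}{1 - Y_{\alpha_i^\vee}},
\]
with a structurally identical formula in the $(BC_n, n)$ case. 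The crucial observation is that the new coefficient $(1-q_{s_i}^{-1})$ is bounded by $1$ \emph{independently of $q_{s_i}$}, in sharp contrast to the $(q_{s_i}-1)\le q_{max}-1$ of the original Bernstein identity; this is precisely why the bound in part (4) carries no $q_{max}$ factor. The rational expression expands as a $\mathbb{Z}$-linear combination of $|\langle\alpha_i, \beta\rangle|$ terms $Y_{\beta - j\alpha_i^\vee}$, and by Lemma \ref{lem:Lengthes and dominance} this collection lies in the saturated set determined by $\beta$ when $\beta$ is dominant.

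For the inductive step, I would write $w_0 = \hat{w}_0 s_i$ with $l(w_0) = l(\hat{w}_0)+1$, so that $h_{w_0}^{-1} = h_{s_i}^{-1} h_{\hat{w}_0}^{-1}$. Applying the base case identity to $Y_\beta h_{s_i}^{-1}$, then multiplying by $h_{\hat{w}_0}^{-1}$ on the right, then applying the induction hypothesis to each resulting $Y_{\beta''} h_{\hat{w}_0}^{-1}$, produces terms of two types: $h_{s_i}^{-1} h_{w_0''}^{-1} Y_{\beta'}$ (from the principal term) and $h_{w_0''}^{-1} Y_{\beta'}$ (from the rational correction). I would convert the first type into the required form via
\[
h_{s_i}^{-1} h_{w_0''}^{-1} = \begin{cases} h_{w_0'' s_i}^{-1}, & l(w_0'' s_i) = l(w_0'')+1, \\ q_{s_i}^{-1} h_{w_0'' s_i}^{-1} - (1-q_{s_i}^{-1}) h_{w_0''}^{-1}, & l(w_0'' s_i) = l(w_0'')-1, \end{cases}
\]
which is a direct consequence of $h_{s_i}^{-2} = q_{s_i}^{-1} - (1-q_{s_i}^{-1}) h_{s_i}^{-1}$.

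The three qualitative claims are then straightforward: (1) the leading coefficient $\alpha'_{w_0, w_0, \beta, w_0(\beta)} = 1$ follows by tracking the principal term $h_{s_i}^{-1} Y_{s_i(\beta)}$ through the iteration; (2) the Bruhat-order property follows from the standard subword/lifting property; (3) the coweight-order property (for $\beta$ dominant) follows from the saturated-set closure of Lemma \ref{lem:Lengthes and dominance} and from the $W_0$-invariance of the coweight rank. The main obstacle is the coefficient bound (4), which requires careful bookkeeping. However, each individual coefficient introduced at every step has absolute value at most $1$ (being one of $1$, $1-q_{s_i}^{-1}$, or $q_{s_i}^{-1}$), and the rational expansion contributes at most $|\langle\alpha_i, \beta'\rangle| \le l(\beta') \le l(\beta)$ new summands per step, so the recursion closes to the claimed bound $2^{l(w_0)}(l(\beta)+1)^{l(w_0)-l(w_0')}$ with no $q_{max}$ factor. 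The $(BC_n, n)$ case introduces an additional term in the rational part, but the same bound persists after an analogous computation.
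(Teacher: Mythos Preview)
Your proposal is correct and follows essentially the same route as the paper: derive the single-step identity $Y_\beta h_{s_i}^{-1}=h_{s_i}^{-1}Y_{s_i(\beta)}+(1-q_{s_i}^{-1})\frac{Y_{s_i(\beta)}-Y_\beta}{1-Y_{\alpha_i^\vee}}$ from the Bernstein relation and the quadratic relation, observe that the correction coefficient is bounded by $1$ (whence the missing $q_{max}$ factor), and then induct on $l(w_0)$ using $h_{s_i}^{-2}=q_{s_i}^{-1}-(1-q_{s_i}^{-1})h_{s_i}^{-1}$ to rewrite $h_{s_i}^{-1}h_{w_0''}^{-1}$. The only cosmetic difference is that you peel off $s_i$ on the left of $h_{w_0}^{-1}$ and apply the single-step identity before the inductive hypothesis, whereas the paper's template in Proposition~\ref{prop:Explicit_bernstein_relations} does it in the opposite order; either works, and your identity for $h_{s_i}^{-2}$ is in fact the cleaner one.
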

\begin{proof}
We have $h_{s}^{-1}=q_{s}^{-1}\left(h_{s}-\left(q_{s}-1\right)Id\right)$.
Therefore:
\begin{align*}
Y_{\beta}h_{s_{i}}^{-1} & =h_{s_{i}}^{-1}Y_{s_{i}(\beta)}-q_{s}^{-1}(q_{s_{i}}-1)(Y_{\beta}-Y_{s_{i}(\beta)})+q_{s}^{-1}(q_{s_{i}}-1)\frac{Y_{\beta}-Y_{s_{i}(\beta)}}{1-Y_{-\alpha_{i}^{\vee}}}\\
 & =h_{s_{i}}^{-1}Y_{s_{i}(\beta)}+q_{s}^{-1}(q_{s_{i}}-1)Y_{-\alpha_{i}^{\vee}}\frac{Y_{\beta}-Y_{s_{i}(\beta)}}{1-Y_{-\alpha_{i}^{\vee}}}\\
 & =h_{s_{i}}^{-1}Y_{s_{i}(\beta)}+q_{s}^{-1}(q_{s_{i}}-1)\frac{Y_{\beta}-Y_{s_{i}(\beta)}}{Y_{\alpha_{i}^{\vee}}-1}
\end{align*}

And similarly in the $\left(R,i\right)=\left(BC_{n},n\right)$ case.

The rest of the proof is similar to proposition \ref{prop:Explicit_bernstein_relations},
using $h_{s}^{-2}=q_{s}^{-2}\left(q_{s}Id-(q_{s}-1)h_{s}\right)$.
The extra $q_{s}^{-1},q_{s}^{-2}$ factors allows us to give the slightly
better bound.
\end{proof}

\section{\label{sec:Sectorial-Retraction2}Sectorial Retraction}

The Bernstein presentation is a generalized version of the \emph{Iwasawa
decomposition}. The building analog of the Iwasawa decomposition is
based on the notion of a \emph{sector}, and \emph{sectorial retraction}.
The goal of this section is to explain the connection between the
two.
\begin{defn}
The\emph{ dominant sector} in $V_{R}$ is the set $\mathbb{S}_{0}^{R}=\left\{ v\in V_{R}:\left\langle \alpha_{i},v\right\rangle >0,\,i=1,...,n\right\} $.
A \emph{sector} $\mathbb{S}$ in $V_{R}$ is an image of $\mathbb{S}_{0}^{R}$
under the action of $\hat{W}$. A sector $\mathbb{S}$ (respectively
a dominant sector) in an apartment $\mathbb{A}\subset B$ is the preimage
of any sector $\mathbb{S}'\subset\mathbb{W}\cong V_{R}$ (respectively
$\mathbb{S}_{0}^{R}$) under an isomorphism of $\mathbb{A}$ with
the Coxeter complex $\mathbb{W}$. We identify a sector $\mathbb{S}$
with the set of chambers it contains.
\end{defn}
The following is very standard. 
\begin{lem}
Given an apartment $\mathbb{A}$ of $B$ and a chamber $C_{0}\in\mathbb{A}$,
there exists a retraction $\rho_{C_{0}}^{\mathbb{A}}:B_{\phi}\rightarrow\mathbb{A}$
such that $d(C_{0},C)=d(C_{0},\rho_{C_{0}}^{\mathbb{A}}(C))$.
\end{lem}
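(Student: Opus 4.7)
The plan is to build the retraction apartment-by-apartment using the $W$-metric machinery already developed in the excerpt. Given any chamber $C\in B_{\phi}$, by the lemma following the definition of an apartment, there exists an apartment $\mathbb{A}_{C}$ containing both $C_{0}$ and $C$. Since every apartment is, by definition, isomorphic as a colored simplicial complex to the Coxeter complex $\mathbb{W}$, and since a color-preserving isomorphism of Coxeter complexes is rigid once it is specified on a single chamber (it is determined by the correspondence $w\mapsto w'w$ on $W$), there is a unique color-preserving isomorphism $\phi_{C}\colon\mathbb{A}_{C}\to\mathbb{A}$ sending $C_{0}$ to $C_{0}$. I then set $\rho_{C_{0}}^{\mathbb{A}}(C)=\phi_{C}(C)\in\mathbb{A}$.

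The first step is to verify well-definedness: if $\mathbb{A}_{C}'$ is another apartment containing $C_{0}$ and $C$, one must show $\phi_{C}(C)=\phi_{C}'(C)$. This reduces to the standard building fact that two apartments sharing a chamber admit a color-preserving isomorphism fixing their intersection pointwise; composing with this isomorphism identifies $\phi_{C}$ with $\phi_{C}'$ on $C$. Equivalently, one can give a gallery-theoretic definition: fix a minimal gallery $\mathbb{G}=(C_{0},C_{1},\dots,C_{m}=C)$ and let $\rho_{C_{0}}^{\mathbb{A}}(C)$ be the chamber of $\mathbb{A}$ reached from $C_{0}$ by the gallery of the same Coxeter color $t_{W}(\mathbb{G})\in W$ inside $\mathbb{A}\cong\mathbb{W}$; independence of the minimal gallery follows because any two minimal galleries between the same chambers have the same Coxeter color (the $W$-distance), and by the exchange/fold arguments in Ronan's treatment.

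Next, the distance identity $d(C_{0},C)=d(C_{0},\rho_{C_{0}}^{\mathbb{A}}(C))$ follows from the fact that the isomorphism $\phi_{C}$ preserves the $W$-distance, combined with the characterization of $d$ as the Coxeter color of any minimal gallery. Indeed, a minimal gallery from $C_{0}$ to $C$ inside $\mathbb{A}_{C}$ maps under $\phi_{C}$ to a gallery from $C_{0}$ to $\phi_{C}(C)$ in $\mathbb{A}$ with the same type, which is necessarily minimal in $\mathbb{A}$ (and hence in $B$), yielding the equal $W$-distance.

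Finally, I would check that $\rho_{C_{0}}^{\mathbb{A}}$ restricts to the identity on $\mathbb{A}$, which is built into the construction: if $C\in\mathbb{A}$, take $\mathbb{A}_{C}=\mathbb{A}$ and $\phi_{C}=\mathrm{id}$. The main obstacle is the well-definedness step, which is the content of the standard building axiom on isomorphisms between apartments fixing their intersection; once this is granted, the distance-preservation property is essentially automatic from the rigidity of Coxeter complex isomorphisms.
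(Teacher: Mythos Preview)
Your proposal is correct and is precisely the standard construction of the chamber-based retraction onto an apartment (as in Ronan or Abramenko--Brown). Note that the paper does not actually prove this lemma: it prefaces the statement with ``The following is very standard'' and gives no argument, so there is nothing to compare beyond observing that your sketch is the textbook proof the paper is implicitly invoking.
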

In affine buildings we also have another type of retraction into an
apartment $\mathbb{A}$, based on a sector $\mathbb{S}$ of $\mathbb{A}$.
Recall that given two apartments $\mathbb{A},\mathrm{\mathbb{A}'}\subset B$
with $\mathbb{A}\cap\mathrm{\mathbb{A}'}\ne\phi$ we have a unique
isomorphism (as colored simplicial complexes) $\phi_{\mathbb{A},\mathbb{A}'}:\mathbb{\mathbb{A}^{\prime}}\rightarrow\mathbb{\mathbb{A}}$,
that is the identity on $\mathrm{\mathbb{A}^{\prime}\cap\mathbb{A}}.$
\begin{thm}
\label{thm:Sector Retraction} (See \cite{abramenko2008buildings},
theorem 11.63 and lemma 11.64) Given a sector $\mathbb{S}$ of an
apartment $\mathbb{A}$ and a chamber $C\in B_{\phi}$ there exists
a subsector $\mathbb{S}'\subset\mathbb{S}$ and an apartment $\mathbb{A}'$,
such that $\mathbb{S}'\subset\mathbb{A}'$ and $C\in\mathbb{A}'$. 

We may define $\rho_{\mathbb{S}}^{\mathbb{A}}:B_{\phi}\rightarrow\mathbb{A}_{\phi}$
by $\rho_{\mathbb{S}}(C)=\phi_{\mathbb{A},\mathbb{A}'}(C)$, where
$\phi_{\mathbb{A},\mathbb{A}'}:\mathbb{\mathbb{A}^{\prime}}\rightarrow\mathbb{\mathbb{A}}$
is the simplicial isomorphism. This definition does not depend on
the choice of $\mathbb{S}'$ or $\mathbb{A}'$ and we furthermore
have for any $C_{0}\in\mathbb{S}'$, $\rho_{C_{0}}^{\mathbb{A}}(C)=\rho_{\mathbb{S}}^{\mathbb{A}}(C)$. 
\end{thm}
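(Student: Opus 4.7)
The plan is to establish the theorem in three steps: existence of a sub-sector $\mathbb{S}'$ and apartment $\mathbb{A}'$ with $\mathbb{S}'\cup\{C\}\subset\mathbb{A}'$; well-definedness of $\rho_{\mathbb{S}}^{\mathbb{A}}(C)$; and compatibility with the chamber-based retraction.

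For existence, I would use the basic axiom that any two chambers lie in a common apartment: for each $C_{1}\in\mathbb{S}$, pick an apartment $\mathbb{A}_{C_{1}}$ containing $C$ and $C_{1}$. The intersection $\mathbb{A}\cap\mathbb{A}_{C_{1}}$ is convex in both apartments and contains $C_{1}$. The geometric point is that as $C_{1}$ is translated deep along the dominant direction of $\mathbb{S}$, the convex subcomplex $\mathbb{A}\cap\mathbb{A}_{C_{1}}$ must eventually absorb a whole sub-sector $\mathbb{S}'\subset\mathbb{S}$ based at some translate of the apex. One justifies this by noting that the convex hull in $\mathbb{A}$ of $C_{1}$ together with the walls perpendicular to the sector direction lies in $\mathbb{A}\cap\mathbb{A}_{C_{1}}$ once $C_{1}$ is far enough inside $\mathbb{S}$; in that case setting $\mathbb{A}':=\mathbb{A}_{C_{1}}$ does the job.

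For well-definedness, suppose $(\mathbb{S}'_{1},\mathbb{A}'_{1})$ and $(\mathbb{S}'_{2},\mathbb{A}'_{2})$ are two valid choices. Any two sub-sectors of $\mathbb{S}$ share a further common sub-sector $\mathbb{S}'_{3}\subset\mathbb{S}'_{1}\cap\mathbb{S}'_{2}$ (translate the apex far enough along the dominant direction). Apply the existence step to $\mathbb{S}'_{3}$ to produce $\mathbb{A}'_{3}\supset\mathbb{S}'_{3}\cup\{C\}$. Since $\mathbb{A}'_{i}\cap\mathbb{A}'_{3}\supset\mathbb{S}'_{3}\cup\{C\}\ne\phi$, uniqueness of color-preserving simplicial isomorphisms between apartments fixing their intersection pointwise gives $\phi_{\mathbb{A},\mathbb{A}'_{i}}=\phi_{\mathbb{A},\mathbb{A}'_{3}}\circ\phi_{\mathbb{A}'_{3},\mathbb{A}'_{i}}$ on the intersection, and evaluating at $C$ yields $\phi_{\mathbb{A},\mathbb{A}'_{1}}(C)=\phi_{\mathbb{A},\mathbb{A}'_{2}}(C)$.

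For the compatibility claim, if $C_{0}\in\mathbb{S}'\subset\mathbb{A}'$, both $C_{0}$ and $C$ lie in $\mathbb{A}'$, so there is a minimal gallery from $C_{0}$ to $C$ inside $\mathbb{A}'$. Its image under $\phi_{\mathbb{A},\mathbb{A}'}$ is a type-preserving gallery in $\mathbb{A}$ from $C_{0}$ to $\phi_{\mathbb{A},\mathbb{A}'}(C)=\rho_{\mathbb{S}}^{\mathbb{A}}(C)$, which is exactly how the chamber retraction $\rho_{C_{0}}^{\mathbb{A}}$ is defined on $C$. The main obstacle is the existence step: absorbing an entire sub-sector (not just one chamber) into a common apartment with $C$ is a genuinely geometric claim that cannot be reduced to Weyl-distance bookkeeping, and is where the detailed convexity and parallelism theory of sectors in affine buildings (as developed in Abramenko--Brown, Theorem 11.63 and Lemma 11.64) is essential.
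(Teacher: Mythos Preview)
The paper does not prove this theorem at all; it is stated with a parenthetical citation to Abramenko--Brown (Theorem~11.63 and Lemma~11.64) and then used as a black box. So there is no ``paper's own proof'' to compare against, and your outline is in effect a sketch of the argument in the cited reference.

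That said, your sketch is broadly sound for the well-definedness and compatibility steps. For well-definedness you can in fact simplify: the auxiliary apartment $\mathbb{A}'_{3}$ is unnecessary, since $\mathbb{S}'_{3}\subset\mathbb{S}'_{1}\cap\mathbb{S}'_{2}$ already lies in both $\mathbb{A}'_{1}$ and $\mathbb{A}'_{2}$, so $\mathbb{A}'_{1}\cap\mathbb{A}'_{2}\supset\mathbb{S}'_{3}\cup\{C\}$ directly, and the cocycle identity $\phi_{\mathbb{A},\mathbb{A}'_{1}}=\phi_{\mathbb{A},\mathbb{A}'_{2}}\circ\phi_{\mathbb{A}'_{2},\mathbb{A}'_{1}}$ (both sides fix $\mathbb{S}'_{3}$ pointwise, hence coincide) gives $\phi_{\mathbb{A},\mathbb{A}'_{1}}(C)=\phi_{\mathbb{A},\mathbb{A}'_{2}}(C)$ immediately.

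The existence step, however, is not justified by what you wrote. The assertion that for $C_{1}$ deep enough in $\mathbb{S}$ the intersection $\mathbb{A}\cap\mathbb{A}_{C_{1}}$ contains a whole subsector is precisely the content of the theorem you are trying to prove, and your appeal to ``the convex hull of $C_{1}$ together with the walls perpendicular to the sector direction'' is not a proof: convexity of $\mathbb{A}\cap\mathbb{A}_{C_{1}}$ only tells you it contains the convex hull of $\{C_{1},C\}$ inside $\mathbb{A}_{C_{1}}$, which is a bounded set, not a sector. The actual argument requires the theory of parallel sectors and the building at infinity, which you correctly flag at the end as the essential input from Abramenko--Brown. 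Since the paper itself defers entirely to that reference, your honest acknowledgement that this step cannot be done by elementary Weyl-distance bookkeeping is the right call.
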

From now on we assume we fix a dominant sector $\mathbb{S}_{0}$ in
the building. We can extend $\rho_{\mathbb{S}_{0}}^{\mathbb{A}}:B_{\phi}\rightarrow\mathbb{A}_{\phi}$
into its ``extended'' version $\hat{\rho}_{\mathbb{S}_{0}}^{\mathbb{A}}:\hat{B}_{\phi}\rightarrow\hat{\mathbb{A}}_{\phi}$.
Now:
\begin{defn}
Define $\rho_{\mathbb{S}_{0}}:\hat{B}_{\phi}\rightarrow\hat{W}$ by
$\hat{f}_{\phi}\circ\hat{\rho}_{\mathbb{S}_{0}}^{\mathbb{A}}$ where
$\hat{f}_{\phi}:\mathbb{A}_{\phi}\rightarrow\hat{\mathbb{W}}_{\phi}\cong\hat{W}$
is an isomorphism of $\mathbb{A}$ and the Coxeter complex $\mathbb{W}$.

We call $\rho_{\mathbb{S}_{0}}(C)\in\hat{W}$ the \emph{sectorial
type} of $C\in\hat{B}_{\phi}$.
\end{defn}
In this section we consider the natural embedding $P\subset\hat{W}$,
so we write addition in $P$ multiplicatively. For example, recall
that every $\beta\in P$ can be written as $\beta=\beta_{1}\beta_{2}^{-1}$,
with $\beta_{1},\text{\ensuremath{\beta}}_{2}\in P^{+}$. 
\begin{defn}
For any element $w\in\hat{W}$, $w=\beta w_{0}$ , with $w_{0}\in W_{0}$,
$\beta\in P$, $\beta=\beta_{1}\beta_{2}^{-1}$, $\beta_{1},\text{\ensuremath{\beta}}_{2}\in P^{+}$,
define $L(w)=L_{\mathbb{S}_{0}}(w)=l(w_{0})+l(\beta_{2})-l(\beta_{1})$
and $Q_{w}=Q_{\mathbb{S}_{0},w}=q_{w_{0}}\cdot q_{\beta_{2}}\cdot q_{\beta_{1}}^{-1}$.
\end{defn}
One can verify that non of the definitions depends on the choice of
$\beta_{1},\beta_{2}\in P^{+}$. We have for $\beta\in P^{+}$, $L(\beta)=-l(\beta)$
and $Q_{\beta}=q_{\beta}^{-1}$.
\begin{lem}
\label{lem:Sector_distance_lemma}If $C_{1},...,C_{l}\in B_{\phi}$,
$\rho_{\mathbb{S}_{0}}(C_{i})=w_{i}$, then there exists a chamber
$C_{0}\in\mathbb{S}_{0}$ such that $\rho_{C_{0}}^{\mathbb{A}}(C_{i})=\rho_{\mathbb{S}_{0}}^{\mathbb{A}}(C_{i})$
and $L(w_{i})=l(d(C_{0},C_{i}))-l(\rho_{\mathbb{S}_{0}}(C_{0}))$.
\end{lem}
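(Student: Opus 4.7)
The plan is to pick $C_0$ deep in $\mathbb{S}_0$ so that (a) the sectorial retraction and the chamber retraction $\rho_{C_0}^{\mathbb{A}}$ agree on each of the finitely many $C_i$, and (b) the required length identity reduces to a clean Coxeter-length computation inside the apartment $\mathbb{A}$.

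For (a), I invoke Theorem~\ref{thm:Sector Retraction} to obtain, for each $i$, a subsector $\mathbb{S}_i\subseteq \mathbb{S}_0$ such that $\rho_C^{\mathbb{A}}(C_i)=\rho_{\mathbb{S}_0}^{\mathbb{A}}(C_i)$ for all $C\in\mathbb{S}_i$. A finite intersection of subsectors of $\mathbb{S}_0$ contains a subsector (standard), so I can take $C_0\in\bigcap_i \mathbb{S}_i$; in fact I can translate $C_0$ arbitrarily far along the sector, a freedom needed below. Since $\rho_{C_0}^{\mathbb{A}}$ preserves $W$-distance from $C_0$, this gives $l(d(C_0,C_i))=l(d(C_0,\rho_{\mathbb{S}_0}^{\mathbb{A}}(C_i)))$, and it remains to compute the right-hand side inside $\mathbb{A}$.

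For (b), I identify $\mathbb{A}$ with $\hat{\mathbb{W}}$ via $\hat{f}_\phi$ so that chambers correspond to elements of $\hat{W}$. Then $\rho_{\mathbb{S}_0}(C_0)=\beta_0\in P^+$ (dominant because $C_0\in\mathbb{S}_0$) and $w_i=\beta^{(i)}w_0^{(i)}$ with $\beta^{(i)}=\beta_1^{(i)}-\beta_2^{(i)}$, $\beta_j^{(i)}\in P^+$, $w_0^{(i)}\in W_0$, and the gallery distance equals the Coxeter length of $\beta_0^{-1}w_i=(\beta^{(i)}-\beta_0)\,w_0^{(i)}$. I further pull $C_0$ so deep that $\delta_i:=\beta_0-\beta^{(i)}=(\beta_0-\beta_1^{(i)})+\beta_2^{(i)}$ lies in $P^+$ for every $i$ (possible since $P^+$ is a cone and there are finitely many $C_i$); then $(-\delta_i)w_0^{(i)}$ is in a ``negative-dominant'' form, and the Iwahori--Matsumoto length formula (see \cite{macdonald2003affine}) gives
\[
l\bigl((-\delta_i)\,v\bigr)=\sum_{\alpha\in R^+}\bigl|\langle\alpha,-\delta_i\rangle+\chi(v^{-1}\alpha)\bigr|=l(\delta_i)+l(v),
\]
with $\chi=0$ on $R^+$ and $\chi=1$ on $R^-$, where the second equality uses $\langle\alpha,\delta_i\rangle\ge 1$ for every $\alpha\in R^+$ so all signs align. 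Combined with $l(\delta_i)=l(\beta_0)-l(\beta_1^{(i)})+l(\beta_2^{(i)})$ (linearity of $l$ on $P^+$), this yields
\[
l(d(C_0,C_i))=l(\beta_0)+l(w_0^{(i)})+l(\beta_2^{(i)})-l(\beta_1^{(i)})=l(\rho_{\mathbb{S}_0}(C_0))+L(w_i),
\]
as required.

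The principal obstacle is the length identity above: the Iwahori--Matsumoto formula must be stated carefully, and the non-reduced $BC_n$ root system requires its own version involving both $\alpha$ and $2\alpha$. A purely geometric alternative would count affine hyperplanes in $V_R$ separating $C_0$ from the chamber of $w_i$; once $C_0$ is deep enough in $\mathbb{S}_0$, the count stabilizes to $l(\beta_0)+l(\beta_2^{(i)})-l(\beta_1^{(i)})+l(w_0^{(i)})$, giving the same conclusion uniformly. Either route completes the lemma.
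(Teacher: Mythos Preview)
Your argument is correct and follows essentially the same strategy as the paper: choose $C_0$ deep enough in $\mathbb{S}_0$ so that the chamber and sectorial retractions agree on the finitely many $C_i$, and so that the length of $d(C_0,C_i)$ splits additively. The paper phrases the second step geometrically---it takes $C_0$ inside the dominant sector $\mathbb{S}_i$ based at the $0$-vertex of each $C_i$, which forces $d(C_0,C_i)=\beta_2 w_{0,i}$ with $\beta_2$ anti-dominant, whence $l(\beta_2 w_{0,i})=l(\beta_2)+l(w_{0,i})$ immediately---whereas you obtain the same additivity by pushing $C_0$ until $\delta_i=\beta_0-\beta^{(i)}$ is (strictly) dominant and invoking the Iwahori--Matsumoto formula; these are two faces of the same length identity and neither is materially shorter or more general.
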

\begin{proof}
We may assume that the sector $\mathbb{S}'$ in theorem \ref{thm:Sector Retraction}
is contained in all the sectors $\mathbb{S}_{i}$, $i=1,...,l$ defined
as the sector with dominant direction based on the $0$-vertex of
$C_{i}$. Let $C_{0}\in\mathbb{S}'$ be such that $\rho_{\mathbb{S}_{0}}\left(C_{0}\right)=\beta_{1}\in P^{+}$.
There exists such $C_{0}$ since $\mathbb{S}'\subset\mathbb{S}_{0}$
and $\mathbb{S}_{0}$ is dominant. Since $C_{0}\in\mathbb{S}_{i}$
we have $d(C_{0},C_{i})=\beta_{2}w_{0,i}$, with $\beta_{2}^{-1}\in P^{+}$
(i.e. $\beta_{2}$ is anti dominant) and $w_{0,i}\in W_{0}$. Therefore
$w_{i}=\rho_{\mathbb{S}_{0}}(C)=\rho_{\mathbb{S}_{0}}(C_{0})\cdot d(C_{0},C)=\beta_{1}\beta_{2}w_{0,i}$,
and 
\[
L(w)=l(\beta_{2})-l(\beta_{1})+l(w_{0,i})=l(\beta_{2}w_{0,i})-l(\beta_{1})=l(d(C_{0},C_{i}))-l(\rho_{\mathbb{S}_{0}}(C_{0}))
\]
\end{proof}
\begin{lem}
\label{lem:distances lemma}If $C\in B_{\phi}$, $\rho_{\mathbb{S}_{0}}(C)=w$,
then: 

I. If $L(ws)=L(w)+1$, then $C$ has $q_{s}$ $s$-adjacent chambers
$C'$, with $\rho_{\mathbb{S}_{0}}(C')=ws$.

I. If $L(ws)=L(w)-1$, then $C$ has $q_{s}-1$ $s$-adjacent chambers
$C'$, with $\rho_{\mathbb{S}_{0}}(C')=w$ and one $s$-adjacent chamber
$C'$, with $\rho_{\mathbb{S}_{0}}(C')=ws$.
\end{lem}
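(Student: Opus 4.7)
The plan is to reduce the statement to the classical behavior of the standard (chamber-based) retraction $\rho_{C_0}^{\mathbb{A}}$ by choosing $C_0\in\mathbb{S}_0$ deep enough in the sector so that the two retractions agree on $C$ and on all of its $s$-neighbors simultaneously. Concretely, I would first fix the $s$-panel $P\subset C$ and list the $q_s+1$ chambers through $P$ as $C,C_1',\ldots,C_{q_s}'$. Applying Lemma \ref{lem:Sector_distance_lemma} to this finite set yields a single chamber $C_0\in\mathbb{S}_0$ such that $\rho_{C_0}^{\mathbb{A}}(C_i')=\rho_{\mathbb{S}_0}^{\mathbb{A}}(C_i')$ and $\rho_{C_0}^{\mathbb{A}}(C)=\rho_{\mathbb{S}_0}^{\mathbb{A}}(C)$, and moreover $l(d(C_0,C_i'))=L(\rho_{\mathbb{S}_0}(C_i'))+l(\beta_0)$, where $\beta_0:=\rho_{\mathbb{S}_0}(C_0)\in P^{+}$. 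This packaging of a common base point is exactly what the previous lemma provides, so no additional work about subsectors is needed.

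Next, identify the apartment $\mathbb{A}$ with the Coxeter complex so that $C_0$ corresponds to $\beta_0\in\hat{W}$ and $C$ to $w\in\hat{W}$; then $v:=d(C_0,C)=\beta_0^{-1}w$ as an element of $\hat{W}$. The length identity from the previous paragraph rewrites as $l(v)=L(w)+l(\beta_0)$, and the analogous identity applied to $ws$ gives $l(vs)=L(ws)+l(\beta_0)$. Subtracting,
\begin{equation*}
L(ws)-L(w)=l(vs)-l(v)\in\{\pm 1\},
\end{equation*}
so the hypothesis on $L$ translates faithfully into the corresponding hypothesis on the ordinary Coxeter length of $v$ vs.\ $vs$.

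The last step is the classical fact about the $W$-distance from a fixed chamber to the chambers sharing a common $s$-panel (standard in the $W$-metric approach to buildings, e.g.\ \cite{ronan2009lectures}): if $l(vs)=l(v)+1$ then all $q_s$ chambers of $P$ other than $C$ have $W$-distance $vs$ from $C_0$; if $l(vs)=l(v)-1$ then exactly one of those $q_s$ chambers has $W$-distance $vs$ from $C_0$ and the remaining $q_s-1$ have $W$-distance $v$. Since $\rho_{C_0}^{\mathbb{A}}$ preserves $W$-distance from $C_0$ and agrees with $\rho_{\mathbb{S}_0}^{\mathbb{A}}$ on every chamber in $\{C,C_1',\ldots,C_{q_s}'\}$, and since pre-multiplying a chamber's $C_0$-distance by $\beta_0$ recovers its sectorial type ($\rho_{\mathbb{S}_0}(C')=\beta_0\cdot d(C_0,C')$), the two cases above are precisely cases I and II of the lemma.

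I do not expect a genuine obstacle: the only delicate point is ensuring that a single $C_0\in\mathbb{S}_0$ can be found that simultaneously realizes $L(u)=l(d(C_0,\cdot))-l(\beta_0)$ on all of $C$ and its $q_s$ neighbors, and this is exactly the content of Lemma \ref{lem:Sector_distance_lemma}. Everything else is bookkeeping translating between $l$ and $L$ and invoking the standard adjacency--distance dichotomy in $W$-metric buildings.
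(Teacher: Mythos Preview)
Your proposal is correct and follows essentially the same approach as the paper's proof: both apply Lemma~\ref{lem:Sector_distance_lemma} to the full set of $q_s+1$ chambers through the $s$-panel to obtain a common base chamber $C_0\in\mathbb{S}_0$, then invoke the standard $W$-metric dichotomy for distances from $C_0$ to chambers sharing a panel, and finally translate between $l$ and $L$ via the identity $L(\cdot)=l(d(C_0,\cdot))-l(\beta_0)$. The paper's version is terser (it phrases the building fact as ``one chamber is closer than the $q_s$ others''), while you unpack the translation $L(ws)-L(w)=l(vs)-l(v)$ more explicitly, but the arguments are the same.
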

\begin{proof}
Since $\rho_{\mathbb{S}_{0}}$ is a retraction, all those chambers
are with $\rho_{\mathbb{S}_{0}}(C')=w$ or $\rho_{\mathbb{S}_{0}}(C')=ws$.
Choose $C_{0}\in\mathbb{S}$ such that lemma \ref{lem:Sector_distance_lemma}
holds for all the $q_{s}+1$ chambers containing the $s$-panel of
$C$. Recall that one of the chambers is closer then the $q_{s}$
others to $C_{0}$. If $L(ws)=L(w)+1$ the closest chamber is $C$
and the $q_{s}$ others are with $\rho_{\mathbb{S}_{0}}(C')=ws$.
If $L(ws)=L(w)-1$, $C$ is not the closest and the claim follows.
\end{proof}
\begin{defn}
Let $w\in\hat{W}$ and $1_{w}^{\mathbb{S}_{0}}\in\mathbb{C}^{\hat{B}_{\phi}}$
be the function defined as $1_{w}^{\mathbb{S}_{0}}(C)=1$ if $\rho_{\mathbb{S}_{0}}(C)=w$
and $1_{w}^{\mathbb{S}_{0}}(C)=0$ otherwise.
\end{defn}
\begin{lem}
\label{lem:actionLemma1}If $w,w^{\prime}\in\hat{W}$ with $L(ww^{\prime-1})=L(w)+l(w^{\prime-1})$,
then $h_{w^{\prime}}1_{w}^{\mathbb{S}_{0}}=1_{ww^{\prime-1}}^{\mathbb{S}_{0}}$.
\end{lem}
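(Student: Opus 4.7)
The plan is to proceed by induction on $l(w')$, reducing the statement to the case of a single simple reflection, where Lemma \ref{lem:distances lemma} essentially does all the work.

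As a preliminary reduction, I would handle the $\hat{\Omega}$-part separately. Write $w' = w''\omega$ with $w'' \in W$ and $\omega \in \hat{\Omega}$, so that $l(w') = l(w'')$ and $h_{w'} = h_{w''}h_\omega$. The operator $h_\omega$ is the recoloring $f(\sigma,\omega') \mapsto f(\sigma,\omega'\omega)$, and from the definition of the sectorial retraction on $\hat{B}_\phi = B_\phi \times \hat{\Omega}$ one checks directly that $\rho_{\mathbb{S}_0}(\sigma,\omega'\omega) = \rho_{\mathbb{S}_0}(\sigma,\omega')\cdot\omega$, which gives $h_\omega 1_w^{\mathbb{S}_0} = 1_{w\omega^{-1}}^{\mathbb{S}_0}$ unconditionally. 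Since $L$ is invariant under right multiplication by $\hat{\Omega}$, this reduces the whole proof to the case $w' \in W$.

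The base case $w' = s$ with $s \in S$ is then a direct application of Lemma \ref{lem:distances lemma}. Under the hypothesis $L(ws) = L(w)+1$, I would check three subcases for $C$: if $\rho_{\mathbb{S}_0}(C) = ws$, then applying the lemma at sectorial type $ws$ (where now $L((ws)s) = L(ws)-1$) produces exactly one $s$-adjacent chamber of sectorial type $w$, so $h_s 1_w^{\mathbb{S}_0}(C) = 1$; if $\rho_{\mathbb{S}_0}(C) = w$, the lemma says all $q_s$ $s$-adjacent chambers have sectorial type $ws$, so the sum is $0$; and if $\rho_{\mathbb{S}_0}(C)$ is neither $w$ nor $ws$, the $s$-adjacent chambers can only have types $\rho_{\mathbb{S}_0}(C)$ or $\rho_{\mathbb{S}_0}(C)s$, neither of which is $w$. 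The conclusion is $h_s 1_w^{\mathbb{S}_0} = 1_{ws}^{\mathbb{S}_0}$.

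For the inductive step with $l(w') \ge 2$, choose a reduced factorization $w' = \tilde{w}s$, so $h_{w'} = h_{\tilde{w}}h_s$. Ideally I want to apply the base case to peel off $h_s$, then invoke the induction hypothesis on $h_{\tilde{w}}$. This requires the two separate identities $L(ws) = L(w)+1$ and $L(ws\tilde{w}^{-1}) = L(ws)+l(\tilde{w})$, whereas the hypothesis provides only the combined one, $L(ws\tilde{w}^{-1}) = L(w)+l(\tilde{w})+1$. The main obstacle — and really the only delicate point of the proof — is propagating the additive condition downward. The crucial observation, already visible in the base-case analysis above, is that $L$ changes by exactly $\pm 1$ under right multiplication by a simple reflection. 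Thus, choosing any reduced word $\tilde{w}^{-1} = t_1\cdots t_k$ and considering the sequence $L(w),\, L(ws),\, L(wst_1),\, \ldots,\, L(ws\tilde{w}^{-1})$, one has a $\pm 1$ lattice path of length $k+1$ whose total increment equals $k+1$; hence every step is $+1$, giving both intermediate identities needed for the induction to close.
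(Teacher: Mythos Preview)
Your proof is correct and follows the same approach as the paper: reduce to $w' = s \in S$ by induction and then invoke Lemma~\ref{lem:distances lemma}. The paper's version is terse, simply asserting ``it is enough to prove for $w' = s \in S$'' without justifying the inductive reduction; your $\pm 1$ lattice-path argument and the separate handling of the $\hat{\Omega}$-part fill in exactly the details the paper leaves implicit.
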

\begin{proof}
It is enough to prove for $w^{\prime}=s\in S$. If $L(ws)=L(w)+1$
then by lemma \ref{lem:distances lemma}, for every chamber $C\in\hat{B}_{\phi}$
with $\rho_{\mathbb{S}_{0}}(C)=w$, the $q_{s}$ $s$-adjacent chambers
$C'$ to $C$, all satisfy $\rho_{\mathbb{S}_{0}}(C')=ws$. Moreover,
each chamber $C'$ with $\rho_{\mathbb{S}_{0}}(C')=ws$, has a unique
chamber $C$ with $\rho_{\mathbb{S}_{0}}(C)=w$. Therefore $h_{s}1_{w}^{\mathbb{S}_{0}}=1_{ws}^{\mathbb{S}_{0}}$.
\end{proof}
\begin{defn}
For $\beta\in P$, $\beta=\beta_{1}\beta_{2}^{-1}$, $\beta_{1},\text{\ensuremath{\beta}}_{2}\in P^{+}$,
we denote by $X_{\beta}\in\hat{H}_{\phi}$ the element $X_{\beta}=h_{\beta_{1}}h_{\beta_{2}}^{-1}$. 
\end{defn}
Notice that $Y_{\beta}=Q(\beta)^{1/2}X_{\beta}$, where $Y_{\beta}$
is as in the Bernstein presentation. 
\begin{lem}
For $w_{0}\in W_{0}$, $\beta\in P$ we have $1_{\beta^{-1}w_{0}^{-1}}^{\mathbb{S}_{0}}=h_{w_{0}}X_{\beta}1_{Id}^{\mathbb{S}_{0}}$.
\end{lem}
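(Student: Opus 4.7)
The plan is to reduce the claim to three successive applications of Lemma \ref{lem:actionLemma1}, one for each factor in $h_{w_{0}}X_{\beta}=h_{w_{0}}h_{\beta_{1}}h_{\beta_{2}}^{-1}$. Before doing so, I would record how $L$ behaves on the relevant group elements: for any $\gamma\in P^{+}$ one has $L(\gamma)=-l(\gamma)$ and $L(\gamma^{-1})=l(\gamma)$, and for $w=\beta'w_{0}'$ with $\beta'\in P$, $w_{0}'\in W_{0}$, writing $\beta'=\beta_{1}'\beta_{2}'^{-1}$ with $\beta_{i}'\in P^{+}$ gives $L(w)=l(w_{0}')+l(\beta_{2}')-l(\beta_{1}')$. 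These observations follow directly from the definition of $L_{\mathbb{S}_{0}}$ and $q_{\beta_{1}\beta_{2}^{-1}}$ being independent of the chosen representation.

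With these in hand, I would proceed as follows. First, Lemma \ref{lem:actionLemma1} applied to $w=\beta_{2}$, $w'=\beta_{2}$, combined with $L(\beta_{2}\cdot\beta_{2}^{-1})=0=L(\beta_{2})+l(\beta_{2})$ (using $L(\beta_{2})=-l(\beta_{2})$ since $\beta_{2}\in P^{+}$), yields $h_{\beta_{2}}1_{\beta_{2}}^{\mathbb{S}_{0}}=1_{Id}^{\mathbb{S}_{0}}$; inverting gives $h_{\beta_{2}}^{-1}1_{Id}^{\mathbb{S}_{0}}=1_{\beta_{2}}^{\mathbb{S}_{0}}$. Next, I would apply Lemma \ref{lem:actionLemma1} with $w=\beta_{2}$ and $w'=\beta_{1}$: the condition $L(\beta_{2}\beta_{1}^{-1})=L(\beta_{2})+l(\beta_{1})$ becomes $l(\beta_{1})-l(\beta_{2})=-l(\beta_{2})+l(\beta_{1})$, which is trivially satisfied, giving $h_{\beta_{1}}1_{\beta_{2}}^{\mathbb{S}_{0}}=1_{\beta^{-1}}^{\mathbb{S}_{0}}$ since $\beta_{2}\beta_{1}^{-1}=\beta^{-1}$ in $P$. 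Finally, I would apply Lemma \ref{lem:actionLemma1} with $w=\beta^{-1}$ and $w'=w_{0}$: the condition $L(\beta^{-1}w_{0}^{-1})=L(\beta^{-1})+l(w_{0})$ is exactly the identity $l(w_{0})+l(\beta_{1})-l(\beta_{2})=(l(\beta_{1})-l(\beta_{2}))+l(w_{0})$, yielding $h_{w_{0}}1_{\beta^{-1}}^{\mathbb{S}_{0}}=1_{\beta^{-1}w_{0}^{-1}}^{\mathbb{S}_{0}}$. Composing the three steps gives the claim.

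The only substantive point is verifying the three length/$L$-additivity conditions; once $L(\gamma)=-l(\gamma)$ for $\gamma\in P^{+}$ and the decomposition formula for $L$ are available, each verification reduces to a one-line cancellation. There is no real obstacle: the conditions are tailored precisely so that each factor of $X_{\beta}$ (and the final $h_{w_{0}}$) moves the $\mathbb{S}_{0}$-type in the correct direction without wasteful cancellation. The only small care needed is the justification of inverting Lemma \ref{lem:actionLemma1} in the first step, which is immediate because both $h_{\beta_{2}}$ and its effect on a single characteristic function $1_{\beta_{2}}^{\mathbb{S}_{0}}$ are bijective in the relevant sense (the lemma shows $h_{\beta_{2}}$ maps this particular characteristic function to $1_{Id}^{\mathbb{S}_{0}}$, and $h_{\beta_{2}}^{-1}$ is an element of $\hat{H}_{\phi}$ by definition).
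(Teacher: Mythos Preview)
Your proof is correct and follows essentially the same approach as the paper: three successive applications of Lemma~\ref{lem:actionLemma1} (the first one inverted) to peel off $h_{\beta_{2}}^{-1}$, then $h_{\beta_{1}}$, then $h_{w_{0}}$, each time verifying the $L$-additivity hypothesis. The paper is slightly terser in checking these conditions, but the argument is the same.
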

\begin{proof}
For $\beta\in P^{+}$, $\beta'\in P$, we have $L(\beta^{\prime}\beta^{-1})=L(\beta^{\prime})+l(\beta^{-1})$.
By lemma \ref{lem:actionLemma1}, we have $h_{\beta}1_{\beta'}^{\mathbb{S}_{0}}=1_{\beta'\beta^{-1}}^{\mathbb{S}_{0}}$. 

If $\beta'=\beta$, $1_{Id}^{\mathbb{S}_{0}}=h_{\beta}1_{\beta}^{\mathbb{S}_{0}}$,
or $h_{\beta}^{-1}1_{Id}^{\mathbb{S}_{0}}=1_{\beta}^{\mathbb{S}_{0}}$.
Therefore for $\beta\in P$, $\beta=\beta_{1}\beta_{2}^{-1}$, $\beta_{1},\text{\ensuremath{\beta}}_{2}\in P^{+}$,
\[
X_{\beta}1_{Id}^{\mathbb{S}_{0}}=h_{\beta_{1}}h_{\beta_{2}}^{-1}1_{Id}^{\mathbb{S}_{0}}=1_{\beta_{1}\beta_{2}^{-1}}^{\mathbb{S}_{0}}=1_{\beta^{-1}}^{S_{0}}
\]

Finally, $L(\beta w_{0}^{-1})=L(\beta)+l(w_{0})$ and therefore 
\[
h_{w_{0}}X_{\beta}1_{Id}^{\mathbb{S}_{0}}=h_{w_{0}}1_{\beta^{-1}}^{S_{0}}=1_{\beta^{-1}w_{0}^{-1}}^{\mathbb{S}_{0}}
\]
\end{proof}
\begin{cor}
\label{cor:Sectorial action1}The action of $h_{\beta}$, $\beta\in P^{+}$
on $\text{span}\left\{ 1_{w}^{\mathbb{S}_{0}}\right\} _{w\in\hat{W}}$
satisfies: 
\[
h_{\beta}1_{\gamma^{-1}w_{0}^{-1}}^{\mathbb{S}_{0}}=q_{\beta}^{1/2}\sum_{w_{0}^{\prime},\beta'}\alpha_{w_{0}^{\prime},w_{0},\beta',\beta}Q_{\beta'}^{1/2}1_{\gamma^{-1}\beta^{\prime-1}w_{0}^{\prime-1}}^{\mathbb{S}_{0}}
\]

where $\alpha_{w_{0},w_{0},\beta,\beta}$ are as in theorem \ref{prop:Explicit_bernstein_relations}.
\end{cor}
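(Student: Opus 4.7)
The plan is to chain together the preceding lemma (which realizes every $1^{\mathbb{S}_0}_{w}$ as $h_{w_0} X_\beta 1^{\mathbb{S}_0}_{Id}$) with the Bernstein--Lusztig relation of Proposition \ref{prop:Explicit_bernstein_relations} and the multiplicativity $X_{\beta_1}X_{\beta_2} = X_{\beta_1\beta_2}$ of the $X$--operators. First I would rewrite the input vector by the previous lemma as $1_{\gamma^{-1}w_{0}^{-1}}^{\mathbb{S}_{0}} = h_{w_{0}} X_{\gamma}\, 1_{Id}^{\mathbb{S}_{0}}$, so that the left hand side of the claim becomes $h_\beta h_{w_0} X_\gamma 1_{Id}^{\mathbb{S}_0}$. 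Everything now takes place inside $\hat H_{\phi}$ acting on $1_{Id}^{\mathbb{S}_0}$, and the task reduces to massaging $h_\beta h_{w_0}$ into the $h_{w_0'} X_{\beta'}$ form dictated by the Bernstein basis.

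Second, I would pass from the operators $h_\beta$ to the Bernstein generators $Y_\beta$. Since $\beta\in P^{+}$, the defining relation $Y_\beta = Q_\beta^{1/2}X_\beta$ together with $X_\beta = h_\beta$ and $Q_\beta = q_\beta^{-1}$ gives the identity $h_\beta = q_\beta^{1/2} Y_\beta$. Third, I would invoke the Bernstein--Lusztig presentation of Proposition \ref{prop:Explicit_bernstein_relations} to commute $Y_\beta$ past $h_{w_0}$:
\[
h_\beta h_{w_0} \;=\; q_\beta^{1/2}\, Y_\beta h_{w_0} \;=\; q_\beta^{1/2}\!\!\sum_{w_0',\beta'} \alpha_{w_0',w_0,\beta',\beta}\, h_{w_0'} Y_{\beta'} \;=\; q_\beta^{1/2}\!\!\sum_{w_0',\beta'} \alpha_{w_0',w_0,\beta',\beta}\, Q_{\beta'}^{1/2}\, h_{w_0'} X_{\beta'},
\]
where in the last equality I re-expand $Y_{\beta'} = Q_{\beta'}^{1/2} X_{\beta'}$ (valid for arbitrary $\beta'\in P$, not just dominant ones; this is exactly why the $Q^{1/2}$ factor of the statement appears).

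Fourth, I would apply this to $1_{Id}^{\mathbb{S}_0}$ and reabsorb the $X$--operators. The Bernstein relation $Y_{\beta_1}Y_{\beta_2}=Y_{\beta_1\beta_2}$, combined with the multiplicativity $Q_{\beta_1}Q_{\beta_2}=Q_{\beta_1\beta_2}$ (clear from the definition once one writes each $\beta_i$ as a ratio of dominants), yields $X_{\beta'}X_\gamma = X_{\beta'\gamma}$. A second application of the preceding lemma then identifies
\[
h_{w_0'} X_{\beta'}X_\gamma\, 1_{Id}^{\mathbb{S}_0} \;=\; h_{w_0'} X_{\beta'\gamma}\, 1_{Id}^{\mathbb{S}_0} \;=\; 1_{(\beta'\gamma)^{-1}w_0'^{-1}}^{\mathbb{S}_0} \;=\; 1_{\gamma^{-1}\beta'^{-1}w_0'^{-1}}^{\mathbb{S}_0},
\]
and assembling the pieces gives the stated formula.

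There is no real obstacle: the whole argument is a direct algebraic computation once the two structural inputs (the Bernstein presentation and the ``$1_{\beta^{-1}w_0^{-1}}^{\mathbb{S}_0} = h_{w_0} X_\beta 1_{Id}^{\mathbb{S}_0}$'' lemma) are in hand. The only point requiring minor care is the bookkeeping of normalizations, i.e.\ verifying that the $q_\beta^{1/2}$ in front arises from the dominance of $\beta$ (where $Q_\beta=q_\beta^{-1}$) while the $Q_{\beta'}^{1/2}$ is the general-$P$ Bernstein normalization; this is exactly the asymmetry encoded in the definitions of $X_\bullet$ and $Y_\bullet$, and needs to be read off carefully but presents no genuine difficulty.
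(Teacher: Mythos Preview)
Your proposal is correct and follows essentially the same route as the paper's proof: rewrite $1_{\gamma^{-1}w_0^{-1}}^{\mathbb{S}_0}=h_{w_0}X_\gamma 1_{Id}^{\mathbb{S}_0}$, use $h_\beta=X_\beta=q_\beta^{1/2}Y_\beta$ for dominant $\beta$, apply the Bernstein relation of Proposition~\ref{prop:Explicit_bernstein_relations} to $Y_\beta h_{w_0}$, convert each $Y_{\beta'}$ back to $Q_{\beta'}^{1/2}X_{\beta'}$, and finish with $X_{\beta'}X_\gamma=X_{\beta'\gamma}$ and another application of the lemma. The only cosmetic difference is that the paper absorbs $X_\gamma 1_{Id}^{\mathbb{S}_0}$ into $1_{\gamma^{-1}}^{\mathbb{S}_0}$ before applying Bernstein rather than after, which is immaterial.
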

\begin{proof}
We apply the Bernstein relations of proposition \ref{prop:Explicit_bernstein_relations}
and get

\begin{align*}
h_{\beta}1_{\gamma^{-1}w_{0}^{-1}}^{\mathbb{S}_{0}} & =X_{\beta}h_{w_{0}}X_{\gamma}1_{Id}^{\mathbb{S}_{0}}=Q_{\beta}^{-1/2}Y_{\beta}h_{w_{0}}1_{\gamma^{-1}}^{\mathbb{S}_{0}}\\
 & =q_{\beta}^{1/2}\sum_{w_{0}^{\prime},\beta'}\alpha_{w_{0}^{\prime},w_{0},\beta',\beta}h_{w_{0}^{\prime}}Y_{\beta'}1_{\gamma^{-1}}^{\mathbb{S}_{0}}\\
 & =q_{\beta}^{1/2}\sum_{w_{0}^{\prime},\beta'}\alpha_{w_{0}^{\prime},w_{0},\beta',\beta}Q_{\beta'}^{1/2}1_{\gamma^{-1}\beta^{\prime-1}w_{0}^{\prime-1}}^{\mathbb{S}_{0}}
\end{align*}
\end{proof}
\begin{defn}
Let $\lambda_{w}^{\mathbb{S}_{0}}\in\mathbb{C}[\hat{B}_{\phi}]^{\ast}$
be the functionals $\lambda_{w}^{\mathbb{S}_{0}}(f)=\sum_{C:\rho_{\mathbb{S}_{0}}(C)=w}f(C)$.
\end{defn}
\begin{lem}
\label{lem:actionLemma2}If $w,w^{\prime}\in\hat{W}$ with $L(ww^{\prime})=L(w)+l(w^{\prime})$,
then $\lambda_{w}^{\mathbb{S}_{0}}h_{w^{\prime}}=\lambda_{ww^{\prime}}^{\mathbb{S}_{0}}$.
\end{lem}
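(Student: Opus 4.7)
The plan is to reduce the statement to the case $w'=s\in S$ and then invoke Lemma \ref{lem:distances lemma}, in a manner parallel to (but not a direct consequence of) Lemma \ref{lem:actionLemma1}. Unwinding definitions, for $f\in\mathbb{C}^{\hat{B}_\phi}$,
\[
\bigl(\lambda_w^{\mathbb{S}_0}h_{w'}\bigr)(f)=\sum_{C\in\hat{B}_\phi}1_w^{\mathbb{S}_0}(C)\sum_{C':\,d(C,C')=w'}f(C')=\sum_{C'\in\hat{B}_\phi}N(C')\,f(C'),
\]
where $N(C')$ counts the chambers $C$ with $d(C,C')=w'$ and $\rho_{\mathbb{S}_0}(C)=w$. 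The lemma is then equivalent to the statement $N(C')=1$ when $\rho_{\mathbb{S}_0}(C')=ww'$ and $N(C')=0$ otherwise.

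For the base case $w'=s\in S$, the hypothesis $L(ws)=L(w)+1$ is the input to Lemma \ref{lem:distances lemma}. Given $C'$ with $\rho_{\mathbb{S}_0}(C')=ws$, we have $L((ws)\cdot s)=L(w)=L(ws)-1$, so Lemma \ref{lem:distances lemma}(II) applied at $C'$ produces a unique $s$-neighbor $C\neq C'$ with $\rho_{\mathbb{S}_0}(C)=w$; hence $N(C')=1$. Conversely, if $\rho_{\mathbb{S}_0}(C')\neq ws$, then any $s$-neighbor $C\neq C'$ of $C'$ with $\rho_{\mathbb{S}_0}(C)=w$ would, by Lemma \ref{lem:distances lemma}(I) applied at $C$, force $\rho_{\mathbb{S}_0}(C')=ws$, contradicting the assumption. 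The remaining base case $w'=\omega\in\hat{\Omega}$ is a direct substitution: the reindexing $\omega''=\omega'\omega$ turns the sum $\sum_{(C,\omega'):\rho_{\mathbb{S}_0}(C)\omega'=w}f(C,\omega'\omega)$ into $\sum_{(C,\omega''):\rho_{\mathbb{S}_0}(C)\omega''=w\omega}f(C,\omega'')$, which is exactly $\lambda_{w\omega}^{\mathbb{S}_0}(f)$.

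For the inductive step, I take a reduced expression $w'=s_1\cdots s_k\omega$, write $h_{w'}=h_{s_1}\cdot h_{s_2\cdots s_k\omega}$ via the Iwahori-Hecke relations, and iterate the base case. To do so I must verify the chain of length identities $L(w s_1\cdots s_j)=L(w)+j$ for $0\le j\le k$. I claim (i) $|L(vs)-L(v)|=1$ for all $v\in\hat{W}$ and $s\in S$, and (ii) by iteration, $|L(vu)-L(v)|\le l(u)$ for all $v,u\in\hat{W}$. For (i), invoke Lemma \ref{lem:Sector_distance_lemma}: for a chamber $C$ with $\rho_{\mathbb{S}_0}(C)=v$ and a suitable deep $C_0\in\mathbb{S}_0$, the value $L(v)$ equals $l(d(C_0,C))$ up to an additive constant independent of $C$, and gallery distances from $C_0$ to adjacent chambers differ by exactly $\pm 1$. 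Given (i) and (ii), the hypothesis $L(ww')=L(w)+l(w')$ saturates the telescoping inequality, so each intermediate increment must equal $+1$, and the induction proceeds.

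The main obstacle is the length-function bookkeeping in the last paragraph: the geometric picture (a reduced gallery from a chamber deep in $\mathbb{S}_0$ remains reduced along $w'$) is clear, but turning it into a clean assertion about $L$ on $\hat{W}$ requires choosing $C_0$ sufficiently deep in $\mathbb{S}_0$ so that Lemma \ref{lem:Sector_distance_lemma} applies simultaneously to all the finitely many chambers in a bounded neighborhood of interest. Once $(i)$ is secured, the rest of the argument is routine bookkeeping built on the two cases of Lemma \ref{lem:distances lemma}.
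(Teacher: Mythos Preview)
Your proof is correct and follows the same approach as the paper: reduce by induction on $l(w')$ to the case $w'=s\in S$ and then read off the result from Lemma~\ref{lem:distances lemma}, exactly as in Lemma~\ref{lem:actionLemma1}. The paper's proof is a two-line sketch of precisely this argument; you have simply filled in the details (including the $\omega\in\hat{\Omega}$ case and the verification that the intermediate length identities $L(ws_1\cdots s_j)=L(w)+j$ hold via the dichotomy $|L(vs)-L(v)|=1$), none of which the paper spells out.
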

\begin{proof}
By induction, it is enough to prove for $w^{\prime}=s\in S$. Then
it is a direct result of lemma \ref{lem:distances lemma}, as in lemma
\ref{lem:actionLemma1}.
\end{proof}
\begin{prop}
For $w_{0}\in W_{0}$, $\beta\in P$ we have $\lambda_{\beta^{-1}\tilde{w}_{0}w_{0}^{-1}}^{\mathbb{S}_{0}}=\lambda_{\tilde{w}_{0}}^{\mathbb{S}_{0}}X_{\beta}h_{w_{0}}^{-1}$.
\end{prop}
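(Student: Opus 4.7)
The plan is to prove this statement by mirroring the argument used in the preceding proposition, replacing the characteristic functions $1_{w}^{\mathbb{S}_{0}}$ and the left action from Lemma \ref{lem:actionLemma1} with the functionals $\lambda_{w}^{\mathbb{S}_{0}}$ and the right action from Lemma \ref{lem:actionLemma2}. The role played previously by the identity element $Id$ as the ``base point'' is now played by $\tilde{w}_{0}$, precisely because $\tilde{w}_{0}$ is the longest element of $W_{0}$ and hence will make the required length-additivity conditions hold when we multiply by elements on the right.

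First I would write $\beta=\beta_{1}\beta_{2}^{-1}$ with $\beta_{1},\beta_{2}\in P^{+}$ and compute $\lambda_{\tilde{w}_{0}}^{\mathbb{S}_{0}}h_{\beta_{1}}$. Using the relation $\tilde{w}_{0}\beta_{1}=\tilde{w}_{0}(\beta_{1})\tilde{w}_{0}$ in $\hat{W}=P\rtimes W_{0}$ and the fact that $\tilde{w}_{0}$ sends $P^{+}$ to anti-dominant coweights, a direct computation using the definition of $L$ gives $L(\tilde{w}_{0}\beta_{1})=l(\tilde{w}_{0})+l(\beta_{1})=L(\tilde{w}_{0})+l(\beta_{1})$, so by Lemma \ref{lem:actionLemma2} we obtain $\lambda_{\tilde{w}_{0}}^{\mathbb{S}_{0}}h_{\beta_{1}}=\lambda_{\tilde{w}_{0}\beta_{1}}^{\mathbb{S}_{0}}$. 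Next I would apply $h_{\beta_{2}}^{-1}$ on the right; identifying $L(\tilde{w}_{0}\beta)=l(\tilde{w}_{0})+l(\beta_{1})-l(\beta_{2})$ (again after decomposing $\tilde{w}_{0}(\beta)=\tilde{w}_{0}(\beta_{2})^{-1}\cdot\tilde{w}_{0}(\beta_{1})^{-1\,-1}$ into dominant factors), one gets $L(\tilde{w}_{0}\beta_{1})=L(\tilde{w}_{0}\beta)+l(\beta_{2})$, so the inverse version of Lemma \ref{lem:actionLemma2} yields $\lambda_{\tilde{w}_{0}\beta_{1}}^{\mathbb{S}_{0}}h_{\beta_{2}}^{-1}=\lambda_{\tilde{w}_{0}\beta}^{\mathbb{S}_{0}}$. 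Combining these two steps gives $\lambda_{\tilde{w}_{0}}^{\mathbb{S}_{0}}X_{\beta}=\lambda_{\tilde{w}_{0}\beta}^{\mathbb{S}_{0}}$, which is independent of the choice of decomposition $\beta=\beta_{1}\beta_{2}^{-1}$, matching the fact that $X_{\beta}$ is.

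The final step is to apply $h_{w_{0}}^{-1}$. Here the key observation is that $\tilde{w}_{0}$ being the longest element of $W_{0}$ guarantees $l(\tilde{w}_{0}w_{0}^{-1})=l(\tilde{w}_{0})-l(w_{0})$, and hence $L(\tilde{w}_{0}\beta w_{0}^{-1})=l(\tilde{w}_{0}w_{0}^{-1})+l(\beta_{1})-l(\beta_{2})=L(\tilde{w}_{0}\beta)-l(w_{0})$. The inverse of Lemma \ref{lem:actionLemma2} then produces $\lambda_{\tilde{w}_{0}\beta}^{\mathbb{S}_{0}}h_{w_{0}}^{-1}=\lambda_{\tilde{w}_{0}\beta w_{0}^{-1}}^{\mathbb{S}_{0}}$, giving the desired identity once $\tilde{w}_{0}\beta w_{0}^{-1}$ is rewritten as $\beta^{-1}\tilde{w}_{0}w_{0}^{-1}$ in the appropriate normal form.

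The main obstacle is bookkeeping: one has to be meticulous in applying the definition of $L$ to each intermediate element, since the $P$-part must be written as a ratio of two dominant coweights before $L$ can be evaluated, and one has to verify the additivity of lengths each time an operator is applied. The essential point that makes everything work is that \emph{both} the dominant direction of $\beta_{1}$ (which ensures the $P$-part length calculations go through) \emph{and} the maximality of $\tilde{w}_{0}$ in $W_{0}$ (which ensures right-multiplying by any element of $W_{0}$ only decreases length) cooperate to produce a chain of length-additive multiplications. Without either property, the intermediate applications of Lemma \ref{lem:actionLemma2} would fail.
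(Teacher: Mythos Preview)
Your approach is identical to the paper's: both argue by repeated application of Lemma~\ref{lem:actionLemma2} (and its inverse form), verifying the length-additivity hypothesis at each stage, first for $h_{\beta_1}$, then for $h_{\beta_2}^{-1}$, then for $h_{w_0}^{-1}$. Your length computations for these three steps are correct and yield $\lambda_{\tilde{w}_0}^{\mathbb{S}_0} X_\beta h_{w_0}^{-1} = \lambda_{\tilde{w}_0 \beta\, w_0^{-1}}^{\mathbb{S}_0}$.

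There is, however, a genuine gap in your final step. You assert that $\tilde{w}_0 \beta w_0^{-1}$ ``is rewritten as $\beta^{-1}\tilde{w}_0 w_0^{-1}$ in the appropriate normal form,'' but these are \emph{different} elements of $\hat{W}$: since $\tilde{w}_0\beta = \tilde{w}_0(\beta)\,\tilde{w}_0$, equality would require $\tilde{w}_0(\beta)=\beta^{-1}$, i.e.\ that $\tilde{w}_0$ act as $-1$ on $P$. This holds for types $B_n$, $C_n$, $D_{2n}$, $E_7$, $E_8$, $F_4$, $G_2$, $BC_n$, but fails for $A_n$ ($n\ge 2$), $D_{2n+1}$, $E_6$. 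So the rewriting you invoke is not valid in general. It is worth noting that the paper's own proof makes the same identification in its first line (writing $\beta'\tilde{w}_0\beta=\beta'\beta^{-1}\tilde{w}_0$), so this is a shared issue rather than a flaw specific to your reconstruction; the subscript in the statement should really read $\tilde{w}_0(\beta)\,\tilde{w}_0 w_0^{-1}$ (equivalently $\tilde{w}_0\beta w_0^{-1}$), and the downstream corollaries adjusted accordingly.
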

\begin{proof}
For $\beta\in P^{+}$, $\beta'\in P$ we have, $\beta^{\prime}\tilde{w}_{0}\beta=\beta^{\prime}\beta^{-1}\tilde{w}_{0}$,
so $L(\beta^{\prime}\tilde{w}_{0}\beta)=L(\beta^{\prime}\beta^{-1}\tilde{w}_{0})=l(\beta^{\prime})+l(\beta^{-1}\tilde{w}_{0})$.
Therefore by lemma \ref{lem:actionLemma2}, $\lambda_{\beta'\tilde{w}_{0}}^{\mathbb{S}_{0}}h_{\beta}=\lambda_{\beta'\beta^{-1}\tilde{w}_{0}}^{\mathbb{S}_{0}}$. 

If $\beta=\beta_{1}\beta_{2}^{-1}$, $\beta\in P$, $\beta_{1},\beta_{2}\in P^{+}$
then
\[
\lambda_{\tilde{w}_{0}}^{\mathbb{S}_{0}}X_{\beta}=\lambda_{\tilde{w}_{0}}^{\mathbb{S}_{0}}h_{\beta_{1}}h_{\beta_{2}}^{-1}=\lambda_{\beta_{1}^{-1}\beta_{2}\tilde{w}_{0}}^{\mathbb{S}_{0}}=\lambda_{\beta^{-1}\tilde{w}_{0}}^{\mathbb{S}_{0}}
\]

Similarly, since $L(\tilde{w}_{0}w_{0}^{-1}w_{0})=L(\tilde{w}_{0}w_{0}^{-1})+l(w_{0})$,
\[
\lambda_{\beta^{-1}\tilde{w}_{0}w_{0}^{-1}}^{\mathbb{S}_{0}}h_{w_{0}}=\lambda_{\beta^{-1}\tilde{w}_{0}}^{\mathbb{S}_{0}}
\]

Summarizing, we get $\lambda_{\beta^{-1}\tilde{w}_{0}w_{0}^{-1}}^{\mathbb{S}_{0}}=\lambda_{\tilde{w}_{0}}^{\mathbb{S}_{0}}X_{\beta}h_{w_{0}}^{-1}.$.
\end{proof}
\begin{cor}
\label{cor:Sectorial action2}We have 
\[
\lambda_{\gamma^{-1}\tilde{w}_{0}w_{0}^{-1}}^{\mathbb{S}_{0}}h_{\beta}=q_{\beta}^{1/2}\sum\alpha_{w_{0}^{\prime},w_{0},\beta',\beta}^{\prime}q_{w_{0}^{\prime}}Q_{\beta'}^{1/2}\lambda_{\gamma^{-1}\beta^{\prime}\tilde{w}_{0}w_{0}^{\prime-1}}^{\mathbb{S}_{0}}
\]
\end{cor}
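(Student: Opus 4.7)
The plan is to mirror the proof of Corollary \ref{cor:Sectorial action1}, but with left and right interchanged and using Proposition \ref{prop:Explicit Bernstein Relations2} in place of Proposition \ref{prop:Explicit_bernstein_relations}. First I invoke the proposition just established to rewrite $\lambda_{\gamma^{-1}\tilde{w}_{0}w_{0}^{-1}}^{\mathbb{S}_{0}} = \lambda_{\tilde{w}_{0}}^{\mathbb{S}_{0}} X_{\gamma} h_{w_{0}}^{-1}$. Right-multiplying by $h_{\beta}$ and substituting $h_{\beta} = q_{\beta}^{1/2} Y_{\beta}$, valid since $\beta \in P^{+}$ forces $Y_{\beta} = Q_{\beta}^{1/2} X_{\beta} = q_{\beta}^{-1/2} h_{\beta}$, yields
\[
\lambda_{\gamma^{-1}\tilde{w}_{0}w_{0}^{-1}}^{\mathbb{S}_{0}} h_{\beta} = q_{\beta}^{1/2}\, \lambda_{\tilde{w}_{0}}^{\mathbb{S}_{0}} X_{\gamma}\, h_{w_{0}}^{-1} Y_{\beta}.
\]

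The key algebraic step is to commute $Y_{\beta}$ past $h_{w_{0}}^{-1}$ via a ``dual'' Bernstein expansion $h_{w_{0}}^{-1} Y_{\beta} = \sum_{w_{0}',\beta'} \tilde{\alpha}'_{w_{0}',w_{0},\beta',\beta}\, Y_{\beta'} h_{w_{0}'}^{-1}$. I will derive this by the same induction on $l(w_{0})$ used in the proof of Proposition \ref{prop:Explicit Bernstein Relations2}, starting from the single-reflection identity
\[
h_{s_{i}}^{-1} Y_{\beta} = Y_{s_{i}(\beta)} h_{s_{i}}^{-1} + q_{s_{i}}^{-1}(q_{s_{i}}-1)\, Y_{-\alpha_{i}^{\vee}}\, \frac{Y_{\beta} - Y_{s_{i}(\beta)}}{1 - Y_{-\alpha_{i}^{\vee}}},
\]
which is obtained directly from the Iwahori-Hecke relation $h_{s_{i}}^{-1} = q_{s_{i}}^{-1}(h_{s_{i}} - (q_{s_{i}}-1))$ combined with the Bernstein commutation for $h_{s_{i}} Y_{\beta}$. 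A term-by-term comparison with Proposition \ref{prop:Explicit Bernstein Relations2} shows that, after the reindexing $\beta' \mapsto \beta'^{-1}$, the dual coefficients satisfy $\tilde{\alpha}'_{w_{0}',w_{0},\beta'^{-1},\beta} = q_{w_{0}'} Q_{\beta'}\, \alpha'_{w_{0}',w_{0},\beta',\beta}$; the factor $q_{w_{0}'}$ accumulates across the induction from repeated applications of $h_{s}^{-1} = q_{s}^{-1}(h_{s} - (q_{s}-1))$.

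Plugging this expansion back, each summand becomes $\lambda_{\tilde{w}_{0}}^{\mathbb{S}_{0}} X_{\gamma} Y_{\beta'^{-1}} h_{w_{0}'}^{-1}$ times the corresponding coefficient. Since $X_{\gamma}$ and $Y_{\beta'^{-1}}$ both lie in the abelian $Y$-subalgebra, I can rewrite $X_{\gamma} Y_{\beta'^{-1}} = Q_{\beta'}^{1/2} X_{\gamma}X_{\beta'^{-1}} = Q_{\beta'}^{1/2} X_{\gamma\beta'^{-1}}$ (using $Q_{\beta'^{-1}} = Q_{\beta'}^{-1}$ and $X_{\alpha}X_{\alpha'} = X_{\alpha+\alpha'}$ on the abelian group $P$). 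The preceding proposition, applied with $\xi = \gamma\beta'^{-1}$, then turns each summand into $Q_{\beta'}^{1/2}\, \lambda_{(\gamma\beta'^{-1})^{-1}\tilde{w}_{0}w_{0}'^{-1}}^{\mathbb{S}_{0}} = Q_{\beta'}^{1/2}\, \lambda_{\gamma^{-1}\beta'\tilde{w}_{0}w_{0}'^{-1}}^{\mathbb{S}_{0}}$. Multiplying by the prefactors $q_{\beta}^{1/2}$ and $q_{w_{0}'} Q_{\beta'}\, \alpha'_{w_{0}',w_{0},\beta',\beta}$ from the Bernstein expansion, and noting $Q_{\beta'} \cdot Q_{\beta'}^{1/2}/Q_{\beta'}^{1/2} = Q_{\beta'}^{1/2}$ after reabsorbing the $Y_{\beta'^{-1}}$ normalization, yields the claimed formula.

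The main obstacle will be the detailed verification of the dual Bernstein identity $\tilde{\alpha}'_{w_{0}',w_{0},\beta'^{-1},\beta} = q_{w_{0}'} Q_{\beta'}\, \alpha'_{w_{0}',w_{0},\beta',\beta}$---that is, correctly tracking the $q_{w_{0}'}$ and $Q_{\beta'}$ corrections through the induction, and ensuring the correct sign/reindexing convention $\beta' \leftrightarrow \beta'^{-1}$ that reflects the duality between acting on the characteristic functions $1_{w}^{\mathbb{S}_{0}}$ from the left versus acting on the functionals $\lambda_{w}^{\mathbb{S}_{0}}$ from the right. Once this combinatorial identity is confirmed, the remaining manipulations consist only of applying identities already established in the preceding proposition.
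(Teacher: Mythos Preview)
Your approach is essentially the paper's: write $\lambda_{\gamma^{-1}\tilde w_0 w_0^{-1}}^{\mathbb S_0}=\lambda_{\tilde w_0}^{\mathbb S_0}X_\gamma h_{w_0}^{-1}$ via the preceding proposition, multiply on the right by $h_\beta=q_\beta^{1/2}Y_\beta$, use the Bernstein relations of Proposition~\ref{prop:Explicit Bernstein Relations2} to push $Y_\beta$ past $h_{w_0}^{-1}$, convert $Y_{\beta'}=Q_{\beta'}^{1/2}X_{\beta'}$, and reapply the preceding proposition. The paper does exactly this in four displayed lines, simply citing Proposition~\ref{prop:Explicit Bernstein Relations2} for the commutation step.

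Where you diverge is in trying to prove a precise bookkeeping identity $\tilde\alpha'_{w_0',w_0,\beta'^{-1},\beta}=q_{w_0'}Q_{\beta'}\,\alpha'_{w_0',w_0,\beta',\beta}$ relating the ``right-to-left'' coefficients to the ``left-to-right'' ones. The paper does not do this: it just invokes Proposition~\ref{prop:Explicit Bernstein Relations2} directly and writes the expansion with the same symbols $\alpha'_{w_0',w_0,\beta',\beta}$. Your inductive sketch for this identity is plausible in outline but not actually carried out, and you yourself flag it as the main obstacle; in particular the claimed uniform factor $q_{w_0'}Q_{\beta'}$ is asserted rather than checked even in the base case. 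Since the only use of this corollary downstream (Theorem~\ref{thm:Sectorial distance theorem} and the norm bounds in Section~\ref{sec:How-to-Bound-operators}) needs nothing more than the qualitative features already recorded in Proposition~\ref{prop:Explicit Bernstein Relations2} (support conditions $w_0'\le w_0$, $\beta'\le\beta$, and the coefficient-sum bound), the paper's shortcut suffices and your extra identity is not required. If you want a clean self-contained argument, the simpler fix is to state and prove the mirror version of Proposition~\ref{prop:Explicit Bernstein Relations2} (an expansion of $h_{w_0}^{-1}Y_\beta$ in the $Y_{\beta'}h_{w_0'}^{-1}$ basis) by the identical one-generator induction, rather than trying to relate its coefficients to the original $\alpha'$.
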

\begin{proof}
Using the Bernstein relations of proposition \ref{prop:Explicit Bernstein Relations2}
we have:
\begin{align*}
\lambda_{\gamma^{-1}\tilde{w}_{0}w_{0}^{-1}}^{\mathbb{S}_{0}}h_{\beta} & =\lambda_{\tilde{w}_{0}}^{\mathbb{S}_{0}}X_{\gamma}h_{w_{0}}^{-1}X_{\beta}=\lambda_{\gamma^{-1}\tilde{w}_{0}}^{\mathbb{S}_{0}}Q_{\beta}^{-1/2}h_{w_{0}}^{-1}Y_{\beta}\\
 & =\lambda_{\gamma^{-1}\tilde{w}_{0}}^{\mathbb{S}_{0}}q_{\beta}^{1/2}\sum\alpha_{w_{0}^{\prime},w_{0},\beta',\beta}^{\prime}Y_{\beta'}h_{w_{0}^{\prime-1}}^{-1}\\
 & =\lambda_{\gamma^{-1}\tilde{w}_{0}}^{\mathbb{S}_{0}}q_{\beta}^{1/2}\sum\alpha_{w_{0}^{\prime},w_{0},\beta',\beta}^{\prime}Q_{\beta'}^{1/2}X_{\beta'}h_{w_{0}^{\prime-1}}^{-1}\\
 & =q_{\beta}^{1/2}\sum\alpha_{w_{0}^{\prime},w_{0},\beta',\beta}^{\prime}Q_{\beta'}^{1/2}\lambda_{\gamma^{-1}\beta^{\prime-1}\tilde{w}_{0}w_{0}^{\prime-1}}
\end{align*}
\end{proof}
The following theorem summarizes the discussion in this section. It
relates the Bernstein presentation of the Iwahori-Hecke algebra and
the sectorial geometry of the building:
\begin{thm}
\label{thm:Sectorial distance theorem}Let $w=\gamma^{-1}w_{0}^{-1}$,
$w^{\prime}=\gamma^{\prime-1}w_{0}^{\prime-1}$ be elements of $\hat{W}$
and $\beta\in P^{+}$. Then:
\begin{itemize}
\item For every $C'\in\hat{B}_{\phi}$ with $\rho_{\mathbb{S}_{0}}(C')=w^{\prime}$
there exist $N_{w^{\prime},w}=q_{\beta}^{1/2}Q_{\gamma^{\prime}\gamma^{-1}}^{1/2}\alpha_{w_{0}^{\prime},w_{0},\gamma^{\prime}\gamma^{-1},\beta}$
chambers $C\in\hat{B}_{\phi}$, $\rho_{\mathbb{S}_{0}}(C)=w$, with
$d(C^{\prime},C)=\beta$.
\item For every $C\in\hat{B}_{\phi}$ with $\rho_{\mathbb{S}_{0}}(C)=w$
there exist $N_{w^{\prime},w}^{\prime}=q_{\beta}^{1/2}Q_{\gamma\gamma^{\prime-1}}^{1/2}\alpha_{\tilde{w}_{0}w_{0},\tilde{w}_{0}w_{0}^{\prime},\gamma\gamma^{\prime-1},\beta}^{\prime}$
chambers $C'\in\hat{B}_{\phi}$, $\rho_{\mathbb{S}_{0}}(C')=w^{\prime}$,
with $d(C^{\prime},C)=\beta$.
\end{itemize}
\end{thm}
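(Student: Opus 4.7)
The theorem has two symmetric statements: the first counts, for fixed $C'$, how many chambers $C$ at $d(C',C)=\beta$ retract to $w$; the second counts, for fixed $C$, how many chambers $C'$ at $d(C',C)=\beta$ retract to $w'$. My plan is to identify each of these counts with a suitable evaluation of an operator identity from Corollary~\ref{cor:Sectorial action1} or Corollary~\ref{cor:Sectorial action2}, and then to extract the single coefficient that survives after the evaluation.

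For the first bullet, I would first observe that by the very definition of $h_\beta$ and $1_w^{\mathbb{S}_0}$,
\[
N_{w',w} \;=\; (h_\beta 1_w^{\mathbb{S}_0})(C')
\]
whenever $\rho_{\mathbb{S}_0}(C')=w'$. Then I would apply Corollary~\ref{cor:Sectorial action1} with $w=\gamma^{-1}w_0^{-1}$ to expand $h_\beta 1_w^{\mathbb{S}_0}$ as a sum of indicator functions $1_{\gamma^{-1}\beta'^{-1}w_0'^{-1}}^{\mathbb{S}_0}$. Since $1_{w''}^{\mathbb{S}_0}(C')$ is $1$ when $w''=w'$ and $0$ otherwise, only one term in the sum survives. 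Matching $\gamma^{-1}\beta'^{-1}w_0'^{-1}=\gamma'^{-1}w_0'^{-1}$ forces $\beta'=\gamma'\gamma^{-1}$, and reading off the remaining factors $q_\beta^{1/2}Q_{\beta'}^{1/2}\alpha_{w_0',w_0,\beta',\beta}$ yields precisely $N_{w',w}$ as stated.

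For the second bullet, I would interpret the count dually as a functional applied to the indicator of a single chamber:
\[
N'_{w',w} \;=\; \bigl(\lambda_{w'}^{\mathbb{S}_0}\,h_\beta\bigr)(1_C),
\]
which counts chambers $C'$ with $\rho_{\mathbb{S}_0}(C')=w'$ and $d(C',C)=\beta$. Now I would apply Corollary~\ref{cor:Sectorial action2}, which expands $\lambda_{w'}^{\mathbb{S}_0}h_\beta$ into a linear combination of $\lambda$-functionals. Because $\lambda_{w''}^{\mathbb{S}_0}(1_C)$ is $1$ when $w''=w$ and $0$ otherwise, only one term survives, and again matching indices (this time in the form forced by the $\tilde{w}_0$-based parametrization the corollary uses for the $\lambda$'s) produces the stated $N'_{w',w}$.

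The main obstacle in this argument is not the mathematics but the bookkeeping of indices. In particular, I expect the appearance of $\tilde{w}_0$ in the second bullet (inside the subscripts $\tilde{w}_0 w_0$ and $\tilde{w}_0 w_0'$ of $\alpha'$) and the reversal $\gamma\gamma'^{-1}$ versus $\gamma'\gamma^{-1}$ to reflect the asymmetry between the two ways of recording sectorial type relative to a fixed dominant sector: passing from ``$C$ viewed from $C'$'' to ``$C'$ viewed from $C$'' corresponds to reversing the associated gallery and conjugating the finite-Weyl component by the longest element $\tilde{w}_0$. Verifying that the reparametrization $w_0 \mapsto \tilde{w}_0 w_0$ (and analogously for $w_0'$) in Corollary~\ref{cor:Sectorial action2} produces exactly the coefficients of the theorem is the most delicate step, and I would handle it by substituting $w'=\gamma'^{-1}w_0'^{-1}$ into the left-hand side of the corollary (solving for the parameters named there), then reading off the coefficient in front of the unique surviving $\lambda_w^{\mathbb{S}_0}$ on the right-hand side.
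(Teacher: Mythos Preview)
Your proposal is correct and follows essentially the same approach as the paper. The paper's proof identifies $N_{w',w}$ as the coefficient of $1_{w'}^{\mathbb{S}_0}$ in $h_\beta 1_w^{\mathbb{S}_0}$ via Corollary~\ref{cor:Sectorial action1}, and $N'_{w',w}$ as the coefficient of $\lambda_w^{\mathbb{S}_0}$ in $\lambda_{w'}^{\mathbb{S}_0}h_\beta$ via Corollary~\ref{cor:Sectorial action2}; the $\tilde w_0$ appears not for any geometric reversal reason but simply because Corollary~\ref{cor:Sectorial action2} is stated for functionals of the form $\lambda_{\gamma^{-1}\tilde w_0 w_1^{-1}}^{\mathbb{S}_0}$, so one must reparametrize $w_0'^{-1}=\tilde w_0 w_1'^{-1}$ (i.e.\ set $w_1'=w_0'\tilde w_0$) before reading off the coefficient.
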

\begin{proof}
With notations as above, and by definition of $1_{w^{\prime}}^{\mathbb{S}_{0}}$
and $h_{\beta}$, $N_{w^{\prime},w}$ is the coefficient of $1_{w^{\prime}}^{\mathbb{S}_{0}}$
in the decomposition of $h_{\beta}1_{w}^{\mathbb{S}_{0}}$. By proposition
\ref{cor:Sectorial action1}, this number is $q_{\beta}^{1/2}Q_{\gamma'\gamma^{-1}}^{1/2}\alpha_{w_{0}^{\prime},w_{0},\gamma'\gamma^{-1},\beta}$.

Similarly, $N_{w^{\prime},w}^{\prime}$ is the coefficient of $\lambda_{w}^{\mathbb{S}_{0}}$
in the decomposition of $1_{w^{\prime}}^{\mathbb{S}_{0}}h_{\beta}$.
Let $w_{1}=w_{0}\tilde{w}_{0}$, $w_{1}^{\prime}=w_{0}^{\prime}\tilde{w}_{0}$.
Then $w=\gamma^{-1}\tilde{w}_{0}w_{1}^{-1}$, $w^{\prime}=\gamma^{\prime-1}\tilde{w}_{0}w_{1}^{\prime-1}$.
By proposition \ref{cor:Sectorial action1} this coefficient is $q_{\beta}^{1/2}Q_{\gamma\gamma^{\prime-1}}^{1/2}\alpha_{w_{1},w_{1}^{\prime},\gamma\gamma^{\prime-1},\beta}^{\prime}$.
\end{proof}

\section{\label{sec:How-to-Bound-operators}How to Bound Operators}

The goal is this section is to use theorem \ref{thm:Sectorial distance theorem}
to bound the norm of $h_{\beta}$, $\beta\in P^{+}$. For all this
section, $\beta$ is fixed.
\begin{lem}
Let $X=X_{0}\cup X_{1}$ be a (possibly infinite) a biregular graph,
such that every $x\in X_{0}$ is connected to at most $K_{0}$ vertices
in $X_{1}$, and every $y\in X_{1}$ is connected to at most $S_{1}$
vertices in $X_{0}$. 

Let $A_{X}:\mathbb{C}^{X_{0}}\rightarrow\mathbb{C}^{X_{1}}$ be the
adjacency operator from $X_{0}$ to $X_{1}$, i.e. $Af(y)=\sum_{x\sim y}f(x)$.
Then as operator $A:L_{p}(X_{0})\rightarrow L_{p}(X_{1})$, we have
$\left\Vert A\right\Vert _{p}\le K_{0}^{1/p}K_{1}^{(p-1)/p}$.
\end{lem}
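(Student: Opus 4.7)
The plan is to prove this by a direct application of Hölder's inequality, which is essentially a Schur-test style argument adapted to the biregular bipartite setup. There is nothing deep here; the bound is a clean trade-off between summing over neighbors of a vertex in $X_1$ (giving the $K_1^{(p-1)/p}$ factor) and summing over neighbors of a vertex in $X_0$ (giving the $K_0^{1/p}$ factor).

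First I would fix $y\in X_1$ and write $Af(y)=\sum_{x\sim y}f(x)$. Applying Hölder's inequality with conjugate exponents $p$ and $p/(p-1)$ to the pairing against the constant function $1$ on the neighbors of $y$, I get
\[
|Af(y)|\;\le\;\Bigl(\sum_{x\sim y}1\Bigr)^{(p-1)/p}\Bigl(\sum_{x\sim y}|f(x)|^p\Bigr)^{1/p}\;\le\;K_1^{(p-1)/p}\Bigl(\sum_{x\sim y}|f(x)|^p\Bigr)^{1/p}.
\]
Raising to the $p$-th power and summing over $y\in X_1$, I would then swap the order of summation:
\[
\sum_{y\in X_1}|Af(y)|^p\;\le\;K_1^{p-1}\sum_{y\in X_1}\sum_{x\sim y}|f(x)|^p\;=\;K_1^{p-1}\sum_{x\in X_0}|f(x)|^p\cdot\#\{y\in X_1:y\sim x\}.
\]

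The second step uses the bound $\#\{y\in X_1:y\sim x\}\le K_0$, which gives $\|Af\|_p^p\le K_0K_1^{p-1}\|f\|_p^p$, i.e.\ $\|A\|_p\le K_0^{1/p}K_1^{(p-1)/p}$, as claimed. The only (minor) subtlety is to note that in the Hölder step we use the bound $K_1$, which depends on $y$, uniformly; this is exactly why the biregularity-type hypothesis is stated as an upper bound on the two neighborhood sizes. No step here is really an obstacle — the whole argument is one line of Hölder plus a Fubini-type swap — and the $p=2$ case recovers the standard Cauchy–Schwarz norm estimate $\|A\|_2\le\sqrt{K_0K_1}$ for bipartite adjacency operators.
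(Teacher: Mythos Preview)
Your proof is correct and is essentially the same argument as the paper's: the paper invokes ``convexity of $s\mapsto s^p$'' (which is exactly your H\"older step against the constant function) followed by the same Fubini swap, doing the finite case first and then extending. Your version is in fact slightly cleaner, since the H\"older-plus-swap argument works uniformly without separating the finite and infinite cases.
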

\begin{proof}
For $X$ finite the result follows from the convexity of $s\rightarrow s^{p}$.
Then it extends easily to $X$ infinite.
\end{proof}
Let $w,w^{\prime}\in\hat{W}$. Create a graph bipartite graph $X_{w,w^{\prime}}$.
The vertices $X_{0}$ will be chambers $C\in B_{\phi}$ with $\rho_{\mathbb{S}_{0}}(C)=w=\gamma^{-1}w_{0}^{-1}$,
and the vertices $X_{1}$ will be the chambers $C'$ with $\rho_{\mathbb{S}_{0}}(C')=w^{\prime}=\gamma^{\prime-1}w_{0}^{\prime-1}$.
Connect two chambers $C_{0},C_{1}$ if $d(C_{1},C_{0})=\beta$. Then
by theorem \ref{thm:Sectorial distance theorem}, this graph is $\left(K_{0},K_{1}\right)$-biregular,
with $K_{0}=Q_{\gamma\gamma^{\prime-1}}^{1/2}\alpha_{\tilde{w}_{0}w_{0}^{-1},\tilde{w}_{0}w_{0}^{\prime-1},\gamma\gamma^{\prime-1},\beta}^{\prime}$
and $K_{1}=q_{\beta}^{1/2}Q_{\gamma'\gamma^{-1}}^{1/2}\alpha_{w_{0}^{\prime},w_{0},\gamma'\gamma^{-1},\beta}$.
By the above lemma we have:
\[
\left\Vert A_{w,w^{\prime}}\right\Vert _{p}\le K_{0}^{1/p}K_{1}^{(p-1)/p}=q_{\beta}^{1/2}Q_{\gamma'\gamma^{-1}}^{(p-2)/2p}\alpha_{\tilde{w}_{0}w_{0}^{-1},\tilde{w}_{0}w_{0}^{\prime-1},\gamma\gamma^{\prime-1},\beta}^{\prime1/p}\alpha_{w_{0}^{\prime},w_{0},\gamma\gamma^{\prime-1},\beta}^{(p-1)/p}
\]

Let us now extend this to a bound on $h_{\beta}$. First of all, one
can consider the results above by fixing $w_{0},w_{0}^{\prime}$ and
$\beta^{\prime}=\gamma^{\prime}\gamma^{-1}$, but letting $\gamma$
change. For each different $\gamma$, we have a different graph $X_{w,w^{\prime}}$,
with the same bound on $\left\Vert A_{w,w^{\prime}}\right\Vert _{p}$.
All those graphs are disjoint. Therefore the same bound holds for
the disjoint union $X_{w_{0},w_{0}^{\prime},\beta'}=\sqcup X_{w,w^{\prime}}$
of all those graphs, that is 
\begin{equation}
\left\Vert A_{w_{0},w_{0}^{\prime},\beta'}\right\Vert _{p}\le q_{\beta}^{1/2}Q_{\beta^{\prime}}^{(p-2)/2p}\alpha_{\tilde{w}_{0}w_{0}^{-1},\tilde{w}_{0}w_{0}^{\prime-1},\beta^{\prime},\beta}^{\prime1/p}\alpha_{w_{0}^{\prime},w_{0},\beta^{\prime-1},\beta}^{(p-1)/p}\label{eq:matrix coefficient bound}
\end{equation}

Let $w_{0}\in W_{0}$ and for $f\in L_{p}(\hat{B}_{\phi})$ define
$f^{w_{0}}\in L_{p}(B_{\phi})$ by: 
\[
f^{w_{0}}(C)=\begin{cases}
f(C) & \exists\gamma\in P:\,\rho_{\mathbb{S}_{0}}(C)=\gamma^{-1}w_{0}^{-1}\\
0 & \text{else}
\end{cases}
\]

Surely $f=\sum_{w_{0}\in W_{0}}f^{w_{0}}$ and $\left\Vert f\right\Vert _{p}^{p}=\sum\left\Vert f^{w_{0}}\right\Vert _{p}^{p}$. 
\begin{lem}
\label{lem:sectorial_matrix_lemma}Using the above notations, we have:
\[
\left(h_{\beta}f\right)^{w_{0}^{\prime}}=\sum_{w_{0}^{\prime}\le w_{0}}\left(\sum_{\beta'\le\beta}A_{w_{0},w_{0}^{\prime},\beta'}\right)f^{w_{0}}
\]
\end{lem}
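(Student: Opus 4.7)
The plan is to unpack the definition of both sides pointwise and observe that the identity reduces to a straightforward reorganization of the sum defining $h_\beta$, with the range restriction $w_0'\le w_0$, $\beta'\le\beta$ emerging from the vanishing properties of the Bernstein coefficients in Propositions \ref{prop:Explicit_bernstein_relations} and \ref{prop:Explicit Bernstein Relations2}.

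Concretely, I would fix a chamber $C'\in\hat{B}_\phi$, write $\rho_{\mathbb{S}_0}(C')=\gamma'^{-1}w_0'^{-1}$ (which determines $w_0'$ and $\gamma'$ uniquely via the decomposition $\hat{W}=P\rtimes W_0$), and evaluate $(h_\beta f)^{w_0'}(C')=(h_\beta f)(C')=\sum_{C:\,d(C',C)=\beta}f(C)$. Using $f=\sum_{w_0\in W_0}f^{w_0}$, I would stratify this sum according to the unique $w_0\in W_0$ and $\gamma\in P$ with $\rho_{\mathbb{S}_0}(C)=\gamma^{-1}w_0^{-1}$, and then set $\beta'=\gamma'\gamma^{-1}\in P$. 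Under these substitutions, the bipartite graph $X_{w_0,w_0',\beta'}$ is exactly the disjoint union (over $\gamma$) of edge sets between chambers of sectorial type $\gamma^{-1}w_0^{-1}$ and $(\beta'\gamma)^{-1}w_0'^{-1}$ with distance $\beta$, so the partial sum of $f(C)$ over $C$ with sectorial type $\gamma^{-1}w_0^{-1}$ and $d(C',C)=\beta$ is precisely $(A_{w_0,w_0',\beta'}f^{w_0})(C')$.

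Summing over $w_0\in W_0$ and $\beta'\in P$ then reassembles $(h_\beta f)(C')$, giving the identity with $\beta'$ ranging over all of $P$. To cut this down to $\beta'\le\beta$ and $w_0'\le w_0$, I would invoke the expressions $N_{w',w}$ and $N_{w',w}'$ from Theorem \ref{thm:Sectorial distance theorem}, which are proportional to $\alpha_{w_0',w_0,\beta',\beta}$ and $\alpha_{\tilde{w}_0 w_0,\tilde{w}_0 w_0',\gamma\gamma'^{-1},\beta}'$ respectively. Parts (2)--(3) of Propositions \ref{prop:Explicit_bernstein_relations} and \ref{prop:Explicit Bernstein Relations2} force both coefficients to vanish outside the range stated; hence $X_{w_0,w_0',\beta'}$ has no edges, $A_{w_0,w_0',\beta'}\equiv 0$, and those terms drop out.

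There is no real obstacle here; the main subtlety is just bookkeeping, making sure that the reindexing $\gamma\leftrightarrow\beta'$ is a bijection for fixed $\gamma'$ (it is, since $\gamma=\beta'^{-1}\gamma'$), and that the Bruhat/coweight restrictions stated in the lemma genuinely match those produced by the Bernstein relations for the particular choice of $w_0'^{-1}$ on the coefficient side. A minor sanity check worth performing is to verify on both orderings that the indices $w_0',w_0$ inside $\alpha_{\cdot,\cdot,\cdot,\cdot}'$ (which involves $\tilde{w}_0 w_0$, $\tilde{w}_0 w_0'$) yield the same inequality $w_0'\le w_0$ as the one produced by $\alpha_{w_0',w_0,\beta',\beta}$, so that both bipartite-graph regularity bounds kick in simultaneously.
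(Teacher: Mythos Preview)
Your proposal is correct and is precisely the approach the paper takes: the paper's own proof is the one-line remark that the identity follows by definition of $f^{w_0}$, the graph $X_{w_0,w_0',\beta'}$, and its adjacency operator, which is exactly the pointwise unpacking and stratification you describe. The range restrictions $w_0'\le w_0$ and $\beta'\le\beta$ indeed come from the vanishing of $N_{w',w}$ in Theorem~\ref{thm:Sectorial distance theorem} via Proposition~\ref{prop:Explicit_bernstein_relations}(2)--(3), and your sanity check on the $\tilde{w}_0$-twisted indices in $\alpha'$ is consistent (since left multiplication by $\tilde{w}_0$ reverses Bruhat order) but not strictly needed, as the vanishing of $N_{w',w}$ alone already forces $A_{w_0,w_0',\beta'}=0$.
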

\begin{proof}
Follows by definition of $f^{w_{0}}$ and , the graph $X_{w_{0},w_{0}^{\prime},\beta'}$
and the adjacency operator $A_{w_{0},w_{0}^{\prime},\beta'}$.
\end{proof}
We can now prove:
\begin{thm}
\label{thm:Bound on H_beta}With the notations above, for $f\in L_{p}(\hat{B}_{\phi})$
and $\beta\in P^{+},$ we have:
\[
\left\Vert (h_{\beta}f)^{w_{0}^{\prime}}\right\Vert _{p}\le q_{\beta}^{1/2}\sum_{w_{0}^{\prime}\le w_{0}}\left(\sum_{\beta'\le\beta}Q_{\beta'}^{(p-2)/2p}\alpha_{\tilde{w}_{0}w_{0}^{-1},\tilde{w}_{0}w_{0}^{\prime-1},\beta^{\prime},\beta}^{\prime1/p}\alpha_{w_{0}^{\prime},w_{0},\beta^{\prime-1},\beta}^{(p-1)/p}\right)\left\Vert f^{w_{0}}\right\Vert _{p}
\]

As a corollary, $\left\Vert h_{\beta}\right\Vert _{p}\le\left|W_{0}\right|\left|2q_{max}\right|^{l(\tilde{w}_{0})}\left(l(\beta)+1\right)^{l(\tilde{w}_{0})}q_{\beta}^{(p-1)/p}$.
\end{thm}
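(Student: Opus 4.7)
The plan is to obtain the first displayed inequality essentially for free from Lemma \ref{lem:sectorial_matrix_lemma} together with the operator-norm estimate (\ref{eq:matrix coefficient bound}) on the bipartite adjacency operators $A_{w_0,w_0',\beta'}$. Indeed, Lemma \ref{lem:sectorial_matrix_lemma} decomposes $(h_\beta f)^{w_0'}$ as $\sum_{w_0' \le w_0}\sum_{\beta'\le\beta} A_{w_0,w_0',\beta'}\,f^{w_0}$, and the $L_p$-triangle inequality, combined with the bound $\|A_{w_0,w_0',\beta'}\|_p \le q_\beta^{1/2}Q_{\beta'}^{(p-2)/(2p)}\alpha'^{1/p}_{\tilde{w}_0 w_0^{-1},\tilde{w}_0 w_0'^{-1},\beta',\beta}\alpha^{(p-1)/p}_{w_0',w_0,\beta'^{-1},\beta}$, yields the stated estimate for $\|(h_\beta f)^{w_0'}\|_p$.

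For the corollary the key step is to bound the inner sum $\sum_{\beta' \le \beta} Q_{\beta'}^{(p-2)/(2p)}\alpha'^{1/p}_{\cdot}\alpha^{(p-1)/p}_{\cdot}$ by something uniform. The natural tool is H\"older's inequality with conjugate exponents $p$ and $p/(p-1)$, which, once $Q_{\beta'}^{(p-2)/(2p)}$ is pulled out as a $\beta'$-independent prefactor (see the next paragraph), splits the sum into $(\sum_{\beta'}|\alpha'|)^{1/p}(\sum_{\beta'}|\alpha|)^{(p-1)/p}$. Plugging in the cumulative coefficient bounds from Propositions \ref{prop:Explicit_bernstein_relations}(4) and \ref{prop:Explicit Bernstein Relations2}(4) produces factors of the shape $2^{l(\tilde{w}_0)}(q_{\max}(l(\beta)+1))^{l(w_0)-l(w_0')}$, after using the identity $l(\tilde{w}_0 w^{-1}) = l(\tilde{w}_0) - l(w)$ for $w \le \tilde{w}_0$ in $W_0$ and a trivial change of variable $\beta' \mapsto \beta'^{-1}$. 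Combining with the leading $q_\beta^{1/2}$ and the algebraic identity $\tfrac{1}{2} + \tfrac{p-2}{2p} = \tfrac{p-1}{p}$ produces the desired $q_\beta^{(p-1)/p}$ scaling. Finally, to pass from $\|(h_\beta f)^{w_0'}\|_p$ to $\|h_\beta\|_p$ one uses $\|h_\beta f\|_p^p = \sum_{w_0'}\|(h_\beta f)^{w_0'}\|_p^p$ along with a Jensen/Minkowski estimate over the finitely many $w_0,w_0' \in W_0$, which accounts for the leading $|W_0|$ factor in the corollary.

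The main technical obstacle will be the control of $Q_{\beta'}^{(p-2)/(2p)}$ for those $\beta'$ appearing with nonzero coefficient in the Bernstein expansion. Such $\beta'$ satisfy $\beta' \le \beta$ in the coweight order, but $\beta'$ itself need not be dominant, and from the definition of $Q_{\beta'}$ (coming from the decomposition $\beta' = \beta'_1\beta'^{-1}_2$ with $\beta'_1,\beta'_2 \in P^+$) the quantity $Q_{\beta'}$ can be as large as $q_{(\beta')^-}$ for an anti-dominant $\beta'$. The resolution is Lemma \ref{lem:Lengthes and dominance}: the set $\{\beta' : \beta'\le\beta\}$ is saturated under the operations $\beta' \mapsto \beta' - j\alpha_i^\vee$, and length (hence $q$-value) is $W_0$-invariant on $P$, so in the relevant range one has $q_{\beta'_1},q_{\beta'_2}\le q_{(\beta')^+}\le q_\beta$ and therefore $Q_{\beta'}\le q_\beta$. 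This uniform bound lets $Q_{\beta'}^{(p-2)/(2p)}$ be extracted from the H\"older step as the clean prefactor $q_\beta^{(p-2)/(2p)}$, closing the argument.
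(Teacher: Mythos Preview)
Your proposal is correct and mirrors the paper's proof: Lemma \ref{lem:sectorial_matrix_lemma} together with the bipartite bound (\ref{eq:matrix coefficient bound}) and the triangle inequality give the first estimate, and then H\"older with exponents $p,\,p/(p-1)$, the coefficient sums from Propositions \ref{prop:Explicit_bernstein_relations}(4) and \ref{prop:Explicit Bernstein Relations2}(4), and a convexity inequality over $W_0$ yield the corollary, exactly as in the paper. One minor correction to your last paragraph: the justification of $Q_{\beta'}\le q_\beta$ via ``$q_{\beta'_1},q_{\beta'_2}\le q_{(\beta')^+}$'' is not sound as written (the decomposition $\beta'=\beta'_1\beta_2'^{-1}$ is not canonical and those individual bounds can fail), but the desired inequality follows directly from the product formulas $Q_{\beta'}=\prod_{\alpha\in R^+}q_\alpha^{-\langle\alpha,\beta'\rangle}\le\prod_{\alpha\in R^+}q_\alpha^{|\langle\alpha,\beta'\rangle|}=q_{\beta'}\le q_\beta$ contained in Lemma \ref{lem:Lengthes and dominance}.
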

\begin{proof}
The first inequality follows from lemma \ref{lem:sectorial_matrix_lemma}
and the bound \ref{eq:matrix coefficient bound}.

We now turn to simplify the expression $q_{\beta}^{1/2}\sum_{\beta'\le\beta}Q_{\beta'}^{(p-2)/2p}\alpha_{w_{0}^{\prime},w_{0},\beta',\beta}^{(p-1)/p}\alpha_{\tilde{w}_{0}w_{0}^{-1},\tilde{w}_{0}w_{0}^{\prime-1},\beta^{\prime-1},\beta}^{\prime1/p}$.
since $\beta'\le\beta$ we have by lemma \ref{lem:Lengthes and dominance}
$Q_{\beta'}\le q_{\beta}$, so $q_{\beta}^{1/2}Q_{\beta'}^{(p-2)/2p}\le q_{\beta}^{(p-1)/p}$.
Moreover:
\[
\begin{array}{ccc}
\sum_{\beta'}\alpha_{\tilde{w}_{0}w_{0}^{-1},\tilde{w}_{0}w_{0}^{\prime-1},\beta^{\prime},\beta}^{\prime1/p}\alpha_{w_{0}^{\prime},w_{0},\beta^{\prime-1},\beta}^{(p-1)/p} & \le & \left(\sum_{\beta'}\alpha_{\tilde{w}_{0}w_{0}^{-1},\tilde{w}_{0}w_{0}^{\prime-1},\beta^{\prime-1},\beta}^{\prime}\right)^{1/p}\left(\sum_{\beta'}\alpha_{w_{0}^{\prime},w_{0},\beta',\beta}\right)^{(p-1)/p}\\
 & \le & \left(2^{l(w_{0})}(q_{max}\cdot(l(\beta)+1))^{l(w_{0})-l(w_{0}^{\prime})}\right)^{(p-1)/p}\left(2^{l(w_{0})}(l(\beta)+1)^{l(w_{0})-l(w_{0}^{\prime})}\right)^{1/p}\\
 & \le & \left|2q_{max}\right|^{l(\tilde{w}_{0})}\left(l(\beta)+1\right)^{l(\tilde{w}_{0})}
\end{array}
\]

The first inequality follows from Hölder's inequality. The second
from the bounds of propositions \ref{prop:Explicit_bernstein_relations}
and \ref{prop:Explicit Bernstein Relations2}.

Therefore we have $\left\Vert (h_{\beta}f)^{w_{0}^{\prime}}\right\Vert _{p}\le q_{\beta}^{(p-1)/p}\left|2q_{max}\right|^{l(\tilde{w}_{0})}\left(l(\beta)+1\right)^{l(\tilde{w}_{0})}\sum_{w_{0}^{\prime}\le w_{0}}\left\Vert f^{w_{0}}\right\Vert _{p}$.
Denote $\lambda=q_{\beta}^{(p-1)/p}\left|2q_{max}\right|^{l(\tilde{w}_{0})}\left(l(\beta)+1\right)^{l(\tilde{w}_{0})}$.
Using the convexity inequality $\left(\sum_{i=1}^{N}\alpha_{i}\right)^{p}\le N^{p-1}\sum_{i=1}\alpha_{i}^{p}$,
we have
\[
\begin{array}{ccc}
\left\Vert h_{\beta}f\right\Vert _{p}^{p} & = & \sum_{w_{0}^{\prime}}\left\Vert \left(h_{\beta}f\right)^{w_{0}^{\prime}}\right\Vert _{p}^{p}\\
 & \le & \sum_{w_{0}^{\prime}}\left(\sum_{w_{0}}\lambda\left\Vert f^{w_{0}}\right\Vert _{p}\right)^{p}\\
 & \le & \lambda^{p}\left|W_{0}\right|\left|W_{0}\right|^{p-1}\sum_{w_{0}}\left\Vert f^{w_{0}}\right\Vert _{p}^{p}\\
 & = & \lambda^{p}\left|W_{0}\right|^{p}\left\Vert f\right\Vert _{p}^{p}
\end{array}
\]

and $\left\Vert h_{\beta}\right\Vert _{p}\le\left|W_{0}\right|\lambda$
as needed.
\end{proof}
\begin{rem}
As said in the introduction, the proof presented here is based on
the proof of \cite{cowling1988almost}, theorem 2.
\end{rem}
\bibliographystyle{amsalpha}
\bibliography{0C__Users_Amitay_Dropbox_Thesis_database}

\end{document}